\documentclass[11pt,reqno]{amsart}
\usepackage{amsmath,amsthm,amssymb,mathrsfs,stmaryrd,color}
\usepackage[all]{xy}
\usepackage{url}

\usepackage{geometry}
\geometry{margin=1in}

% for accents
\usepackage[utf8]{inputenc}
\usepackage[T1]{fontenc}

\usepackage{enumitem}
\usepackage[colorlinks=true,hyperindex, linkcolor=magenta, pagebackref=false, citecolor=cyan,pdfpagelabels]{hyperref}
\usepackage[capitalize]{cleveref}
\setlist[enumerate]{itemsep=2pt,parsep=2pt,before={\parskip=2pt}}

% commands/packages needed for defining \prism
\usepackage{relsize}
\usepackage[bbgreekl]{mathbbol}
\usepackage{amsfonts}
\DeclareSymbolFontAlphabet{\mathbb}{AMSb}
\DeclareSymbolFontAlphabet{\mathbbl}{bbold}
\newcommand{\Prism}{\mathbbl{\Delta}}

\newcommand{\cosimp}[3]{\xymatrix@1{#1 \ar@<.4ex>[r] \ar@<-.4ex>[r] & {\ }#2 \ar@<0.8ex>[r] \ar[r] \ar@<-.8ex>[r] & {\ } #3 \ar@<1.2ex>[r] \ar@<.4ex>[r] \ar@<-.4ex>[r] \ar@<-1.2ex>[r] & \cdots }}

\newcommand{\colim}{\mathop{\mathrm{colim}}}

\newcommand{\adjunction}[4]{\xymatrix@1{#1{\ } \ar@<0.3ex>[r]^{ {\scriptstyle #2}} & {\ } #3 \ar@<0.3ex>[l]^{ {\scriptstyle #4}}}}

\newtheorem{theorem}{Theorem}[section]
\newtheorem{proposition}[theorem]{Proposition}
\newtheorem{lemma}[theorem]{Lemma}
\newtheorem{corollary}[theorem]{Corollary}

\theoremstyle{definition}
\newtheorem{definition}[theorem]{Definition}
\newtheorem{question}[theorem]{Question}
\newtheorem{remark}[theorem]{Remark}

\newtheorem{example}[theorem]{Example}

\newtheorem{notation}[theorem]{Notation}

\newtheorem{construction}[theorem]{Construction}

\newtheorem{claim}[theorem]{Claim}

\newtheorem*{convention*}{Conventions}

\newcommand{\RH}{\mathrm{RH}}

\newcommand{\pfd}{\mathrm{perfd}}

\newcommand{\perf}{\mathrm{perf}}

\begin{document}
\title{Cohen--Macaulayness of absolute integral closures}
\author{Bhargav Bhatt}
\address{University of Michigan, Ann Arbor}
\date{}
\maketitle

\begin{abstract}
We prove that, modulo any power of a prime $p$, the absolute integral closure of an excellent noetherian domain is Cohen--Macaulay. A graded analog is also established, yielding variants of Kodaira vanishing ``up to finite covers'' in mixed characteristic. Our main tools are (log) prismatic cohomology (which yields a Frobenius action in mixed characteristic) and the $p$-adic Riemann-Hilbert functor for constructible \'etale $\mathbf{F}_p$-sheaves on varieties over a $p$-adic field (which almost controls perfectified prismatic cohomology).
\end{abstract}

\setcounter{tocdepth}{1}
\tableofcontents

\newpage

\section{Introduction}
\label{sec:Intro}

Fix a prime number $p$.  Given an integral domain $R$, we use the term {\em absolute integral closure} to describe the integral closure of $R$ in an algebraic closure of its fraction field (following \cite{ArtinJoins}); such a closure is unique up to (non-unique) isomorphism, commutes with localization, and is typically denoted $R^+$. There is an evident generalization to integral schemes.

\subsection{Cohen--Macaulayness of $R^+$}

Recall that Cohen--Macaulay (CM) modules $M$ over a noetherian ring $R$ are exceptionally well-behaved $R$-modules; for instance, they are flat over any noetherian normalization of $\mathrm{Spec}(R)$ and have extremely simple local cohomology. The main commutative algebra result of this paper is following, stating roughly that absolute integral closures of excellent noetherian domains are CM in mixed characteristic.

\begin{theorem}[see Theorem~\ref{AICVanishLocalCoh}, Corollary~\ref{CMRegSeqAIC}]
\label{RPlusCMIntro}
Let $R$ be an excellent domain with an absolute integral closure $R^+$. Then the $R/p^nR$-module $R^+/p^nR^+$ is Cohen--Macaulay\footnote{In fact, if $R$ is a complete noetherian local domain, we show that the $p$-adic completion $\widehat{R^+}$ is Cohen--Macaulay over $R$ in see Corollary~\ref{CMPlusComp}. The hypothesis on $R$ in the previous sentence turns out to not be necessary: the recent paper \cite{BMPSTWW} explains how to deduce directly from Theorem~\ref{RPlusCMIntro} that the $p$-adic completion $\widehat{R^+}$ is Cohen--Macaulay for any local excellent domain $R$. We also further note that $\widehat{R^+}$ was recently shown to be a domain by Heitmann \cite{HeitmannRPlusDomain} under mild extra assumptions on $R$ (such as analytic irreducibility).}
 for all $n \geq 1$.
\end{theorem}

Despite involving the large ring $R^+$, Theorem~\ref{RPlusCMIntro} is essentially a finitistic statement. In fact, in the key special case where $R$ is local of residue characteristic $p$ and $n=1$, Theorem~\ref{RPlusCMIntro} amounts to the following concrete assertion: for any system of parameters $x_1,...,x_d$ in $R/p$ and any relation $\sum_i r_i x_i = 0$ in $R/pR$, there exists a finite injective map $R \hookrightarrow S$ of domains such that the relation $\sum_i r_i x_i = 0$ becomes trivial in $S/pS$, i.e., is an $S/pS$-linear combination of trivial Koszul relations. Note that it is necessary to work modulo a power of $p$ (or after $p$-adic completion) in Theorem~\ref{RPlusCMIntro}: if $R$ is an excellent noetherian domain containing $\mathbf{Q}$ and $\dim(R) \geq 3$, then $R^+$ is {\em never} CM over $R$ due to trace obstructions.

Theorem~\ref{RPlusCMIntro} has some standard consequences, e.g., it implies splinters are CM in mixed characteristic (Corollary~\ref{splintersCM}) and that $R^+/\sqrt{pR^+}$ is CM over $R/p$ (Corollary~\ref{LyubeznikConj}), answering a question highlighted by Lyubeznik \cite{LyubeznikRPlus}. Moreover, it yields (Remark~\ref{rmk:WeakFunNonLoc}) a  simple and explicit construction of ``weakly functorial  CM algebras'' (previously shown to exist by Andr\'e \cite{AndreWeakFun} and Gabber \cite{GabberMSRINotesBCM} via rather indirect constructions), which is known to imply a many of the ``homological conjectures'' in commutative algebra; see \cite{HochsterHomologicalMSRI} for a recent survey on this question.

\begin{remark}[What was previously known?]
If $\dim(R) \leq 2$, Theorem~\ref{RPlusCMIntro} follows from the Auslander-Buchsbaum formula (and then the result holds true even without reducing modulo $p^n$). Next, if $R$ is an $\mathbf{F}_p$-algebra, then Theorem~\ref{RPlusCMIntro} is the main result of Hochster-Huneke's \cite{HHCM} (see also \cite{HHCMSurvey,HHCMSurvey2,HunekeSurvey} for expositions focusing on applications); this result underlies and conceptualizes many basic results in $F$-singularity theory, was inspired by the theory of tight closure \cite{HHTightJAMS}, and also formed the initial motivation for this paper. Finally, if $\dim(R) = 3$ and $R$ has mixed characteristic, the image of Theorem~\ref{RPlusCMIntro} in ``almost mathematics over $R^+$" (in the classical sense of Faltings, see \cite{FaltingsJAMS,FaltingsAlmostEtale,GabberRamero}) was the main result of Heitmann's \cite{HeitmannDSC3}; see also \cite{HeitmannExt} for a stronger almost vanishing theorem in dimension $3$ also covered by Theorem~\ref{RPlusCMIntro}.
\end{remark}

\subsection{Kodaira vanishing up to finite covers}

There is a well-known analogy between projective geometry and local algebra, e.g., the global cohomological properties of a projective variety $X \subset \mathbf{P}^n$ are faithfully reflected in the local cohomological properties of its affine cone $Y \subset \mathbf{A}^{n+1}$ near the vertex $0 \in Y$. This analogy suggests that Theorem~\ref{RPlusCMIntro} ought to have a global variant. This is indeed the case; in fact, the global analog can be regarded as an ``up to finite covers'' variant of Kodaira vanishing  in mixed characteristic, and states the following (giving a mixed characteristic analog of the graded main theorem in \cite{HHCM} when translated to algebra):

\begin{theorem}[see Theorem~\ref{KodairaFiniteCover}, Remark~\ref{rmk:ProperDVR}]
\label{XPlusCMIntro}
Let $V$ be an excellent $p$-henselian and $p$-torsionfree DVR. Let $X/V$ be a proper flat integral scheme of relative dimension $d$, and let $L \in \mathrm{Pic}(X)$ be semiample and big (e.g., $L$ could be ample). For any $n \geq 1$,  there is a finite surjective map $\pi:Y \to X$ such that the pullbacks
\begin{enumerate}
\item $H^*(X, L^{b})_{tors} \to H^*(Y, \pi^* L^{b})_{tors}$ for fixed $b < 0$.
\item $H^{< d}(X_{p^n=0}, L^b) \to H^{< d}(Y_{p^n=0}, \pi^* L^b)$ for all $b < 0$.
\item $H^{> 0}(X_{p^n=0}, L^a) \to H^{> 0}(Y_{p^n=0}, \pi^* L^a)$ for all $a \geq 0$.
\end{enumerate}
are the $0$ map. (Here $X_{p^n=0} := X \otimes_V V/p^n$ denotes the mod $p^n$ fibre of $X/V$, and similarly for $Y$.)
\end{theorem}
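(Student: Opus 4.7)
The plan is to reduce the geometric statement to the Cohen--Macaulayness theorem for $R^+$ via the section-ring construction, and then to translate vanishing of local cohomology back into vanishing of sheaf cohomology on finite covers.

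After replacing $L$ by a sufficiently divisible power (harmless for all three statements), I would form the section ring $R := \bigoplus_{m \geq 0} H^0(X, L^{m})$, which is a finitely generated, positively graded, flat $V$-algebra and a domain. Semiampleness gives finite generation, and bigness lets me assume via a standard birational reduction that $\mathrm{Proj}(R) = X$ with $L = \mathcal{O}_X(1)$. Then $\dim R = d+2$ and $\dim R/pR = d+1$. For any graded finite extension $R \hookrightarrow S$, corresponding to a finite cover $\pi : Y \to X$, the punctured-spectrum / $\mathbb{G}_m$-torsor computation gives the graded identification
\[
H^{i+1}_{R_+}(S/p^nS)_{a} \;\cong\; H^i\!\left(Y_{p^n=0},\ \pi^*L^{a}\right) \qquad (i \geq 1,\ a \in \mathbb{Z},\ n \geq 1).
\]

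Applying Theorem~\ref{RPlusCMIntro} to $R$ gives that $R^+/p^nR^+$ is Cohen--Macaulay over $R/p^nR$, so $H^i_{\mathfrak m}(R^+/p^nR^+) = 0$ for $i < d+1$, where $\mathfrak m := (p, R_+)$. Writing $R^+$ as the filtered union of its finite subextensions and using that local cohomology commutes with filtered colimits, every class in $H^i_{\mathfrak m}(R/p^nR)$ with $i < d+1$ is killed on pullback to some finite $R \hookrightarrow S \subset R^+$. Since the class is homogeneous and $\mathfrak m$ is homogeneous, I would then upgrade $S$ to a finite \emph{graded} normal extension $S' \subset R^+$ killing the same class, by extracting homogeneous components of the witnessing elements and enlarging inside $R^+$. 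Combining with the identification above yields, for $0 < i < d$ and all $a \in \mathbb{Z}$,
\[
\operatorname*{colim}_{\pi:Y \to X} H^i\!\left(Y_{p^n=0},\ \pi^*L^{a}\right) \;=\; 0.
\]
Since each $H^i(X_{p^n=0}, L^a)$ is finitely generated over $V/p^n$, a single cover $\pi$ suffices to annihilate the pullback, yielding statement (2) and the range $0 < i < d$ of statement (3).

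Statement (1) would then follow from (2) via the Bockstein sequence $0 \to L^b \xrightarrow{p^n} L^b \to L^b|_{X_{p^n=0}} \to 0$: any $p^n$-torsion class of $H^*(X, L^b)$ lifts to a class of $H^{*-1}(X_{p^n=0}, L^b)$ which (2) kills on pullback. The remaining case of (3), namely $i = d$ with $a \geq 0$, lies just outside the reach of Cohen--Macaulayness of $R^+/p^n$ (which controls $H^{<d+1}_{\mathfrak m}$ only); I would handle it by Grothendieck--Serre duality on the special fiber, translating the vanishing of $H^d(Y_{p^n=0}, \pi^*L^a)$ into vanishing of a trace map on $H^0$ of an appropriate twist of the relative dualizing complex, and feeding this back into the section-ring machinery with an auxiliary bundle. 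The main obstacle, in my view, is the graded promotion step: turning a general non-graded witnessing extension $S \subset R^+$ into a finite graded extension killing the same homogeneous local cohomology class. A secondary difficulty is the top-degree case of (3), which requires the duality detour above rather than a direct appeal to Cohen--Macaulayness of $R^+/p^n$.
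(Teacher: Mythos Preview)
Your approach has a genuine gap at the ``graded promotion'' step, and this gap is precisely why the paper does \emph{not} deduce the graded statement from Theorem~\ref{RPlusCMIntro} but instead reruns the entire prismatic/Riemann--Hilbert argument from scratch in the graded setting.

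The problem is this: a finite subextension $R \subset S \subset R^+$ witnessing the death of a homogeneous class $\alpha$ has no grading, and $R^+$ itself carries no natural grading extending that of $R$ (a generic finite extension of $\mathrm{Frac}(R)$ is not of the form $K(Y)(t)$ for a finite cover $Y \to X$). So ``extracting homogeneous components of the witnessing elements'' is not a meaningful operation. What you would need is that the graded absolute integral closure $R^{+,gr} := \bigoplus_{n \geq 0} H^0(X^+, L^n)$ is a direct summand of $R^+$ as an $R^{+,gr}$-module, so that $H^i_\mathfrak{m}(R^{+,gr}/p) \hookrightarrow H^i_\mathfrak{m}(R^+/p)$; but no such splitting is available in mixed characteristic. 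The paper circumvents this by working instead with the larger ring $R^{+,GR} := \bigoplus_{n \in \mathbf{Q}_{\geq 0}} H^0(X^+, L^n)$, which is perfectoid on $p$-completion (whereas $R^{+,gr}$ is not, as the paper explicitly notes), and then proving its Cohen--Macaulayness directly via a graded analog of the argument for Theorem~\ref{RPlusCMIntro}: almost CM-ness via the Riemann--Hilbert functor applied to an ind-perverse sheaf on $C_\infty[1/p]$, followed by the isogeny theorem and a Frobenius/equational-lemma argument.

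There are two secondary issues. First, your ``standard birational reduction'' from semiample-and-big to ample is not free: $\mathrm{Proj}(R)$ is the image $Z$ of the Iitaka map, not $X$, and passing cohomology from $Z$ back to $X$ requires Theorem~\ref{KillCohMixed} (killing higher direct images of $X \to Z$ by finite covers), exactly as in the paper's Theorem~\ref{VanishingFiniteCov}. Second, the top-degree case $i = d$ of part (3) is handled in the paper not by duality but by a separate perfectoid argument (Proposition~\ref{AICAmple}): one shows $\mathcal{O}_{C_\infty}/p \simeq Rf_* \mathcal{O}_{T_\infty}/p$ using that the $p$-completion of $T_\infty$ is perfectoid and that $\mathbf{F}_p \simeq Rg_* \mathbf{F}_p$ for $g:X^+ \to \mathrm{Spec}(\overline{V})$, whence $R\Gamma(X^+_{p=0}, L^a)$ is concentrated in degree $0$ for all $a \geq 0$. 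Your proposed duality detour is too vague to assess, and in any case would need the ind-CM property on the dualizing side, which is again the graded theorem you are trying to prove.
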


Parts (1) and (2) of Theorem~\ref{XPlusCMIntro} imply that spurious cohomology classes --- ones that should not be there if Kodaira vanishing were true --- are annihilated by pullback to a finite cover, explaining why we view Theorem~\ref{XPlusCMIntro} as a variant of Kodaira vanishing in mixed characteristic. This theorem in fact admits a generalization to the relative case where the base $V$ is replaced by any finitely presented algebra $T$ over $V$ (Theorem~\ref{VanishingFiniteCov}); the latter is quite useful in applications (such as those mentioned in Remark~\ref{rmk:MMP}) and was recently extended to all $p$-henselian excellent domains $T$ in \cite[\S 3]{BMPSTWW} by an approximation argument.

\begin{remark}[What was previously known?]
As far as we know, Theorem~\ref{XPlusCMIntro} is new even when $d=1$. The equal characteristic $p$ analog of Theorem~\ref{XPlusCMIntro} was for $L$ ample corresponds to the graded main theorem from \cite{HHCM}, and the general characteristic $p$ case was the subject of  \cite{ddscposchar}. In mixed characteristic, the special case of Theorem~\ref{XPlusCMIntro} (3) when $a=0$ was previously known, at the expense of relaxing finite covers to alterations, by \cite{Bhattpdiv} and formed a key input to the $p$-adic Poincar\'e lemma\footnote{Curiously, while the Poincar\'e lemma of \cite{Beilinsonpadic} was used in {\em loc.\ cit.} to give a simple proof of Fontaine's $C_{dR}$ conjecture in $p$-adic Hodge theory, our proof of Theorem~\ref{XPlusCMIntro} relies on modern advances in $p$-adic Hodge theory.} of  \cite{Beilinsonpadic}.   
\end{remark}

\begin{question}
Theorem~\ref{RPlusCMIntro} (resp., Theorem~\ref{XPlusCMIntro}) imply that local (resp., coherent) cohomology classes can be annihilated by finite covers in some situations. Our method of proof, which relies largely on studying structural features of the absolute integral closure directly and is sketched below, does not directly shed light on how one might construct such finite covers in examples. It would thus be interesting to give such explicit constructions in some specific instances, e.g., in the special case of Theorem~\ref{XPlusCMIntro} (3) when $d=1$. 
\end{question}

\subsection{Some further remarks on the main theorems}

\begin{remark}[A comment on excellence]
The excellence of a noetherian local ring ensures a tight connection between the ring and its completion. Without this hypothesis, pathologies can occur,  e.g., by \cite{Heitmann-completion-UFD}, there exist noetherian local UFDs with non-equidimensional completions\footnote{See \cite{LechBad,OgomaBad,Nagata} for earlier examples as well as \cite[Tag 02JE]{StacksProject} for an exposition of a key example by Nagata.}, so the excellence assumption cannot be  dropped entirely from Theorem~\ref{RPlusCMIntro} (see Example~\ref{ExcNecc}). However, by analogy with the improvement to \cite{HHCM} in \cite{QuyRPlus}, one might wonder if Theorem~\ref{RPlusCMIntro} holds true for any noetherian local domain which is the quotient of a CM ring (this property generalizes excellence by \cite[Corollary 1.2]{KawasakiCM}); see also Remark~\ref{MellowDualizingDrop}.
\end{remark}

\begin{remark}[Historical comment]
As mentioned above, certain consequences of Theorem~\ref{RPlusCMIntro}, such as Lyubeznik's question from \cite{LyubeznikRPlus} (see also \cite{LyubeznikMSRITalk}) discussed in Corollary~\ref{LyubeznikConj}, have attracted some attention in the literature. In fact, as Hochster kindly informed us, the entirety of Theorem~\ref{RPlusCMIntro} has often been discussed by experts  since \cite{HHCM} appeared. However, as best as we could determine, this result has never been explicitly conjectured in print, most likely due to the lack of supporting evidence, e.g., \cite[\S 10]{RobertsHomConjSurvey} raises the question addressed by Theorem~\ref{RPlusCMIntro} only in dimension $3$, and \cite[paragraph 2, page 11]{HunekeSurvey} discusses the general question with a slight suspicion of a negative answer; even the very recent  papers \cite[\S 3.6]{HochsterOpenProblemsLyubeznik} and \cite[Problem 2]{ShimomotoIntegerPerfAndre} only ask about variants of the almost Cohen--Macaulayness (as in Theorem~\ref{ACMIntro} below).
\end{remark}

\begin{remark}[Recent applications of Theorems~\ref{RPlusCMIntro} and \ref{XPlusCMIntro}]
\label{rmk:MMP}
Theorem~\ref{XPlusCMIntro} (as well as a relative variant, see \S \ref{rmk:RelativeGradedBCM}) was used in recently in \cite{BMPSTWW, TakamatsuYoshikawaMMP} to establish the minimal model program in dimension $\leq 3$ in mixed characteristic; this theory was then used in \cite{HaconLamarcheSchwede} to establish global generation results for mixed characteristic analogs of multiplier ideals. 
\end{remark}

\subsection{Some key ingredients}

Let us briefly outline our proof of Theorem~\ref{RPlusCMIntro} and introduce some key players in this paper along the way; a more detailed outline can be found in \S \ref{ss:OutlineProof}. By standard arguments, we reduce to checking Theorem~\ref{RPlusCMIntro} for essentially finitely presented algebras over a $p$-adic DVR. We then attempt to imitate the cohomological approach to the main result of \cite{HHCM} given by Huneke-Lyubeznik \cite{HunekeLyubeznik}. To explain this, fix an excellent noetherian local domain $(R,\mathfrak{m})$ over $\mathbf{F}_p$ admitting a dualizing complex. The CM property for $R^+$ is essentially the assertion that each class in $H^i_{\mathfrak{m}}(R)$ with $i < \dim(R)$ can be annihilated by passing to finite covers of $R$. In \cite{HunekeLyubeznik}, such annihilation of cohomology classes takes place in two steps: Frobenius actions are used to kill certain finite length submodules in $H^{i}_{\mathfrak{m}}(R)$ by finite covers, and an induction on dimension based on Grothendieck duality is used  to reduce to the finite length situation by passage to a finite cover.

In imitating the above argument in  mixed characteristic, the first obstacle is  the lack of a Frobenius on the structure sheaf $\mathcal{O}$. We resolve this problem by replacing $\mathcal{O}$ with the prismatic structure sheaf $\Prism$ from \cite{BMS1,BMS2, BhattScholzePrism} as the latter was designed to carry a Frobenius. However, this maneuver comes at a price: the sheaf $\Prism$ has an extra ``arithmetic parameter'' (necessary to accommodate a Frobenius action).  Consequently, a new obstacle is encountered in the induction step to handle this extra parameter. In fact, jumping this hurdle turns out to be fairly close to proving the target theorem in almost mathematics over $R^+$. While approaching this problem with the current strategy seems hopelessly circular, this is exactly the problem solved recently in \cite{BhattLuriepadicRH1} by entirely different methods. A simplified variant of what we  need is:

\begin{theorem}[\cite{BhattLuriepadicRH1}, see Theorem~\ref{ACMaic}]
\label{ACMIntro}
Let $R$ be an excellent noetherian domain with an absolute integral closure $R^+$. Then the $R/p^nR$-module $R^+/p^nR^+$ is almost Cohen--Macaulay i.e., the $R/p^nR$-module $\sqrt{pR^+}/p^n \sqrt{pR^+}$ is Cohen--Macaulay for all $n \geq 1$. %(Moreover, if $R$ is essentially finitely presented over a $p$-complete and $p$-torsionfree DVR $V$, then $R^+/p^n$ can be written as a filtered colimit of $R_{V^+}/p^n$-complexes which are almost CM and almost coherent.)
\end{theorem}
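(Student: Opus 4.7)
The plan is to reduce to a concrete geometric situation and invoke the $p$-adic Riemann--Hilbert functor of \cite{BhattLuriepadicRH1}, which converts almost mod-$p^n$ cohomology of $\widehat{R^+}$ into \'etale $\mathbf{F}_p$-cohomology on a rigid-analytic space, where Artin-type affine vanishing will finish the job.

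First, by standard reductions (localization at a maximal ideal, $\mathfrak{m}$-adic completion, N\'eron--Popescu desingularization, and the compatibility of almost vanishing with filtered colimits) we may assume that $(R, \mathfrak{m})$ is a complete noetherian local domain of dimension $d$ essentially of finite type over a complete $p$-adic DVR $V$. A d\'evissage in $n$, using the short exact sequences $0 \to R^+/p \to R^+/p^{n+1} \to R^+/p^n \to 0$, reduces us to showing that $H^i_{\mathfrak{m}}(\widehat{R^+}/p)$ is annihilated by every element of $(p^{1/p^\infty})$ for $i < d$. Via de Jong alterations I would next fix a proper flat regular $V$-scheme $X$ with strict normal crossings special fibre such that $\mathrm{Spec}(R)$ is a henselization of $X$ at a closed point $x$; then $\widehat{R^+}$ becomes, up to almost modification, the stalk of the integral perfectoid pro-\'etale sheaf $\widehat{\mathcal{O}}^+_{X_\eta}$ at a geometric point above $x$.

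Next, I would apply the $p$-adic Riemann--Hilbert correspondence of \cite{BhattLuriepadicRH1}: to a constructible \'etale $\mathbf{F}_p$-sheaf $\mathcal{F}$ on $X_\eta$ it assigns a coherent complex $\RH(\mathcal{F})$ on $X_{p=0}$ together with a functorial almost isomorphism $\mathcal{F} \otimes^L \widehat{\mathcal{O}}^+_{X_\eta}/p \simeq \RH(\mathcal{F}) \otimes^L \widehat{\mathcal{O}}^+_{X_\eta}/p$ and, crucially, a perverse-$t$-exactness bound on the amplitude of $\RH$. Taking $\mathcal{F} = j_! \mathbf{F}_p$ for the open immersion $j : U_\eta \hookrightarrow X_\eta$ arising from the generic fibre of the punctured spectrum $U = \mathrm{Spec}(R) \setminus \{\mathfrak{m}\}$, and combining with local duality on the regular ambient scheme $X$, one obtains an almost identification of $R\Gamma_{\mathfrak{m}}(\widehat{R^+}/p)$ with a shift of $R\Gamma(U_\eta, \mathbf{F}_p)$. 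The latter, being the \'etale cohomology of an affinoid-type rigid space of dimension $d-1$, vanishes above degree $d-1$ by Artin vanishing in the rigid-analytic setting (Scholze), and the required almost vanishing will follow.

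The hardest part of this strategy --- and the technical heart of \cite{BhattLuriepadicRH1} --- is setting up the Riemann--Hilbert comparison in an honest almost-integral sense (not merely after inverting $p$) and controlling the $t$-amplitude of $\RH$ sharply enough that the identification lands in the desired cohomological range. Given those inputs, the deduction of Theorem~\ref{ACMIntro} above is comparatively formal; without them, the strategy collapses to rational statements that are insufficient for the integral almost-CM assertion.
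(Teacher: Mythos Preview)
Your proposal invokes the right machinery---the Riemann--Hilbert functor of \cite{BhattLuriepadicRH1}---but the identification you sketch does not line up with how $\RH$ actually operates, and the decisive combinatorial input is missing. Concretely: you want to realise $\widehat{R^+}$ as a stalk of $\widehat{\mathcal{O}}^+_{X_\eta}$ ``at a geometric point above $x$'' and then, by applying $\RH$ to $j_!\mathbf{F}_p$ for the punctured spectrum together with local duality, to obtain an almost identification of $R\Gamma_{\mathfrak{m}}(\widehat{R^+}/p)$ with a shift of $R\Gamma(U_\eta,\mathbf{F}_p)$. But $x$ is a closed point of the \emph{special fibre}, not of $X_\eta$; the functor $\RH_{\overline{\Prism}}$ goes from \'etale sheaves on $X[1/p]$ to quasi-coherent complexes on $X_{p=0}$, and it does not package local cohomology at a point of $X_{p=0}$ as global \'etale cohomology of an affinoid. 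The regular alteration and rigid-analytic Artin vanishing have no clear entry point in this picture, and the claimed identification is not substantiated.

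The paper's route (Theorem~\ref{ACMaic}) is different and avoids any such identification. One works with a finitely presented flat integral $X/\overline{V}$ and the absolute integral closure $\pi:X^+\to X$. By Theorem~\ref{thm:RH}~(1)--(2) and the perfectoidness of $\widehat{X^+}$, one has $\RH_{\overline{\Prism}}(\pi_{\eta,*}\mathbf{F}_p)\simeq \pi_*\mathcal{O}_{X^+}/p$ in $D_{qc}(X_{p=0})^a$. The key new observation is Proposition~\ref{EtaleCohAIC}: an integral normal scheme with algebraically closed function field has trivial higher \'etale $\mathbf{F}_p$-cohomology, so the constant sheaf on $X^+$ is $*$-extended from any nonempty open. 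This forces the ind-object $\{f_{Y,\eta,*}\mathbf{F}_p[n]\}$ (over finite normal covers $Y\to X$) to be ind-perverse, using only the \emph{algebraic} Artin vanishing of \cite[Corollary~4.1.3]{BBDG} for quasi-finite affine pushforwards. The conclusion then falls out of Theorem~\ref{thm:RH}~(6), which says $\RH_{\overline{\Prism}}$ carries ${}^pD^{\geq 0}$ into objects satisfying the almost local-cohomology bound. No regular model, no local duality, and no rigid-analytic cohomological dimension estimate is needed; the missing idea in your outline is precisely the \'etale acyclicity of $X^+$ and the resulting ind-perversity.
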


\begin{remark}[Almost vs honest Cohen--Macaulayness]
Theorem~\ref{ACMIntro} generalizes \cite{HeitmannDSC3} to arbitrary dimension. Despite ``merely'' being a statement in almost mathematics, it suffices for some applications, e.g., it yields a new proof of Hochster's direct summand conjecture, which was recently established  by Andr\'e \cite{AndreDSC,AndrePerfectoidAbhyankar}; unlike Andr\'e's or other previously known proofs such as   \cite{BhattDSC,HeitmannMaTor},  this proof does not rely on  the  Riemann extension theorem (or closely related techniques). For other applications, however, the gap between Theorem~\ref{ACMIntro} and Theorem~\ref{RPlusCMIntro} is substantial, e.g., one cannot prove splinters are CM this way, and the almost variant of Theorem~\ref{XPlusCMIntro} is not as useful. More conceptually, from the perspective of the Riemann-Hilbert functor \cite{BhattLuriepadicRH1} (see also \S \ref{sec:RHreview}), the almost vanishing result in Theorem~\ref{ACMIntro} is essentially a reflection in algebra of a statement concerning the topology of the generic fibre $\mathrm{Spec}(R[1/p])$, while Theorem~\ref{RPlusCMIntro} wrestles with the entirety of $\mathrm{Spec}(R)$.
\end{remark}

In implementing the strategy sketched above for proving Theorem~\ref{RPlusCMIntro}, we also need to ensure that the intervening prismatic cohomology groups are related to their perfections in a controlled fashion. Such control is provided by the isogeny theorem for prismatic cohomology. As the isogeny theorem only holds true for (log) smooth schemes, we work with semistable schemes and log prismatic cohomology\footnote{The name ``logarithmic prismatic cohomology'' was not used in \cite{CesnavicusKoshikawa}: the latter gave a logarithmic variant of \cite{BMS1}, predating the prismatic theory as eventually developed in \cite{BhattScholzePrism}. It is quite likely that our arguments can also be run using the log prismatic theory of the very recent preprint \cite{KoshikawaLogPrisms}.} as developed by \v{C}esnavi\v{c}ius-Koshikawa \cite{CesnavicusKoshikawa} to implement the above outline. Alteration theorems of de Jong \cite{deJongAlterations} ensure an abundant supply of such schemes. At the end, to ensure that we are still computing the object of interest (namely, the local cohomology of $R^+$ rather than that of an alteration), we need the following result from \cite{BhattLuriepadicRH1}, ensuring that higher $\mathcal{O}$-cohomology of proper maps can be annihilated $p$-adically by finite covers:

\begin{theorem}[\cite{BhattLuriepadicRH1}, see Theorem~\ref{KillCohMixed}]
\label{CohCohKill}
Let $f:X \to \mathrm{Spec}(R)$ be a proper morphism of $p$-torsionfree noetherian schemes. Then there exists a finite cover $\pi:Y \to X$ such that $\pi^*:R\Gamma(X, \mathcal{O}_X/p) \to R\Gamma(Y, \mathcal{O}_Y/p)$ factors over $H^0(Y, \mathcal{O}_Y)/p \hookrightarrow H^0(Y, \mathcal{O}_Y/p) \to R\Gamma(Y,\mathcal{O}_Y/p)$ compatibly with the maps from $R/p$.
\end{theorem}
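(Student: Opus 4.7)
The plan is to pass to the absolute integral closure of $X$ in the function-field direction and then descend via finite generation of coherent cohomology. After reducing to $X$ integral (the claim decomposes over irreducible components, with finite covers recombined by fiber product), let $X^+$ denote the normalization of $X$ in an algebraic closure of its function field. Writing $X^+ = \lim_\alpha X_\alpha$ over the cofiltered system of finite surjective maps $\pi_\alpha : X_\alpha \to X$, affineness of each $\pi_\alpha$ together with quasicompactness of $X$ yields
$$R\Gamma(X^+, \mathcal{O}_{X^+}/p) = \colim_\alpha R\Gamma(X_\alpha, \mathcal{O}_{X_\alpha}/p).$$

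The desired factorization of $R\Gamma(X, \mathcal{O}_X/p) \to R\Gamma(Y, \mathcal{O}_Y/p)$ through $H^0(Y, \mathcal{O}_Y)/p$ amounts to finitely many concrete conditions. Using the $p$-torsionfreeness of $Y$ (inherited from $X$ and from finite extensions of the integral structure) and the exact sequence $0 \to \mathcal{O}_Y \xrightarrow{p} \mathcal{O}_Y \to \mathcal{O}_Y/p \to 0$ one extracts the short exact sequence $0 \to H^0(Y, \mathcal{O}_Y)/p \to H^0(Y, \mathcal{O}_Y/p) \to H^1(Y, \mathcal{O}_Y)[p] \to 0$. The factorization is then equivalent to: (a) the map $H^{>0}(X, \mathcal{O}_X/p) \to H^{>0}(Y, \mathcal{O}_Y/p)$ vanishes, and (b) the composite of the pullback on $H^0$ with the projection to $H^1(Y, \mathcal{O}_Y)[p]$ vanishes; compatibility with the maps from $R/p$ is then automatic. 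Since $X / \mathrm{Spec}(R)$ is proper, both $H^i(X, \mathcal{O}_X/p)$ and $H^1(X, \mathcal{O}_X)[p]$ are finitely generated $R$-modules, so it suffices to annihilate individual classes and combine the resulting finite covers by fiber product.

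The heart of the argument is then to show each such class already dies in the colimit. Almost vanishing is furnished by Theorem~\ref{ACMIntro}: the almost Cohen-Macaulayness of $S^+/p^n$ for affine opens $\mathrm{Spec}(S) \subset X$, plugged into a \v{C}ech cover of $X$ by such affines, shows that $R\Gamma(X^+, \mathcal{O}_{X^+}/p)$ is almost concentrated in degree zero and matches $H^0(X^+, \mathcal{O}_{X^+})/p$ up to almost-zero error. The upgrade from ``almost'' to ``honest'' — the crucial point that a single finite cover actually works — is where the $p$-adic Riemann-Hilbert functor of \cite{BhattLuriepadicRH1} enters: it controls $R\Gamma(X^+, \mathcal{O}_{X^+}/p)$ via constructible étale $\mathbf{F}_p$-cohomology on the generic fiber, a context in which constructibility and the compatibility of proper direct images with finite covers let one extract an explicit $X_\alpha$ on which each given class vanishes honestly, not merely almost.

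The main obstacle is precisely this almost-to-honest upgrade. An almost-zero class in $R\Gamma(X^+, \mathcal{O}_{X^+}/p)$ is, a priori, only trivialized in the limit by multiplication with every element of $\sqrt{pR^+}$ and might require infinite covers to become honestly zero. The Riemann-Hilbert framework is exactly what transforms this almost statement into an honest one, by producing a constructible (and hence in an appropriate sense finite) model of the situation on the generic fiber; it is this finiteness that lets the directed-colimit argument extract a single $X_\alpha$.
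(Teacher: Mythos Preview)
Your overall strategy---pass to the absolute integral closure $X^+$, show the relevant classes die there, and descend by a limit argument using coherence---matches the paper's. The gap is in how you establish vanishing at $X^+$. You invoke the almost Cohen--Macaulayness of $S^+/p$ (Theorem~\ref{ACMIntro}) for affine opens $\mathrm{Spec}(S) \subset X$ and claim that a \v{C}ech argument then shows $R\Gamma(X^+, \mathcal{O}_{X^+}/p)$ is almost concentrated in degree $0$. This step does not work: almost Cohen--Macaulayness is a statement about \emph{local} cohomology $H^i_{\mathfrak{p}}(S^+/p)$ at points of $\mathrm{Spec}(S)$ and says nothing about global sheaf cohomology of $\mathcal{O}_{X^+}/p$ on the proper scheme $X^+$. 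The \v{C}ech complex for an affine cover of $X^+$ is just a finite complex of rings, and Cohen--Macaulayness of its terms gives no control on its higher cohomology.

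The paper's route is both simpler and avoids almost mathematics entirely. After reductions, one takes the Stein factorization $h\colon X^+ \to S^+$ of $X^+ \to \mathrm{Spec}(R)$ (so $S^+$ is an absolute integral closure of $\mathrm{Spec}(R)$) and shows $\mathcal{O}_{S^+}/p \simeq Rh_* \mathcal{O}_{X^+}/p$ honestly. The key input is the elementary \'etale fact (Proposition~\ref{EtaleCohAIC}) that $\mathbf{F}_p \simeq Rh_* \mathbf{F}_p$ for a dominant map between integral normal schemes with algebraically closed function fields. Since both $X^+$ and $S^+$ are perfectoid after $p$-completion, Theorem~\ref{thm:RH}~(1) and~(2) convert this \'etale isomorphism into the desired coherent one on the nose---no almost-to-honest upgrade is needed. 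So the Riemann--Hilbert functor enters not to repair an almost statement, but to transport an honest \'etale vanishing to an honest coherent one.
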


\begin{remark}
Theorem~\ref{CohCohKill} extends some results in \cite{Bhattpdiv,Beilinsonpadic} by improving alterations to finite covers, and allowing torsion classes as well as higher dimensional bases. Notably, it implies the equivalence of splinters and derived splinters for noetherian rings  in mixed characteristic, extending \cite{ddscposchar} to this setting.% At its core, this proof relies on the Faltings-Scholze ``primitive comparison theorem'' in $p$-adic Hodge theory, as well as its characteristic $p$ (which is simply Artin-Schreier theory). 
\end{remark}

The proofs of Theorem~\ref{ACMIntro} and Theorem~\ref{CohCohKill} in \cite{BhattLuriepadicRH1} rely on the Riemann-Hilbert functor, whose construction was the primary objective of \cite{BhattLuriepadicRH1}.  This functor attaches coherent objects in mixed characteristic to \'etale $\mathbf{F}_p$-sheaves on algebraic varieties in characteristic $0$. This construction respects various geometric operations, can be regarded as an integral and relative variant of Fontaine's $D_{HT}(-)$ functor from $p$-adic Hodge theory (see \cite[\S 1.5]{Fontainepadicrepr}), and agrees with the perfectoidization functor from \cite{BhattScholzePrism} for constant sheaves.	Granting basic properties of this functor, both Theorem~\ref{ACMIntro} and Theorem~\ref{CohCohKill}  reduce to relatively easy statements about the behaviour of \'etale $\mathbf{F}_p$-sheaves (especially the perverse $t$-structure) under finite covers; after recounting the basic properties of the Riemann-Hilbert functor, we sketch these proofs  in \S \ref{sec:RHreview}. 

The proof of Theorem~\ref{XPlusCMIntro}  broadly follows the same outline as that of Theorem~\ref{RPlusCMIntro} once one has an almost Cohen--Macaulayness result replacing Theorem~\ref{ACMIntro}; the key to proving this analog is to relate the graded absolute integral closure of the section ring $\oplus_{n \geq 0} H^0(X,L^n)$ to a perfectoid formal scheme over $X^+$ obtained by taking an inverse limit of the total spaces of a compatible system $\{L^{-1/n}\}_{n \geq 1}$ of roots of $L^{-1}$ on $X^+$.

\subsection{Notation and conventions}
\label{GlobalNotation}

Let us spell out some conventions that we adopt in this paper.

{\em Regular sequences:} Given a commutative ring $R$, a sequence $\underline{x}$ of elements $x_1,...,x_r \in R$ and an $R$-module $M$, we say that $\underline{x}$ is regular on $M$ if $x_1$ is a nonzerodivisor on $M$ and $x_k$ is a nonzerodivisor on $M/(x_1,...,x_{k-1})M$ for $2 \leq k \leq r$. In particular, we do not demand that $M/(x_1,...,x_r)M = 0$. 

{\em Cohen--Macaulay modules:} A (not necessarily finitely generated) module $M$ over a noetherian local ring $(S,\mathfrak{m})$ is called Cohen--Macaulay if $M/\mathfrak{m}M \neq 0$ and every system of parameters on $S$ is a regular sequence on $M$. A module $M$ over a general noetherian ring $S$ is called Cohen--Macaulay if it is Cohen--Macaulay after localization at every prime of $S$ in the previous sense.% In our applications (e.g., to $R^+$), the condition $M/\mathfrak{m}M \neq 0$ will be trivially verified.

{\em Vanishing loci and quotients:} For a scheme $X$ and an element $f \in H^0(X, \mathcal{O}_X)$, we write $X_{f=0} \subset X$ for the closed subscheme defined by $f=0$; we will only use this when $f$ is a nonzerodivisor. Given $K \in D_{qc}(X)$, we shall write $K/f$ for the cone of multiplication by $f$ on $K$. When $K$ is an honest quasi-coherent sheaf, we shall write $K/fK$ for the usual quotient in quasi-coherent sheaves, so $K/fK = H^0(K/f)$.

{\em Derived categories and completions:} Given a commutative ring $R$, we shall use the terms ``connective'', ``coconnective'' and ``discrete'' to describe objects in $D(R)$ that lie in the full subcategories $D^{\leq 0}(R)$, $D^{\geq 0}(R)$, and $D^{\leq 0}(R) \cap D^{\geq 0}(R) = \mathrm{Mod}_R$ respectively. Given a finitely generated ideal $I \subset R$, we shall use the notions of derived $I$-completions and $I$-complete flatness as in \cite{BhattScholzePrism}. In particular, all completions are interpreted in the derived sense unless explicitly otherwise specified.

{\em Complete objects over a perfectoid ring:} 
We shall use the basic theory of perfectoid rings as in \cite[\S 3]{BMS1} as well as their connection with perfect prisms as in \cite[Theorem 3.10]{BhattScholzePrism} (see also \cite[\S 3]{CesnavicusScholzePurity} and \cite[Lecture IV]{BhattEilenberg}). Fix a perfectoid ring $R$. For objects in $D(A_{\inf}(R))$, unless otherwise specified, completeness is always interpreted to be derived $(\ker(\theta),p)$-completeness. Thus, given an $A_{\inf}(R)$-algebra $B$, the notation $D_{comp}(B) \subset D(B)$ describes to the full subcategory of all derived $(\ker(\theta),p)$-complete objects.   In particular, $D_{comp}(R) \subset D(R)$ is the full subcategory of derived $p$-complete objects.

{\em Almost mathematics over a perfectoid ring and its prism:}  Say $R$ is a perfectoid ring. All occurrences of almost mathematics over $R$ are always in the ``standard'' context that we now recall. The ideal $I := \sqrt{pR} \subset R$ satisfies $I \otimes_R^L I \simeq I \widehat{\otimes}_R^L I \simeq I$. Consequently, for any $R$-algebra $S$, restriction of scalars gives a fully faithful embedding $D(S \otimes_R^L R/I) \to D(S)$, and the quotient $D(S) \to D(S)^a := D(S)/D(S \otimes_R^L R/I)$ is called the {\em almost derived category} of $S$; for $M \in D(S)$, write $M^a \in D(S)^a$ for its image.  The localization functor $D(S) \to D(S)^a$ has both left and right adjoints given by $M^a \mapsto (M^a)_! := I \otimes_R^L M$ and $M^a \mapsto (M^a)_* := \mathrm{RHom}_R(I,M)$ respectively.   One employs similar notation for the full subcategory $D_{comp}(S) \subset D(S)$ of derived $p$-complete objects.  Finally, we define $D_{comp}(A_{\inf}(R))^a := D_{comp}(A_{\inf}(R)) / D_{comp}(A_{\inf}(R/I))$; this makes sense as $D_{comp}(A_{\inf}(R/I)) \to D_{comp}(A_{\inf}(R))$ is again fully faithful, and the localization functor $D_{comp}(A_{\inf}(R)) \to D_{comp}(A_{\inf}(R))^a$ again has both left and right adjoints.  %These conventions generalize to schemes over $A_{\inf}(R)$. 

{\em Commutative algebra over a finitely presented algebra over a valuation ring:} 
To avoid subtle  complications stemming from the arithmetic of $p$-adic fields, we often find it convenient to work over an algebraic closure of a $p$-adic field. This inevitably leads to non-noetherian schemes of the following sort: $V$ is an excellent $p$-henselian and $p$-torsionfree DVR with absolute integral closure $\overline{V}$ and $X/\overline{V}$ is a finite type scheme.  Let us recall some standard techniques for working with such $X$'s. First, if $X$ is flat over $\overline{V}$ (i.e., $\mathcal{O}_X$ is torsionfree over $V$), then $X$ is automatically finitely presented by Nagata's theorem (\cite[Tag 053E]{StacksProject}). Assume for the rest of this paragraph that we are in this situation. Then $X$ descends to a finitely presented $W$-scheme $Y$ for some finite extension $V \subset W$ of DVRs contained in $\overline{V}$. In particular, we have $X \simeq \lim_{W'} Y_{W'}$, where the limit runs over all finite extensions $W'/W$ of DVRs contained in $\overline{V}$. Standard limit arguments (e.g., as in \cite[Tag 01YT]{StacksProject}) help understand the non-noetherian scheme $X$ via the noetherian schemes $Y_{W'}$. For example, any such $X$ is a coherent scheme since the transition maps in the tower $\{Y_{W'}\}$ are flat, and similarly for $X_{p=0} = \lim_{W'} Y_{p=0,W'/p}$. We shall use the relative dualizing complex $\omega^\bullet_{X/\overline{V}}$ for $X/\overline{V}$ defined in \cite[Tag 0E2S]{StacksProject}; note that this arises via base change from the usual relative dualizing complex for $Y/W$ by \cite[Tag 0E2Y]{StacksProject}, and agrees with $f^! \mathcal{O}_{\mathrm{Spec}(\overline{V})}$ if the structure map $f:X \to \mathrm{Spec}(\overline{V})$ is proper by \cite[Tag 0E2Z]{StacksProject}.

{\em Ind-objects and pro-objects:} The language of ind-objects and pro-objects in a category helps efficiently package many of our arguments (e.g., those involving the collection of all finite or proper covers of a ring or a scheme).  When applied to diagrams in the derived category, these are always interpreted in the $\infty$-categorical sense (to avoid mistakes, e.g., we often use that taking cohomology commutes with filtered colimits).

{\em \'Etale sheaves:} We shall use the classical theory of \'etale $\mathbf{Z}/p^n$-sheaves on a scheme $X$ as well as the constructible derived category $D^b_{cons}(X, \mathbf{Z}/p^n)$ of such sheaves as well as full unbounded derived category $D(X, \mathbf{Z}/p^n)$ of all \'etale $\mathbf{Z}/p^n$-sheaves. To avoid excess notation, given a scheme $X$, an object $F \in D(X, \mathbf{Z}/p^n)$, and a (pro-)open subscheme $U \subset X$, we say that $F$ is $*$-extended from $U$ if $F \simeq Rj_* (F|_U)$ via the natural map, where $j:U \to X$ is the inclusion; similarly for the property of being $!$-extended from $U$.  Finally, in the context of varieties over an algebraically closed field of characteristic $0$, we shall use the perverse $t$-structure on $D^b_{cons}(X,\mathbf{Z}/p^n)$, in the sense of  middle perversity \cite[\S 4]{BBDG}.

\subsection{Acknowledgements} 
%The work described is a natural offshoot of my joint work \cite{BhattLuriepadicRH1} with Lurie. Moreover, the outline of the argument proving Theorem~\ref{RPlusCMIntro} emerged in a series of conversations with Scholze that intermittently spanned the last decade. Many more to be added.
It is hopefully obvious that the methods and results of this paper rely crucially on my joint work \cite{BhattLuriepadicRH1} with Jacob Lurie. The argument outlined above for proving Theorem~\ref{RPlusCMIntro} emerged in various discussions with Peter Scholze over a long period of time; indeed, the prospect of approaching Theorem~\ref{RPlusCMIntro} by tilting (as in Scholze's thesis \cite{ScholzePerfectoidSpaces}) to \cite{HHCM} was one of the first questions we ever discussed, back in 2011.  Both Lurie and Scholze have declined to be co-authors on this article. I am extremely grateful to both of them for discussions, encouragement, and their friendship over the years. 

I am deeply indebted to  Johan de Jong and Mel Hochster: besides countless conversations and their encouragement over years, the question addressed here first came up in my thesis work supervised by de Jong and concern areas of mathematics where Hochster (with collaborators, notably Huneke) has had a transformative impact.  I would like thank my colleagues on the ``BuBbLeSZ" team (Manuel Blickle, Gennady Lyubeznik, Anurag Singh, Wenliang Zhang) for our multiple collaborations and  conversations. In particular, the goal of one of these projects, \cite{BBLSZRHC}, is to give  soft proofs of various results in equal characteristic commutative algebra using the classical Riemann-Hilbert correspondence (over $\mathbf{C}$ and $\mathbf{F}_p$ respectively); working on \cite{BBLSZRHC} provided a very useful guide for contemplating the shape of a mixed characteristic analog. I would also like to thank Bogdan Zavyalov: his proof of Poincar\'e duality for rigid spaces (inspired by some ideas of Gabber), which he kindly explained during his stay at Michigan in Fall 2019, was an important source of inspiration for the duality compatibility in \cite{BhattLuriepadicRH1} that plays an essential role in this paper. 

Finally, many thanks are due to Dan Abramovich, K\k{e}stutis \v{C}esnavi\v{c}ius, Rankeya Datta, Ofer Gabber, Ray Heitmann, Wei Ho, Craig Huneke, Linquan Ma, Zsolt Patakfalvi, Karl Schwede, Kazuma Shimomoto, Paul Roberts, Kevin Tucker, Joe Waldron, Jakub Witaszek  and Wenliang Zhang for a number of helpful exchanges during the preparation of this paper, and to Yves Andr\'e, Sasha Beilinson, David Hansen,  Luc Illusie, Srikanth Iyengar and Akhil Mathew for inspiring discussions related to the mathematics in this paper. I was supported by NSF DMS (\#1801689, \#1952399, \#1840234), a Packard fellowship, and the Simons Foundation (\#622511).

\newpage
\section{(Ind-)Cohen--Macaulay complexes}
\label{sec:CM}

The purpose of this section is twofold. First, we formulate a notion of Cohen--Macaulayness for an object of the derived category in terms of the behaviour of local cohomology (Definition~\ref{CMDerived}) and relate this to classical Cohen--Macaulayness (in the case of modules) as defined in terms of regular sequences (Lemma~\ref{CMLocCohCrit}). Secondly, we formulate a notion of Cohen--Macaulayness for certain ind-objects (Definition~\ref{IndCMDef}) and prove a crucial lemma, essentially borrowed from \cite{HunekeLyubeznik}, stating roughly that having this property in dimension $< n$ implies a finiteness statement in dimension $n$ (Lemmas~\ref{IndCMPunctured} and \ref{IndCMPuncturedNN}).

\subsection{CM modules via local cohomology}

The condition of being Cohen--Macaulay for a module over a noetherian local ring involves a condition on systems of parameters. For our applications, it will be convenient to use a local cohomological variant of this condition instead, as that passes easily to derived category; the following definition captures what we need, and additionally makes sense in non-noetherian contexts as well.

\begin{definition}[CM complexes]
\label{CMDerived}
Given a finite dimensional scheme $X$, an object $M \in D_{qc}(X)$ is called {\em cohomologically CM} if $R\Gamma_x(M_x) \in D^{\geq \dim(\mathcal{O}_{X,x})}$ for all $x \in X$. 
\end{definition}

Even though the above definition is formulated for all finite dimensional schemes, we shall only use it when $X$ is topologically noetherian. In the rest of this section, we make some elementary remarks on this notion and show why it captures the condition on parameters in good cases (Corollary~\ref{CatCohCM}).

\begin{remark}
Definition~\ref{CMDerived} is obviously not equivalent to the usual definition of Cohen--Macaulayness when specialized to modules, e.g., the $0$ module is always cohomologically CM but is not Cohen--Macaulay (as it violates the support property). Hence, it might be better to dub the notion in Definition~\ref{CMDerived} as ``weakly cohomologically CM" but we do not do so. We also remark that a closely related notion of Cohen--Macaulay complexes was introduced by Roberts in \cite[Definition 2]{RobertsCMGR}.
\end{remark}

\begin{remark}
\label{BiequiCM}
Say $X$ is a biequidimensional noetherian scheme of dimension $n$, i.e., $X$ has finite Krull dimension and any maximal chain of specializations in $X$ has length $n=\dim(X)$. Then $\dim(\mathcal{O}_{X,x}) + \dim(\overline{\{x\}}) = n$ for all $x \in X$ as the left side is the length of some maximal chain of specializations in $X$. Definition~\ref{CMDerived} can thus be also reformulated as follows for such $X$: an object $M \in D_{qc}(X)$ is cohomologically CM exactly when $R\Gamma_x(K_x) \in D^{\geq n-\dim(\overline{\{x\}})}$ for all $x \in X$.
\end{remark}

\begin{remark}
\label{CCMConn}
Say $R$ is a ring with $\mathrm{Spec}(R)$ topologically noetherian. Fix $M \in D^{\leq 0}(R)$. We claim that $M$ is cohomologically CM exactly when $R\Gamma_x(M_x)$ is concentrated in degree $\dim(R_x)$ for all $x \in \mathrm{Spec}(R)$. To see this, it is enough to show that $R\Gamma_x(M_x) \in D^{\leq \dim(R_x)}$ for all $M \in D^{\leq 0}(R)$ and all $x \in \mathrm{Spec}(R)$. As $R$ is topologically noetherian, each $\{x\} \subset \mathrm{Spec}(R_x)$ is constructible for $x \in \mathrm{Spec}(R)$. This implies $R\Gamma_x(M_x)  = M \otimes_R R\Gamma_x(R_x)$ for all $x \in \mathrm{Spec}(R)$, and further (by Schreiderer's theorem \cite[Tag 0A3G]{StacksProject}) that $R\Gamma_x(R_x) \in D^{\leq \dim(R_x)}$ for all $x \in \mathrm{Spec}(R)$, whence the same holds true for $R\Gamma_x(M_x)$. 
\end{remark}

\begin{example}[Cohomologically CM objects via duality]
\label{CMDualConn}
Let $R$ be a biequidimensional noetherian ring admitting a normalized dualizing $\omega^\bullet_R$ (Definition~\ref{DefNormDual}) with associated duality functor $\mathbf{D} := \mathrm{RHom}_R(-,\omega^\bullet_R)$ on $D^b_{coh}(R)$. For any $M \in D^b_{coh}(R)$, the dual $\mathbf{D}(M)$ is cohomologically CM exactly when $M \in D^{\leq -\dim(R)}$: this follows from the compatibility of $\mathbf{D}$ with Matlis duality under local cohomology, see proof of Lemma~\ref{IndCMPunctured}. In other words, cohomologically CM objects in $D^b_{coh}(R)$ are exactly the coconnective part of a shift of the Grothendieck dual of the standard $t$-structure on $D^b_{coh}(R)$; we refer to \cite{Kashiwaratstructures,Gabbertstructures,ArinkinBezrukavnikovPervCoh} for a much more thorough study of this $t$-structure. 
\end{example}

Our goal is to prove that cohomologically CM modules over catenary noetherian domains satisfy the condition on regular sequences in the usual definition of Cohen--Macaulayness. For this, we need the following elementary (and presumably standard) observation:

\begin{lemma}[Heights of parameter ideals in catenary equidimensional rings]
\label{CatHeight}
Let $R$ be a noetherian local ring which is catenary and equidimensional. If  $x_1,...,x_d$ is a system of parameters in $R$, then $\mathrm{ht}(x_1,...,x_i) = i$.
\end{lemma}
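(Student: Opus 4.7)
The plan is to sandwich $\mathrm{ht}(x_1,\dots,x_i)$ between $i$ and $i$ using Krull's height theorem on one side and the catenary/equidimensional hypothesis on the other.

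First, I would recall Krull's height theorem: any ideal generated by $i$ elements in a noetherian ring has height at most $i$. Applied to $I_i := (x_1,\dots,x_i)$ this immediately gives the upper bound $\mathrm{ht}(I_i) \leq i$, with no assumption on $R$ needed.

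For the matching lower bound, I would pick an arbitrary minimal prime $\mathfrak{q}$ of $I_i$ and aim to show $\mathrm{ht}(\mathfrak{q}) \geq i$. The key geometric observation is that in $R/\mathfrak{q}$ the images of the remaining parameters $x_{i+1},\dots,x_d$ cut out a zero-dimensional quotient, because $R/(x_1,\dots,x_d)$ is already zero-dimensional and $R/\mathfrak{q}/(x_{i+1},\dots,x_d)$ is a further quotient. Krull's height theorem applied inside $R/\mathfrak{q}$ then gives
\[
\dim(R/\mathfrak{q}) \leq d - i.
\]
At this point I would invoke the standard consequence of the catenary plus equidimensional hypothesis: for every prime $\mathfrak{p}$ of $R$ one has $\mathrm{ht}(\mathfrak{p}) + \dim(R/\mathfrak{p}) = \dim(R) = d$. (Chase a saturated chain from a minimal prime through $\mathfrak{p}$ to $\mathfrak{m}$; equidimensionality fixes the total length as $d$, catenary-ness fixes the length of any saturated piece.) Applied to $\mathfrak{q}$ and combined with the previous inequality, this yields $\mathrm{ht}(\mathfrak{q}) \geq i$.

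Finally I would take the minimum over minimal primes to conclude $\mathrm{ht}(I_i) \geq i$, which combined with Krull's bound gives equality. The main (minor) obstacle is the dimension identity $\mathrm{ht}(\mathfrak{p}) + \dim(R/\mathfrak{p}) = \dim(R)$, but this is a textbook consequence of the two hypotheses; everything else is a direct application of Krull's height theorem. Nothing deep is needed beyond the standard dimension theory of noetherian rings.
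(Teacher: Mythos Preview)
Your proof is correct and follows essentially the same route as the paper: Krull's height theorem for the upper bound, the dimension identity $\mathrm{ht}(\mathfrak{p}) + \dim(R/\mathfrak{p}) = d$ from the catenary and equidimensional hypotheses, and the bound $\dim(R/\mathfrak{q}) \leq d-i$ coming from the fact that $x_{i+1},\dots,x_d$ cut down to dimension zero in $R/\mathfrak{q}$. The paper phrases the last step as $\dim(R/\mathfrak{p}) \leq \dim(R/(x_1,\dots,x_i)) = d-i$, but this is the same argument.
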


We shall often apply this lemma when $R = S/f$, with $S$ a catenary noetherian local domain and $f \in S$ a nonzerodivisor. Note that such an $R$ is indeed catenary (by lifting saturated chains in $R$ to $S$) and equidimensional (by lifting saturated chains again, and using that any minimal prime of $R$ containing $f$ has height $1$ by Krull's theorem).

\begin{proof}
First, we observe that for any prime $\mathfrak{p}$ in such an $R$, we must have 
\begin{equation}
\label{CatDimIneq}
\mathrm{ht}(\mathfrak{p}) + \dim(R/\mathfrak{p}) = d.
\end{equation}
Indeed, splicing together a saturated chain of length $\mathrm{ht}(\mathfrak{p})$ connecting a minimal prime to $\mathfrak{p}$ with a saturated chain of length $\dim(R/\mathfrak{p})$ connecting $\mathfrak{p}$ to $\mathfrak{m}$ gives a saturated chain that connects a minimal prime of $R$ to $\mathfrak{m}$ and has length $\mathrm{ht}(\mathfrak{p}) + \dim(R/\mathfrak{p})$. As $R$ is catenary and equidimensional, all such chains have length $d$, which gives the formula. Now $\mathrm{ht}(x_1,....,x_i) = \mathrm{ht}(\mathfrak{p})$ for some minimal prime $\mathfrak{p}$ containing $x_1,....,x_i$. By Krull's theorem, we have $\mathrm{ht}(\mathfrak{p}) \leq i$. To show equality, using \eqref{CatDimIneq}, it is enough to show $\dim(R/\mathfrak{p}) \leq d-i$. But $\dim(R/\mathfrak{p}) \leq \dim(R/(x_1,...,x_i)) = d-i$, where the last equality follows as  $x_1,...,x_d$ is an SOP.
\end{proof}

The promised relation between cohomologically CM modules and regular sequences rests on the following:

\begin{lemma}[Regularity of sequences via local cohomology]
\label{CMLocCohCrit}
Let $R$ be a noetherian local ring. Let $x_1,...,x_d$ be a system of parameters in $R$ with $\mathrm{ht}(x_1,...,x_i) = i$.   Let $M$ be an $R$-module which is cohomologically CM as an object of $D(R)$\footnote{As $M$ is an honest $R$-module, for any prime $\mathfrak{p}$ of $R$, we have always have $R\Gamma_{\mathfrak{p}R_{\mathfrak{p}}}(M_{\mathfrak{p}}) = M_{\mathfrak{p}} \otimes_{R_{\mathfrak{p}}}^L R\Gamma_{\mathfrak{p}R_{\mathfrak{p}}}(R_{\mathfrak{p}}) \in D^{\leq \dim(R_{\mathfrak{p}})}$ (as the same holds true when $M=R$), so our assumption is equivalent to the a priori stronger constraint that $R\Gamma_{\mathfrak{p}R_{\mathfrak{p}}}(M_{\mathfrak{p}})$ is concentrated in degree $\dim(R_{\mathfrak{p}})$. The formulation in the lemma is nevertheless more convenient for later use.}. Then $x_1,...,x_d$ is a regular sequence on $M$. 
\end{lemma}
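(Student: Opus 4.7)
The plan is induction on $i$: assume $x_1,\ldots,x_{i-1}$ is already regular on $M$ and show $x_i$ is a nonzerodivisor on $M_{i-1}:=M/(x_1,\ldots,x_{i-1})M$, with the base case $i=1$ being a degenerate instance of the same argument (take $M_0=M$). If $x_i$ were a zerodivisor on $M_{i-1}$, there would exist an associated prime $\mathfrak{p}\in\mathrm{Ass}(M_{i-1})$ containing $x_i$. Any such $\mathfrak{p}$ contains the annihilator of $M_{i-1}$, hence contains $(x_1,\ldots,x_{i-1})$, so in total $\mathfrak{p}\supset(x_1,\ldots,x_i)$ and therefore $\dim R_\mathfrak{p}=\mathrm{ht}(\mathfrak{p})\geq\mathrm{ht}(x_1,\ldots,x_i)=i$ by the height hypothesis on the sequence.

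Next, I would translate the cohomologically CM hypothesis on $M$ into information about $M_{i-1}$ using Koszul complexes. The inductive regularity of $x_1,\ldots,x_{i-1}$ on $M$ gives an identification $M_{i-1}\simeq M\otimes_R^L K(x_1,\ldots,x_{i-1})$ in $D(R)$; since $R\Gamma_{\mathfrak{p}R_\mathfrak{p}}(-)$ is $R_\mathfrak{p}$-linear and therefore commutes with derived tensoring by the perfect Koszul complex, localizing at $\mathfrak{p}$ yields
$$R\Gamma_{\mathfrak{p}R_\mathfrak{p}}\bigl(M_{i-1,\mathfrak{p}}\bigr)\;\simeq\;R\Gamma_{\mathfrak{p}R_\mathfrak{p}}(M_\mathfrak{p})\otimes_{R_\mathfrak{p}}^L K_\mathfrak{p}(x_1,\ldots,x_{i-1}).$$
The Koszul complex on $i-1$ elements is supported in (cohomological) degrees $[-(i-1),0]$, and by the cohomologically CM hypothesis the first tensor factor lies in $D^{\geq\dim R_\mathfrak{p}}$, so the whole complex lies in $D^{\geq\dim R_\mathfrak{p}-(i-1)}$.

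On the other hand, $\mathfrak{p}\in\mathrm{Ass}(M_{i-1})$ means $H^0_{\mathfrak{p}R_\mathfrak{p}}(M_{i-1,\mathfrak{p}})\neq 0$, so the displayed complex has nonzero $H^0$. This forces $\dim R_\mathfrak{p}-(i-1)\leq 0$, i.e.\ $\dim R_\mathfrak{p}\leq i-1$, contradicting the inequality $\dim R_\mathfrak{p}\geq i$ established in the first paragraph. Hence $x_i$ is a nonzerodivisor on $M_{i-1}$, completing the inductive step.

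I do not anticipate a serious obstacle: the proof is a mechanical combination of (a) the identification of associated primes with nonvanishing of $H^0$ of local cohomology, (b) the $R_\mathfrak{p}$-linearity of local cohomology applied to the perfect Koszul complex, and (c) the height hypothesis $\mathrm{ht}(x_1,\ldots,x_i)=i$, which (via Lemma~\ref{CatHeight} in the typical application) is precisely what forces the two numerical bounds on $\dim R_\mathfrak{p}$ to conflict. The only mildly subtle point is recognizing that the cohomologically CM hypothesis is used on $M$ itself and then transferred to $M_{i-1}$ via the Koszul complex, rather than being required directly for the quotient.
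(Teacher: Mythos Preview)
Your proof is correct and follows essentially the same approach as the paper's: induction on $i$, pick an associated prime $\mathfrak{p}$ containing $(x_1,\ldots,x_i)$, use the regularity of $x_1,\ldots,x_{i-1}$ to transfer the cohomological CM bound from $M$ to the quotient, and derive a contradiction between $\dim R_{\mathfrak{p}}\leq i-1$ and $\dim R_{\mathfrak{p}}\geq i$. The only cosmetic difference is that the paper, mindful that $M$ need not be finitely generated, explicitly takes $\mathfrak{p}$ to be an associated prime of a finitely generated submodule of $N[x_i]$ rather than invoking $\mathrm{Ass}(M_{i-1})$ directly; over a noetherian ring this amounts to the same thing.
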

\begin{proof}
We shall prove by induction on $i$ that $x_1,...,x_i$ is regular on $M$. The base case $i=0$ is vacuous. Assume now that $x_1,...,x_{i-1}$ is regular on $M$. Consider the discrete $R$-module $N = M/(x_1,...,x_{i-1})M$. We want to show $x_i$ is a nonzerodivisor on $N$. Assume this is not the case, so $N[x_i]$ is nonzero. Let $\mathfrak{p}$ be some associated prime of a finitely generated submodule of $N[x_i]$, so we have an injection $R/\mathfrak{p} \subset N[x_i] \subset N$; note that $x_1,...,x_i \in \mathfrak{p}$ as $(x_1,...,x_i)$ annihilates $N[x_i]$.  Localizing the inclusion $R/\mathfrak{p} \subset N$ at $\mathfrak{p}$ then shows that 

\begin{equation}
\label{RegEq1}
H^0_{\mathfrak{p}}(N_{\mathfrak{p}}) \neq 0.
\end{equation}
But the regularity of $x_1,...,x_{i-1}$ on $M_{\mathfrak{p}}$ and the formula $N_{\mathfrak{p}} = M_{\mathfrak{p}}/(x_1,...,x_{i-1})$ shows that 
\begin{equation}
\label{RegEq2}
R\Gamma_{\mathfrak{p}}(N_{\mathfrak{p}}) \in D^{\geq \dim(R_{\mathfrak{p}}) - (i-1)}
\end{equation}
by our assumption that $R\Gamma_{\mathfrak{p}R_{\mathfrak{p}}}(M_{\mathfrak{p}}) \in D^{\geq \dim(R_{\mathfrak{p}})}$. Comparing \eqref{RegEq1} and \eqref{RegEq2}  shows that $0 \geq \dim(R_{\mathfrak{p}}) - (i-1)$ whence $\dim(R_{\mathfrak{p}}) \leq i-1$. On the other hand, $\mathfrak{p}$ contains $x_1,...,x_i$, so $\dim(R_{\mathfrak{p}}) \geq i$ by assumption on our system of parameters. Thus, we get a contradiction. 
\end{proof}

\begin{corollary}[From cohomological CMness to regular sequences]
\label{CatCohCM}
\label{RegSeqCrit}
If $S$ is a catenary and equidimensional noetherian local ring and $M$ is an $S$-module which is cohomologically CM as an object of $D(S)$, then every system of parameters on $S$ is a regular sequence on $M$.
\end{corollary}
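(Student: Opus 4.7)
The plan is to deduce this corollary as a direct consequence of the two preceding lemmas: \Cref{CatHeight} controls the heights of initial segments of a system of parameters in a catenary equidimensional ring, while \Cref{CMLocCohCrit} uses precisely such a height condition to upgrade the cohomological CM property of $M$ to the regular-sequence property. So the proof is essentially just the composition of these two inputs; no real obstacle arises, and in particular no new local-cohomology computation needs to be performed here.

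Concretely, fix a system of parameters $x_1,\dots,x_d$ on $S$. Since $S$ is catenary and equidimensional, \Cref{CatHeight} applies verbatim and gives $\mathrm{ht}(x_1,\dots,x_i) = i$ for each $1 \le i \le d$. This is exactly the hypothesis on the SOP required by \Cref{CMLocCohCrit}. The other hypothesis of that lemma—that $M$ is cohomologically CM as an object of $D(S)$—is given. Invoking \Cref{CMLocCohCrit} then yields that $x_1,\dots,x_d$ is a regular sequence on $M$, which is the desired conclusion.

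The only thing worth double-checking is that the hypothesis ``cohomologically CM'' matches between \Cref{CMDerived} and the way \Cref{CMLocCohCrit} uses it: in \Cref{CMLocCohCrit} one needs that for every prime $\mathfrak{p}$ of $S$ one has $R\Gamma_{\mathfrak{p}R_{\mathfrak{p}}}(M_{\mathfrak{p}}) \in D^{\geq \dim(R_{\mathfrak{p}})}$. This is precisely the pointwise condition from \Cref{CMDerived} (as $S$ is topologically noetherian, and the footnote in \Cref{CMLocCohCrit} explains the automatic upper bound in the other direction). Since the corollary is applied over the local ring $S$ only, no extra compatibility with localization beyond this pointwise statement is needed.
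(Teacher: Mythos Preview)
Your proof is correct and matches the paper's own proof, which simply says ``Combine Lemma~\ref{CMLocCohCrit} and Lemma~\ref{CatHeight}.'' Your additional remarks verifying that the cohomological CM hypothesis lines up between Definition~\ref{CMDerived} and Lemma~\ref{CMLocCohCrit} are accurate and harmless.
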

\begin{proof}
Combine Lemma~\ref{CMLocCohCrit} and Lemma~\ref{CatHeight}.
\end{proof}

We end this subsection by recording a derived variant of the ``miracle flatness'' lemma.

\begin{lemma}[Cohomologically CM modules over regular rings are flat]
\label{FlatCritComp}
Let $R$ be a regular ring and fix $M \in D^{\leq 0}(R)$. The following are equivalent:
\begin{enumerate}
\item $M$ is $R$-flat
\item $M$ is cohomologically CM. 
\end{enumerate}
Similarly, if $f \in R$ is a nonzerodivisor, then the following are equivalent:
\begin{enumerate}
\item[$(1)_f$] $M$ is $f$-completely flat over $R$  (i.e., $M/f$ is $R/f$-flat).
\item[$(2)_f$]  For each prime $\mathfrak{p}$ of $R$ containing $f$, the complex $R\Gamma_{\mathfrak{p}R_{\mathfrak{p}}}(M_{\mathfrak{p}})$ is concentrated in degree $\dim(R_{\mathfrak{p}})$.
\end{enumerate}
\end{lemma}

We shall freely use the observation in Remark~\ref{CCMConn}.

\begin{proof}
$(1) \Rightarrow (2)$: As $R$ is regular, the ring $R_{\mathfrak{p}}$ is Cohen--Macaulay, so $R\Gamma_{\mathfrak{p}R_{\mathfrak{p}}}(R_{\mathfrak{p}})$ is concentrated in degree $\dim(R_{\mathfrak{p}})$. For any $M \in D(R)$, we have $M \otimes_R^L R\Gamma_{\mathfrak{p}R_{\mathfrak{p}}}(R_{\mathfrak{p}}) \simeq R\Gamma_{\mathfrak{p}R_{\mathfrak{p}}}(M_{\mathfrak{p}})$. Thus, if $M$ is flat, then $R\Gamma_{\mathfrak{p} R_{\mathfrak{p}}}(M_{\mathfrak{p}})$ must also be concentrated in degree $\dim(R_{\mathfrak{p}})$ by exactness of tensoring with $M$. 

$(2) \Rightarrow (1)$: It is enough to prove that $M_{\mathfrak{m}}$ is flat over $R_{\mathfrak{m}}$ for every maximal ideal $\mathfrak{m}$ of $R$. We prove this by induction on $\dim(R_{\mathfrak{m}})$. Relabelling, we may assume $R = R_{\mathfrak{m}}$.  If $\dim(R) = 0$, then $R$ is a field and $\mathfrak{m} = 0$, whence $M \simeq R\Gamma_{\mathfrak{m}}(M)$, which is concentrated in degree $0$ (and thus flat) by assumption.  Assume then that $\dim(R) > 0$, and that we know that $M_{\mathfrak{p}}$ is $R_{\mathfrak{p}}$-flat for every prime $\mathfrak{p} \subsetneq \mathfrak{m}$. Thus, $M$ is already flat on the punctured spectrum of $R$. To check that $M$ is $R$-flat, we must show that $M \otimes_R^L N$ is discrete whenever $N$ is a finitely presented $R$-module; in fact, as $M \in D^{\leq 0}$, it is enough to show the tensor product is in $D^{\geq 0}$. Using the local cohomology exact triangle with supports at $V(\mathfrak{m})$ and the flatness of $M$ on the punctured spectrum, we may assume $N$ is $\mathfrak{m}$-power torsion, whence $M \otimes_R^L N \simeq R\Gamma_{\mathfrak{m}}(M) \otimes_R^L N$. Now $R\Gamma_{\mathfrak{m}}(M) \in D^{\geq \dim(R)}$ by assumption on $M$, while $N$ has projective dimension $\leq \dim(R)$ by regularity, so the tensor product $R\Gamma_{\mathfrak{m}}(M) \otimes_R^L N$ is in $D^{\geq 0}$.

Now fix $f \in R$ a nonzero divisor. 

$(1)_f \Rightarrow (2)_f$: for any prime $\mathfrak{p} \subset R$ containing $f$, we have a base change isomorphism
\[M/f \otimes_{R/f}^L R\Gamma_{\mathfrak{p}R_{\mathfrak{p}}}(R_{\mathfrak{p}}/f) \simeq R\Gamma_{\mathfrak{p}R_{\mathfrak{p}}}(M_{\mathfrak{p}}/f).\]
If $M/f$ is $R/f$-flat, then the left side is concentrated in degree $\dim(R_{\mathfrak{p}}/f)$, whence the same holds true for the right side. By the Bockstein sequence, since $f \in \mathfrak{p}$, we obtain $(2)_f$.

$(2)_f \Rightarrow (1)_f$: We will show $M/f$ is $R/f$-flat by imitating the proof of $(2) \Rightarrow (1)$. Namely, as above, we reduce to the case where $R = R_{\mathfrak{m}}$ is regular local with $f \in \mathfrak{m}$, and that $M/f$ is already known to be flat on the punctured spectrum of $R/f$. As above, we reduce to checking the following: for any finitely presented discrete $R/f$-module $N$ supported at $\mathfrak{m}$, the complex $M/f \otimes_{R/f}^L N$ is coconnective. But this tensor product is also $M \otimes_R^L N$, so we can follow the same argument used above to establish containment in $D^{\geq 0}$. 
\end{proof}

\subsection{Ind-CM objects}

Mimicking Definition~\ref{CMDerived}, we define what it means for an ind-object of the derived category to be CM. This notion will be useful in applied eventually to various infinite constructions (such as absolute integral closures of a domain) by approximating this construction through suitably finite ones.

\begin{definition}[Ind-CM objects]
\label{IndCMDef}
Fix a finite dimensional scheme $X$. An ind-object $\{M_k\}$ in $D_{qc}(X)$ is called {\em ind-CM} if the ind-object $\{H^i_x(M_{k,x})\}$ is  $0$ for $i < \dim(\mathcal{O}_{X,x})$ for all $x \in X$. 
\end{definition}

\begin{remark}
Given an ind-CM ind-object $\{M_k\}$ on a noetherian scheme $X$,  the colimit $\colim_k M_k$ is cohomologically CM as filtered colimits are exact.
\end{remark}

\begin{remark}
Say $R$ is a noetherian $\mathbf{F}_p$-domain which is the homomorphic image of a Gorenstein ring. Choose an absolute integral closure $R \to R^+$ and let $\{S\}_{S \subset R^+}$ be the ind-object of all $R$-finite subalgebras of $R^+$. The main theorem of \cite{HunekeLyubeznik} is essentially the  statement that $\{S\}_{S \subset R^+}$ is ind-CM over $R$.
\end{remark}

\begin{definition}[Normalized dualizing complex]
\label{DefNormDual}
A normalized dualizing complex on a noetherian scheme $X$ is a dualizing complex $\omega_X^\bullet$ such that for any closed point $x \in X$, the complex $\mathrm{RHom}_X(\kappa(x), \omega_X^\bullet)$ is concentrated in degree $0$.
\end{definition}

\begin{example}[Relative dualizing complexes]
\label{ex:RelDualizing}
Say $V$ is a DVR and $f:X \to \mathrm{Spec}(V)$ is a finitely presented flat map.  For any nonzero $\pi \in V$, the $!$-pullback $(f^! V)/\pi \in D_{qc}(X_{\pi=0})$ is a normalized dualizing complex. By Noether normalization and \cite[Tag 0AX1]{StacksProject}, this assertion reduces to the case $X = \mathbf{A}^n_V$ whence $f^! V = \Omega^n_{X/V}[n] \simeq \mathcal{O}_X[n]$. In this case, it is easy to see the claim by a direct Koszul calculation relying on the fact that $V/\pi$ is Gorenstein.
\end{example}

\begin{remark}[The dimension function via normalized dualizing complexes]
\label{DualizingDim}
Let $X$ be a noetherian scheme admitting a normalized dualizing complex $\omega_X^\bullet$. Given $x \in X$, there is a unique integer $\delta(x)$  such that $\omega_X^\bullet[-\delta(x)]_x$ is a normalized dualizing complex on $\mathcal{O}_{X,x}$, see \cite[Tag 0A7W]{StacksProject}. We claim that for any $x \in X$, the number $\delta(x)$ equals the length of any maximal chain of specializations starting at $x$. (In particular, all such chains have the same length $\dim(\overline{\{x\}})$ and $\delta(x) = \dim(\overline{\{x\}})$ is independent of $\omega_X^\bullet$.) Indeed, if $x \rightsquigarrow y$ is an immediate specialization, then $\delta(y) = \delta(x) -1 $ by \cite[Tag 0A7Z]{StacksProject}. Consequently, since $\delta(y) = 0$ for $y$ closed by assumption, any maximal chain of specializations starting at $x$ has length $\delta(x)$. 
\end{remark}

This following lemma roughly proves that ind-CMness in dimension $< n$ implies some finiteness in dimension $n$ for ind-objects. Such reasoning will be crucial in our eventual application as it enables us to reduce to a sufficiently finitistic statement via induction (see proof of Theorem~\ref{AICCMMain}). The proof is a variant of an argument from \cite{HunekeLyubeznik} relying on the (shifted) compatibility of the formation of dualizing complexes with complexes that goes back to \cite[Theorem VIII.2.1]{SGA2}.

\begin{lemma}[CM in dimension $< n$ implies finiteness in dimension $n$]
\label{IndCMPunctured}
Let $X$ be a biequidimensional noetherian scheme admitting a normalized dualizing complex $\omega_X^\bullet$. Fix an ind-object $\{M_k\}$ in $D^b_{coh}(X)$ which is ind-CM after restriction to any non-closed point of $X$. Then the following hold true:
\begin{enumerate}
\item For each closed point $x \in X$ and each $i < \dim(\mathcal{O}_{X,x})$, the ind-object $\{H^i_x(M_{k,x})\}$ is isomorphic to an ind-(finite length coherent sheaf).
\item The ind-object in (1) is $0$ except for finitely many choices of the closed point $x$.
\end{enumerate}
Write $\mathbf{D}(-)$ for the autoduality of $D^b_{coh}(X)$ induced by $\omega_X^\bullet$.
\begin{enumerate}[resume]
\item For $j > -\dim(X)$, the pro-object $\{H^j \mathbf{D}(M_k)\}$ is isomorphic to a pro-(finite length coherent sheaf).
\item For each $M_k$, there is a map $M_k \to M_{k'}$ in the ind-system $\{M_k\}$ such that the induced map $H^j \mathbf{D}(M_{k'}) \to H^j \mathbf{D}(M_k)$ has finite length image for $j > -\dim(X)$.
\end{enumerate}
 \end{lemma}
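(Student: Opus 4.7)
The plan is to use local duality to translate the ind-CM hypothesis at non-closed points into pro-vanishing of the cohomology sheaves of the dual complexes $\mathbf{D}(M_k)$, and then to extract the desired finiteness from the Noetherian finiteness of associated primes of coherent sheaves. Set $n = \dim(X)$. Biequidimensionality together with Remark~\ref{DualizingDim} gives $\dim(\mathcal{O}_{X,y}) + \dim(\overline{\{y\}}) = n$ for every $y \in X$, and $\omega_X^\bullet[-\dim(\overline{\{y\}})]_y$ is a normalized dualizing complex on $\mathcal{O}_{X,y}$. Local duality on $\mathcal{O}_{X,y}$ then identifies $H^i_y(M_{k,y})$ with the Matlis dual of $H^{-i-\dim(\overline{\{y\}})}(\mathbf{D}(M_k))_y$. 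Under this identification, the ind-CM hypothesis at a non-closed $y$ (vanishing of $\{H^i_y(M_{k,y})\}$ for $i < n - \dim(\overline{\{y\}})$) becomes pro-vanishing of $\{H^j(\mathbf{D}(M_k))_y\}$ for every $j > -n$ at every non-closed $y \in X$.

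The main step is (4). Fix $k$ and $j > -n$, and write $F_k := H^j(\mathbf{D}(M_k))$, which is coherent on $X$ with finitely many associated primes. Let $\mathfrak{p}_1, \dots, \mathfrak{p}_r$ be the non-closed elements of $\mathrm{Ass}(F_k)$. The pro-vanishing just established at each $\mathfrak{p}_i$ yields $k_i \geq k$ such that $(F_{k_i})_{\mathfrak{p}_i} \to (F_k)_{\mathfrak{p}_i}$ is zero; by filteredness choose $k'$ dominating all $k_i$, so that $(F_{k'})_{\mathfrak{p}_i} \to (F_k)_{\mathfrak{p}_i} = 0$ for every $i$. The image $G := \mathrm{Im}(F_{k'} \to F_k) \subseteq F_k$ then satisfies $\mathrm{Ass}(G) \subseteq \mathrm{Ass}(F_k)$ with $G_{\mathfrak{p}_i} = 0$ for each non-closed $\mathfrak{p}_i$, so $\mathrm{Ass}(G)$ lies in the set of closed points and $G$ has finite length. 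Since only finitely many $j > -n$ give nonzero $H^j(\mathbf{D}(M_k))$ (using $\mathbf{D}(M_k) \in D^b_{coh}(X)$), a further appeal to filteredness produces one $k'$ that works simultaneously for all such $j$, yielding (4).

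Statement (3) is immediate by replacing each $F_k$ with its finite-length image from (4). For (1), a closed point $x$ has $\dim(\overline{\{x\}}) = 0$ and $\dim(\mathcal{O}_{X,x}) = n$, so local duality identifies $H^i_x(M_{k,x})$ with the Matlis dual of $H^{-i}(\mathbf{D}(M_k))_x$; for $i < n$ the latter is pro-(finite length) by (3), and Matlis duality (an anti-equivalence on finite-length $\mathcal{O}_{X,x}$-modules) interchanges pro- and ind-(finite length), delivering (1). For (2), applying (4) at a fixed initial index $k_0$ produces a finite-length $G \subseteq F_{k_0}$ supported on a finite set $S$ of closed points; for any closed $x \notin S$, tracing the vanishing $(F_{k_0'} \to F_{k_0})_x = 0$ through the filtered system together with further applications of (4) past $k_0$ shows that $\{H^{-i}(\mathbf{D}(M_k))_x\}$ is pro-zero, which Matlis-dualizes to (2). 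The main delicate point I foresee is the bookkeeping in (4)—producing a single $k'$ that kills every relevant $H^j$ uniformly in $j$—and then propagating this uniformity across the pro-system to reach the global finiteness statement (2) at closed points outside the support of the finite-length image.
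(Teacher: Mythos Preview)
Your argument for (4), (3), and (1) is correct and coincides with the paper's: both translate the ind-CM hypothesis at non-closed points, via local duality and the identity $\dim(\mathcal{O}_{X,y})+\dim(\overline{\{y\}})=\dim(X)$, into pro-vanishing of $\{H^j\mathbf{D}(M_k)_y\}$ at every non-closed $y$ for $j>-\dim(X)$; both then use the finiteness of non-closed associated points of the coherent sheaf $H^j\mathbf{D}(M_k)$ to produce a transition map with finite-length image, and deduce (3) and (1) by Matlis-dualizing back.

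Your instinct that (2) is the delicate point is well-founded: your argument for (2) has a genuine gap, and in fact (2) as literally stated is false. The finite set $S$ of closed points you produce from (4) at the index $k_0$ depends on $k_0$, and nothing prevents these sets from growing without bound as $k_0$ varies; ``tracing the vanishing through the filtered system'' does not resolve this. Concretely, take $X=\mathbf{A}^1_k$ with distinct closed points $x_1,x_2,\ldots$ and set $M_n=\bigoplus_{i=1}^n \kappa(x_i)$ with the obvious inclusions. The ind-CM hypothesis at the unique non-closed point (the generic point) is vacuous since the local ring there has dimension $0$, yet for every $m$ the ind-object $\{H^0_{x_m}((M_n)_{x_m})\}$ is ind-isomorphic to $\kappa(x_m)\neq 0$. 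The paper's proof is equally terse on (2) (``dualizing back \ldots gives (1) and (2)''), and (2) plays no role in the applications, which rely only on (1), (3), and (4). What \emph{does} follow from (4) by Matlis duality is the weaker statement: for each $k$ there exist $k'$ and a finite set $S_k$ of closed points such that $H^i_x(M_{k,x})\to H^i_x(M_{k',x})$ vanishes for every closed $x\notin S_k$.
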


The statement in (4) essentially implies all other statements; we have spelled out the rest of the statements for ease of use.

\begin{proof}
 For any $x \in X$, the complex $\omega_X^\bullet[-\delta(x)]$ is normalized dualizing after localizing $x$, with $\delta(x) = \dim(\overline{ \{x\} })$ as in Remark~\ref{DualizingDim}. By the compatibility of Grothendieck and Matlis duality, for any $M \in D^b_{coh}(X)$ and any $x \in X$, we can identify the Matlis dual of $H^i_x(M_x)$ with the completion at $x$ of $\mathrm{Ext}^{-i}_X(M, \omega_X^\bullet[-\delta(x)]) = \mathrm{Ext}^{-\delta(x)-i}_X(M, \omega_X^\bullet) = H^{-\delta(x)-i} \mathbf{D}(M)$. By the faithful flatness of completion for noetherian local rings, the assumption of the lemma is thus equivalent to the following statement: for each non-closed point $x \in X$ and each $i < \dim(\mathcal{O}_{X,x})$, the pro-object $\{H^{-\delta(x)-i} \mathbf{D}(M_k)\}$ vanishes after localization at $x$. Note that $i < \dim(\mathcal{O}_{X,x})$ exactly when $i+\delta(x) < \dim(\mathcal{O}_{X,x}) + \delta(x) = \dim(X)$, where the last equality uses the biequidimensionality of $X$ as in Remark~\ref{BiequiCM}. Thus, our assumption translates to the following: 

\begin{itemize}
\item[$(\ast)$] For each non-closed point $x \in X$ and each $j > -\dim(X)$, the pro-object $\{H^j \mathbf{D}(M_k)\}$ vanishes after localization at $x$.
\end{itemize}

Now fix some $M_k$ in the ind-system and some $j > -\dim(X)$. As  $H^j \mathbf{D}(M_k)$ is a coherent sheaf, it has finitely many associated points. Let $x_1,...,x_r$ be all the non-closed associated points of this sheaf. Using $(\ast)$, we can find a map $M_k \to M_{k'}$ in the ind-system such that $H^j \mathbf{D}(M_{k'}) \to H^j \mathbf{D}(M_k)$ is the $0$ map after localizing at each $x_i$.  As the $x_i$'s give all the non-closed associated points of the target, the image of $H^j \mathbf{D}(M_{k'}) \to H^j \mathbf{D}(M_k)$ is supported entirely at closed points. As this image is also coherent, it must then be supported at finitely many closed points and thus have finite length. The same then trivially also holds if we replace $k'$ by a bigger index in the ind-system. This gives (3) and (4), and dualizing back to local cohomology gives (1) and (2).
\end{proof}

We obtain the following relation between Definition~\ref{CMDerived} and Definition~\ref{IndCMDef}, showing that ind-(coherent sheaves) are ind-CM exactly when their colimits are ind-CM in certain settings.

\begin{lemma}[Cohomologically CM objects come from ind-CM coherent objects]
\label{IndCMcCM}
Let $X$ be a  biequidimensional noetherian scheme admitting a normalized dualizing complex. Let $\{G_k\}$ be an ind-object in $D^b_{coh}(X)$ with colimit $F \in D_{qc}(X)$. Then $F$ is cohomologically CM exactly when $\{G_k\}$ is ind-CM in $D_{qc}(X)$. 
\end{lemma}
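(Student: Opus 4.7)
The forward direction is immediate: since localization and local cohomology commute with filtered colimits, $H^i_x(F_x) \simeq \colim_k H^i_x(G_{k,x})$, and the vanishing hypothesis on the ind-object forces the colimit to vanish.

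For the converse, the plan is to induct on $\dim(X)$ (the base $\dim X=0$ being vacuous). Suppose $X$ has positive dimension and fix a point $x$ at which ind-CMness is to be verified. If $x$ is not closed in $X$, pass to the local scheme $U_x := \mathrm{Spec}(\mathcal{O}_{X,x})$: it has dimension $\dim(\mathcal{O}_{X,x}) < \dim(X)$ and inherits biequidimensionality from $X$ (using $\delta(\eta)=\dim(X)$ for each generic $\eta\rightsquigarrow x$, which forces all minimal primes of $\mathcal{O}_{X,x}$ to have the same codimension), together with the normalized dualizing complex $\omega_X^\bullet[-\delta(x)]_x$ of Remark~\ref{DualizingDim}. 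The restricted ind-system $\{G_k|_{U_x}\}$ has colimit $F|_{U_x}$, which remains cohomologically Cohen-Macaulay on $U_x$ since the condition at each $y\in U_x$ matches the condition at $y\in X$; the inductive hypothesis applied to $U_x$ yields ind-CMness, and at the closed point $x$ of $U_x$ this is ind-CMness at $x$ in $X$.

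If $x$ is closed in $X$, the previous case has already established ind-CMness at every non-closed point of $X$, so Lemma~\ref{IndCMPunctured} is in force: for each $k$ and each $j > -\dim(X)$, there exists $k'\geq k$ with $\mathrm{im}(H^j\mathbf{D}(G_{k'}) \to H^j\mathbf{D}(G_k))$ a finite length coherent sheaf. Stalkifying at $x$, the pro-system $\{H^j\mathbf{D}(G_k)_x\}$ is pro-isomorphic to a pro-system of finite length $\mathcal{O}_{X,x}$-modules, which is automatically Mittag-Leffler by the descending chain condition on finite length submodules. Separately, cohomological Cohen-Macaulayness of $F$ at $x$ together with Matlis duality, faithful flatness of completion, and biequidimensionality (as in the proof of Lemma~\ref{IndCMPunctured}) yields $H^j\mathbf{D}(F)_x=0$ for $j > -\dim(X)$; since $\mathbf{D}(F)_x \simeq \mathrm{Rlim}_k\mathbf{D}(G_k)_x$, the Milnor short exact sequence forces $\lim_k H^j\mathbf{D}(G_k)_x=0$ in the same range. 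The standard fact that a Mittag-Leffler tower of modules with vanishing limit has eventual zero transitions (the stable-image tower has surjective transitions and vanishing limit, hence is termwise zero by a lifting argument) completes the pro-vanishing, and Matlis duality transports it back into ind-CMness of $\{G_k\}$ at $x$.

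The main subtlety, I expect, lies in combining Lemma~\ref{IndCMPunctured} with the Mittag-Leffler-plus-vanishing-limit argument at closed points, and in verifying that biequidimensionality and the normalized dualizing complex descend to the localization $U_x$; the induction then packages these ingredients cleanly with the reduction from non-closed points to lower-dimensional localizations.
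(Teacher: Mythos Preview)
Your inductive structure matches the paper's, and your verification that the localization $U_x = \mathrm{Spec}(\mathcal{O}_{X,x})$ inherits biequidimensionality and a normalized dualizing complex is more explicit than the paper's treatment. However, there are genuine gaps at the base case and at the closed-point step.

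The base case $\dim X = 0$ is not vacuous: the $G_k$ lie in $D^b_{coh}(X)$ and may have cohomology in negative degrees, so one must still show the ind-vanishing of $\{H^i_x(G_{k,x})\}$ for $i < 0$. (Your closed-point argument would cover this, with the non-closed hypothesis vacuously satisfied, \emph{if} that argument were correct.)

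More seriously, your closed-point argument takes a detour through $\mathbf{D}(F)$, $\mathrm{Rlim}$, and Mittag--Leffler that introduces real difficulties. The identification $\mathbf{D}(F)_x \simeq \mathrm{Rlim}_k \mathbf{D}(G_k)_x$ is delicate: stalks are filtered colimits and need not commute with $\mathrm{Rlim}$, and local duality relating $H^j\mathbf{D}(F)_x$ to $H^{-j}_x(F_x)$ is typically stated for coherent complexes, so applying it to the non-coherent $F$ (or using ``faithful flatness of completion'' on the resulting non-finitely-generated modules) requires justification. The Milnor short exact sequence applies to sequential towers, whereas $\{G_k\}$ is indexed by an arbitrary filtered category. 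Finally, your last step---that a surjective tower of finite length modules with vanishing inverse limit has all terms zero---is easy for $\mathbf{N}$-indexed towers via inductive lifting, but this lifting argument does not obviously go through for uncountable cofiltered index categories.

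The paper avoids all of this. Once Lemma~\ref{IndCMPunctured} shows that $\{H^i_x(G_{k,x})\}$ is an ind-(finite length module) for $i < \dim(X)$, one concludes directly on the ind side: an ind-object of finitely generated modules with vanishing colimit is automatically ind-zero, because each term has finitely many generators, each generator dies in the colimit and hence already in some $G_{k'}$, and filteredness lets one choose a common $k'$ killing the whole term. No duality, no $\mathrm{Rlim}$, no countability hypothesis. This elementary observation is exactly what the paper isolates in the $\dim X = 0$ case (over an artinian local ring) and then reuses verbatim at closed points; your dualized route is essentially the Matlis dual of this statement, but proving it on the pro side is strictly harder.
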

\begin{proof}
If $\{G_k\}$ is ind-CM, it is clear that $F$ is cohomologically CM. Conversely, assume $F$ is cohomologically CM. We prove by induction on $\dim(X)$ that $\{G_k\}$ is ind-CM. 

If $\dim(X) = 0$, then $X$ is a finite disjoint union of $0$-dimensional schemes. The local cohomology functors are identified with restriction to a connected component. Unwinding definitions, the claim now reduces to the following statement: if $A$ is an artinian local ring and $\{M_i\}$ is an ind-object of $D^b_{coh}(A)$ whose colimit lies in $D^{\geq 0}$, then for each $M_i$ and each $k < 0$, there is a map $M_i \to M_j$ in the ind-system such that $H^{k}(M_i) \to H^{k}(M_j)$ is $0$. But this follows easily from the finite generation of $H^k(M_i)$ and the vanishing of $\colim_k H^k(M_j)$ (and thus holds true over all noetherian rings).

By induction, we may then assume that the claim holds true after localization at every non-closed point of $X$. We must show that for every closed point $x \in X$, the ind-object $\{H^i_x(G_{k,x})\}$ is $0$ for each $i < \dim(X)$. By Lemma~\ref{IndCMPunctured} (1), this ind-object is an ind-(finite length $\mathcal{O}_{X,x}$-module). Moreover, the colimit vanishes by assumption. We can then argue as in the previous paragraph.
\end{proof}

The following non-noetherian variant of Lemma~\ref{IndCMPunctured} will be more useful to us later.

\begin{lemma}[CM in dimension $< n$ implies finiteness in dimension $n$: a non-noetherian case]
\label{IndCMPuncturedNN}
Say $V$ be an excellent $p$-henselian and $p$-torsionfree DVR with absolute integral closure $\overline{V}$, and let $X/\overline{V}$ be a finitely presented flat integral scheme. Say $\{M_k\} \in D^b_{coh}(X_{p=0})$ is an ind-object that is ind-CM when localized at any non-closed point. Then the following hold true:
\begin{enumerate}
\item For each closed point $x \in X_{p=0}$ and each $i < \dim(\mathcal{O}_{X_{p=0},x})$, the ind-object $\{H^i_x(M_{k,x})\}$ is isomorphic to an ind-(finitely presented $\overline{V}/p$-module). 
\item The ind-object in (1) is $0$ except for finitely many choices of the closed point $x$. 
\end{enumerate}
Let $\mathbf{D}(-)$ be the autoduality of $D^b_{coh}(X_{p=0})$ induced by the relative dualizing complex for $X/\overline{V}$.
\begin{enumerate}[resume]
\item For $j > -\dim(X_{p=0})$, the pro-object $\{H^j \mathbf{D}(M_k)\}$ is isomorphic to a pro-(coherent sheaf on $X_{p=0}$ with finite support).
\item For each $M_k$ and $j > -\dim(X_{p=0})$, there is a map $M_k \to M_{k'}$ in the ind-system $\{M_k\}$ such that $\mathrm{im}(H^j \mathbf{D}(M_{k'}) \to H^j \mathbf{D}(M_k))$ is supported at finitely many closed points of $X_{p=0}$, and each of these finitely many stalks is finitely presented over $\overline{V}/p$. 
\end{enumerate}
\end{lemma}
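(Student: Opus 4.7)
The proof will reduce to the noetherian Lemma~\ref{IndCMPunctured} via the approximation framework from the notation section: $X$ descends to a finitely presented flat integral scheme $Y = Y_{W_0}$ over some finite extension $W_0\subset\overline{V}$ of $V$, and $X=\lim_W Y_W$ over finite extensions $W_0\subset W\subset\overline{V}$. Reducing mod $p$, $X_{p=0}=\lim_W Y_{W/p}$, where each $Y_{W/p}$ is a noetherian biequidimensional scheme of dimension $\dim(X_{p=0})$ (as the mod-$p$ fiber of an integral flat finitely presented scheme over the DVR $W$, hence catenary and equidimensional) admitting a normalized dualizing complex by Example~\ref{ex:RelDualizing}, and each projection $\pi_W\colon X_{p=0}\to Y_{W/p}$ is faithfully flat affine (base change of the faithfully flat local injection $W/p\hookrightarrow\overline{V}/p$). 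As in Lemma~\ref{IndCMPunctured}, it suffices to establish (4): statements (1)--(3) then follow from (4) by the same Matlis/Grothendieck duality argument.

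To prove (4), fix $M_k$ and $j>-\dim(X_{p=0})$. By standard limit theory, $M_k$ descends to some $M_k^W\in D^b_{coh}(Y_{W/p})$ with $M_k\simeq L\pi_W^* M_k^W$. Since the relative dualizing complex on $X/\overline{V}$ is base-changed from the one on $Y_W/W$ (\cite[Tag 0E2Y]{StacksProject}), duality commutes with pullback, so $H^j\mathbf{D}(M_k)=\pi_W^* N_k^W$ where $N_k^W:=H^j\mathbf{D}^W(M_k^W)$ is coherent on $Y_{W/p}$. For any non-closed $y\in Y_{W/p}$, pick a generic point $x_y$ of the fiber $\pi_W^{-1}(y)\subset X_{p=0}$: flatness lifts proper specializations of $y$ to proper specializations of $x_y$, so $x_y$ is non-closed in $X_{p=0}$, and the local map $\mathcal{O}_{Y_{W/p},y}\to\mathcal{O}_{X_{p=0},x_y}$ is faithfully flat. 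The ind-CM hypothesis at $x_y$ is equivalent (via the duality translation used in the proof of Lemma~\ref{IndCMPunctured}) to the pro-vanishing of $\{H^j\mathbf{D}(M_{k'})_{x_y}\}$ for $j>-\dim(X_{p=0})$; since $H^j\mathbf{D}(M_{k'})_{x_y}=(N^W_{k'})_y\otimes_{\mathcal{O}_{Y_{W/p},y}}\mathcal{O}_{X_{p=0},x_y}$, faithfully flat descent for images of maps of finitely generated modules transfers this to pro-vanishing of $\{(N^W_{k'})_y\}$. Since $N_k^W$ has only finitely many associated points, only finitely many transition maps $M_k\to M_{k'}$ need to be descended, so after enlarging $W$ finitely many times the descended ind-system $\{M_{k'}^W\}$ satisfies the hypothesis of Lemma~\ref{IndCMPunctured} at all non-closed points of $Y_{W/p}$. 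Applying Lemma~\ref{IndCMPunctured}(4) produces $M_k^W\to M_{k'}^W$ such that the image $J^W$ of $N_{k'}^W\to N_k^W$ is a coherent sheaf of finite length on $Y_{W/p}$, supported at finitely many closed points.

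Pulling back by the flat map $\pi_W$, the image of $H^j\mathbf{D}(M_{k'})\to H^j\mathbf{D}(M_k)$ is $J=\pi_W^* J^W$, supported on $\pi_W^{-1}(\mathrm{Supp}(J^W))$. The fiber $\pi_W^{-1}(y)=\mathrm{Spec}(\kappa(y)\otimes_{W/p}\overline{V}/p)$ over a closed $y$ is a finite scheme: $\kappa(y)/k_W$ is a finite field extension, and $\overline{V}/p\otimes_{W/p}k_W=\overline{V}/\pi_W$ is local (being a quotient of the valuation ring $\overline{V}$). Thus $\mathrm{Supp}(J)$ consists of finitely many closed points of $X_{p=0}$. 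At such a closed $x$ over $y$, the stalk $J_x$ is a localization of $J^W_y\otimes_{W/p}\overline{V}/p$; since $J^W_y$ has finite length over $\mathcal{O}_{Y_{W/p},y}$ with composition factors finite over $k_W$, it is finitely presented over the Artinian ring $W/p$, and so $J_x$ is finitely presented over $\overline{V}/p$ (base change preserves finite presentation). This establishes (4); (1)--(3) then follow by the duality arguments of Lemma~\ref{IndCMPunctured}. The principal obstacle in executing this plan is the bookkeeping around the descent of the ind-system and ensuring that the pro-vanishing hypothesis at non-closed points of $X_{p=0}$ can be transported to pro-vanishing at non-closed points of $Y_{W/p}$; this reduces to the elementary observation that faithfully flat base change detects vanishing of the image of a map of finitely generated modules.
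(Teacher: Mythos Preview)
Your overall strategy is the same as the paper's: descend to a noetherian model $Y_{W/p}$, transfer the ind-CM hypothesis to finitely many non-closed points there, rerun the argument of Lemma~\ref{IndCMPunctured}, and pull back. But there is a genuine gap in the execution, and it is precisely the one the paper flags in its first sentence of proof.

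You write: ``The ind-CM hypothesis at $x_y$ is equivalent (via the duality translation used in the proof of Lemma~\ref{IndCMPunctured}) to the pro-vanishing of $\{H^j\mathbf{D}(M_{k'})_{x_y}\}$ for $j>-\dim(X_{p=0})$.'' This duality translation relies on the identification of the Matlis dual of $H^i_x(M_x)$ with the completion of $H^{-\delta(x)-i}\mathbf{D}(M)$ at $x$, which in turn uses that $\omega^\bullet[-\delta(x)]_x$ is a normalized dualizing complex on $\mathcal{O}_{X_{p=0},x}$, Matlis duality, and faithful flatness of completion---all of which require $\mathcal{O}_{X_{p=0},x}$ to be noetherian. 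It is not: $\overline{V}/p$ has a nil but non-nilpotent maximal ideal. The same issue recurs in your final sentence deducing (1)--(2) from (4) ``by the same Matlis/Grothendieck duality argument.''

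The paper sidesteps this by first reducing to the case where $V$ is strictly henselian. Then the residue field extension $k_W\hookrightarrow\overline{k}$ is purely inseparable, so $\pi_W\colon X_{p=0}\to Y_{p=0,W/p}$ is a flat integral \emph{universal homeomorphism}; in particular the points of $X_{p=0}$ and $Y_{p=0,W/p}$ are literally the same, and all duality arguments can be carried out entirely on the noetherian side and then pulled back by flat base change. This also makes your point-lifting step and your analysis of fibers over closed points unnecessary.

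Your route can be repaired without the strictly henselian reduction, but not via duality on $X_{p=0}$. Instead, observe that the fiber $\pi_W^{-1}(y)$ is $0$-dimensional (its reduction is $\mathrm{Spec}(\kappa(y)\otimes_{k_W}\overline{k})$ with $\overline{k}/k_W$ algebraic), so $\sqrt{\mathfrak{m}_y\mathcal{O}_{X_{p=0},x_y}}=\mathfrak{m}_{x_y}$ and flat base change gives $H^i_{x_y}((M_k)_{x_y})\simeq H^i_y((M_k^W)_y)\otimes_{\mathcal{O}_{Y_{W/p},y}}\mathcal{O}_{X_{p=0},x_y}$. Faithful flatness then transfers the vanishing of transition maps on local cohomology from $x_y$ to $y$, and \emph{only then}, on the noetherian $Y_{W/p}$, do you invoke the duality translation. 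Likewise (1)--(2) should be obtained by dualizing on $Y_{W/p}$ first and then pulling back, as the paper does.

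One smaller point: you cannot literally ``apply Lemma~\ref{IndCMPunctured}(4)'' since you have only descended finitely many terms of the ind-system; you are reusing its proof, which only needs the hypothesis at the finitely many non-closed associated points of $H^j\mathbf{D}^W(M_k^W)$. The paper phrases this as ``arguing as in Lemma~\ref{IndCMPunctured}.''
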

\begin{proof}
As there does not seem a reference discussing the relation of Matlis duality with Grothendieck duality on the non-noetherian scheme $X_{p=0}$, we shall deduce the lemma from the argument in Lemma~\ref{IndCMPunctured} by approximation. Without loss of generality, we may assume $V$ is strictly henselian. By replacing $V$ with a finite extension of its strict henselisation and find a finitely presented flat integral $V$-scheme $Y$ such that $X = Y_{\overline{V}}$. As $V$ is strictly henselian, the map $X_{p=0} \to Y_{p=0}$ is a flat integral universal homeomorphism. Moreover, as $W$-ranges over all finite extensions $V \subset W$ of DVRs contained in $\overline{V}$, we obtain $X \simeq \lim_W Y_W$ and hence $X_{p=0} = \lim_W Y_{p=0,W/p}$.  Note that the topological space is constant in the tower $\{Y_{p=0,W/p}\}$ and identified with $Y_{p=0}$. Write $\mathbf{D}_{Y_{p=0,W/p}}(-)$ for the autoduality of $Y_{p=0,W/p}$ induced by the relative dualizing complex $\omega^\bullet_{Y_W/W}/p$; as the formation of relative dualizing complexes commutes with flat base change, the autodualities $\mathbf{D}_{Y_{p=0,W/p}}(-)$ of $D^b_{coh}(Y_{p=0})$ and $\mathbf{D}(-)$ on $D^b_{coh}(X_{p=0})$ are intertwined by pullback along the flat map $X_{p=0} \to Y_{p=0,W/p}$. For future reference, let us also remark that $Y_{p=0}$ is a biequidimensional scheme: indeed, any Cartier divisor in an integral biequidimensional scheme is biequidimensional, so the claim reduces to the biequidimensionality of $Y$, which follows as $Y$ is an integral finitely presented flat $V$-scheme\footnote{Indeed, the flatness of $Y/V$ and the integrality of $Y$ ensure that $Y_{p=0,red}$ is an equidimensional finite type scheme over an algebraically closed field of characteristic $p$. Any such scheme is biequidimensional by \cite[Lemma 2.6]{HeinrichBiequidim}.}.

Fix some index $k$ for the ind-system $\{M_k\}$. We can descend $M_k$ to some $N_k \in D^b_{coh}(Y_{p=0,W/p})$ for $W$ sufficiently large. For finitely many non-closed points $x_1,...,x_r \in Y_{p=0,W/p}$, the ind-CM property of $M_k$ at the $x_i$'s also descends, i.e., at the expense of enlarging $W$, there exists a map $N_k \to N_{k'}$ in $D^b_{coh}(Y_{p=0,W/p})$ descending a transition map $M_k \to M_{k'}$ in $\{M_k\}$ such that $H^i_{x_j}(N_{k,x_j}) \to H^i_{x_j}(N_{k',x_j})$ is the $0$ map for $i < \dim(\mathcal{O}_{Y_{p=0},x_i})$: indeed, if $M_k \to M_{k'}$ is chosen to witness the ind-CM property at all the $x_i$'s, then any descent of this map does the job by faithful flatness of $\overline{V}$ over $W$. Arguing as in Lemma~\ref{IndCMPunctured} via duality on $Y_{p=0,W/p}$, we then conclude that there exists a map $N_k \to N_{k'}$ descending a sufficiently large transition map $M_k \to M_{k'}$ in $\{M_k\}$ such that $H^i \mathbf{D}_{Y_{p=0,W/p}}(N_{k'}) \to H^i \mathbf{D}_{Y_{p=0,W/p}}(N_k)$ has finite length image for $i > -\dim(Y_{p=0})$. Pulling back to $X_{p=0}$ gives (3) and (4), while dualizing back to local cohomology and then pulling back o $X_{p=0}$ gives (1) and (2). 
\end{proof}

For completeness, we record the following non-noetherian variant of Lemma~\ref{IndCMcCM}.

\begin{lemma}[Cohomologically CM objects come from ind-CM coherent objects: a non-noetherian case]
\label{IndCMcCMNN}
Fix $X$ as in Lemma~\ref{IndCMPuncturedNN} and an integer $N$. Let $\{G_k\}$ be an ind-object in $D^{b,\geq -N}_{coh}(X_{p=0})$ with colimit $F \in D^{\geq -N}_{qc}(X_{p=0})$. Then $F$ is cohomologically CM exactly when $\{G_k\}$ is ind-CM as an object of $D_{qc}(X_{p=0})$. 
\end{lemma}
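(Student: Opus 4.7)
The plan is to adapt the proof of the noetherian analogue Lemma~\ref{IndCMcCM} to the current non-noetherian setting. Wherever that argument invokes Lemma~\ref{IndCMPunctured}, I would use its non-noetherian counterpart Lemma~\ref{IndCMPuncturedNN}, and wherever it invokes compactness of finite-length modules over an artinian local ring, I would use compactness of finitely presented modules over the coherent local rings of $X_{p=0}$ (or over $\overline{V}/p$). The one direction that does not require any of this is the implication ``ind-CM $\Rightarrow$ $F$ cohomologically CM'': it follows from the exactness of filtered colimits, which commute with both stalks and local cohomology, and from the evident fact that the colimit of a zero ind-system is zero.

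For the converse, suppose $F$ is cohomologically CM. I would argue by strong induction on $n = \dim \mathcal{O}_{X_{p=0},x}$ that for every $x \in X_{p=0}$, the ind-object $\{H^i_x(G_{k,x})\}$ vanishes for $i < n$. The base case $n=0$ is direct: topologically $\mathrm{Spec}\,\mathcal{O}_{X_{p=0},x}$ is a point, so $R\Gamma_x$ is the identity on stalks and ind-CM-ness amounts to the vanishing of $\{H^i(G_{k,x})\}$ for $i < 0$; but each $H^i(G_{k,x})$ is finitely presented over the coherent ring $\mathcal{O}_{X_{p=0},x}$, hence compact, so the hypothesis $\colim_k H^i(G_{k,x}) = H^i(F_x) = 0$ together with the lower bound $G_k \in D^{\geq -N}$ (which ensures only finitely many $i$ need be treated simultaneously) implies that each $H^i(G_{k,x})$ is eventually killed by a transition map.

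For the inductive step, once I have checked (using the argument below) that $\{G_k\}$ is ind-CM at all non-closed points of $X_{p=0}$, Lemma~\ref{IndCMPuncturedNN}(1) says that for each closed $x \in X_{p=0}$ and each $i < \dim \mathcal{O}_{X_{p=0},x}$ the ind-object $\{H^i_x(G_{k,x})\}$ is isomorphic to an ind-(finitely presented $\overline{V}/p$-module); combined with the vanishing of its colimit $H^i_x(F_x) = 0$ and the compactness of finitely presented $\overline{V}/p$-modules, this forces the ind-object itself to vanish, completing the induction at $x$.

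The main obstacle is supplying the ind-CM hypothesis at non-closed points of $X_{p=0}$ that is needed to invoke Lemma~\ref{IndCMPuncturedNN}: in the noetherian proof of Lemma~\ref{IndCMcCM} one simply localizes and recurses, but here $\mathrm{Spec}\,\mathcal{O}_{X_{p=0},y}$ is not of the form prescribed in the setup. I would resolve this exactly as in the proof of Lemma~\ref{IndCMPuncturedNN}: descend the relevant finite data (a chosen $G_{k_0}$ and a candidate transition map to be verified) to $Y_{p=0,W/p}$ for a sufficiently large finite extension $W/V$ contained in $\overline{V}$, where $Y_{p=0,W/p}$ is biequidimensional noetherian with normalized dualizing complex $\omega^\bullet_{Y_W/W}/p$ (Example~\ref{ex:RelDualizing}). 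In that noetherian world, the compatibility of the dualities under pullback (verified in the proof of Lemma~\ref{IndCMPuncturedNN}) lets one run the inductive/duality argument from Lemma~\ref{IndCMcCM} to establish the vanishing of the relevant cohomology images; pulling back to $X_{p=0}$ gives the required ind-CM statement at non-closed points, which plugs into Lemma~\ref{IndCMPuncturedNN} as described above.
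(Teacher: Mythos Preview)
Your easy direction and the base case are fine. The gap is in handling non-closed points. You propose to ``descend the relevant finite data (a chosen $G_{k_0}$ and a candidate transition map to be verified)'' to a noetherian model and then ``run the inductive/duality argument from Lemma~\ref{IndCMcCM}.'' But Lemma~\ref{IndCMcCM} is applied to an ind-object of coherent complexes over a fixed noetherian scheme whose colimit is a given cohomologically CM object; it \emph{produces} the required transition map rather than verifying a candidate. You never say to which ind-object over $Y_{p=0,W/p}$ you would apply it. Descending each $G_k$ to some $N_k$ over a varying $W_k$ and viewing $\{N_k\}$ as an ind-object over $Y_{p=0}$ does not have colimit $F$ (e.g.\ if the original system is constant at $G_0$, the descended one is constant at $N_0 \neq G_0$); and the direct pushforwards of the $G_k$ to $Y_{p=0}$ are not coherent. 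Neither option feeds into Lemma~\ref{IndCMcCM}, so as written the recursion at non-closed points does not go through.

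The paper sidesteps your dimension induction with a different construction. Fix the noetherian model $Y_{p=0}$ once and for all. Given $G_k$, descend it to $H \in D^{b,\geq -N}_{coh}(Y_{p=0})$, then form the \emph{new} ind-object $\{H_j\}$ of all factorizations $H \to H_j \to F$ in $D^{\geq -N}_{qc}(Y_{p=0})$ with $H_j \in D^{b,\geq -N}_{coh}(Y_{p=0})$. Compact generation gives $\colim_j H_j = F$, so Lemma~\ref{IndCMcCM} applies to $\{H_j\}$ and produces $H'$ with $H^i_x(H_x) \to H^i_x(H'_x)$ zero. Compactness of $H'$ factors $H' \to F$ through some $G_{k'}$; after enlarging $k'$ the square with $H \to G_k$ commutes. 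Since $H \otimes_{V/p}^L \overline{V}/p \simeq G_k$, the image of $H^i_x(H_x)$ generates $H^i_x(G_{k,x})$ over $\overline{V}/p$, forcing $H^i_x(G_{k,x}) \to H^i_x(G_{k',x})$ to vanish. This handles all points at once.
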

\begin{proof}
After possibly enlarging $V$, choose a descent $Y/V$ of $X/\overline{V}$ as in the first paragraph of the proof of Lemma~\ref{IndCMPuncturedNN}, so $X_{p=0} \to Y_{p=0}$ is a flat universal homeomorphism and $X_{p=0} = \lim_W Y_{p=0,W/p}$. We shall regard quasi-coherent sheaves on $X_{p=0}$ as quasi-coherent sheaves on $Y_{p=0}$ via pushforward. Since $X_{p=0} \to Y_{p=0}$ is a flat universal homeomorphism, the condition of being cohomologically CM for an object of $D_{qc}(X_{p=0})$ is equivalent to its image in $D_{qc}(Y_{p=0})$ being cohomologically CM, and similarly for the ind-CM property of ind-objects. In particular, $F$ is cohomologically CM over $Y_{p=0}$. Fix some $G_k$, a point $x \in X_{p=0}$, and an integer $i < \dim(\mathcal{O}_{X,x})$. We must show that there is some $G_k \to G_{k'}$ such that $H^i_x(G_{k,x}) \to H^i_x(G_{k',x})$ is $0$.  As a first step, we descend $G_k$ to $V$, i.e., after possibly enlarging $V$, we  find an $H \in D^{b,\geq -N}_{coh}(Y_{p=0})$ and an isomorphism $H \otimes_{V/p}^L \overline{V}/p \simeq G_k$ on $X_{p=0} = Y_{p=0} \otimes_{V/p} \overline{V}/p$. Consider the ind-object $\{H_j\}$ of $D^{b,\geq -N}_{coh}(Y_{p=0})$ formed by all diagrams of the form $H \to H_j \to F$ in $D^{\geq -N}_{qc}(Y_{p=0})$ with $H_j \in D^{b,\geq -N}_{coh}(Y_{p=0})$. Note that $\colim_j H_j = F$ as $D^{\geq -N}_{qc}(Y_{p=0})$ is compactly generated by $D^{b,\geq -N}_{coh}(Y_{p=0})$. Applying Lemma~\ref{IndCMcCM} to $\{H_j\}$, we learn that there exists some $H' = H_j \in D^{b, \geq -N}_{coh}(Y_{p=0})$ and a factorization $H \to H' \to F$ in $D^{\geq -N}_{qc}(Y_{p=0})$ with $H^i_x(H_x) \to H^i(H'_x)$ being the $0$ map. Since $\colim_{k' \geq k} G_{k'} = F$, the compactness of $H'$ in $D^{\geq -N}_{qc}(Y_{p=0})$ shows that there is a sufficiently large index $k'$ and a map $H' \to G_{k'}$ factoring $H' \to F$. This gives us a diagram
\[ \xymatrix{ H \ar[r] \ar[d] & H' \ar[d] \ar[rd] & \\
		G_k \ar[r] & G_{k'} \ar[r] & F}\] 
in $D^{\geq -N}_{qc}(Y_{p=0})$ where the triangle on the right as well as the outer quadrilateral commute. By compactness of $H$ in $D^{\geq -N}_{qc}(Y_{p=0})$, it follows that after enlarging $G_{k'}$ if necessary, the square on the left also commutes.   Applying $H^i_x( (-)_x)$ to the left square, we obtain a commutative diagram
\[ \xymatrix{ H^i_x(H_x) \ar[r]^a \ar[d]^d & H^i_x(H'_x)  \ar[d]^b \\
		H^i_x(G_{k,x}) \ar[r]^c & H^i_x(G_{k',x}).}\]
Now $a=0$ by choice of $H'$, so $c$ vanishes on $\mathrm{im}(d)$ by the diagram. But $c$ is a map of $\overline{V}/p$-modules and $\mathrm{im}(d)$ generates $H^i_x(G_{k,x})$ as a $\overline{V}/p$-module since $H \otimes_{V/p} \overline{V}/p \simeq G$. Thus, $c=0$, as wanted.
\end{proof}

\newpage \section{Review of prismatic cohomology and the $p$-adic Riemann-Hilbert functor}
\label{sec:RHreview}

In this section, for the convenience of the reader as well as ease of reference, we review some results from \cite{BhattLuriepadicRH1} that will be used in the sequel. 
As the theory in \cite{BhattLuriepadicRH1} as well as later results in this paper rely on prismatic cohomology \cite{BhattScholzePrism}, we give a quick summary of the necessary results in \S \ref{PrismaticReview} first. We then summarize the relevant structural results on the Riemann-Hilbert functor in \S \ref{ss:RH}, and explain some consequences for questions in commutative algebra (especially Cohen--Macaulayness considerations) in \S \ref{ss:RHCM}.

\subsection{Review of prismatic cohomology}
\label{PrismaticReview}

We review the key  properties of the prismatic complexes from \cite{BhattScholzePrism,BMS1} and then discuss an explicit example.

\begin{construction}[Prismatic cohomology over a perfect prism]
\label{ConsPrism}

Let $\mathcal{O}$ be a perfectoid ring corresponding to the perfect prism $(A,(d)) = (A_{\inf}(\mathcal{O}), \ker(\theta))$. Let $R$ be a $p$-complete $\mathcal{O}$-algebra. In \cite[\S 7.2 \& Remark 4.6]{BhattScholzePrism}, one finds a functorially defined commutative algebra object $\Prism_R \simeq \Prism_{R/A} \in D_{comp}(A)$. This objects comes equipped with a Frobenius endomorphism $\phi_R:\Prism_R \to \Prism_R$ that is linear over the Frobenius on $A$ and is a lift of the Frobenius on $\Prism_R/p$ in a suitable sense. This construction has the following features:
\begin{enumerate}
\item Hodge-Tate comparison \cite[Construction 7.6]{BhattScholzePrism}: The object $\Prism_R/d \in D_{comp}(\mathcal{O})$ admits an increasing exhaustive $\mathbf{N}$-indexed multiplicative ``Hodge-Tate'' filtration with graded pieces given by $\wedge^i L_{R/\mathcal{O}}[-i]$. In particular, the map $R = \mathrm{gr}_0^{HT}(\Prism_R/d) \to \Prism_R/d$ is a map of commutative algebras, so we can regard $\Prism_R/d$ as a commutative algebra in $D_{comp}(R)$. For $R$ formally  smooth, the description of the higher graded pieces shows that $\Prism_R/d$ is a perfect complex over $R$.

\item de Rham comparison \cite[Corollary 15.4]{BhattScholzePrism}: There is a natural isomorphism between $(\phi_{A_{\inf}(\mathcal{O})}^* \Prism_R)/d$ and the $p$-completed derived de Rham complex $L\Omega_{R/\mathcal{O}}$ of $R/\mathcal{O}$. 

\item \'Etale comparison \cite[Theorem 9.1]{BhattScholzePrism}: There is a natural isomorphism between $\left(\Prism_R/p[\frac{1}{d}]\right)^{\phi=1}$ and $R\Gamma(\mathrm{Spec}(R[1/p]), \mathbf{F}_p)$. 

\item Perfections in mixed characteristic \cite[\S 8]{BhattScholzePrism}: Let $\Prism_{R,\perf} := \colim_\phi \Prism_R \in D_{comp}(A)$ and $R_\pfd := \Prism_{R,\perf}/d \in D_{comp}(R)$. If $R_\pfd$ is concentrated in degree $0$, then $R_\pfd$ is a perfectoid ring and the map $R \to R_\pfd$ is the universal map from $R$ to a perfectoid ring. More generally, $R_\pfd$ identifies with $R\lim S$, where the inverse limit runs over all perfectoid rings $S$ equipped with a map $R \to S$. 

\item Isogeny theorem \cite[Theorem 15.3]{BhattScholzePrism}: If $R/\mathcal{O}$ is formally smooth, then the $A$-linearization $\phi^* \Prism_R \to \Prism_R$ of $\phi_R$ is a $d$-isogeny in $D_{comp}(A)$, i.e., it admits left and right inverses up to multiplication by $d^n$ for some $n$; in fact, $n = \dim(R/\mathcal{O})$ works by \cite[Corollary 15.5]{BhattScholzePrism} and (1). Reducing mod $p$ and taking the colimit shows that the map $\Prism_R/p \to \Prism_{R,\perf}/p$ has cone annihilated by $d^c$ for some integer $c \geq 1$ depending only on $\dim(R/\mathcal{O})$; in fact, one can show that $c = \frac{n}{p-1}$ works.

\item Colimit compatibility  \cite[Construction 7.6]{BhattScholzePrism}: The functor carrying the $p$-complete $\mathcal{O}$-algebra $R$ to $\Prism_R \in D_{comp}(A)$ commutes with filtered colimits. (In fact, when regarded as a functor to commutative algebra objects in $D_{comp}(A)$, this functor commutes with all colimits.) In particular, if $\{R_i\}$ is a filtered diagram of $p$-complete $\mathcal{O}$-algebras whose colimit $R = \colim_i R_i \in D_{comp}(\mathcal{O})$ is perfectoid, then we have
\[ (\colim_i \Prism_{R_i})/d \simeq (\colim_i \Prism_{R_i,\perf})/d \simeq R\] 
in $D_{comp}(\mathcal{O})$ by (4).

\item The \'etale sheaf property \cite[Construction 7.6]{BhattScholzePrism}: The construction $R \mapsto \Prism_R$ is a sheaf of complexes (in the $\infty$-categorical sense) for the Zariski (and even \'etale) topology on $\mathrm{Spf}(R)$ by (1). Consequently, for any $p$-adic formal $\mathcal{O}$-scheme $X$, one obtains a commutative algebra object $\Prism_X \in D(X,A)$ with a Frobenius endomorphism $\phi_X:\Prism_X \to \Prism_X$ satisfying analogs of (1), (2), and (5) above. 
\end{enumerate}
In the sequel, if $\mathcal{O}'$ is a ring with $p$-completion $\mathcal{O}$ and $R$ is any $\mathcal{O}'$-algebra, we set $\Prism_R := \Prism_{\widehat{R}}$, where $\widehat{R}$ is the $p$-completion of $R$. Similarly, if $X$ is an arbitrary $\mathcal{O}'$-scheme, we write $\Prism_X \in D(X, A)$ for the image of $\Prism_{\widehat{X}} \in D(\widehat{X}, A)$, where $\widehat{X}$ is the $p$-adic completion of $X$; thus, $\Prism_X/(p,d) \in D(X A/(p,d))$ naturally lifts to $D_{qc}(X_{p=0})$ by (1).
\end{construction}

\begin{remark}[How much of Construction~\ref{ConsPrism}  do we use in this paper?]
The \'etale comparison theorem in (3) does not play a direct role in this paper. We shall use the remaining results when $\mathcal{O}$ is the $p$-adic completion of the absolute integral closure $\overline{V}$ of a $p$-henselian and $p$-torsionfree DVR $V$, and $R/\mathcal{O}$ is obtained as the $p$-completion of a finitely presented $\overline{V}$-algebra (and similarly for schemes).  In this case, all results above also follow from the construction of the complexes $A\Omega_R$ in \cite{BMS1}, which preceeded \cite{BhattScholzePrism}: we have $\Prism_R = \phi_* A\Omega_R$ by \cite[Theorem 17.2]{BhattScholzePrism}.  Moreover, we shall crucially use the logarithmic variant of (1) and (4) for semistable $\mathcal{O}$-schemes; this theory is due to \v{C}esnavi\v{c}ius-Koshikawa \cite{CesnavicusKoshikawa} (which generalizes the construction in \cite{BMS1} to the logarithmic context), and is reviewed in \S \ref{ss:ProvePn}. Finally, in one isolated spot (Lemma~\ref{IncreaseFinite}), we shall also use the more general prismatic theory developed in \cite{BhattScholzePrism} in the context of the so-called ``Breuil-Kisin prism'' (which is a particular type of imperfect prism). 
\end{remark}

To give the reader a feel for the theory, we give the fundamental example of a calculation of $\Prism_R$. 

\begin{example}[The case of a torus]
Say $\mathcal{O} = \mathbf{Z}_p^{\text{cycl}} = \mathbf{Z}_p[\mu_{p^\infty}]^{\wedge}$ is the ring of integers of the perfectoid field $K = \mathbf{Q}_p(\mu_{p^\infty})^{\wedge}$. Let $R = \mathcal{O}[T^{\pm 1}]^{\wedge}$, where the completion is $p$-adic. Our goal is to describe $\Prism_R$ explicitly as a complex over $A = A_{\inf}(\mathcal{O})$.  Fix a non-trivial compatible system $(\epsilon_0,\epsilon_1,...)$ of $p$-power roots of $1$ in $\mathcal{O}$ (so $\epsilon_n$ is a primitive $p^n$-th root of $1$). This system determines an element $[\underline{\epsilon}] \in \mathcal{O}^\flat := \lim_{\phi} \mathcal{O}/p$ and hence an element $q = [\underline{\epsilon}] \in A = W(\mathcal{O}^\flat)$. In fact, one shows that $A = \mathbf{Z}_p[q^{1/p^\infty}]^{\wedge}$, where the completion is $(p,q-1)$-adic; the map $\theta:A \to \mathcal{O}$ is determined by $q^{1/p^n} \mapsto \epsilon_n$ for all $n$. A generator $d \in \ker(\theta:A \to \mathcal{O})$ is given by $d = \frac{q-1}{q^{1/p}-1} =: [p]_{q^{1/p}}$, the $q^{1/p}$-analog of $p$; note that $(p,q-1)$ and $(p,d)$ agree up to radicals. Let $I \subset A$ be the ideal of almost mathematics, so $I$ is the $(p,q-1)$-completion of the ideal generated by $\{q^{1/p^n}-1\}_{n \geq 1}$. The complex $\Prism_R \in D_{comp}(A)$ turns out to be given by the following direct sum of $2$-term complexes:
\[ \Prism_R \simeq \widehat{\bigoplus_{i \in \mathbf{Z}}} \left( A \cdot T^i \xrightarrow{ [i]_{q^{1/p} } } I \cdot T^i d\log_{q^{1/p}} T \right), \]
where the completion is $(p,q-1)$-adic, $[i]_{q^{1/p}} := \frac{q^{i/p}-1}{q^{1/p}-1}$ is the $q^{1/p}$-analog of $i$, and $T^i$ as well as  $T^i d\log_{q^{1/p}} T$ are  formal symbols. The $\phi_A$-semilinear Frobenius endomorphism of $\Prism_R$ is determined in the above presentation by requiring that it carry $T^i$ to $T^{ip}$ in the first term on the right. A more conceptual description of $\Prism_R$, and one that explains the $q$-analogs as well as Frobenius twists appearing above, is given by observing that $\phi_A^* \Prism_R$ is naturally identified with the $q$-de Rham complex of the $A$-algebra $A[T^{\pm 1}]$ with invertible \'etale co-ordinate given by $T$; see \cite{ScholzeqdR} and \cite[\S 16]{BhattScholzePrism} for more on this perspective.
\end{example}

\subsection{The Riemann-Hilbert functor}
\label{ss:RH}

In \cite{BhattLuriepadicRH1}, we construct a Riemann-Hilbert functor, attaching coherent objects to constructible \'etale sheaves on $p$-adic schemes. This construction can be regarded as a relative and integral variant of Fontaine's $D_{HT}(-)$ functor from \cite[\S 1.5]{Fontainepadicrepr}. We summarize the main results about this construction next:

\begin{theorem}[The Riemann-Hilbert functor for torsion coefficients]
\label{thm:RH}
Let $\mathcal{O}$ be a perfectoid ring, and let $X/\mathcal{O}$ be a scheme. For each $n \geq 1$, there is an exact colimit preserving functor
\begin{equation}
\tag{RH}
\label{eq:RHGeneral}
\RH_{\overline{\Prism}}:D(X, \mathbf{Z}/p^n) \to D_{qc}(X_{p^n=0}) 
\end{equation}
with the following features:
\begin{enumerate}
\item {\em The constant sheaf:} We have $\RH_{\overline{\Prism}}(\mathbf{Z}/p^n) = \Prism_{X,\perf}/(d,p^n) = \mathcal{O}_{X,\pfd}/p^n$. In particular, if the $p$-completion $\widehat{X}$ is perfectoid, then we have $\RH_{\overline{\Prism}}(\mathbf{Z}/p^n) = \mathcal{O}_X/p^n$ (where the quotient is derived).

\item {\em Proper pushforward:} If $f:Y \to X$ is a proper map of $\mathcal{O}$-schemes, then there is a natural isomorphism 
\[ \RH_{\overline{\Prism}} \circ Rf_{*} \simeq Rf_* \circ \RH_{\overline{\Prism}}\] 
of functors $D(Y, \mathbf{Z}/p^n) \to D_{qc}(X_{p^n=0})$.

\item {\em Sheaves on the generic fibre:} If $j:X[1/p] \hookrightarrow X$ denotes the inclusion, then $\RH_{\overline{\Prism}}$ carries the full subcategory 
\[ j_!:D(X[1/p], \mathbf{Z}/p^n) \hookrightarrow D(X, \mathbf{Z}/p^n)\] 
into the full subcategory 
\[ (-)_!:D_{qc}(X_{p^n=0})^a \hookrightarrow D_{qc}(X_{p^n=0}),\]  
where the almost category $D_{qc}(X_{p^n=0})^a$ is in the usual context (see Notation~\ref{GlobalNotation}). Moreover, for any $F \in D(X, \mathbf{Z}/p^n)$, the map $\RH_{\overline{\Prism}}(j_! (F|_{X[1/p]})) \to \RH_{\overline{\Prism}}(F)$ coming from functoriality identifies with the counit $(M^a)_! \to M$ available for any $M \in D_{qc}(X_{p^n=0})$; in particular, this map is an almost isomorphism.
\end{enumerate}

For the rest of the theorem, assume $\mathcal{O} = \mathcal{O}_C$ for a nonarchimedean perfectoid extension $C/\mathbf{Q}_p$, and  $X/\mathcal{O}_C$ is a finitely presented flat $\mathcal{O}_C$-scheme. We may regard $\RH_{\overline{\Prism}}$ as giving a functor 
\begin{equation}
\tag{RH$^a$}
\label{eq:RHOC}
\RH_{\overline{\Prism}}:D^b_{cons}(X[1/p], \mathbf{Z}/p^n) \to D_{qc}(X_{p^n=0})^a
\end{equation}
via (3). This functor has the following features:

\begin{enumerate}[resume]

\item {\em Almost coherence:} The functor in \eqref{eq:RHOC} takes values in the full subcategory $D^b_{acoh}(X_{p^n=0})^a$ of almost coherent complexes; here $N \in D^b_{qc}(X_{p^n=0})^a$ is called almost coherent if it is bounded above and for each $\epsilon \in \sqrt{p\mathcal{O}_C}$, there is an object $M_\epsilon \in D^b_{coh}(X_{p^n=0})$ and a map $M_\epsilon \to N$ in $D^b_{qc}(X_{p^n=0})^a$ whose cone has cohomology sheaves annihilated by $\epsilon$.

\item {\em Duality compatibility:} Writing $\mathbf{D}_V = \underline{\mathrm{RHom}}_{X[1/p]}(-, \omega_{X[1/p],et})$ and $\mathbf{D}_G = \underline{\mathrm{RHom}}_{X_{p^n=0}}(-, \omega_{X/\mathcal{O}_C,coh}^\bullet/p^n)$ for the Verdier and Grothendieck duality functors on $D^b_{cons}(X[1/p], \mathbf{Z}/p^n)$ and $D_{acoh}^b(X_{p^n=0})^a$ respectively, we have a natural isomorphism 
\[ \RH_{\overline{\Prism}} \circ \mathbf{D}_V \simeq \mathbf{D}_G \circ \RH_{\overline{\Prism}} \]
of contravariant functors $D^b_{cons}(X[1/p], \mathbf{Z}/p^n) \to D_{acoh}^b(X_{p^n=0})^a$

\item {\em Perverse $t$-structures:} Endow $D^b_{cons}(X[1/p], \mathbf{Z}/p^n)$ with the perverse $t$-structure for middle perversity, as in \cite[\S 4]{BBDG}. Then the functor in \eqref{eq:RHOC} satisfies
\[ \RH_{\overline{\Prism}} \left( {}^p D^{\leq 0}_{cons} \right) \subset D^{\leq 0}_{qc}(X_{p^n=0})^a \]
and
\[ \RH_{\overline{\Prism}} \left( {}^p D^{\geq 0}_{cons} \right) \subset \{K \in D^+_{qc}(X_{p^n=0})^a \mid R\Gamma_x(K_x) \in D^{\geq -\dim(\overline{\{x\}})} \ \ \forall x \in X_{p^n=0} \}. \]
\end{enumerate}
\end{theorem}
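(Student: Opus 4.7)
The plan is to build $\RH_{\overline{\Prism}}$ around the perfectoidized prismatic sheaf $\overline{\Prism}_X := \Prism_{X,\perf}/d$ from Construction~\ref{ConsPrism}, a commutative algebra object on $X_{p^n=0}$ equipped with a Frobenius $\phi$ and which, by design, realizes property~(1) for the constant sheaf $\mathbf{Z}/p^n$. For arbitrary $F \in D(X,\mathbf{Z}/p^n)$, one works pro-\'etale-locally: the \'etale comparison theorem gives $(\overline{\Prism}_X/p^n[1/d])^{\phi=1} \simeq \mathbf{Z}/p^n$ on the generic fibre, which allows one to define $\RH_{\overline{\Prism}}(F)$ by descent as a ``$\phi$-untwist'' of $F \otimes_{\mathbf{Z}/p^n}^L \overline{\Prism}_X/p^n$ computed after pullback along a suitable perfectoid pro-\'etale cover. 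Exactness and colimit-preservation then come for free from the formula, and compatibility of the extension with the value on the constant sheaf is enforced by functoriality.

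Property~(2) would reduce to a proper base change statement for prismatic cohomology: proper pushforward commutes with the formation of $\overline{\Prism}$, which combined with classical proper base change for constructible \'etale sheaves yields the desired intertwining. For property~(3), the key observation is that $\overline{\Prism}_X$ is, over the closed fibre, controlled by the almost ideal $\sqrt{p\mathcal{O}_C}$, so restriction of $F$ to $X[1/p]$ only produces almost-zero corrections upon applying $\RH_{\overline{\Prism}}$, and the counit description of $(-)_!$ falls out of the tensor formula.

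Property~(4) I would prove by d\'evissage along the constructible stratification: reduce to a lisse $\mathbf{Z}/p^n$-sheaf $\mathcal{L}$ on a smooth open $U \subset X[1/p]$, where $\RH_{\overline{\Prism}}(j_! \mathcal{L})$ is almost computed by a $\phi$-twisted locally free $\overline{\Prism}$-module, and then observe that such a module admits coherent approximations modulo any element of the almost ideal via standard spreading-out techniques. Property~(5) is the technical heart: it rests on a prismatic Poincar\'e duality pairing $\overline{\Prism}_X$ against a relative dualizing complex via a $\phi$-equivariant trace, in the spirit of Zavyalov's proof of Poincar\'e duality for rigid spaces acknowledged in the introduction. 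I expect this duality step to be the main obstacle: constructing a functorial trace at integral torsion coefficients, and checking its compatibility with the Hodge-Tate and \'etale comparisons, requires careful use of the isogeny theorem and of log-smooth alterations of de Jong.

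Finally, property~(6) should fall out formally once (4) and (5) are in hand. The right $t$-exactness inclusion can be checked via the recollement for the perverse $t$-structure, reducing to the model case $j_! \mathcal{L}[\dim]$ for a lisse sheaf on a smooth open, where a direct calculation with $\overline{\Prism}$ on the smooth stratum produces an object of the expected amplitude. The left $t$-exactness inclusion is then deduced by applying Verdier duality on the source, using property~(5) to transport it to Grothendieck duality on the target, and converting coconnectivity into the stated local-cohomology inequality via the relationship between Grothendieck duality and local cohomology already recorded in Example~\ref{CMDualConn}.
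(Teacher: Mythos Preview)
This theorem is not proven in the present paper. Section~\ref{sec:RHreview} is explicitly a \emph{review} of results from \cite{BhattLuriepadicRH1}: the opening sentence says ``we review some results from \cite{BhattLuriepadicRH1} that will be used in the sequel,'' and the theorem is introduced only with ``We summarize the main results about this construction next.'' There is no proof (or even proof sketch) of Theorem~\ref{thm:RH} here; the construction of $\RH_{\overline{\Prism}}$ and the verification of properties (1)--(6) are deferred entirely to \cite{BhattLuriepadicRH1}. So there is no ``paper's own proof'' for your proposal to be compared against.

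As for your sketch on its own merits: it is a plausible outline of the \emph{shape} such an argument might take, but several steps are vague or optimistic enough that they do not constitute a proof. In particular, your description of the functor as a ``$\phi$-untwist of $F \otimes^L \overline{\Prism}_X/p^n$ computed after pullback along a suitable perfectoid pro-\'etale cover'' is not a definition one can immediately work with --- specifying the descent datum and checking it glues to something in $D_{qc}(X_{p^n=0})$ (rather than in some pro-\'etale or $v$-topological category) is nontrivial. Likewise, your treatment of (5) correctly identifies it as the hard part but does not go beyond naming the ingredients; the paper itself flags in the acknowledgements that the duality compatibility was inspired by Zavyalov's ideas and is one of the main contributions of \cite{BhattLuriepadicRH1}. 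Your derivation of (6) from (4) and (5) via Example~\ref{CMDualConn} is reasonable in spirit, though note that Example~\ref{CMDualConn} concerns $D^b_{coh}$ over a noetherian ring, whereas here one is in an almost category over a non-noetherian base, so the translation requires care.
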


The subscript in $\RH_{\overline{\Prism}}$ is meant to be suggestive: roughly, if one works over a perfectoid ring $\mathcal{O}$, then one has a family of such functors parametrized by $\mathrm{Spec}(\Prism_{\mathcal{O}})$, and the functor $\RH_{\overline{\Prism}}$ discussed above is the restriction of this family to the Hodge-Tate divisor 
$\mathrm{Spec}(\mathcal{O}) = \mathrm{Spec}(\overline{\Prism}_{\mathcal{O}}) \subset \mathrm{Spec}(\Prism_{\mathcal{O}})$ (see also Remark~\ref{rmk:AinfLift}).

\begin{remark}[Refining $\RH$ to account for Frobenius]
\label{rmk:AinfLift}
Fix notation as in Theorem~\ref{thm:RH} and a distinguished element $d \in \ker(\theta) \subset A_{\inf}(\mathcal{O})$. Recall from Construction~\ref{ConsPrism} that $\mathcal{O}_{X,\pfd} = \Prism_{X,\perf}/d$, and that $\Prism_{X,\perf} \in D_{comp}(X,A_{\inf}(\mathcal{O}))$ carries a Frobenius automorphism. In \cite{BhattLuriepadicRH1}, we refine Theorem~\ref{thm:RH} to take values in Frobenius modules over $\Prism_{X,\perf}$. More precisely, write $D_{comp,qc}(X,\Prism_{X,\perf}/p^n)$ denotes the full subcategory of $D(X, \Prism_{X,\perf}/p^n)$ spanned by $d$-complete objects that are quasi-coherent modulo $d$ after restriction of scalars along $\mathcal{O}_X/p^n \to \Prism_X/(d,p^n) \to \Prism_{X,\perf}/(d,p^n)$. Then we show that $\RH_{\overline{\Prism}}$ factors as 
\[ D^b_{cons}(X, \mathbf{Z}/p^n) \xrightarrow{\RH_{\Prism}} D_{comp,qc}(X, \Prism_{X,\perf}/p^n)^{\phi=1} \xrightarrow{- \otimes_{\Prism_{X,\perf}}^L \mathcal{O}_{X,\pfd}} D_{qc}(X_{p^n=0})\] 
with the following features:
\begin{itemize}
\item The functor $\RH_{\Prism}$ symmetric monoidal and satisfies the natural variants of Theorem~\ref{thm:RH} (1) - (3).
\item In the context of the second half of Theorem~\ref{thm:RH}, if $X/\mathcal{O}_C$ is proper, then $\RH_\Prism$ is fully faithful. 
\item $\RH_{\Prism}$ identifies with the functor in \cite[Theorem 12.1.5]{BhattLurieModpRH} when $X$ is an $\mathbf{F}_p$-scheme.
\end{itemize}
In the sequel, we shall use the existence of this lift $\RH_\Prism$ of $\RH_{\overline{\Prism}}$.
\end{remark}

\begin{remark}[Refining $\RH$ to $\mathcal{O}_{\pfd}$-coefficients]
Fix notation as in Theorem~\ref{thm:RH}. It follows from Remark~\ref{rmk:AinfLift} that the functor $\RH_{\overline{\Prism}}$ from \eqref{eq:RHGeneral} can be regarded as a symmetric monoidal functor 
\[ \RH_{\overline{\Prism}}:D^b_{cons}(X, \mathbf{Z}/p^n) \to D_{qc}(X, \mathcal{O}_{X,\pfd}/p^n).\]
In \cite{BhattLuriepadicRH1}, we show that this variant of $\RH_{\overline{\Prism}}$ commutes with pullbacks along maps $\mathcal{O}$-schemes. 

Moreover, if $X/\mathcal{O}_C$ is finitely presented and flat, then (using \cite{Gabbertstructures}), \cite{BhattLuriepadicRH1} constructs both``constructible'' and ``perverse'' $t$-structures on $D_{qc}(X, \mathcal{O}_{X,\pfd}/p^n)^a$ making the functor 
\[ \RH_{\overline{\Prism}}:D^b_{cons}(X[1/p], \mathbf{Z}/p^n) \to D_{qc}(X, \mathcal{O}_{X,\pfd}/p^n)^a\] 
coming from  \eqref{eq:RHOC}  $t$-exact for the corresponding $t$-structures on the left hand side.
\end{remark}

\begin{remark}[Applying $\RH$ over non-complete ground fields]
\label{NonCompBase}
In some applications of the second half of Theorem~\ref{thm:RH}, to make noetherian approximation arguments feasible, we often find ourselves in the following situation: the $\mathcal{O}_C$-scheme $X$ is the base change of a $W$-scheme $Y$, where $W$ is a rank $1$ absolutely integrally closed valuation ring with $p$-completion $\mathcal{O}_C$.  In this situation, we shall often write $\RH$ for the composition of the functor in \eqref{eq:RHOC} with the scalar extension functor $D^b_{cons}(Y[1/p], \mathbf{Z}/p^n) \to D^b_{cons}(X[1/p], \mathbf{Z}/p^n)$ along the extension $C/W[1/p]$ of algebraically closed fields
\end{remark}

We close this subsection with a key sample calculation of $\RH_{\overline{\Prism}}$.

\begin{example}[The Riemann-Hilbert functor for standard sheaves]
Let $R$ be a perfectoid ring. Theorem~\ref{thm:RH} gives a functor $\RH_{\overline{\Prism}}:D(\mathrm{Spec}(R), \mathbf{F}_p) \to D_{qc}(R/p)$. Let $i:\mathrm{Spec}(R/I) \subset \mathrm{Spec}(R)$ be a  closed subset defined by an ideal $I \subset R$, and let $j:U \subset \mathrm{Spec}(R)$ be the complementary open. We have a short exact sequence
\[ 0 \to j_! \mathbf{F}_p \to \mathbf{F}_p \to i_* \mathbf{F}_p \to 0\]
of $\mathbf{F}_p$-sheaves on $\mathrm{Spec}(R)$. The functor $\RH_{\overline{\Prism}}$ carries this sequence to the short exact sequence
\[ 0 \to I_\pfd \to R \to (R/I)_\pfd \to 0\]
by Theorem~\ref{thm:RH} (2) and \cite[Theorem 7.4]{BhattScholzePrism}. This calculation illustrates another feature of the functor $\RH_{\overline{\Prism}}$: it is $t$-exact with respect to the standard $t$-structures on $D(\mathrm{Spec}(R), \mathbf{F}_p)$ and $D(R/p)$ when $R$ is perfectoid. 
\end{example}

\subsection{Applications of the Riemann-Hilbert functor in commutative algebra}
\label{ss:RHCM}

In this subsection, we recall from \cite{BhattLuriepadicRH1} some consequences of Theorem~\ref{thm:RH} for commutative algebra. Many of these rely on the following statement contained in Theorem~\ref{thm:RH} (6).

\begin{corollary}[$\RH$ carries perverse sheaves to shifted almost Cohen--Macaulay complexes]
\label{cor:PervACM}
With notation as in the second half of Theorem~\ref{thm:RH}, if $F \in {}^p D^{\geq 0}_{cons}(X[1/p], \mathbf{Z}/p^n)$, then $R\Gamma_x(\RH_{\overline{\Prism}}(F)_x) \in D^{\geq -\dim(\overline{\{x\}}),a}$ for all  $x \in X_{p^n=0}$.
\end{corollary}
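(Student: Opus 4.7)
The plan is to recognize this as an essentially immediate restatement of Theorem~\ref{thm:RH}(6): the second inclusion there is literally the assertion that for $F \in {}^p D^{\geq 0}_{cons}(X[1/p], \mathbf{Z}/p^n)$, the object $\RH_{\overline{\Prism}}(F)$ satisfies $R\Gamma_x(\RH_{\overline{\Prism}}(F)_x) \in D^{\geq -\dim(\overline{\{x\}})}$ for all $x \in X_{p^n=0}$, which is the claim (the ``$a$''-superscript in $D^{\geq -\dim(\overline{\{x\}}),a}$ is a notational reminder that $\RH_{\overline{\Prism}}$ as in \eqref{eq:RHOC} takes values in the almost category).

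If one wanted to re-derive the second inclusion of Theorem~\ref{thm:RH}(6) from its first inclusion together with the duality compatibility of Theorem~\ref{thm:RH}(5), the route I would take is the following. Since middle perversity is self-dual under Verdier duality, the condition $F \in {}^p D^{\geq 0}_{cons}$ is equivalent to $\mathbf{D}_V(F) \in {}^p D^{\leq 0}_{cons}$. Applying the first inclusion in Theorem~\ref{thm:RH}(6) to $\mathbf{D}_V(F)$ and invoking the duality exchange of Theorem~\ref{thm:RH}(5) yields
\[
\mathbf{D}_G\bigl(\RH_{\overline{\Prism}}(F)\bigr) \;\simeq\; \RH_{\overline{\Prism}}(\mathbf{D}_V(F)) \;\in\; D^{\leq 0}_{qc}(X_{p^n=0})^a.
\]
The remaining task is to convert this coconnective bound on $\mathbf{D}_G(\RH_{\overline{\Prism}}(F))$ into the desired connective bound on local cohomology stalks. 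For this, I would invoke local duality at each $x \in X_{p^n=0}$: after completion, Matlis duality identifies the stalk of $\mathbf{D}_G(K)$ at $x$ with a shift (by $\delta(x)=\dim(\overline{\{x\}})$, as dictated by the normalization of $\omega^\bullet_{X/\mathcal{O}_C}/p^n$ recorded in Remark~\ref{DualizingDim} and Example~\ref{ex:RelDualizing}) of the Matlis dual of $R\Gamma_x(K_x)$. Exactness of Matlis duality then turns the upper bound on the left into the bound $R\Gamma_x(K_x) \in D^{\geq -\dim(\overline{\{x\}})}$ on the right, as required.

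The main obstacle in running this re-derivation carefully is that $X_{p^n=0}$ is not noetherian: $\mathcal{O}_C$ is only a rank-one valuation ring and $X$ is merely finitely presented over it, so classical local duality does not directly apply. I would handle this exactly as in the proof of Lemma~\ref{IndCMPuncturedNN}: descend $X/\mathcal{O}_C$ to a finitely presented flat $W$-scheme $Y/W$ for some excellent DVR $W \subset \mathcal{O}_C$, use that $\omega^\bullet_{X/\mathcal{O}_C}/p^n$ is the pullback of $\omega^\bullet_{Y/W}/p^n$ (Example~\ref{ex:RelDualizing}), run ordinary local duality on the noetherian excellent scheme $Y_{p^n=0}$ (with $F$ descended to a suitable finite extension of $W$), and transport the conclusion along the faithfully flat base change to $X_{p^n=0}$. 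Everything else in the argument is formal from Theorem~\ref{thm:RH}.
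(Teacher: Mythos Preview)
Your proposal is correct and matches the paper's treatment: the corollary is presented there without a separate proof, simply as a restatement of the second inclusion in Theorem~\ref{thm:RH}(6), exactly as you identify in your first paragraph. The subsequent paragraphs in which you sketch how one might re-derive that second inclusion from the first via Theorem~\ref{thm:RH}(5) and local duality go beyond what the paper does (it takes all of part~(6) as a black box from \cite{BhattLuriepadicRH1}), but the sketch is reasonable and not needed for the corollary itself.
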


In applications of Corollary~\ref{cor:PervACM}, we need a large supply of perverse sheaves. Besides standard examples in algebraic geometry (e.g., those provided by Artin vanishing), the main new source of examples comes from the following simple but critical observation (see proof of Theorem~\ref{ACMaic}):

\begin{proposition}[\'Etale acyclicity of absolutely integrally closed integral schemes]
\label{EtaleCohAIC}
Let $X$ be an integral normal scheme with algebraically closed function field. Then $\mathbf{F}_p \simeq R\Gamma(X, \mathbf{F}_p)$. If $f:Y \to X$ is a dominant  map between two such schemes (e.g., a non-empty open immersion), then $\mathbf{F}_p \simeq Rf_* \mathbf{F}_p$.
\end{proposition}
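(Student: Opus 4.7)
The plan is to deduce both parts from a single structural statement: for $j : \mathrm{Spec}(K) \hookrightarrow X$ the inclusion of the generic point (where $K$ denotes the algebraically closed function field of $X$), the unit map $\mathbf{F}_p \to Rj_* \mathbf{F}_p$ of \'etale sheaves on $X$ is an isomorphism. To verify this I would pass to stalks at an arbitrary geometric point $\bar{x} \to X$, where the formula reads $(Rj_* \mathbf{F}_p)_{\bar{x}} \simeq R\Gamma(\mathrm{Spec}(\mathcal{O}_{X,\bar{x}}^{sh}) \times_X \mathrm{Spec}(K), \mathbf{F}_p)$.

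The key input is the identification $\mathrm{Frac}(\mathcal{O}_{X,\bar{x}}^{sh}) = K$. Since $X$ is integral and normal, the strict henselization $\mathcal{O}_{X,\bar{x}}^{sh}$ is a normal local domain, realized as a filtered colimit of local rings of \'etale neighborhoods $(U,\bar{u}) \to (X,\bar{x})$. Each such local ring is a normal local domain whose fraction field is a finite separable extension of $K$; but $K$ is algebraically closed, so all these extensions are trivial, and the colimit has fraction field $K$. Consequently $\mathrm{Spec}(\mathcal{O}_{X,\bar{x}}^{sh}) \times_X \mathrm{Spec}(K) = \mathrm{Spec}(K)$, so the stalk equals $R\Gamma(\mathrm{Spec}(K), \mathbf{F}_p) = \mathbf{F}_p$ (as $K$ is algebraically closed), and tracing through adjunctions identifies the map with the identity of $\mathbf{F}_p$. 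This gives $\mathbf{F}_p \simeq Rj_* \mathbf{F}_p$.

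The first assertion is then immediate: $R\Gamma(X,\mathbf{F}_p) \simeq R\Gamma(X, Rj_* \mathbf{F}_p) \simeq R\Gamma(\mathrm{Spec}(K), \mathbf{F}_p) = \mathbf{F}_p$. For the second, given a dominant $f : Y \to X$ with generic points $\eta_Y, \eta_X$ and induced map $g : \eta_Y \to \eta_X$, the key fact applied to $Y$ gives $Rf_* \mathbf{F}_p \simeq Rf_* Rj_{Y,*} \mathbf{F}_p \simeq Rj_{X,*} Rg_* \mathbf{F}_p$; since $g$ is a morphism between spectra of algebraically closed fields we have $Rg_* \mathbf{F}_p = \mathbf{F}_p$, and the key fact applied to $X$ then yields $Rj_{X,*} \mathbf{F}_p \simeq \mathbf{F}_p$, as desired. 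The only real substance lies in the fraction-field computation for the strict henselization; everything else is formal manipulation of adjunctions and base change for $Rj_*$.
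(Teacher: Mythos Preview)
Your proof is correct and takes a genuinely different route from the paper. The paper argues structurally that the \'etale and Zariski sites of $X$ coincide (via Gabber's observation that any affine \'etale $X$-scheme decomposes as a finite product of open subschemes of $X$), and then invokes Grothendieck's theorem that constant sheaves on irreducible spaces have no higher cohomology. You instead prove the sheaf-theoretic statement $\mathbf{F}_p \simeq Rj_*\mathbf{F}_p$ for $j:\mathrm{Spec}(K)\hookrightarrow X$ by a direct stalk computation, the substance being that $\mathrm{Frac}(\mathcal{O}_{X,\bar{x}}^{sh})=K$. Both arguments ultimately rest on the same phenomenon (\'etale neighborhoods of points of $X$ have fraction field $K$), but your packaging avoids the detour through Zariski cohomology of irreducible spaces and yields the useful intermediate result $\mathbf{F}_p\simeq Rj_*\mathbf{F}_p$, from which the $Rf_*$ statement follows by pure functoriality. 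The paper's route, on the other hand, gives a stronger structural conclusion (the identification of topologies) that is reused elsewhere in the paper, e.g.\ in the proof of Lemma~\ref{EquationCM}.
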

\begin{proof}[Sketch of proof]
For the first part, one first shows that the Zariski and \'etale topologies of $X$ coincide: any affine \'etale $X$-scheme itself a finite product integral normal schemes with algebraically closed function fields, and these schemes are open subsets of $X$ by generic point considerations (see \cite[Lemma 3]{GabberAffineAnalog}). Having translated from \'etale to Zariski cohomology, we conclude by Grothendieck's theorem \cite[Tag 02UW]{StacksProject} that constant sheaves have vanishing higher cohomology on irreducible topological spaces.

For the second part, as the Zariski and \'etale topologies of $X$ coincide, it is sufficient to prove that $R\Gamma(U, \mathbf{F}_p) \simeq R\Gamma(f^{-1}U, \mathbf{F}_p)$ for all non-empty opens $U \subset X$. Since $f$ is dominant, $f^{-1}U$ is a non-empty open subscheme of $Y$. But then both $U$ and $f^{-1}U$ are integral normal schemes with algebraically closed function fields, so the claim follows from the previous paragraph as both sides identify with $\mathbf{F}_p$.
\end{proof}

The main reason we cared about Corollary~\ref{cor:PervACM} was part (2) of the following result, which was previously known by work of Heitmann \cite{HeitmannDSC3} in the case of relative dimension $2$.

\begin{theorem}[Almost Cohen--Macaulayness of absolute integral closures]
\label{ACMaic}
Let $W$ be an absolutely integrally closed rank $1$ valuation ring with pseudouniformizer $p$. Let $X/W$ be a finitely presented flat integral scheme of relative dimension $n$. Let $\pi:X^+ \to X$ be an absolute integral closure. Write $(-)_\eta$ for the functor of inverting $p$. 
\begin{enumerate}
\item The object $\pi_{\eta,*} \mathbf{F}_p[n] \in D^b(X_\eta, \mathbf{F}_p)$ is a filtered colimit of perverse sheaves.
\item For any $x \in X_{p=0}$, the complex $R\Gamma_x(\pi_* \mathcal{O}_{X^+,x}/p)$ is almost concentrated in degree $n-\dim(\overline{\{x\}})$. (In other words, $\pi_* \mathcal{O}_{X^+}/p \in D_{qc}(X_{p=0})$ satisfies the almost analog of Definition~\ref{CMDerived}.)
\end{enumerate}
\end{theorem}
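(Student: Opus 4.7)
The plan is to feed the geometry of $X^+$ through the Riemann-Hilbert functor: part~(1) identifies the perverse structure of $\pi_{\eta,*}\mathbf{F}_p[n]$ on $X_\eta$, and part~(2) transports this across $\RH_{\overline{\Prism}}$ to deduce the almost Cohen-Macaulayness via Corollary~\ref{cor:PervACM}. For~(1), write $X^+=\lim_\alpha Y_\alpha$ as the cofiltered limit of finite integral normal covers of $X$, so that $\pi_{\eta,*}\mathbf{F}_p\simeq\colim_\alpha\pi_{\alpha,\eta,*}\mathbf{F}_p$ by standard limit arguments. By Proposition~\ref{EtaleCohAIC}, the stalks of $\pi_{\eta,*}\mathbf{F}_p$ at $x\in X_\eta$ compute \'etale cohomology of pullbacks of strict henselizations to $X^+_\eta$, which are absolutely integrally closed normal schemes, and hence are concentrated in degree~$0$; a parallel computation on punctured henselizations yields the costalk vanishing required for $\pi_{\eta,*}\mathbf{F}_p[n]$ to be perverse on $X_\eta$. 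Finally, one obtains the filtered-colimit statement in (1) by applying perverse truncation ${}^p\mathcal{H}^0$ to each $\pi_{\alpha,\eta,*}\mathbf{F}_p[n]$: since perverse cohomology commutes with filtered colimits and $\pi_{\eta,*}\mathbf{F}_p[n]$ already equals its own ${}^p\mathcal{H}^0$, we get $\pi_{\eta,*}\mathbf{F}_p[n]\simeq\colim_\alpha{}^p\mathcal{H}^0(\pi_{\alpha,\eta,*}\mathbf{F}_p[n])$, a filtered colimit of constructible perverse sheaves.

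\textbf{Part (2).} Apply $\RH_{\overline{\Prism}}\circ j_!$ (with $j:X_\eta\hookrightarrow X$) to the perverse colimit from (1). By Theorem~\ref{thm:RH}(6) combined with Corollary~\ref{cor:PervACM}, each perverse summand $P_\alpha={}^p\mathcal{H}^0(\pi_{\alpha,\eta,*}\mathbf{F}_p[n])$ satisfies $R\Gamma_x(\RH_{\overline{\Prism}}(j_!P_\alpha)_x)\in D^{\geq-\dim\overline{\{x\}}}$ in the almost derived category for every $x\in X_{p=0}$. This bound is preserved by filtered colimits and $\RH_{\overline{\Prism}}$ commutes with them (Theorem~\ref{thm:RH}), so the same conclusion propagates to $\RH_{\overline{\Prism}}(j_!\pi_{\eta,*}\mathbf{F}_p[n])$. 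To identify this object almostly with $\pi_*(\mathcal{O}_{X^+}/p)[n]$, for each finite $Y\to X$ use proper pushforward (Theorem~\ref{thm:RH}(2)):
\[ \RH_{\overline{\Prism}}(\pi_{Y,*}\mathbf{F}_p)\;\simeq\;\pi_{Y,*}\RH_{\overline{\Prism}}(\mathbf{F}_p)\;=\;\pi_{Y,*}(\mathcal{O}_{Y,\pfd}/p), \]
together with the $j_!$-almost identification (Theorem~\ref{thm:RH}(3)), and then take the colimit over $Y$; since $\colim_Y\mathcal{O}_{Y,\pfd}/p=\mathcal{O}_{X^+,\pfd}/p$ is almost isomorphic to $\mathcal{O}_{X^+}/p$ (because the $p$-completion of $\mathcal{O}_{X^+}$ is perfectoid), the identification follows. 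Unshifting by $n$ gives $R\Gamma_x(\pi_*\mathcal{O}_{X^+,x}/p)\in D^{\geq n-\dim\overline{\{x\}}}$ almostly; the matching upper bound $D^{\leq n-\dim\overline{\{x\}}}$ is automatic, as $\pi_*\mathcal{O}_{X^+}/p$ is a discrete sheaf on~$X_{p=0}$ whose local ring at~$x$ has Krull dimension $n-\dim\overline{\{x\}}$. This yields~(2).

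\textbf{Main obstacle.} I expect the most delicate technical point to be the verification of perversity in part~(1): the costalk vanishing $H^i_x(\pi_{\eta,*}\mathbf{F}_p)_x=0$ for $i<n-\dim\overline{\{x\}}$ relies on controlling the $\pi_0$ of punctured strict henselizations in $X^+_\eta$, which is subtle near singular points of $X_\eta$ and requires careful use of Proposition~\ref{EtaleCohAIC} together with the behaviour of absolute integral closures under localization. The coherent side in part~(2) is, by contrast, largely formal once the Riemann-Hilbert black box (Theorem~\ref{thm:RH}) and the perfectoidness of $\widehat{\mathcal{O}_{X^+}}$ are in hand.
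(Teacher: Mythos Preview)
Your part~(2) matches the paper's: apply $\RH_{\overline{\Prism}}$ and invoke Corollary~\ref{cor:PervACM}, identifying $\RH_{\overline{\Prism}}(\pi_{\eta,*}\mathbf{F}_p)$ with $\pi_*\mathcal{O}_{X^+}/p$ via perfectoidness of $\widehat{X^+}$. For part~(1), however, you take a genuinely different route. The paper works at the level of the ind-system: for each finite normal cover $f_Y:Y\to X$, it picks an affine open $j:U\hookrightarrow X_\eta$ over which $Y_\eta$ is finite \'etale, so $Rj_*(f_{Y,\eta,*}\mathbf{F}_p[n]|_U)$ is perverse by Artin vanishing; then Proposition~\ref{EtaleCohAIC} (showing $\mathbf{F}_p$ on $X^+_\eta$ is $*$-extended from $X^+_U$) supplies a later $Z\to Y$ through which the transition map factors via this perverse sheaf. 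You instead verify directly that the \emph{colimit} $\pi_{\eta,*}\mathbf{F}_p[n]$ satisfies the perverse stalk/costalk conditions, and then apply ${}^p\mathcal{H}^0$ termwise. The paper's approach stays inside $D^b_{cons}$ and produces the intermediate perverse sheaves explicitly; yours is more intrinsic but must grapple with perversity for non-constructible objects.

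Two remarks on your route. First, the costalk computation you flag as the main obstacle is in fact clean: after harmlessly replacing $X$ by its normalization, each connected component of $X^+_\eta\times_{X_\eta}\mathrm{Spec}(\mathcal{O}_{X_\eta,\bar x}^{sh})$ is the spectrum of an absolutely integrally closed \emph{domain}, hence irreducible; removing the closed fibre leaves it connected whenever $x$ is non-generic, so Proposition~\ref{EtaleCohAIC} gives $i_x^!(\pi_{\eta,*}\mathbf{F}_p)=0$ outright---no subtlety at singular points. Second, there is a genuine (though fixable) gap in your final step: deducing that the colimit lies in the heart of the perverse $t$-structure, and that ${}^p\mathcal{H}^0$ commutes with filtered colimits, requires extending the perverse $t$-structure to ind-constructible sheaves and checking that the pointwise costalk condition still detects ${}^p D^{\geq 0}$ there. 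This holds (argue by gluing and induction on dimension, using that $i^!$ for a closed immersion does preserve filtered colimits of ind-constructibles since $i_*$ preserves compacts), but since $i_x^!$ is a right adjoint it is not automatic and deserves justification; the paper's argument avoids this issue entirely by never leaving $D^b_{cons}$.
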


Note that part (1) of this result would be completely false with characteristic $0$ coefficients.

\begin{proof}[Sketch of proof]
(1): Consider the category (in fact, poset) $\mathcal{P}$ of all factorizations $\{X^+ \xrightarrow{g_Y} Y \xrightarrow{f_Y} X\}$ with $Y \to X$ finite surjective with $Y$ integral normal and $X^+ \to Y$ dominant. As $\pi_{\eta,*} \mathbf{F}_p$ is the colimit of the ind-object $\{f_{Y,\eta,*} \mathbf{F}_p\}_{\mathcal{P}}$ in $D^b_{cons}(X_\eta, \mathbf{F}_p)$, it is enough to show that the ind-object $\{f_{Y,\eta,*} \mathbf{F}_p[n]\}_{\mathcal{P}}$ is ind-perverse. Fix some $Y \in \mathcal{P}$. As $Y \to X$ is a finite map of reduced varieties in characteristic $0$, we may choose an affine open subset $j:U \subset X_\eta$ such that $Y_U \to U$ is finite \'etale, so $Rj_* (f_{Y,\eta,*} \mathbf{F}_p[n]|_{U})$ is perverse by Artin vanishing as in \cite[Corollary 4.1.3]{BBDG}. It is then enough to prove that there is a map $Z \to Y$ in $\mathcal{P}$ such that the pullback map $f_{Y,\eta,*} \mathbf{F}_p[n] \to f_{Z,\eta,*} \mathbf{F}_p[n]$ factors over $f_{Y,\eta,*} \mathbf{F}_p[n] \to Rj_* (f_{Y,\eta,*} \mathbf{F}_p[n]|_U)$. But this can be checked after taking a colimit over all such $Z$'s by compactness of constructibile complexes, and then it follows from the observation that the constant sheaf $\mathbf{F}_p$ on $X^+$ is $*$-extended from $X^+_U$ by Proposition~\ref{EtaleCohAIC}.

(2): Consider the functor $\RH:D(X_\eta, \mathbf{F}_p) \to D_{qc}(X_{p=0})^a$ coming from Theorem~\ref{thm:RH} applied to $p$-completion of $X/W$ as in Remark~\ref{NonCompBase}. Theorem~\ref{thm:RH} (1) and (2) show that $\RH(\pi_{\eta,*} \mathbf{F}_p) \simeq \pi_* \mathcal{O}_{X^+}/p$ since the $p$-completion of $X^+$ is a perfectoid formal scheme (see Lemma~\ref{AICPerfectoid} below); the claim now follows from (1) as well as Corollary~\ref{cor:PervACM}.
 \end{proof}

Another consequence of Theorem~\ref{thm:RH} that we shall need is the following, giving a $p$-adic lift of the main result of \cite{ddscposchar} as well as a generalization of \cite{Bhattpdiv} (as well as parts of \cite{Beilinsonpadic}) that accommodates torsion classes and works over a higher dimensional base. 

\begin{theorem}[Annihilating cohomology of proper maps]
\label{KillCohMixed}
Let $f:X \to S$ be a proper morphism of noetherian $p$-torsionfree schemes. Then there exists a finite cover $\pi:Y \to X$ such that, with $g = f \circ \pi$, the map $Rf_* \mathcal{O}_X/p \to Rg_* \mathcal{O}_Y/p$ factors over $(g_* \mathcal{O}_Y)/p \hookrightarrow H^0(Rg_* \mathcal{O}_Y/p) \to Rg_* \mathcal{O}_Y/p$ over $\mathcal{O}_S/p$. 
\end{theorem}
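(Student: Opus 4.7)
My plan is to use the Riemann--Hilbert functor $\RH_{\overline{\Prism}}$ of Theorem~\ref{thm:RH} to translate the desired mod-$p$ coherent factoring problem into the analogous statement for the étale constant sheaf $\mathbf{F}_p$, where the étale acyclicity of absolute integral closures from Proposition~\ref{EtaleCohAIC} supplies the needed vanishing.

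First, I would localize on $S$ to reduce to the case $S = \mathrm{Spec}(R)$ affine noetherian, and by first replacing $X$ with a finite cover (e.g., the disjoint normalization of the reduced irreducible components), assume $X$ is integral and normal. Fix an absolute integral closure $\pi: X^+ \to X$ and write $\pi = \lim_{\mathcal{Y}} \pi_Y$ as the filtered colimit of its finite-cover approximations $\pi_Y: Y \to X$ with $Y$ integral and normal. Since $\pi$ is affine and $f$ is proper with $S$ noetherian, we have identifications $Rf_*\pi_*\mathcal{O}_{X^+}/p \simeq \colim_{\mathcal{Y}} Rg_{Y,*}\mathcal{O}_Y/p$ and $f_*\pi_*\mathcal{O}_{X^+}/p \simeq \colim_{\mathcal{Y}}(g_{Y,*}\mathcal{O}_Y)/p$ in $D(\mathcal{O}_S/p)$. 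Compactness of $Rf_*\mathcal{O}_X/p$ in $D^b(\mathcal{O}_S/p)$ reduces the theorem to producing the factoring at the ind-colimit level, from which a single finite cover can be extracted.

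To access Theorem~\ref{thm:RH}, I would next perform noetherian approximation and a faithfully flat base change along $R \to \mathcal{O}$, where $\mathcal{O}$ is the $p$-completion of an absolute integral closure of a finite-type $\mathbf{Z}$-subalgebra of $R$ (hence perfectoid). Since $\widehat{X^+}$ is perfectoid (Lemma~\ref{AICPerfectoid}), Theorem~\ref{thm:RH}(1),(2) gives $\RH(\pi_*\mathbf{F}_p) \simeq \pi_*\mathcal{O}_{X^+}/p$ on $X_{p=0}$ and $\RH(Rg_*\mathbf{F}_p) \simeq Rf_*\pi_*\mathcal{O}_{X^+}/p$ on $S_{p=0}$. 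Moreover, the natural map $\mathcal{O}_X/p \to \pi_*\mathcal{O}_{X^+}/p$ factors through the perfectoidization $\mathcal{O}_{X,\pfd}/p = \RH(\mathbf{F}_p)$ by its universal property, so the pullback $Rf_*\mathcal{O}_X/p \to Rf_*\pi_*\mathcal{O}_{X^+}/p$ factors through $\RH(Rf_*\mathbf{F}_p)$. This reduces the desired ind-factoring to the corresponding étale statement: the map $Rf_*\mathbf{F}_p \to Rg_*\mathbf{F}_p$ on $S$ factors, at the ind-colimit of finite covers, through $g_*\mathbf{F}_p$.

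For the étale statement itself, Proposition~\ref{EtaleCohAIC} gives $R^i\pi_*\mathbf{F}_p = 0$ for $i > 0$, so the only higher cohomology of $Rg_*\mathbf{F}_p$ comes from $Rf_*$; killing this by finite covers is a statement in the same spirit as Theorem~\ref{ACMaic}(1) (ind-perversity of $\pi_{\eta,*}\mathbf{F}_p[n]$), combined with Artin vanishing on the generic fibre and cohomology-killing arguments of the type developed in \cite{Bhattpdiv}. Compactness then descends the ind-factoring to a single finite cover. The primary obstacle I anticipate is the careful bookkeeping around two related subtleties: (i) the exactness of $\RH$ on the non-perfectoid scheme $X$ is governed by the perverse (rather than standard) $t$-structure by Theorem~\ref{thm:RH}(6), so extracting an honest $H^0$-factoring from the RH translation requires additional input; and (ii) the gap between \emph{factoring through} $H^0(Rg_{Y,*}\mathcal{O}_Y/p)$ and factoring through the honest quotient $(g_{Y,*}\mathcal{O}_Y)/p$---which is governed by the $p$-torsion in $R^1 g_{Y,*}\mathcal{O}_Y$---must be handled by absorbing this $p$-torsion into a further finite cover via the same ind-level argument.
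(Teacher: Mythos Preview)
Your overall strategy---reduce to a limit over finite covers, apply the Riemann--Hilbert functor to translate into an \'etale statement, and then invoke the \'etale acyclicity of absolutely integrally closed schemes---matches the paper's approach. However, you are missing the single organizing idea that makes the paper's proof short: the \emph{Stein factorization} of $X^+ \to S$.

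Concretely, the paper factors $g:X^+ \to S$ as $X^+ \xrightarrow{h} S^+ := \underline{\mathrm{Spec}}_S(g_* \mathcal{O}_{X^+}) \to S$ and observes that $S^+$ is itself an absolute integral closure of $S$ (any finite cover of $S^+$ pulls back to a finite cover of $X^+$, which has a section by absolute integral closedness, and this section descends through the affinization). This has three consequences that simultaneously eliminate all the difficulties you flag:
\begin{itemize}
\item $S^+$ is perfectoid after $p$-completion by Lemma~\ref{AICPerfectoid}, so you do not need an auxiliary faithfully flat base change to a perfectoid ring $\mathcal{O}$; the Riemann--Hilbert functor applies directly over $S^+$.
\item The map $h:X^+ \to S^+$ is a dominant map between integral normal schemes with algebraically closed function fields, so Proposition~\ref{EtaleCohAIC} gives $\mathbf{F}_p \simeq Rh_* \mathbf{F}_p$ on the nose. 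There is nothing left to ``kill by finite covers'' on the \'etale side---no Artin vanishing, no ind-perversity, no appeal to \cite{Bhattpdiv}. Your hand-wave here is the actual gap: you never prove the \'etale vanishing you need.
\item Applying $\RH_{\overline{\Prism}}$ over $S^+$ (where both source and target are perfectoid, so $\RH_{\overline{\Prism}}(\mathbf{F}_p)$ is literally the structure sheaf mod $p$) gives $\mathcal{O}_{S^+}/p \simeq Rh_* \mathcal{O}_{X^+}/p$. Since $\mathcal{O}_{S^+} = g_* \mathcal{O}_{X^+}$ by construction, this says $(g_*\mathcal{O}_{X^+})/p \simeq Rg_*\mathcal{O}_{X^+}/p$: the entire derived pushforward is already the naive quotient. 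Your obstacle (ii) dissolves, and obstacle (i) never arises because no $t$-structure comparison is needed when both schemes are perfectoid.
\end{itemize}
In short, the Stein factorization replaces your two acknowledged obstacles and your unproven \'etale cohomology-killing step with a one-line application of Proposition~\ref{EtaleCohAIC}.
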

\begin{proof}[Sketch of proof]
By standard arguments using the coherence of higher direct images, one reduces to the following case: $X$ and $S$ are integral, $f$ is surjective, and $S$ is finitely presented and flat over a $p$-adic DVR. Choose an absolute integral closure $\pi:X^+ \to X$ and let $h:X^+ \to S^+ = \underline{\mathrm{Spec}}_S(h_* \mathcal{O}_{X^+})$ be the Stein factorization of $X^+ \to X \to S$. By a limit argument, it is enough to show that $(h_* \mathcal{O}_{X^+})/p \simeq Rh_* \mathcal{O}_{X^+}/p$. Since $\mathcal{O}_{S^+} = h_* \mathcal{O}_{X^+}$ by construction, we  have  $\mathcal{O}_{S^+}/p = (h_* \mathcal{O}_{X^+})/p$, so we must show $\mathcal{O}_{S^+}/p \simeq Rh_* \mathcal{O}_{X^+}/p$. For this, observe that $S^+ \to S$ is an absolute integral closure: this is a surjective integral map from an integral scheme $S^+$, and $S^+$ is absolutely integrally closed as $X^+$ is so\footnote{It is enough to show that any finite cover $T \to S^+$ has a section. The base change of $T \times_{S^+} X^+ \to X^+$ has a section as $X^+$ is integral normal with algebraically closed fraction field (e.g., by taking the closure of a section over the generic point). Fixing such a section  yields an $S^+$-map map $X^+ \to T$. By the universal property of the affinization map $X^+ \to S^+$, this factors over $X^+ \to S^+$ to yield an $S^+$-map $S^+ \to T$, thus giving a section of $T \to S^+$.}. Thus, both $S^+$ and $X^+$ are perfectoid after $p$-completion (see Lemma~\ref{AICPerfectoid}). The isomorphy of $\mathcal{O}_{S^+}/p \to Rh_* \mathcal{O}_{X^+}/p$ will now follow from Theorem~\ref{thm:RH} (1) and (2)  if we can show that $\mathbf{F}_p \simeq Rh_* \mathbf{F}_p$ as sheaves on $S^+$. But this is clear from Proposition~\ref{EtaleCohAIC}.
\end{proof}

\newpage \section{The geometric result}
\label{CMgeomcase}

This section is the technical heart of the paper: we prove that absolute integral closures of essentially finitely presented normal local domains $R$ over $p$-adic DVRs are Cohen--Macaulay (Theorem~\ref{AICCMMain}). We begin in \S \ref{MellowPn} by formulating two stronger versions of this statement that are equivalent to each other and are more amenable to inductive arguments than the mere Cohen--Macaulayness of $R^+$. This stronger statement is then proven in \S \ref{ss:ProvePn}; as this proof relies on a few different inputs (the Riemann-Hilbert functor, alterations, and log prismatic cohomology), we first give a rough summary of the structure of the argument in \S \ref{ss:OutlineProof}.

\subsection{A preliminary reduction}
\label{MellowPn}

Our goal is to prove that local cohomology in mixed characteristic can be annihilated by finite covers. For this purpose, it is useful to make the following temporary definition capturing the outcome we want to achieve:

\begin{definition}
\label{Def:mellow}
An excellent normal local domain $(R,\mathfrak{m})$ satisfies $(*)_{CM}$ if there exists a finite extension $R \to S$ such that $H^i_{\mathfrak{m}}(R/p) \to H^i_{\mathfrak{m}}(S/p)$ is the $0$ map for $i < \dim(R/p)$.
\end{definition}

Here (and in the sequel) a finite extension $R \to S$ of domains is a finite injective map of domains.  Our main theorem is the following:

\begin{theorem}
\label{KillCohFinite}
For any $p$-henselian  $p$-torsionfree excellent DVR $V$, any flat finite type normal $V$-scheme $X$ and any  point $x \in X_{p=0}$, the local ring $\mathcal{O}_{X,x}$ satisfies $(*)_{CM}$. 
\end{theorem}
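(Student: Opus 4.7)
The plan is to prove the theorem by induction on $d := \dim(\mathcal{O}_{X,x}/p)$, following the broad strategy of Huneke--Lyubeznik \cite{HunekeLyubeznik} but replacing the structure sheaf with log prismatic cohomology to obtain a Frobenius action in mixed characteristic. Setting $R := \mathcal{O}_{X,x}$, the cases $d \leq 0$ are trivial, so I assume $d \geq 1$. It is convenient to reformulate $(*)_{CM}$: after fixing an absolute integral closure $R^+$ and letting $\{S\}$ range over the filtered system of finite normal $R$-subalgebras of $R^+$, the property $(*)_{CM}$ for $R$ is equivalent to the ind-object $\{S/p\}$ being ind-CM over $R/p$ in the sense of Definition~\ref{IndCMDef}, since each $H^i_\mathfrak{m}(R/p)$ is finitely generated. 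Induction will be applied in this ind-formulation.

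The first step is to exploit the induction hypothesis at every non-closed point. For each prime $\mathfrak{p} \subsetneq \mathfrak{m}$ of $R$ with $p \in \mathfrak{p}$, the local ring $R_\mathfrak{p}$ has $\dim(R_\mathfrak{p}/p) < d$, and since absolute integral closure commutes with localization the localized system $\{S_\mathfrak{p}/p\}$ is the analogous ind-object for $R_\mathfrak{p}$. By induction it is ind-CM, so Lemma~\ref{IndCMPuncturedNN} (applied after the standard reduction, via \S1 and limit arguments, to the finitely presented setting over $\overline{V}$) shows that for each $i < d$ the ind-object $\{H^i_\mathfrak{m}(S/p)\}$ is isomorphic to an ind-(finite length $R/p$-module). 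Thus the problem reduces to annihilating, by passing to further finite covers, any fixed finite-length class in $H^i_\mathfrak{m}(S/p)$.

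The heart of the proof is killing these finite-length classes by finite covers, and this is where prismatic cohomology enters in three coordinated moves. First, using de Jong \cite{deJongAlterations} I would choose a proper alteration $f \colon Y \to \mathrm{Spec}(\widehat{R})$ with $Y$ semistable over a finite extension of $V$, then replace the structure sheaf mod $p$ by the log Hodge--Tate sheaf $\overline{\Prism}_Y$ of Cesnavicius--Koshikawa \cite{CesnavicusKoshikawa}, which carries a Frobenius endomorphism $\phi_Y$. Second, the log variant of the isogeny theorem (Construction~\ref{ConsPrism}(5)) asserts that the map $\overline{\Prism}_Y \to \overline{\Prism}_{Y,\mathrm{perf}} = \mathcal{O}_{Y_\infty,\pfd}/p$ has cone killed by a bounded power of the Hodge--Tate ideal; combined with Theorem~\ref{ACMaic} applied to the perfectoidization on a suitable cover of $Y$, this says that a sufficiently high iterate of $\phi_Y$ factors, up to explicit almost error, through an almost Cohen--Macaulay sheaf, and hence annihilates any prescribed finite-length class up to an almost-zero error controlled independently of the class. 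Third, to transfer an annihilation statement on $Y$ back to one on $\mathrm{Spec}(R)$, I would apply Theorem~\ref{CohCohKill} to pass to a further finite cover $\pi \colon Y' \to Y$ killing the higher direct images $R^{>0}(f\pi)_* \mathcal{O}/p$, so that $R(f\pi)_*\mathcal{O}_{Y'}/p$ reduces to its $H^0$, which is a finite extension of $R/p$.

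The main obstacle will be the careful bookkeeping in the middle move: one needs to lift a finite-length class in $H^i_\mathfrak{m}(S/p)$ to a class in the log prismatic cohomology of $Y$, apply enough Frobenius iterates to push the lift into the (almost) CM perfected theory provided by Theorem~\ref{ACMaic}, and then use the explicit isogeny bound on the cone together with the finite length of the target to convert the almost annihilation into an honest annihilation. It is precisely this upgrade from almost to honest that closes the gap between Theorem~\ref{ACMIntro} and Theorem~\ref{RPlusCMIntro} noted in the introduction; the rest of the argument is really a scaffolding built around this point, with Theorem~\ref{CohCohKill} used to maintain finiteness over $R$ throughout.
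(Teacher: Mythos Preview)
Your broad architecture matches the paper's: induction on dimension, reduction to finite-length classes via Lemma~\ref{IndCMPuncturedNN}, passage to semistable alterations and log prismatic cohomology, and use of Theorem~\ref{KillCohMixed} to return from alterations to finite covers. However, there is a genuine gap in your ``middle move,'' precisely at the almost-to-honest upgrade you flag as the main obstacle. Your proposed mechanism---``use the explicit isogeny bound on the cone together with the finite length of the target''---does not work: knowing that a finite-length class maps to something almost zero in the perfection, with the cone of $\Prism_Y/p \to \Prism_{Y,\perf}/p$ annihilated by $d^c$, does not force the class to die in any finite cover. The isogeny bound only tells you the relevant image is $d^c$-torsion, and almost-vanishing plus $d^c$-torsion plus finite length still leaves room for a nonzero class.

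What the paper actually does at this step (see Theorem~\ref{AICCMMain}, Claims~\ref{CMclaim1} and~\ref{CMclaim2}, and Lemma~\ref{EquationCM}) requires an additional ingredient you do not mention. One first shows that the image of $H^i_{\mathfrak{m}}(\Prism_Y/p) \to H^i_{\mathfrak{m}}(\mathcal{O}_{X^+}^\flat)$ is contained in a finitely generated $\overline{V}^\flat$-submodule, combining the induction hypothesis with the $d^c$-torsion bound via Lemma~\ref{IncreaseFinite} and the Bockstein map. The decisive new step is the \emph{equational lemma} (Lemma~\ref{EquationCM}): any Frobenius-stable finitely generated submodule of $H^i_{\mathfrak{m}}(\mathcal{O}_{X^+}^\flat)$ vanishes. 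This is proved via Artin--Schreier theory, reducing to the vanishing of local \'etale cohomology $H^i_{\mathfrak{m}}(\mathrm{Spec}(R^+), \mathbf{F}_p)$, which follows from Proposition~\ref{EtaleCohAIC} after tilting. Your phrase ``apply enough Frobenius iterates to push the lift into the perfected theory'' misidentifies the mechanism: Frobenius is not used to move an individual class, but rather its equivariance forces the image to be a Frobenius-stable finitely generated submodule, whose vanishing is then a separate \'etale-cohomological theorem. A secondary technical point: the Frobenius lives on $\Prism_Y/p$, not on the Hodge--Tate reduction $\overline{\Prism}_Y = \Prism_Y/d$ as you write (the ideal $(d)$ is not $\phi$-stable), and the entire endgame takes place in the tilt $\mathcal{O}_{X^+}^\flat$ rather than in the Hodge--Tate fibre.
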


In this section, we reduce the assertion in Theorem~\ref{KillCohFinite} to a somewhat smaller class of rings by essentially elementary arguments. First, we observe that $+$-CMness can be detected \'etale locally:

\begin{lemma}
\label{mellowstable}
Fix an excellent normal local domain $(R,\mathfrak{m})$. The following are equivalent:
\begin{enumerate}
\item $R$ satisfies $(*)_{CM}$.
\item The henselization $R^h$ satisfies $(*)_{CM}$.
\item A strict henselization $R^{sh}$ satisfies $(*)_{CM}$.
\end{enumerate}
\end{lemma}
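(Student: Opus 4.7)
The key mechanism is faithful flatness of $R \to R^h$ and $R \to R^{sh}$, together with the fact that $\mathfrak{m} R^h$ and $\mathfrak{m} R^{sh}$ are the maximal ideals of $R^h$ and $R^{sh}$; this gives $H^i_\mathfrak{m}(M) \otimes_R R^h = H^i_{\mathfrak{m} R^h}(M \otimes_R R^h)$ (and analogously for $R^{sh}$), so the vanishing of a cohomology map of $R$-modules is both preserved and reflected by these base changes, and $d := \dim(R/p)$ is invariant.

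For $(1) \Rightarrow (2) \Rightarrow (3)$: given a finite extension of domains $R \to S$ witnessing $(*)_{CM}$, form $S^h := S \otimes_R R^h$, a finite $R^h$-algebra that decomposes as a product of local henselian rings (since $R^h$ is henselian). Any minimal prime $\mathfrak{p} \subset S^h$ contracts to $(0) \subset R^h$ (since $R^h$ is a domain and $R^h \to S^h$ is integral and injective by faithful flatness), so $T := S^h/\mathfrak{p}$ is a finite extension of domains over $R^h$; the composition $H^i_{\mathfrak{m} R^h}(R^h/p) \to H^i_{\mathfrak{m} R^h}(S^h/p) \to H^i_{\mathfrak{m} R^h}(T/p)$ vanishes for $i < d$ because the first map is the base change of the hypothesized vanishing. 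The same argument with $R^{sh}$ in place of $R^h$ gives $(2) \Rightarrow (3)$. For $(3) \Rightarrow (2)$: write $R^{sh} = \colim_\alpha R^h_\alpha$ as a filtered colimit of finite \'etale local $R^h$-subalgebras (each a normal local domain). A finite extension of domains $R^{sh} \to T$ descends to a finite extension of domains $R^h_\alpha \to T_\alpha$ for some $\alpha$ (take the $R^h_\alpha$-subalgebra of $T$ generated by a finite set of $R^{sh}$-module generators of $T$), so $R^h \to T_\alpha$ is a finite extension of domains. To verify the vanishing, base change to $R^{sh}$: strict henselicity of $R^{sh}$ forces $R^h_\alpha \otimes_{R^h} R^{sh} \simeq \prod_\sigma R^{sh}$ indexed by $R^h$-embeddings $\sigma: R^h_\alpha \to R^{sh}$, whence $T_\alpha \otimes_{R^h} R^{sh} = \prod_\sigma T_\alpha \otimes_{R^h_\alpha, \sigma} R^{sh}$ with the canonical $\sigma_0$-component equal to $T$. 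Since $\mathrm{Aut}_{R^h}(R^{sh}) = \mathrm{Gal}(\kappa^{sep}/\kappa)$ permutes the $\sigma$'s, functoriality propagates vanishing from the $\sigma_0$-component (given by hypothesis on $T$) to all components, and faithful flatness then descends the total vanishing to $R^h \to T_\alpha$.

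The main obstacle is $(2) \Rightarrow (1)$, since $R^h$ is generally not finite over $R$. Write $R^h = \colim_i (B_i)_{\mathfrak{q}_i}$ with $B_i$ finite \'etale over $R$ (a normal domain after restricting to the connected component containing $\mathfrak{q}_i$) and $\kappa(\mathfrak{q}_i) = \kappa$. A given finite extension of domains $R^h \to T$ (which we may assume local and normal) descends to a finite extension of domains $(B_i)_{\mathfrak{q}_i} \to T_0$ and spreads out to a finite $B_i$-subalgebra $T_{B_i} \subseteq T_0$ generated by $(B_i)_{\mathfrak{q}_i}$-module generators of $T_0$ (integral over $B_i$ after clearing denominators); then $R \to T_{B_i}$ is a finite extension of domains, and $T = (T_{B_i})^h_{\mathfrak{n}_0}$ is the henselization of $T_{B_i}$ at some prime $\mathfrak{n}_0$ above $\mathfrak{q}_i$. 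The naive argument fails because $T_{B_i} \otimes_R R^h$ decomposes as a product over all primes of $B_i$ above $\mathfrak{m}$, only the $\mathfrak{q}_i$-factor being $T$. The fix replaces $T_{B_i}$ by $\widetilde{T}$, the integral closure of $R$ in a normal closure $L$ of $\mathrm{Frac}(T_{B_i})$ over $\mathrm{Frac}(R)$: then $\widetilde{T}$ is a finite normal extension of $R$ containing $T_{B_i}$, and its automorphism group $G = \mathrm{Aut}(L/\mathrm{Frac}(R))$ acts transitively on the primes of $\widetilde{T}$ above $\mathfrak{m}$. For any prime $\mathfrak{n}$ of $\widetilde{T}$ lying over $\mathfrak{n}_0$, the factor $(\widetilde{T}_\mathfrak{n})^h$ of the decomposition $\widetilde{T} \otimes_R R^h = \prod_\mathfrak{n} (\widetilde{T}_\mathfrak{n})^h$ receives a natural map from $T = (T_{B_i})^h_{\mathfrak{n}_0}$ (obtained by henselizing the integral inclusion $(T_{B_i})_{\mathfrak{n}_0} \hookrightarrow \widetilde{T}_\mathfrak{n}$), so the cohomology map $H^i_{\mathfrak{m} R^h}(R^h/p) \to H^i_{\mathfrak{m} R^h}((\widetilde{T}_\mathfrak{n})^h/p)$ factors through $H^i_{\mathfrak{m} R^h}(T/p) = 0$. $G$-equivariance propagates vanishing to every factor, and faithful flatness descends this to $H^i_\mathfrak{m}(R/p) \to H^i_\mathfrak{m}(\widetilde{T}/p) = 0$, so $\widetilde{T}$ witnesses $(*)_{CM}$ for $R$.
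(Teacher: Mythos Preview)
Your proof is correct and rests on the same core idea as the paper's: faithful flatness handles the downward implications, and a Galois/normal-closure transitivity argument handles the descent $(2) \Rightarrow (1)$. The paper packages the latter into a separate lemma (Lemma~\ref{DescendFiniteCov}, proved via the field-theoretic Lemma~\ref{GaloisExtRefine}): given a quasi-finite \'etale extension $R \to S$ and a finite extension $S \to S'$, it produces a finite extension $R \to R'$ with every factor of $R' \otimes_R S$ refining $S'$. Your direct construction of $\widetilde{T}$ as the integral closure of $R$ in a normal closure, together with the transitivity of $\mathrm{Aut}(L/\mathrm{Frac}(R))$ on primes over $\mathfrak{m}$, is exactly the content of that lemma unwound inline.

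One place where the paper is more economical is $(3) \Rightarrow (2)$: rather than decomposing $T_\alpha \otimes_{R^h} R^{sh}$ into components and invoking the Galois action to propagate vanishing to all of them, the paper simply observes that $T_\alpha \to T_\alpha \otimes_{R^h_\alpha} R^{sh}$ is faithfully flat and that the composite $R^h \to T_\alpha \to T_\alpha \otimes_{R^h_\alpha} R^{sh}$ agrees with $R^h \to R^{sh} \to T_\alpha \otimes_{R^h_\alpha} R^{sh}$, which kills $H^i_{\mathfrak{m}}(-/p)$ for $i < d$ by hypothesis; injectivity on local cohomology then forces $H^i_{\mathfrak{m}}(R^h/p) \to H^i_{\mathfrak{m}}(T_\alpha/p)$ to vanish. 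This avoids the Galois bookkeeping entirely for that implication.
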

\begin{proof}
$(1) \Rightarrow (2),(3)$: This follows from flatness of $R \to R^h$ as we have a base change isomorphism $R\Gamma_{\mathfrak{m}}(R^h/p) \simeq R^h \otimes_R R\Gamma_{\mathfrak{m}}(R/p)$ (and similarly for $R^{sh}$).

$(2) \Rightarrow (1)$: The henselization $R^h$ can be written as a filtered limit $\colim_i S_i$ where each $S_i$ is the localization of an \'etale $R$-algebra at a point above $\mathfrak{m}$ with trivial residue field extension. As the map $S_i \to R^h$ induces an isomorphism on local cohomology (as above), and because any finite extension of $R^h$ is pulled back from a finite extension of some $S_i$, we learn that $S_i$ satisfies $(*)_{CM}$ for $i \gg 0$. To descend further to $R$, we apply Lemma~\ref{DescendFiniteCov} to a finite extension $S_i \to T_i$ witnessing the $(*)_{CM}$ property of $S_i$ to obtain a finite extension $R \to R'$ such that base change map $S_i \to R' \otimes_R S_i$ factors over $S_i \to T_i$. As base change along $R \to S_i$ induces an isomorphism on $H^j_{\mathfrak{m}}(-/p)$, it follows that $H^j_{\mathfrak{m}}(R/p) \to H^j_{\mathfrak{m}}(R'/p)$ is $0$ for $j < \dim(R/p)$.

$(3) \Rightarrow (2)$: Note that $R^h \to R^{sh}$ is ind-(finite \'etale) extension of normal local domains, and that any finite  extension of $R^{sh}$ is obtained via base change from a finite extension of some finite \'etale $R^h$-subalgebra $S \to R^{sh}$. Now if $S \to T$ is a finite extension of normal domains such that $R^{sh} \to R^{sh} \otimes_S T$ is a finite extension of normal domains witnessing the $(*)_{CM}$ property of $R^{sh}$, then $R^h \to S \to T$ witnesses the $(*)_{CM}$-property of $R^h$, as one checks using base change for local cohomology along the faithfully flat maps $R^h \to R^{sh}$ and $T \to T \otimes_S R^{sh}$. 
\end{proof}

The following lemmas were used above:

\begin{lemma}
\label{DescendFiniteCov}
Let $R \to S$ be an quasi-finite \'etale extension of excellent normal domains. For any finite extension $S \to S'$ of normal domains, there exists a finite extension $R \to R'$ of normal domains such that the $S$-algebra $R' \otimes_R S$ decomposes as a product $\prod_{i=1}^n S_i$ of normal domains such that each of the maps $S \to S_i$ (and hence the map $S \to R' \otimes_R S$) factors over $S \to S'$. 
\end{lemma}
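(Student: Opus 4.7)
The approach is to construct $R'$ from the Galois closure of $\mathrm{Frac}(S')$ over $\mathrm{Frac}(R)$; after this, the product decomposition will drop out of the \'etaleness of $R \to S$, and the required factorization will be a Galois-theoretic extension combined with normality. To set up, write $K = \mathrm{Frac}(R)$, $L = \mathrm{Frac}(S)$, and $L' = \mathrm{Frac}(S')$. Since $R \hookrightarrow S$ is a flat, quasi-finite, finite type map of domains, the ring $S \otimes_R K$ is both \'etale over $K$ and a localization of the domain $S$; it therefore coincides with $L$ and is a finite separable extension of $K$. I thus have a tower $K \subseteq L \subseteq L'$ of finite separable extensions. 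I choose a finite Galois extension $M/K$ containing $L'$ (e.g.\ the Galois closure of $L'/K$) and take $R'$ to be the integral closure of $R$ in $M$. Excellence of $R$ ensures it is Japanese, so $R'$ is finite over $R$; it is manifestly a normal domain with fraction field $M$.

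Next I would analyze $A := R' \otimes_R S$. Since $R \to S$ is \'etale, so is $R' \to A$, and hence $A$ is normal (\'etale over the normal domain $R'$) and noetherian (finite type over $R'$); it therefore decomposes as a finite product $\prod_{i=1}^n S_i$ of normal domains. Inverting the nonzero elements of $R$ gives $A \otimes_R K = M \otimes_K L \cong \prod_\sigma M$, indexed by the $K$-embeddings $\sigma : L \hookrightarrow M$ (using that $M/K$ is Galois and $L \subseteq M$). Comparing this with $\prod_i (S_i \otimes_R K)$ and using that each $S_i \otimes_R K$ is a domain, I conclude that each $S_i$ has fraction field $M$ and that $S \to S_i$ corresponds on fraction fields to a single $K$-embedding $\sigma_i : L \hookrightarrow M$.

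Finally, I would factor $S \to S_i$ through $S \to S'$. Because $M/K$ is Galois, $\sigma_i$ extends to a $K$-automorphism of $M$, whose restriction to $L' \subseteq M$ yields an embedding $\tilde\sigma_i : L' \hookrightarrow M$ agreeing with $\sigma_i$ on $L$. Elements of $S' \subseteq L'$ are integral over $S$, so their images under $\tilde\sigma_i$ lie in the integral closure of $S$ inside $M = \mathrm{Frac}(S_i)$; since $S_i$ is finite over $S$ and normal in its fraction field, this integral closure is exactly $S_i$. Hence $\tilde\sigma_i(S') \subseteq S_i$, giving the factorization $S \to S' \to S_i$; these assemble into the factorization $S \to S' \to A$.

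The heart of the argument is standard Galois theory combined with the Japanese property of excellent rings. The only points requiring any care are the decomposition of $A$ into a product of normal domains (immediate from \'etaleness over $R'$) and the extension of $\sigma_i$ from $L$ to $L'$ (using Galois-ness of $M/K$); neither is a serious obstacle.
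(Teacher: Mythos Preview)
Your proof is correct and follows essentially the same approach as the paper: pass to fraction fields, take $R'$ to be the normalization of $R$ in a sufficiently large finite extension of $K$, use \'etaleness of $R' \to R' \otimes_R S$ to get the product decomposition into normal domains, and then use normality to factor $S \to S_i$ through $S'$. The only cosmetic difference is that you choose the large extension to be the Galois closure of $L'/K$ and argue via extending embeddings, whereas the paper isolates the field-theoretic step as a separate lemma (proved by a limit argument from $\overline{K}$); both amount to the same thing, and both implicitly use that the fraction fields have characteristic $0$ so that $L'/K$ is separable.
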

\begin{proof}
Apply Lemma~\ref{GaloisExtRefine} to $K = K(R)$, $L = K(S)$ and $M = K(S')$ to obtain extension $E/K(R)$ such that $E \otimes_{K(R)} K(S) = \prod_i M_i$ with each $K(S) \to M_i$ being a field extension refining $K(S) \to K(S')$. Set $R'$ to be the normalization of $R$ in the field extension $E/K$. The ring $R' \otimes_R S$ is \'etale over $R'$ and hence normal. Moreover, each connected component of this ring is a finite normal extension of $S$. Thus, it follows that $R' \otimes_R S = \prod_i S_i$ with each $S_i$ being the normalization of $S$ in $M_i$. It is then clear from the hypothesis on $K(S) \to M_i$ that the map $S \to S_i$ factors over $S \to S'$.
\end{proof}

\begin{lemma}
\label{GaloisExtRefine}
Say $K \to L$ is a finite extension of characteristic $0$ fields, and $L \to M$ is a further finite extension. Then there is a finite extension $K \to E$ such that the $L$-algebra $E \otimes_K L$ decomposes  as $\prod_{i=1}^n M_i$, with each $L \to M_i$ being a field extension refining $L \to M$. 
\end{lemma}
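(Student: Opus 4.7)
My plan is to take $E$ to be the Galois closure of $M$ over $K$, which exists as a finite Galois extension of $K$ since $\mathrm{char}(K) = 0$ forces all intermediate extensions to be separable. Because $L \subset M \subset E$ and $E/K$ is Galois, I expect the $L$-algebra decomposition of $E \otimes_K L$ to be governed by Galois-theoretic data, and each factor to admit a map from $M$ obtained by restricting an element of $\mathrm{Gal}(E/K)$.

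Concretely, I would write $L = K(\alpha)$ via the primitive element theorem, let $f \in K[x]$ be the minimal polynomial of $\alpha$, and observe that $E$ contains all roots $\alpha_1, \ldots, \alpha_n$ of $f$ (they are $K$-conjugates of the element $\alpha \in L \subset E$, and $E/K$ being Galois is stable under such conjugation). The Chinese remainder theorem then supplies an isomorphism
\[
E \otimes_K L \;\cong\; E[x]/(f(x)) \;\cong\; \prod_{i=1}^n E,
\]
where the $L$-algebra structure on the $i$-th factor is given by $\alpha \mapsto \alpha_i$. Setting $M_i := E$, each projection gives a field extension $L \hookrightarrow M_i$ (injectivity is automatic since $L$ is a field and $M_i \neq 0$).

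It remains to verify that each map $L \to M_i$ refines $L \to M$, i.e., extends to a map $M \to M_i$ along the given inclusion. For this, I would choose $\sigma_i \in \mathrm{Gal}(E/K)$ with $\sigma_i(\alpha) = \alpha_i$ (possible since $\mathrm{Gal}(E/K)$ acts transitively on the roots of the irreducible polynomial $f$). The restriction $\sigma_i|_M : M \to E = M_i$ then makes the triangle $L \hookrightarrow M \xrightarrow{\sigma_i|_M} M_i$ commute, because precomposing with $L \hookrightarrow M$ sends $\alpha$ to $\sigma_i(\alpha) = \alpha_i$, matching the $i$-th projection. The only conceivable obstacle would be producing a single finite $E/K$ that simultaneously splits $L/K$ and dominates $M$, but passing to the Galois closure handles both at once, so the argument is essentially formal once this choice has been made.
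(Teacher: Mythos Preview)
Your argument is correct. Taking $E$ to be the Galois closure of $M/K$, splitting $E \otimes_K L$ via the primitive element theorem and CRT, and then using transitivity of $\mathrm{Gal}(E/K)$ on the roots of $f$ to produce the factorizations $M \to M_i$ all work as you describe.

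The paper takes a slightly different route: it first passes to the algebraic closure $\overline{K}$, where $\overline{K} \otimes_K L \cong \prod_i N_i$ with each $N_i$ an algebraic closure of $L$ (so $L \to N_i$ trivially refines $L \to M$), and then observes that writing $\overline{K}$ as a filtered colimit of finite subextensions $E/K$ shows that any sufficiently large finite $E$ already has the required property. So the paper argues by approximation from an infinite limit, while you give an explicit finite construction via Galois theory. Your approach has the virtue of naming a specific $E$ and showing that in fact all the $M_i$ can be taken equal to $E$; the paper's approach is softer and avoids invoking the primitive element theorem or transitivity of the Galois action, at the cost of being less explicit. Both are standard and roughly equally short.
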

\begin{proof}
If $\overline{K}$ denotes an algebraic closure of $K$, then $\overline{K}/K$ is an infinite extension and the base change $L \to \overline{K} \otimes_K L$ decomposes as a product $\prod_i N_i$ of finitely many field extensions of $L$ with each $N_i/L$ being an algebraic closure. In particular, each of the maps $L \to N_i$ factors the map $L \to M$. Writing $\overline{K}$ as a union of finite extensions $E/K$ then shows that any sufficiently large finite extension $E/K$ has the desired form.
\end{proof}

The flexibility of allowing a possibly non-closed point $x$ in Theorem~\ref{KillCohFinite} is useful for inductive arguments. Nevertheless, eventually, we wish to restrict attention to closed points to make geometric arguments. To juggle these needs, we make the following temporary definitions:

\begin{definition}
For any integer $n \geq 1$, define the following properties:
\begin{itemize}
\item[$(M_n):$] The conclusion of Theorem~\ref{KillCohFinite} holds true for all choices of $V$, $X$, and $x$ with $\mathcal{O}_{X,x}$ having relative dimension $n$ over $V$ (i.e., $\dim(\mathcal{O}_{X,x}) = n+1$).
\item[$(P_n):$] For any $p$-henselian  $p$-torsionfree excellent DVR $V$,  any maximal ideal $\mathfrak{m} \subset \mathcal{O}_{\mathbf{P}^n_V}$ and any finite extension $\mathcal{O}_{\mathbf{P}^n_V,\mathfrak{m}} \to R$ of normal  domains, there exists a further finite extension $R \to S$ such that  $H^i_{\mathfrak{m}}(R/p) \to H^i_{\mathfrak{m}}(S/p)$ is the $0$ map for $i < \dim(R/p) = n$. 
\end{itemize}
\end{definition}

Thanks to Noether normalizations, the main difference between $(M_n)$ and $(P_n)$ is that the point $x$ appearing in $(M_n)$ can be a non-closed point unlike the point appearing in $(P_n)$. The next lemma explains why the two conditions are nevertheless equivalent, at least at the expense of enlarging the base DVR.

\begin{lemma} 
\label{CMProjective}
Fix an integer $n \geq 1$. The following are equivalent.
\begin{enumerate}
\item $(M_k)$ holds true for all $k \leq n$.
\item $(P_k)$ holds true for all $k \leq n$.
\end{enumerate}
\end{lemma}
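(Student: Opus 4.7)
The plan is to prove $(M_n) \Leftrightarrow (P_n)$ directly for each fixed $n$; the joint equivalence in the lemma then follows formally.

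For $(M_n) \Rightarrow (P_n)$, I would start by observing that a finite extension $R$ of $\mathcal{O}_{\mathbf{P}^n_V, \mathfrak{m}}$ that is a normal domain is automatically semilocal, with maximal ideals $\mathfrak{m}_1, \ldots, \mathfrak{m}_s$ lying above $\mathfrak{m}$, and each $R_{\mathfrak{m}_j}$ is a local normal domain of dimension $n+1$ essentially of finite type over $V$. By spreading out, each $R_{\mathfrak{m}_j}$ is of the form $\mathcal{O}_{X_j, x_j}$ for some flat finite type normal $V$-scheme $X_j$, so $(M_n)$ provides a finite extension $R_{\mathfrak{m}_j} \to T_j$ of normal domains killing the map $H^i_{\mathfrak{m}_j}(R_{\mathfrak{m}_j}/p) \to H^i_{\mathfrak{m}_j}(T_j/p)$ for $i < n$. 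To assemble the $T_j$ into a single finite extension of $R$, I would let $L$ be the compositum (inside a fixed algebraic closure of $\mathrm{Frac}(R)$) of the $\mathrm{Frac}(T_j)$ for $1 \le j \le s$, and take $S$ to be the normalization of $R$ in $L$; this is finite over $R$ by excellence. Since normalization commutes with localization, each $T_j$ embeds into $S \otimes_R R_{\mathfrak{m}_j}$, so the induced map $H^i_\mathfrak{m}(R/p) \to H^i_\mathfrak{m}(S/p)$ vanishes for $i < n$ upon applying the standard semilocal decomposition $H^i_\mathfrak{m}(M) = \bigoplus_j H^i_{\mathfrak{m}_j}(M_{\mathfrak{m}_j})$ to both sides.

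For $(P_n) \Rightarrow (M_n)$, I would first note that the hypothesis $\dim \mathcal{O}_{X,x} = n+1$ forces $x$ to be a closed point: by catenarity and equidimensionality of $X$ (which is a flat finite type domain over $V$), one has $\dim \mathcal{O}_{X,x} + \dim \overline{\{x\}} = \dim X = n+1$, forcing $\dim \overline{\{x\}} = 0$. Next, by Lemma~\ref{mellowstable}, I may pass to the strict henselization of $V$, thereby arranging that $V$ has algebraically closed residue field. Replacing $X$ by an affine open neighborhood $\mathrm{Spec}(A)$ of $x$, Noether normalization produces a finite injection $B := V[t_1, \ldots, t_n] \hookrightarrow A$. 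Setting $\mathfrak{m}_0 := \mathfrak{m} \cap B$, which is maximal by going-up, I identify $B_{\mathfrak{m}_0}$ with $\mathcal{O}_{\mathbf{P}^n_V, \mathfrak{m}_0}$ via a standard affine chart. Then $R' := A \otimes_B B_{\mathfrak{m}_0}$ is a semilocal normal domain (being a localization of the normal domain $A$) finite over $B_{\mathfrak{m}_0}$, so $(P_n)$ provides a finite extension $R' \to S'$ of normal domains with $H^i_{\mathfrak{m}_0}(R'/p) \to H^i_{\mathfrak{m}_0}(S'/p)$ vanishing for $i < n$. Setting $S := S' \otimes_{R'} R$ yields a semilocal domain finite over $R = R'_\mathfrak{m}$, and restricting the vanishing of the previous map to the direct summand indexed by $\mathfrak{m}$ gives the required vanishing of $H^i_\mathfrak{m}(R/p) \to H^i_\mathfrak{m}(S/p)$.

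The main obstacle in the argument is purely technical: one must carefully track the various semilocal decompositions and confirm that domain and normality properties are preserved under all the constructions (localization, tensor product, normalization in a compositum). No substantial new input beyond Noether normalization over a DVR, excellence, and Lemma~\ref{mellowstable} is needed; the indexing on $n$ in the joint formulation is purely for the convenience of later induction and does not require cross-level implications.
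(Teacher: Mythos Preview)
Your argument for $(M_n) \Rightarrow (P_n)$ matches the paper's. The gap is in the converse: your claim that $\dim \mathcal{O}_{X,x} = n+1$ forces $x$ to be closed is false. The condition $(M_n)$ constrains only the dimension of the local ring $\mathcal{O}_{X,x}$, not of $X$ itself. For instance, take $X = \mathbf{A}^{m}_V$ with any $m > n$ and let $x \in X_{p=0}$ be a point of codimension $n$ in the special fibre; then $\dim \mathcal{O}_{X,x} = n+1$ while $x$ is not closed. Your formula $\dim \mathcal{O}_{X,x} + \dim \overline{\{x\}} = \dim X$ (when valid) only yields $\dim \overline{\{x\}} = \dim X - (n+1)$, and nothing in the setup forces $\dim X = n+1$. (Your appeal to Lemma~\ref{mellowstable} to pass to a strict henselization of $V$ is also somewhat unclear---that lemma concerns the local ring $\mathcal{O}_{X,x}$, not $V$---but this is a minor wrinkle and in any event unnecessary for the closed-point argument.)

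The paper handles the non-closed case with an additional device, Lemma~\ref{MakeClosed}: given such $x$, one constructs a DVR $W$ essentially of finite type over $V$ (the localization of $\mathbf{A}^r_V$ at the generic point of its special fibre, where $r = \mathrm{trdeg}(\kappa(x)/\kappa(V))$) together with a flat finite type normal $W$-scheme $X'$ and a \emph{closed} point $x' \in X'_{p=0}$ with $\mathcal{O}_{X',x'} \simeq \mathcal{O}_{X,x}$. Since $W$ need not be $p$-henselian, one then passes to $W^h$ via Lemma~\ref{mellowstable} and runs the Noether-normalization argument there. Once this step is inserted, the two proofs coincide; in particular, your level-by-level plan is sound, since the local ring retains dimension $n+1$ after the change of base and only $(P_n)$ is invoked.
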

\begin{proof}
We shall prove that $(M_n)$ implies $(P_n)$ for each $n$, and that $(P_j)$ for all $j \leq k$ implies $(M_k)$. 

$(1) \Rightarrow (2)$: Assume $(M_n)$ holds true. Fix $\mathfrak{m}$ and $R$ as in $(P_n)$. Then $\mathrm{Spec}(R)$ has finitely many closed points $\{\mathfrak{n}_1,...,\mathfrak{n}_r\}$, and they all lie above $\mathfrak{m}$. But then we have $H^i_{\mathfrak{m}}(R/p) \simeq \prod_{j=1}^r H^i_{\mathfrak{n}_j}(R_{\mathfrak{n}_j}/p)$ by henselization considerations.  Now, by assumption, for each $j$, there exists a finite extension $R_{\mathfrak{n}_j} \to S_j$ such that $H^i_{\mathfrak{n}_j}(R_{\mathfrak{n}_j}/p) \to H^i_{\mathfrak{n}_j}(S_j/p)$ is the $0$ map for $i < \dim(R_{\mathfrak{n}_j}/p) = \dim(R/p)$. We can then find a single finite extension $R \to S$ such that the induced map $R_{\mathfrak{n}_j} \to S \otimes_R R_{\mathfrak{n}_j}$ factors over $R_{\mathfrak{n}_j} \to S_j$. As the local cohomology groups of $S/p$ also split similarly to those of $R/p$, we then conclude that $H^i_{\mathfrak{m}}(R/p) \to H^i_{\mathfrak{m}}(S/p)$ is the $0$ map for $i < \dim(R)$. 

$(2) \Rightarrow (1)$: Fix a $p$-henselian  $p$-torsionfree excellent DVR $V$, a flat finite type normal $V$-scheme $X$ and a point $x \in X_{p=0}$ with $\dim(\mathcal{O}_{X,x}) = k+1$ with $k \leq n$. We must show that $\mathcal{O}_{X,x}$ satisfies $(*)_{CM}$. 

Assume first that $x$ is a closed point of $X_{p=0}$. Then we can find a quasi-finite injective map $\mathcal{O}_{\mathbf{P}^k_V,0} \to \mathcal{O}_{X,x}$ of domains by noether normalization (where $0$ denotes the origin on the special fibre). The normalization of $\mathcal{O}_{\mathbf{P}^k_V,0}$ in the resulting fraction field extension is a finite extension $\mathcal{O}_{\mathbf{P}^k_V,0} \to R$ such that $\mathcal{O}_{X,x}$ is the local ring of $R$ at one of the finitely many closed points of $\mathrm{Spec}(R)$ above $0$. Pick a finite extension $R \to S$ provided by the assumption in (2). Reversing the reasoning used to prove $(1) \Rightarrow (2)$ above then shows that the finite extension $\mathcal{O}_{X,x} \to S \otimes_R \mathcal{O}_{X,x}$ obtained by localizing $S$ at $x$ does the job. 

Assume now that $x$ is not a closed point of $X_{p=0}$. By Lemma~\ref{MakeClosed}, we can find an extension $W/V$ of DVRs that is essentially of finite type such that $\mathcal{O}_{X,x} = \mathcal{O}_{X',x'}$ for a flat finite type normal $W$-scheme $X'$ and a closed point $x' \in X'_{p=0}$. We cannot directly apply the assumption in $(2)$ as $W$ might not be $p$-henselian. Nevertheless, by Lemma~\ref{mellowstable}, it is enough to prove the statement after base changing along $W \to W^h$. Thus, we conclude using the argument in the previous paragraph applied to the flat finite type $W^h$-scheme $Y := X' \otimes_W W^h$ and the closed point $x' \in Y_{p=0}$ (noting that $X'_{p=0} = Y_{p=0}$).
\end{proof}

The following lemma was used above to realize non-closed points of finite type schemes over a DVR as closed points of a different finite type schemes over a different DVR. 

\begin{lemma}
\label{MakeClosed}
Fix a DVR $V$, a finite type $V$-scheme $X$ and a point $x \in X_{\kappa(V)}$. Then there exists a factorization $V \to W \to \mathcal{O}_{X,x}$, where $W/V$ is an essentially finitely presented extension of DVRs that preserves uniformizers, and  $W \to \mathcal{O}_{X,x}$ is a local map inducing a finite residue field extension. 

 In particular, we can regard $\mathcal{O}_{X,x}$ as the local ring of a (flat) finite type $W$-scheme $X'$ at a closed point $x' \in X'_{\kappa(W)}$; if $X$ is normal and $V$-flat, then $X'$ can be taken to be normal and $V$-flat.
\end{lemma}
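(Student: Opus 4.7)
The plan is to take $W$ to be the local ring of affine $d$-space over $V$ at the generic point of its special fibre, where $d$ is the transcendence degree of $\kappa(x)$ over $\kappa(V)$. This transcendence degree is finite since $\kappa(x)$ is a finitely generated extension of $\kappa(V)$ (apply the Nullstellensatz to the finite type $\kappa(V)$-scheme $X_{\kappa(V)}$).

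Concretely, first choose a transcendence basis $\bar{t}_1, \ldots, \bar{t}_d \in \kappa(x)$ of $\kappa(x)/\kappa(V)$, lift it to elements $t_1, \ldots, t_d \in \mathcal{O}_{X,x}$, and, letting $\pi$ be a uniformizer of $V$, consider the induced $V$-algebra map $\phi \colon V[T_1, \ldots, T_d] \to \mathcal{O}_{X,x}$. The key observation is that for any $s \in V[T_1, \ldots, T_d] \setminus (\pi)$, the reduction $\bar{s} \in \kappa(V)[T_1, \ldots, T_d]$ is nonzero, so by algebraic independence of the $\bar{t}_i$ the element $\bar{s}(\bar{t}_1, \ldots, \bar{t}_d) \in \kappa(x)$ is nonzero; hence $\phi(s)$ avoids $\mathfrak{m}_x$ and is a unit in $\mathcal{O}_{X,x}$. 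Thus $\phi$ extends to a map from the DVR $W := V[T_1, \ldots, T_d]_{(\pi)}$, which is essentially of finite type over $V$ and has uniformizer $\pi$, into $\mathcal{O}_{X,x}$. The same argument shows $\phi(a) \in \mathfrak{m}_x$ iff $a \in (\pi)$ for $a \in V[T_1, \ldots, T_d]$, so the preimage of $\mathfrak{m}_x$ under $W \to \mathcal{O}_{X,x}$ is $\pi W$; the map is therefore local, and the induced residue field extension identifies $\kappa(W)$ with $\kappa(V)(\bar{t}_1, \ldots, \bar{t}_d) \subset \kappa(x)$, over which $\kappa(x)$ is algebraic and finitely generated, and hence finite.

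For the final clause, I would pick an affine open $\mathrm{Spec}(A) \subset X$ containing $x$ with $t_i \in A$, let $\mathfrak{q} \subset A$ be the prime corresponding to $x$, and form the base change $A' := A \otimes_{V[T_1, \ldots, T_d]} W$. Since $A$ is finite type over $V[T_1, \ldots, T_d]$, $A'$ is finite type over $W$; concretely, $A'$ is the localization of $A$ obtained by inverting the image of $V[T_1, \ldots, T_d] \setminus (\pi)$, all of which lies in $A \setminus \mathfrak{q}$, so the extension $\mathfrak{q}' \subset A'$ of $\mathfrak{q}$ satisfies $(A')_{\mathfrak{q}'} = A_{\mathfrak{q}} = \mathcal{O}_{X,x}$. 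Since $A'/\mathfrak{q}' \subset \kappa(x)$ is a domain containing $\kappa(W)$ (as $\pi \in \mathfrak{q}'$) and $\kappa(x)/\kappa(W)$ is finite, $A'/\mathfrak{q}'$ is itself a field, so $x' := \mathfrak{q}'$ is closed in $X' := \mathrm{Spec}(A')$ and lies in $X'_{\kappa(W)}$. For the addendum, if $A$ is $V$-flat (equivalently $\pi$-torsion-free), then so is its localization $A'$, which over the DVR $W$ is equivalent to $W$-flatness; and if $A$ is additionally a normal domain (which may be assumed after shrinking), then its localization $A'$ remains a normal domain. I do not anticipate any real obstacle: the only content is the algebraic-independence observation underlying the factorization through $W$, together with the elementary verification that $x$ becomes a closed point of $X'$ because $\kappa(x)/\kappa(W)$ is a finite extension.
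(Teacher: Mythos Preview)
Your proof is correct and follows essentially the same approach as the paper: both lift a transcendence basis of $\kappa(x)/\kappa(V)$ to $\mathcal{O}_{X,x}$, obtain a map to $\mathbf{A}^d_V$ that factors through $W = \mathcal{O}_{\mathbf{A}^d_V,\eta}$ with $\eta$ the generic point of the special fibre, and then verify that $x$ becomes closed over $W$ because $\kappa(x)/\kappa(W)$ is finite. The paper phrases the factorization geometrically (the special fibre of $\mathrm{Spec}(\mathcal{O}_{X,x})$ maps to $\eta$) while you phrase it algebraically (elements outside $(\pi)$ map to units), and you give a more explicit construction of $X'$ as a localization of an affine open, but the content is the same.
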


\begin{proof}
Let $t_1,...,t_r \in \mathcal{O}_{X,x}$ lift a transcendence basis for $\kappa(x)/\kappa(V)$. These functions give a map $\mathrm{Spec}(\mathcal{O}_{X,x}) \to \mathbf{A}^r_V$. On the special fibre, this map has image in the generic point of $\mathbf{A}^r_{\kappa(V)}$: the closed point goes to the generic point by construction, and hence all points must go to the generic point. As $\mathcal{O}_{X,x}$ is local, it follows that the previous map factors as $\mathrm{Spec}(\mathcal{O}_{X,x}) \to \mathrm{Spec}(W) \to \mathbf{A}^r_V$, where $W = \mathcal{O}_{\mathbf{A}^r_V,\eta}$ with $\eta$ denoting the generic point of the special fibre. The ring $W$ is the localization of a noetherian normal domain at a height $1$ prime, and hence is a DVR. The resulting factorization $V \to W \to \mathcal{O}_{X,x}$ solves the problem in the first part of the lemma.

To obtain $(X',x')$ in the second part, we use the following observation: given a finite type scheme $Z$ over a field $k$ and a point $z \in Z$, the point $z$ is closed if and only if $\kappa(z)$ is finite over $k$. One applies this to the special fibre of a finite type $W$-scheme $X'$ equipped with a point $x'$ such that $\mathcal{O}_{X',x} \simeq \mathcal{O}_{X,x}$ as $W$-algebras. The final statement is clear from the construction. 
\end{proof}

The goal of the rest of this section is to prove $(P_n)$ for all $n$.

\subsection{An informal outline of the proof}
\label{ss:OutlineProof}

In this subsection, we give a summary of the main steps of the proof of $(P_n)$. We stress that the summary is informal and contains a few simplifications; the impatient reader is encouraged to skip this subsection and move on to \S \ref{ss:ProvePn}.

Say $V$ is a $p$-henselian and $p$-torsionfree DVR with absolute integral closure $\overline{V}$, so the $p$-completion $\widehat{\overline{V}}$ is a perfectoid ring; write $(A,(d)) = (A_{\inf}(\widehat{\overline{V}}), \ker(\theta))$ for its perfect prism.  Let $X = \mathbf{P}^n_{\overline{V}}$ with fixed absolute integral closure $\pi:X^+ \to X$. We shall regard all sheaves as implicitly pushed forward to $X$. Our task in this subsection is to sketch the strategy used in \S \ref{ss:ProvePn} to prove the following assertion by induction on $n$:

\begin{itemize}
\item[$(\ast):$] For every closed point $x \in X_{p=0}$, we have $H^i_x(\mathcal{O}_{X^+}) = 0$ for $i < n+1$. 
\end{itemize}

Using an approximation argument, one shows that the validity of $(\ast)$ for all $n$ implies $(P_n)$ for all $n$.  We shall prove $(\ast)$ by first tilting to characteristic $p$, and then using the Frobenius. To tilt, observe that $X^+$ is perfectoid on $p$-completion (see Lemma~\ref{AICPerfectoid}), so the Bockstein sequences for $d$ and $p$ reduce $(\ast)$ to:

\begin{itemize}
\item[$(\ast)^\flat:$] For every closed point $x \in X_{p=0}$, we have $H^i_x(\mathcal{O}_{X^+}^\flat) = 0$ for $i < n+1$. 
\end{itemize}

Slight care is needed to define the local cohomology of $\mathcal{O}_{X^+}^\flat$ (see Construction~\ref{LocalCohFormal}). For the rest of this subsection, fix a closed point $x \in X_{p=0}$. Our strategy is to prove $(\ast)^\flat$ by expressing $\mathcal{O}_{X^+}^\flat$ as the colimit of the prismatic cohomology of finite covers of $X$; the importance of this approximation is that it is Frobenius equivariant. Thus, consider the category $\mathcal{P}_X^{fin}$ of all finite normal covers $Y \to X$ of $X$ dominated by $X^+$. For $Y \in \mathcal{P}_X^{fin}$, write $\Prism^n_Y$ for the prismatic complex of $\widehat{Y}$  as in Construction~\ref{ConsPrism} (the superscript $n$ emphasizes that we have no log structures at the moment). We then have
 \[ \colim_{\mathcal{P}_X^{fin}} \Prism^n_Y/p \simeq \colim_{\mathcal{P}_X^{fin}} \Prism^n_{Y,\perf}/p \simeq \mathcal{O}_{X^+}^\flat\]
 in $D_{comp}(A/p)$ by the compatibility of (perfectified) prismatic cohomology with filtered colimits as well as the perfectoidness of the $p$-completion of $X^+$. The following result, deduced from the almost Cohen--Macaulayness result in Theorem~\ref{ACMaic}, then yields $(\ast)^\flat$ in the almost category:

\begin{enumerate}
\item The ind-object $\{H^i_x(\Prism^n_{Y,\perf}/p)\}_{Y \in \mathcal{P}_X^{fin}}$ is almost zero for $i < n+1$.
\end{enumerate}

To proceed further, it is convenient to use a slightly different formulation of (1). Consider the category $\mathcal{P}_X$ of all alterations $Y \to X$ equipped with a lift of the geometric generic point of $X$. Thus, $\mathcal{P}_X^{fin}$ is a full subcategory of $\mathcal{P}_X$. One also has the full subcategory $\mathcal{P}_X^{ss} \subset \mathcal{P}_X$ spanned by  all alterations which are semistable over $\overline{V}$; this category is cofinal by de Jong's theorems \cite{deJongAlterations}.  For $Y \in \mathcal{P}_X^{ss}$, write $\Prism_Y$ for the log prismatic complex of $\widehat{Y}$ as in \cite{CesnavicusKoshikawa}. The methods used to prove (1) adapt to show (see Proposition~\ref{CMFactor}):

\begin{enumerate}[resume]
\item The ind-object $\{H^i_x(\Prism_{Y,\perf}/p)\}_{Y \in \mathcal{P}_X^{ss}}$ is almost zero for $i < n+1$.
\end{enumerate}

Note that replacing $\mathcal{P}_X^{fin}$ with $\mathcal{P}_X^{ss}$ has no effect on the limit: using Theorem~\ref{KillCohMixed} and the Hodge-Tate comparison, one can show that 
\[ \colim_{\mathcal{P}_X^{ss}} \Prism_{Y,\perf}/p \simeq \colim_{\mathcal{P}_X^{fin}} \Prism^n_{Y,\perf}/p \simeq \mathcal{O}_{X^+}^\flat\]
in $D_{comp}(A/p)$ via a natural zigzag (see Theorem~\ref{IndObjectPairs}). To improve (2) to an actual vanishing, we need to connect this discussion to coherent cohomology to access the finiteness coming by induction from Lemma~\ref{IndCMPuncturedNN}. For this connection, it is better to work with prismatic cohomology itself rather than its perfection: the former carries the Hodge-Tate comparison. To this end, we observe that the isogeny theorem for log prismatic cohomology combined with (2) yields the following (see Claim~\ref{CMclaim1}):

\begin{enumerate}[resume]
\item There exists an integer $c = c(n) \geq 1$ such that the ind-object $\{H^i_x(\Prism_Y/p)\}_{Y \in \mathcal{P}_X^{ss}}$ is ind-isomorphic to a $d^c$-torsion object for $i < n+1$.
\end{enumerate}

We can now connect to coherent cohomology using the surjection 
\[ \partial_{d^c}:H^{i-1}_x(\Prism_Y/(p,d^c)) \twoheadrightarrow H^i_x(\Prism_Y/p)[d^c]\] 
coming from the Bockstein sequence for $d^c$. Indeed, $\Prism_Y/(p,d)$ is coherent complex on $Y_{p=0}$ and closely related to $\mathcal{O}_Y/p$: the difference is given by differential forms on $Y$, and these  vanish locally in the tower $\mathcal{P}_X^{ss}$ as one can extract $p$-th roots of local functions in this tower (see Lemma~\ref{AICPerfectoid}). Using an inductive argument to pass from $\Prism_Y/(p,d)$ to $\Prism_Y/(p,d^c)$  (see Lemma~\ref{IncreaseFinite}), one can propagate the finiteness provided by Lemma~\ref{IndCMPuncturedNN} through the surjection $\partial_{d^c}$ mentioned above to obtain the following from (3) (see Claim~\ref{CMclaim2}):

\begin{enumerate}[resume]
\item For $Y \in \mathcal{P}_X^{ss}$, the image of $H^i_x(\Prism_Y/p) \to H^i_x(\mathcal{O}_{X^+}^\flat)$ is contained in a finitely generated $\overline{V}^\flat$-module for $i < n+1$. 
\end{enumerate}

The maps in (4) are Frobenius equivariant by construction, and their colimit over $Y \in \mathcal{P}_X^{ss}$ exhausts the target. Thus, to prove $(\ast)^\flat$, it is enough to show that $H^i_x(\mathcal{O}_{X^+}^\flat)$ has no nonzero Frobenius stable submodules contained within finitely generated $\overline{V}^\flat$-modules. This reduces to a vanishing theorem in local \'etale cohomology by Artin-Schreier theory, which is deduced from Proposition~\ref{EtaleCohAIC} by tilting (see Lemma~\ref{EquationCM}).

\subsection{Proof of $(P_n)$}
\label{ss:ProvePn}

In this section, we prove Theorem~\ref{KillCohFinite} by verifying that $(P_n)$ always holds true. The strategy is to access local cohomology of the structure sheaf via local cohomology of prismatic complexes, and then to use additional structures on the latter (such as a Frobenius action as well as a connection to \'etale sheaf theory via the Riemann-Hilbert functor) to annihilate cohomology after finite covers. To use the prismatic theory, we need to work over a perfectoid base, so we introduce the following notation:

\begin{notation} 
\label{not:geomCM}
We shall use the following notation throughout this section:
\begin{enumerate}
\item (The base) Let $V$ be a $p$-torsionfree $p$-henselian excellent DVR with residue field $k$, and let $\overline{V}$ be a fixed absolute integral closure of $V$, so the residue field $\overline{k}$ of $V$ is an algebraic closure of $k$. Then $\widehat{\overline{V}}$ is a perfectoid rank $1$ valuation ring with algebraically closed fraction field $C$; write $(A_{\inf}, (d))$ for the corresponding perfect prism. All occurrence of almost mathematics are in the usual context (see Notation~\ref{GlobalNotation}).

\item (The geometric object) Fix an integer $n \geq 1$.  Let $X = \mathbf{P}^n_{\overline{V}}$. Let $K$ be the function field of $X$, and choose an algebraic closure $\overline{K}$ of $K$; let $\pi:X^+ \to X$ denote the normalization of $X$ in $\overline{K}$. Write $\Prism_X \in D_{comp,qc}(X)$ for the prismatic complex of the $p$-completion of $X$ in the sense of \cite{BMS1,BhattScholzePrism}. Write $D_{comp,qc}(X,\Prism_X)$ for the full subcategory of $(p,d)$-complete $\Prism_X$-complexes on $X$ which are quasi-coherent modulo $(p,d)$ when regarded as complexes on $X_{p=0}$ (via restriction of scalars along the Hodge-Tate structure map $\mathcal{O}_X/p \to \Prism_X/(p,d)$).
\end{enumerate}
  For proper maps $f:Y \to X$, we shall abusively regard sheaves on $Y$ as sheaves on $X$ via (derived) pushforward, e.g., $\mathcal{O}_Y \in D_{qc}(X)$ denotes $Rf_* \mathcal{O}_Y$, etc. 
\end{notation}

The assertion in $(P_n)$ concerns all finite covers of projective space. We will use prismatic techniques to study these covers. As prismatic cohomology is best behaved in the (log) smooth context, it will be convenient to work with all alterations instead of merely the finite covers.

\begin{definition}[Alterations over $X$]
\label{SSAlt}
We shall use the following categories of alterations over $X$:

\begin{enumerate}
\item Let $\mathcal{P}_X$ be the category of pairs  $(f_Y:Y \to X, \eta_Y:\mathrm{Spec}(\overline{K}) \to Y)$, where $Y$ is a proper integral $\overline{V}$-scheme, $f_Y$ is an alteration, and $\eta_Y$ is an $X$-map (and thus dominant); usually the data of $\eta_Y$ will be suppressed from the notation, and we shall simply write $(f_Y:Y \to X) \in \mathcal{P}_X$ or even $Y \in \mathcal{P}_X$ for such an object.

\item Let $\mathcal{P}_X^{fin} \subset \mathcal{P}_X$ be the full subcategory spanned by those $Y$ which are finite over $X$; the cofinal subcategory of normal schemes in $\mathcal{P}_X^{fin}$ identifies with the category of finite extensions $L/K$ contained in $\overline{K}$ by taking the function field.

\item Let $\mathcal{P}_X^{ss} \subset \mathcal{P}_X$ be the full subcategory spanned by those $Y$'s which are semistable over $\overline{V}$ in the sense of \cite[\S 1.5]{CesnavicusKoshikawa}. For $(f_Y:Y \to X) \in \mathcal{P}_X^{ss}$, write $\Prism_Y$ for the {\em log prismatic complex of $Y$} (i.e., the log $A_{\inf}$-complex denoted $A\Omega$ in \cite[2.2.3]{CesnavicusKoshikawa} and recalled in Remark~\ref{LogPrismaticDef}). Following our conventions from Notation~\ref{not:geomCM}, the object $\Prism_Y$ is regarded as a Frobenius module in $D(X,\Prism_X)$ via pushforward along $f_Y$, and  lies in $D_{comp,qc}(X,\Prism_X)$ by Theorem~\ref{LogPrismatic} (2) below. 
\end{enumerate}

In the sequel, we shall often use that any $Y \in \mathcal{P}_X$ is $\overline{V}$-flat (as $Y$ is integral with function field of characteristic $0$) and thus finitely presented over both $\overline{V}$ and $X$ by \cite[Tag 053E]{StacksProject}. 

\end{definition}

\begin{remark}[Logarithmic invariants]
Any $Y \in \mathcal{P}_X$ has a natural log structure determined by the open subset $Y[1/p] \subset Y$. Unless otherwise specified, we shall always interpret all invariants of $Y$, such as the sheaf $\Omega^i_{Y/\overline{V}}$ of differential $i$-forms, in the logarithmic sense; this convention was already observed in Definition~\ref{SSAlt} (3).
\end{remark}

\begin{remark}[Logarithmic vs usual prismatic cohomology]
\label{LogPrismaticDef}
Let $Y \in \mathcal{P}_X^{ss}$. Recall that the definition in \cite[2.2.3]{CesnavicusKoshikawa} is
\[ \Prism_Y := L\eta_\mu R\nu_* A_{\inf,\widehat{Y}_C^{ad}} \in D(\widehat{Y}, A_{\inf}),\] 
where $\widehat{Y}_C^{ad}$ denotes the adic generic fibre of the formal completion $\widehat{Y}$ and is equipped with the pro-\'etale topology,  the map $\nu:\left(\widehat{Y}_C^{ad}\right)_{proet} \to \widehat{Y}$ is the nearby cycles map, the element $\mu = [\underline{\epsilon}]-1$ is built from a choice of compatible system of $p$-power roots of $1$ as in \cite[\S 3.2]{BMS1}, and the functor $L\eta_\mu$ is the Berthelot-Ogus(-Deligne) decalage functor discussed in \cite[\S 6]{BMS1}. In particular, this definition shows that $\Prism_Y$ is manifestly functorial in $Y$ regarded merely as a $\overline{V}$-scheme (rather than as a log scheme) and makes sense for any $Y \in \mathcal{P}_X$. For any $Y \in \mathcal{P}_X$, write $\Prism_Y^n$ for the non-logarithmic derived prismatic complex of $\widehat{Y}$ as constructed in \cite[\S 7.2]{BhattScholzePrism}. For $Y/\overline{V}$ smooth, we have $\Prism_Y^n \simeq L\eta_\mu R\nu_* A_{\inf,\widehat{Y}_C^{ad}}$ via the comparison with \cite{BMS1} proven in \cite[\S 17]{BhattScholzePrism}, so $\Prism_Y^n \simeq \Prism_Y$ and thus there is no notational clash between Notation~\ref{not:geomCM} (2) and Definition~\ref{SSAlt} (3).  By left Kan extension from the smooth case, it follows that for any $Y \in \mathcal{P}_X^{ss}$, we have a natural map $\Prism_Y^n \to \Prism_Y$ for any $Y \in \mathcal{P}_X$.
\end{remark}

\begin{remark}[The perfection of log prismatic cohomology]
\label{LogPrismPerfRH}
Fix $Y \in \mathcal{P}_X^{ss}$. The construction from \cite{CesnavicusKoshikawa} recalled in Remark~\ref{LogPrismaticDef} has the feature that the $L\eta_\mu$ operation is almost undone by passage to the perfection:

\begin{claim}
\label{LetaPerfClaim}
The map $ \Prism_{Y,\perf} = \left( L\eta_\mu R\nu_* A_{\inf,\widehat{Y}_C^{ad}}\right)_\perf \to R\nu_* A_{\inf,\widehat{Y}_C^{ad}}$ is an almost isomorphism; here the left side denotes the $(p,d)$-completed perfection.
\end{claim}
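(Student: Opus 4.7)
The plan is to combine the standard properties of the d\'ecalage functor $L\eta_\mu$ with an analysis of how Frobenius perfection interacts with the almost ideal $W(\mathfrak{m}^\flat) \subset A_{\inf}$.

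First, I invoke a standard property of d\'ecalage (see \cite[\S 6]{BMS1}): for any $K \in D(A_{\inf})$, the natural map $L\eta_\mu K \to K$ fits in a distinguished triangle $L\eta_\mu K \to K \to E_K$, where each cohomology sheaf $H^i(E_K)$ is annihilated by $\mu$. I apply this with $K := R\nu_* A_{\inf, \widehat{Y}_C^{\mathrm{ad}}}$, so that $\Prism_Y = L\eta_\mu K$. Next, I observe that the Frobenius on $K$ is already an isomorphism: stalks of $A_{\inf, \widehat{Y}_C^{\mathrm{ad}}}$ on the pro-\'etale site are of the form $A_{\inf}$ of perfectoid rings, on which the Witt vector Frobenius is an honest isomorphism. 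Hence $K_{\perf} \simeq K$, and applying the perfection functor to the above triangle yields a distinguished triangle $\Prism_{Y,\perf} \to K \to (E_K)_{\perf}$; it suffices to show that $(E_K)_{\perf}$ is almost zero.

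The heart of the argument is the following computation. Set $N := H^i(E_K)$, so $\mu N = 0$. Since $H^i((E_K)_{\perf}) = (\colim_\phi N)^\wedge$ (the $(p,d)$-completed Frobenius colimit), every class admits a representative $[x, n]$ at some stage $n$, with $A_{\inf}$-action given by $a \cdot [x, n] = [\phi^n(a) \cdot x, n]$. For any generator $\phi^{-m}(\mu) = [\epsilon^{1/p^m}] - 1$ of the almost ideal and any $k \geq \max(0, m-n)$, using the colimit equivalence $[x, n] = [\phi^k(x), n+k]$ (with $\phi$ the Frobenius-semilinear action on $N$), we compute
\[
\phi^{-m}(\mu) \cdot [x, n] \;=\; \bigl[\phi^{n+k-m}(\mu) \cdot \phi^k(x),\; n+k\bigr].
\]
Since $n + k - m \geq 0$, the coefficient $\phi^{n+k-m}(\mu) = \mu \cdot \prod_{i=0}^{n+k-m-1} \phi^i(d)$ lies in the ideal $(\mu) \subset A_{\inf}$; combined with $\mu N = 0$ (which annihilates $\phi^k(x) \in N$), the above class vanishes. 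So every element of $H^i((E_K)_{\perf})$ is annihilated by every $\phi^{-m}(\mu)$.

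Finally, the Frobenius twists $\{[\epsilon^{1/p^m}] - 1\}_{m \geq 0}$ generate a dense subideal of the almost ideal $W(\mathfrak{m}^\flat)$ after $(p,d)$-completion: their images in $\overline{V}^\flat$ have valuations $v(\zeta_{p^m} - 1) = 1/(p^{m-1}(p-1))$, a cofinal decreasing sequence of positive rationals in the value group. Thus $H^i((E_K)_{\perf})$ is almost zero for each $i$, so $(E_K)_{\perf}$ is almost zero, and $\Prism_{Y,\perf} \to K$ is an almost isomorphism. The main obstacle is this last density statement, which requires care with the $(p,d)$-completion from the almost math setup on $A_{\inf}$; it is routine but technical, following the standard formalism of \cite{BMS1, BhattScholzePrism}.
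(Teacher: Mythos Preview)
Your overall strategy matches the paper's: use that the Frobenius on $K := R\nu_* A_{\inf,\widehat{Y}_C^{ad}}$ is an isomorphism, so it suffices to show the cone of $L\eta_\mu K \to K$ becomes almost zero after perfection, and then argue via the interaction of $\phi$-twists of $\mu$ with the almost ideal. However, there is a genuine error in your first step. The assertion that ``each cohomology sheaf $H^i(E_K)$ is annihilated by $\mu$'' is false. What \cite[Proposition 6.12]{BMS1} actually gives is that the cone has cohomology annihilated by $\mu^N$, where $N$ is the cohomological amplitude of $K$. A clean counterexample: take $K = A_{\inf} \oplus A_{\inf}[-1] \oplus A_{\inf}[-2]$ with zero differentials; then $\eta_\mu K = A_{\inf} \oplus \mu A_{\inf}[-1] \oplus \mu^2 A_{\inf}[-2]$, and the long exact sequence shows $H^2(E_K) = A_{\inf}/\mu^2$, which is not killed by $\mu$. (This also means one must first reduce to bounded $K$, a point you do not address; the paper handles this explicitly.)

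This error propagates: your key computation shows $\phi^{-m}(\mu)\cdot[x,n] = [\phi^{n+k-m}(\mu)\cdot \phi^k(x),\,n+k]$, and then you use $\phi^{n+k-m}(\mu) \in (\mu)$ together with $\mu N = 0$. But with only $\mu^N \cdot H^i(E_K) = 0$, you would need $\phi^{j}(\mu) \in (\mu^N)$ for some $j$, which is false for $N \geq 2$ since $\phi(\mu)/\mu$ is a distinguished element and hence not divisible by $\mu$. The repair is exactly what the paper does: observe instead that the cone of the stage-$r$ map $\varphi^r_*(L\eta_\mu K) \to K$ has cohomology killed by $\phi^{-r}(\mu)^N$ (by Frobenius-twisting the bound $\mu^N$), so the perfected cone is killed by the $(p,d)$-completion of $\bigcup_r (\phi^{-r}(\mu)^N)$, and then invoke \cite[Lemma 9.2]{BMS2} to identify this completion with $W(\mathfrak{m}^\flat)$. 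In your element-wise language, the same fix reads: show $(\phi^{-m}(\mu))^N$ kills $[x,n]$ (your computation gives this, since $\phi^j(\mu)^N \in (\mu^N)$), and then use that $\bigcup_m (\phi^{-m}(\mu)^N)$ still generates a dense subideal of $W(\mathfrak{m}^\flat)$ for fixed $N$.
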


Generalities on prismatic cohomology also show that the natural map  $\Prism_{Y,\perf}^n \to R\nu_* A_{\inf,\widehat{Y}_C^{ad}}$ is an almost isomorphism for any $Y \in \mathcal{P}_X$. Consequently, the map $\Prism_{Y,\perf}^n \to \Prism_{Y,\perf}$ is an almost isomorphism for $Y \in \mathcal{P}_X^{ss}$.  Using this observation and Theorem~\ref{thm:RH} (1) - (3) (or rather the variants in Remark~\ref{rmk:AinfLift}), we learn that the object $\Prism_{Y,\perf}/p \in D_{comp,qc}(X, \Prism_X/p)$ is naturally almost identified with $\RH_{\Prism}(Rf_* \mathbf{F}_{p,Y})$, where $f:Y[1/p] \to X[1/p]$ is the structure map on generic fibres.

\begin{proof}[Proof of Claim~\ref{LetaPerfClaim}]
This is a general fact for any $(p,d)$-complete perfect $\varphi$-complex $(K,\varphi_K:K \simeq \varphi_* K)$ over $A_{\inf}$ with $K/\mu \in D^{\geq 0}$. Since we could not find a reference, let us sketch a proof. First, by the compatibility of all operations involved with filtered colimits, we may assume $K$ is bounded above; by $\mu$-completeness of $K$, it follows that $K$ is bounded. Given such a $K$, we have (by \cite[Proposition 6.12]{BMS1}) a natural map $c:L\eta_\mu K \to K$ with cone having homology annihilated by $\mu^N$, where $N$ is the cohomological amplitude of $K$. Our task is to show the resulting map 
\[ c_\perf:(L\eta_\mu K)_\perf \to K\] is an almost isomorphism, where the left side denotes the $(p,d)$-completed perfection. Note that $c_\perf$ is the $(p,d)$-completed colimit of the maps 
\[ c_r:\varphi^r_* (L\eta_\mu K) \xrightarrow{\varphi^r_*(c)} \varphi^r_* K \stackrel{\varphi^{-r}_K}{\simeq} K.\] 
Now the cone $c_r$ has homology annihilated by $\phi^{-r}(\mu)^N$ by Frobenius twisting the corresponding statement for $r=0$. Since $\phi^{-(r+1)}(\mu) \mid \phi^{-r}(\mu)$ for all $r$, we learn that the cone of $c_\perf$ has cohomology annihilated by the $(p,d)$-completion of the ideal $\cup_r (\phi^{-r}(\mu)^N)$. But this completion coincides with the ideal $W(\mathfrak{m}^\flat) \subset A_{\inf}$ of almost mathematics by \cite[Lemma 9.2]{BMS2}, so the claim follows.
\end{proof}
\end{remark}

We shall use the following theorem to ensure $\mathcal{P}_X^{ss}$ is big enough:

\begin{theorem}[Existence of semistable alterations (de Jong)]
\label{AlterationsExist}
 $\mathcal{P}_X^{ss}$ is cofinal in $\mathcal{P}_X$.
\end{theorem}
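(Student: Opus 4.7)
The plan is to deduce this from de Jong's theorem on semistable alterations over DVRs (\cite{deJongAlterations}) by a standard spreading-out and base-change argument. Fix $(f_Y : Y \to X, \eta_Y) \in \mathcal{P}_X$; we must produce $(Z \to Y)$ with $Z \in \mathcal{P}_X^{ss}$ and an $X$-map equipped with a compatible $\overline{K}$-point.

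First I would descend. Since $Y$ is integral, proper, and flat over $\overline{V}$, it is finitely presented over $\overline{V}$, so by the limit formalism recalled in the introduction one can find a finite DVR extension $V \subset W \subset \overline{V}$, a proper flat integral $W$-scheme $Y_W$, and an alteration $f_{Y_W} : Y_W \to \mathbf{P}^n_W = X_W$ such that $(Y_W, f_{Y_W}) \otimes_W \overline{V} \simeq (Y, f_Y)$. (The compatibility with $X_W$ is automatic after enlarging $W$, since $X$ descends canonically to any $W$.)

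Next, apply the main theorem of \cite{deJongAlterations}: after possibly replacing $W$ by a larger finite DVR extension $W'/W$ inside $\overline{V}$, there exists an alteration $h_0 : Z_{W'} \to Y_W \otimes_W W'$ such that $Z_{W'}$ is strictly semistable over $W'$, i.e., Zariski locally \'etale over $\mathrm{Spec}\bigl(W'[t_1,\ldots,t_n]/(t_1 \cdots t_r - \varpi)\bigr)$ for $\varpi$ a uniformizer of $W'$. Now base change to $\overline{V}$: set
\[
Z := Z_{W'} \otimes_{W'} \overline{V}, \qquad h := h_0 \otimes_{W'} \overline{V} : Z \to Y.
\]
Then $Z$ is an integral proper flat $\overline{V}$-scheme (integrality of the base change uses that $\overline{V}/W'$ is a filtered colimit of finite flat extensions and $Z_{W'}$ is geometrically integral over its generic fiber after a sufficiently large $W'$), and $h$ is an alteration by base change. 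Since the \'etale-local model of a strictly semistable scheme is preserved under flat base change of the valuation ring (with $\varpi \in W'$ viewed as an element of $\overline{V}$), $Z$ is semistable over $\overline{V}$ in the sense of \cite[\S 1.5]{CesnavicusKoshikawa}, so $Z \in \mathcal{P}_X^{ss}$ once it is endowed with a geometric generic point. For that, note that the generic fiber $Z \otimes_{\overline{V}} \overline{K}$ is non-empty and \'etale over $\overline{K}$ (as $h$ is an alteration and characteristic zero fibers of alterations are generically \'etale), so it carries a $\overline{K}$-point $\eta_Z$ lying over $\eta_Y$. This produces the desired object of $\mathcal{P}_X^{ss}$ together with a map to $Y$ in $\mathcal{P}_X$.

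The only step with any real content is the invocation of de Jong's theorem; the rest is a bookkeeping exercise to pass back and forth between the noetherian world (where de Jong applies) and the non-noetherian base $\overline{V}$. The mild subtlety worth flagging is the preservation of the Cesnavicus--Koshikawa notion of semistability under the passage $W' \rightsquigarrow \overline{V}$: this is immediate from the \'etale-local description above, but it is what makes the base change in step~3 land in $\mathcal{P}_X^{ss}$ rather than merely in $\mathcal{P}_X$.
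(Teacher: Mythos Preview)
Your approach is essentially the paper's: descend $Y$ to a finite DVR extension $W \subset \overline{V}$, invoke de Jong, and base change back to $\overline{V}$. The only difference is that the paper routes through the completion $\widehat{W}$ before applying de Jong and then descends the resulting semistable alteration back to $W$ using excellence and henselianness (citing \cite[\S 1.5]{CesnavicusKoshikawa} for this step), whereas you apply de Jong's theorem directly over $W$ allowing a finite extension $W \to W'$; since de Jong's result is stated over arbitrary traits, your route is valid and marginally more direct.

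One slip to fix: in your last step, ``$Z \otimes_{\overline{V}} \overline{K}$'' is an $n$-dimensional variety over $\overline{K}$ (recall that $\overline{K}$ is the algebraic closure of the function field of $X = \mathbf{P}^n_{\overline{V}}$, not of $\mathrm{Frac}(\overline{V})$), so it is certainly not \'etale over $\overline{K}$. What you actually want is the fiber $Z \times_Y \mathrm{Spec}(\overline{K})$ along the given point $\eta_Y$: since $h$ is an alteration and $\eta_Y$ hits the generic point of $Y$, this fiber is a nonempty finite $\overline{K}$-scheme, hence has a $\overline{K}$-point lifting $\eta_Y$. The rest of the argument is unaffected.
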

\begin{proof}
It is enough to show that any proper finitely presented integral flat $\overline{V}$-scheme $Y$ admits an alteration $Y' \to Y$ with $Y'$ semistable as in \cite{CesnavicusKoshikawa}. By  approximation arguments, it is enough to show the same for $\overline{V}$ replaced by a $p$-henselian $p$-torsionfree excellent DVR $W$. Such an alteration exists after base change to the completion $\widehat{W}$ by de Jong's theorem \cite{deJongAlterations}. By excellence and henselianness of $W$, we can then descend such an alteration to $W$ (as is also sketched in \cite[\S 1.5]{CesnavicusKoshikawa}).
\end{proof}

Let us describe the limit of the spaces appearing in $\mathcal{P}_X$ in classical terms.

\begin{lemma}[The Riemann-Zariski space of $X^+$]
\label{RZAIC}
Let $\widetilde{X^+} := \lim_{Y \in \mathcal{P}_X} Y$, computed in locally ringed spaces.
\begin{enumerate}
\item $\mathcal{P}_X$, $\mathcal{P}_X^{fin}$ and $\mathcal{P}_X^{ss}$ are all cofiltered posets.

\item The locally ringed space $\lim_{Y \in \mathcal{P}_X^{fin}} Y$ is naturally identified with $X^+$.

\item The map $\widetilde{X^+} \to X^+$ resulting from (2) is naturally identified with the Riemann-Zariski space of $X^+$. In particular, this map is an inverse limit of normalized blowups of $X^+$. 
\end{enumerate}
\end{lemma}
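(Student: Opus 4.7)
The plan is to treat the three parts in order, noting that (1) is essentially formal, (2) unwinds the definition of the absolute integral closure, and (3) reorganizes $\mathcal{P}_X$ as modifications over finite covers to match the Riemann-Zariski space.

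For (1), the poset property follows from separatedness: any two $X$-morphisms $Y \to Y'$ in $\mathcal{P}_X$ carry $\eta_Y$ to $\eta_{Y'}$, hence agree on the generic point of the integral scheme $Y$, and thus coincide. For cofilteredness of $\mathcal{P}_X$, given $Y_1, Y_2 \in \mathcal{P}_X$, I would take $Y_3$ to be the reduced closure in $Y_1 \times_X Y_2$ of the image of $\mathrm{Spec}(\overline{K})$ under $(\eta_{Y_1}, \eta_{Y_2})$: this is an integral proper $\overline{V}$-scheme, generically finite (hence an alteration) over $X$, equipped with canonical maps to the $Y_i$ and a canonical lift of the geometric generic point. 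The subcategory $\mathcal{P}_X^{fin}$ inherits cofilteredness since $Y_1 \times_X Y_2$ is finite over $X$ when both $Y_i$ are; for $\mathcal{P}_X^{ss}$, apply Theorem~\ref{AlterationsExist} to refine $Y_3$ by a semistable alteration.

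For (2), each finite subextension $L/K$ in $\overline{K}$ determines the normalization $X_L$ of $X$ in $L$, giving an object of $\mathcal{P}_X^{fin}$ with its canonical geometric generic point. Conversely, any $Y \in \mathcal{P}_X^{fin}$ admits a canonical finite dominant map from $X_{K(Y)}$, so the sub-poset $\{X_L\}_L$ is cofinal in $\mathcal{P}_X^{fin}$. Since the transition maps $X_{L'} \to X_L$ are finite (hence affine), the limit in locally ringed spaces is a scheme, affine over $X$, computed as $\underline{\mathrm{Spec}}$ of the filtered colimit of the relative sections; this is exactly $X^+$ by its standard construction as the integral closure of $\mathcal{O}_X$ in $\overline{K}$.

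For (3), each $Y \in \mathcal{P}_X$ factors canonically as $Y \to X_{K(Y)} \to X$, with the first arrow proper birational (both $Y$ and $X_{K(Y)}$ are integral with function field $K(Y)$); this exhibits $\mathcal{P}_X$ as the $2$-colimit, over finite subextensions $L/K$ of $\overline{K}$, of the posets of proper birational modifications of $X_L$. Passing to cofiltered limits and using (2) gives $\widetilde{X^+} = \lim_L \mathrm{RZ}(X_L)$. A standard limit argument (using $X^+ = \lim_L X_L$ with affine transitions together with the finite presentation of modifications) shows that proper birational morphisms to $X^+$ descend to such morphisms over some $X_L$, so this limit matches $\mathrm{RZ}(X^+)$; this descent is the closest thing to a technical obstacle here, but is routine given that each $X_L$ is coherent. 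Finally, by Raynaud-Gruson, every proper birational modification of a qcqs integral scheme is dominated by a blowup along a finitely generated ideal sheaf, so after normalizing, normalized blowups are cofinal among modifications, yielding the final clause.
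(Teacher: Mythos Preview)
Your proposal is correct and follows essentially the same approach as the paper. The paper's own proof is considerably terser: for (1) it says the $\overline{K}$-point rigidifies the picture (your separatedness argument) and that $\mathcal{P}_X$, $\mathcal{P}_X^{fin}$ admit fibre products via suitable irreducible components (your closure-of-generic-point construction), with $\mathcal{P}_X^{ss}$ handled by Theorem~\ref{AlterationsExist}; for (2) it simply identifies $\mathcal{P}_X^{fin}$ with finite subextensions of $\overline{K}$ via normalization, as you do; for (3) it merely asserts the Riemann--Zariski identification is ``standard'' and invokes Raynaud--Gruson flattening for the blowup statement, whereas you spell out the factorization $Y \to X_{K(Y)} \to X$ and the descent of modifications from $X^+$ to some $X_L$---useful detail, but the same route.
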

\begin{proof}
(1): To show these categories are posets, it is enough to check $\mathcal{P}_X$ is so. But the $\overline{K}$-point being recorded in each object of $\mathcal{P}_X$ rigidifies the picture as dominant maps between integral schemes are determined by the induced map on function fields.  Moreover, as both $\mathcal{P}_X$ and $\mathcal{P}_X^{fin}$ admit fibre products (given by taking suitable irreducible components of scheme-theoretic fibre products), these posets are cofiltered. The fact that $\mathcal{P}_X^{ss}$ is cofiltered then follows from Theorem~\ref{AlterationsExist}.

(2): this follows because $\mathcal{P}_X^{fin}$ identifies with the category of finite extensions $L/K$ contained in $\overline{K}$ via the functor carrying such an extension $L$ to the normalization of $X$ in $L$. 

(3): The first part is standard, and the second part follows by the Raynaud-Gruson flattening theorem
\end{proof}

The following result summarizes what we need from the log prismatic theory:

\begin{theorem}[Log prismatic cohomology (Cesnavicus-Koshikawa)]
\label{LogPrismatic}
Fix $Y \in \mathcal{P}_X^{ss}$. 
\begin{enumerate}
\item (Isogeny theorem) There exists some $c = c(n)$ such that map $\Prism_Y/p \to \Prism_{Y,\perf}/p$ admits left and right inverses up to multiplication by $d^c$ in $D_{comp}(X_{p=0},A_{\inf}/p)$. 
\item (Hodge-Tate comparison) There is a natural isomorphism $H^*(\Prism_Y/(p,d)) \simeq \Omega^*_{Y/\overline{V}}/p$ of graded algebras over $X_{p=0}$. In particular, $\Prism_Y \in D_{comp,qc}(X, \Prism_X)$. 
\end{enumerate}
\end{theorem}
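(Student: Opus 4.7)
The plan is to deduce both statements from the logarithmic generalization of the $A\Omega$-construction of \cite{BMS1} developed by Cesnavicus-Koshikawa \cite{CesnavicusKoshikawa}, combined with the Frobenius-perfection technology recalled in Construction~\ref{ConsPrism}. Recall that by definition $\Prism_Y = L\eta_\mu R\nu_* A_{\inf,\widehat{Y}_C^{ad}}$, so structural statements about $\Prism_Y$ reduce to the interaction of $L\eta_\mu$ with the pro-\'etale nearby cycles on the rigid generic fibre $\widehat{Y}_C^{ad}$; the semistability of $Y/\overline{V}$ is what makes this interaction tractable, via local models that look like products of nodal curves.

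For (2), I would invoke the logarithmic Hodge-Tate comparison from \cite{CesnavicusKoshikawa}, which provides a natural graded-algebra isomorphism $H^i(\Prism_Y/d) \cong \Omega^i_{Y/\overline{V}}\{-i\}$ on $\widehat{Y}$, where $\Omega^i_{Y/\overline{V}}$ denotes the log K\"ahler differentials with respect to the natural log structure coming from the horizontal divisor $Y[1/p] \subset Y$, and $\{-i\}$ is the Breuil-Kisin twist. Reducing modulo $p$ trivializes the twist and yields the stated formula $H^i(\Prism_Y/(p,d)) \cong \Omega^i_{Y/\overline{V}}/p$ as graded algebras. Since the right-hand sheaves are coherent on $Y_{p=0}$ and the map $Y \to X$ is proper (so that pushforward preserves quasi-coherence), membership in $D_{comp,qc}(X, \Prism_X)$ is automatic.

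For (1), I would adapt the argument for the non-logarithmic isogeny theorem sketched in Construction~\ref{ConsPrism}(5) to the log setting, again following \cite{CesnavicusKoshikawa}. The Hodge-Tate comparison from (2) identifies $\Prism_Y/(p,d)$ with a bounded complex whose cohomology sheaves $\Omega^i_{Y/\overline{V}}/p$ are locally free of rank bounded by $\binom{n+1}{i}$, using the explicit semistable local models over $\overline{V}$. A standard $L\eta$-argument then converts this bounded-amplitude coherent control into a uniform $d^c$-isogeny bound on the linearization $\phi_{A_{\inf}}^* \Prism_Y \to \Prism_Y$ for some $c = c(n)$ depending only on the relative dimension. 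Passing to the filtered colimit along Frobenius and reducing modulo $p$ then upgrades this to the desired $d^c$-isogeny between $\Prism_Y/p$ and $\Prism_{Y,\perf}/p$.

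The principal technical obstacle, which is exactly what is accomplished in \cite{CesnavicusKoshikawa}, is to extend all of the machinery ($L\eta_\mu$, the Hodge-Tate comparison, and the Frobenius isogeny theorem of \cite{BMS1}) from smooth $\overline{V}$-schemes to the log-semistable setting with a bound $c(n)$ that is uniform across the tower $\mathcal{P}_X^{ss}$. The restriction to semistable alterations is essential: only in this setting are the log differentials $\Omega^i_{Y/\overline{V}}$ locally free of controlled rank, which in turn forces the Frobenius isogeny constant to depend only on the relative dimension $n$ and not on the particular $Y \in \mathcal{P}_X^{ss}$ under consideration.
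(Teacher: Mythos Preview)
Your treatment of (2) matches the paper's: both simply cite the Hodge--Tate comparison of \cite{CesnavicusKoshikawa}.

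For (1), your argument is correct in outline but takes a more circuitous route than the paper. You propose to first establish the log analog of the Frobenius isogeny $\phi_{A_{\inf}}^* \Prism_Y \to \Prism_Y$ (a $d^c$-isogeny) and then take the filtered colimit along Frobenius, summing the geometric series in the exponent to retain a finite constant. The paper instead exploits the explicit definition $\Prism_Y = L\eta_\mu R\nu_* A_{\inf,\widehat{Y}_C^{ad}}$ directly: by \cite[Proposition~6.12]{BMS1}, for any complex $K$ of cohomological amplitude $\leq n$ the natural map $L\eta_\mu K \to K$ admits inverses up to $\mu^n$, and by Remark~\ref{LogPrismPerfRH} this map (for $K = R\nu_* A_{\inf,\widehat{Y}_C^{ad}}$) is almost identified with $\Prism_Y \to \Prism_{Y,\perf}$. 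Reducing modulo $p$ and noting that $(\mu)$ and $(d)$ agree up to radicals in $\overline{V}^\flat$ finishes. The advantage of the paper's approach is that it bypasses any need to prove or cite a log Frobenius isogeny theorem and avoids the colimit bookkeeping; your approach would work too, but requires that extra input and the convergence argument for the constant.
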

\begin{proof}
(2) is \cite[Theorem 4.11]{CesnavicusKoshikawa}. The analog of part (1) for $\mu = [\underline{\epsilon}]-1$ (as in \cite[\S 3.2]{BMS1}) instead of $d$ is immediate from the definition $\Prism_Y$ as $L\eta_\mu R\nu_* A_{\inf,\widehat{Y}^{ad}_C}$:  a general property of the $L\eta$ construction from \cite[Proposition 6.12]{BMS1} and the cohomological amplitude bound coming from (2) ensure that the natural map $L\eta_\mu R\nu_* A_{\inf,\widehat{Y}^{ad}_C} \to R\nu_* A_{\inf,\widehat{Y}_C^{ad}}$ (which almost identifies with $\Prism_Y \to \Prism_{Y,\perf}$ by Remark~\ref{LogPrismPerfRH})  admits an inverse up to multiplication by $\mu^n$ on either side. The claim in (1) then follows by reducing modulo $p$ and noting that the ideals $(d)$ and $(\mu)$ agree up to radicals in the rank $1$ valuation ring $A_{\inf}/p =\overline{V}^\flat$.
\end{proof}

\begin{remark}[Why do we need semistable alterations?]
Theorem~\ref{LogPrismatic} is the main reason we pass from finite covers of $X$ to semistable alterations over $X$ in the proof of $(P_n)$. Indeed, the isogeny theorem effectively bounds the difference between prismatic cohomology and its perfection in prismatic cohomology by a fixed power of $d$, which allows us to translate certain questions from $\Prism_Y/p$ to $\Prism_Y/(p,d^c)$ for some fixed $c \geq 1$ thanks to the almost vanishing theorem we have already proven (see Proposition~\ref{CMFactor}). The Hodge-Tate comparison then helps relate $\Prism_Y/(p,d^c)$ to coherent cohomology (see Lemma~\ref{IncreaseFinite}).

It is tempting to  avoid restricting to semistable alterations (and thus avoid  \cite{deJongAlterations} or \cite{CesnavicusKoshikawa}) by simply using the object $L\eta_\mu R\nu_* A_{\inf,\widehat{Y}^{ad}_C}$ for all $Y \in \mathcal{P}_X$ instead of merely $Y \in \mathcal{P}_X^{ss}$: the analog of the isogeny theorem holds true for this object by the proof of Theorem~\ref{LogPrismatic} (1). However, the lack of a description of $\left(L\eta_\mu R\nu_* A_{\inf,\widehat{Y}^{ad}_C}\right)/(p,d)$ via coherent cohomology for general $Y \in \mathcal{P}_X$ prevents us from following this path.
\end{remark}

We now begin proving results that we shall need in order to prove $(P_n)$. First, we explain how to move between coherent cohomology, prismatic cohomology, and log prismatic cohomology in the towers given by the categories in Definition~\ref{SSAlt}.

\begin{theorem}
\label{IndObjectPairs}
\begin{enumerate}
\item (Cofinality of finite maps and vanishing of differential forms) The natural maps
\[ \{ \mathcal{O}_Y/p \}_{Y \in \mathcal{P}_X^{fin}} \xrightarrow{a} \{ \mathcal{O}_Y/p \}_{Y \in \mathcal{P}_X} \xleftarrow{b} \{ \mathcal{O}_Y/p \}_{Y \in \mathcal{P}_X^{ss}} \xrightarrow{c} \{ \Prism_Y/(p,d) \}_{Y \in \mathcal{P}_X^{ss}} \]
are isomorphisms of ind-objects in $D_{qc}(X_{p=0})$, and they all have colimit $\mathcal{O}_{X^+}/p$. 
\item (Asymptotic equality of logarithmic and usual prismatic cohomology)  For $Y \in \mathcal{P}_X$, write $\Prism^n_Y$ for the non-logarithmic prismatic complex of $Y$. Then the map 
\[ \{\Prism^n_Y/(p,d^c)\}_{Y \in \mathcal{P}_X^{ss}} \to \{\Prism_Y/(p,d^c)\}_{Y \in \mathcal{P}_X^{ss}}\]
is an isomorphism of ind-objects $D(X_{p=0}, A_{\inf}/(p,d^c))$ for any $c \geq 1$.
\end{enumerate}
\end{theorem}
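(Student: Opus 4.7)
For part (1), I would handle the three maps in sequence. The map $b$ follows from pure cofinality: by Theorem~\ref{AlterationsExist}, $\mathcal{P}_X^{ss} \subset \mathcal{P}_X$ is cofinal, so the inclusion induces an ind-iso on the restricted diagram. For the map $c$, I would show that the cone of $\mathcal{O}_Y/p \to \Prism_Y/(p,d)$, whose cohomology sheaves are $\Omega^i_{Y/\overline{V}}/p$ for $i \geq 1$ by the logarithmic Hodge-Tate comparison (Theorem~\ref{LogPrismatic} (2)), is ind-zero over $\mathcal{P}_X^{ss}$; since this cone is bounded, it suffices to show each $\Omega^i_{Y/\overline{V}}/p$ is ind-zero. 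For this I would use a root-extraction argument: $\Omega^1_{Y/\overline{V}}$ is locally generated by finitely many $d\log(f_j)$ with $f_j$ sections of the log structure, and on a finite cover $Y' \to Y$ where each $f_j$ acquires a $p$-th root $g_j$ (such roots exist in the absolute integral closure $X^+$, hence on some finite subcover), we have $d\log(f_j) = p \cdot d\log(g_j) \equiv 0 \pmod p$. Semistabilizing $Y'$ via Theorem~\ref{AlterationsExist} gives a witness in $\mathcal{P}_X^{ss}$, and higher $\Omega^i$ are handled via wedge powers.

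For the map $a$, the decisive input is Theorem~\ref{KillCohMixed}. Given $Y \in \mathcal{P}_X$ with structure map $f: Y \to X$, apply Theorem~\ref{KillCohMixed} to obtain a finite cover $\pi: Y' \to Y$ (replacing $Y'$ by its normalization) such that $Rf_* \mathcal{O}_Y/p \to R(f\pi)_* \mathcal{O}_{Y'}/p$ factors over $(f\pi)_* \mathcal{O}_{Y'}/p$. Let $Z := \underline{\mathrm{Spec}}_X((f\pi)_* \mathcal{O}_{Y'})$, which is finite over $X$ by coherence of proper direct images; normalizing places $Z$ in $\mathcal{P}_X^{fin}$ and yields a factorization $\mathcal{O}_Y/p \to \mathcal{O}_Z/p \to \mathcal{O}_{Y'}/p$ in $D_{qc}(X_{p=0})$. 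This exhibits the entry at $Y$ as coming, in the ind-category, from the $\mathcal{P}_X^{fin}$-indexed subsystem, making $a$ an ind-iso. The colimit identification with $\mathcal{O}_{X^+}/p$ then follows from Lemma~\ref{RZAIC} (2) and the affineness of $X^+ \to X$.

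For part (2), I would induct on $c$. The base case $c = 1$: the comparison $\Prism^n_Y/(p,d) \to \Prism_Y/(p,d)$ is compatible with Hodge-Tate filtrations whose graded pieces are (derived wedges of) $L_{Y/\overline{V}}/p$ on the non-log side and $\Omega^i_{Y/\overline{V}}/p$ on the log side. By part (1c) and an analogous root-extraction argument for the non-log side, both maps $\mathcal{O}_Y/p \to \Prism^n_Y/(p,d)$ and $\mathcal{O}_Y/p \to \Prism_Y/(p,d)$ are ind-isos, hence so is their comparison. For the inductive step $c \to c+1$, apply the Bockstein exact triangles
\begin{equation*}
\Prism_Y/(p, d) \xrightarrow{\cdot d^c} \Prism_Y/(p, d^{c+1}) \to \Prism_Y/(p, d^c),
\end{equation*}
and its analog for $\Prism^n_Y$; these are compatible under the natural comparison. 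The leftmost and rightmost vertical maps are ind-isos by base case and induction respectively, so in the resulting map of exact triangles the middle vertical is an ind-iso as well.

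The main obstacle I foresee is the analysis of the non-log derived cotangent complex $L_{Y/\overline{V}}/p$ in the base case of part (2): for semistable but non-smooth $Y$, this complex has a non-trivial contribution in degree $-1$ supported on the singular locus (intersections of components in the special fiber), and the naive root-extraction argument may not immediately kill these. A cleaner alternative uses Remark~\ref{LogPrismPerfRH}: the cone of $\Prism^n_Y \to \Prism_Y$ becomes almost zero after perfection, so its reduction modulo $(p,d^c)$ is Frobenius-nilpotent up to almost mathematics; one then uses that going up in $\mathcal{P}_X^{ss}$ via $p$-th root extractions absorbs Frobenius, killing such elements asymptotically.
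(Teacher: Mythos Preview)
Your argument for part (1) is correct and essentially matches the paper's. The paper organizes all three maps through a single principle: since every term is a coherent complex on $X_{p=0}$ with uniformly bounded amplitude, such objects are compact in $D^{[a,b]}_{qc}(X_{p=0})$, so it suffices to check isomorphy on colimits. This streamlines (a) and (c) to statements about $\mathcal{O}_{X^+}/p$ and the vanishing of $\Omega^1_{\widetilde{X^+}/\overline{V}}/p$, the latter via Lemma~\ref{AICPerfectoid}. Your direct factorization for (a) and your root-extraction for (c) are just hands-on versions of the same computations.

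For part (2), your route genuinely differs from the paper's. You induct on $c$ via the exact triangle $\Prism/(p,d) \to \Prism/(p,d^{c+1}) \to \Prism/(p,d^c)$ and handle the base case $c=1$ by comparing Hodge--Tate filtrations. The paper instead base-changes along the nilpotent thickening $A_{\inf}/(p,d^c) \to A_{\inf}/(p,\varphi^{-1}(d))$ in one shot: after this reduction, the de Rham comparison identifies $\Prism^n_Y/(p,\varphi^{-1}(d))$ with (a Frobenius twist of) $\mathrm{dR}^n_{Y/\overline{V}}/p$, and the problem becomes showing $\{(\mathcal{O}_Y/p)^{(1)}\} \to \{\mathrm{dR}^n_{Y/\overline{V}}/p\}$ is an ind-isomorphism. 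This is then dispatched by Lemma~\ref{FactorizeFrobAIC}: since the limit is perfectoid, the relative Frobenius over $\overline{V}/p$ factors through transition maps at finite level, which forces the conjugate filtration to collapse in the ind-category.

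Regarding your self-identified obstacle: your worry about $H^{-1}(L_{Y/\overline{V}}/p)$ is resolvable within your own framework, and you should not retreat to the vague perfection argument. The clean fix is exactly the compactness principle from part (1): each $\wedge^i L_{Y/\overline{V}}[-i]/p$ is a perfect complex (as $Y$ is semistable), so pushed to $X_{p=0}$ it is coherent with amplitude uniformly bounded by $\dim X$; hence ind-vanishing is equivalent to vanishing of the colimit. But the colimit is $\wedge^i L_{\widetilde{X^+}/\overline{V}}/p$, and since each local ring of $\widetilde{X^+}$ has perfectoid $p$-completion over the perfectoid $\widehat{\overline{V}}$, its $p$-completed cotangent complex over $\overline{V}$ vanishes (this is immediate from $\Prism^n_R/d = R$ for perfectoid $R$). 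So your induction closes. What each approach buys: yours is more elementary, avoiding both the de Rham comparison and Lemma~\ref{FactorizeFrobAIC}; the paper's avoids any direct analysis of $\Prism^n_Y/(p,d)$ for non-smooth $Y$ and treats all $c$ uniformly.
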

\begin{proof}
 (1): Each term of each ind-object appearing here is a coherent complex on $X_{p=0}$ with uniformly bounded cohomological amplitude (via the coherence of higher direct images for the first three, and Theorem~\ref{LogPrismatic} (2) for the last one).  For any $a \leq b \in \mathbf{Z}$, objects in $D^{[a,b]}_{coh}(X_{p=0})$ are compact objects in $D^{[a.b]}_{qc}(X_{p=0})$. Thus, to prove the isomorphy at the level of ind-objects, it suffices to do so after taking colimits. The claim for (b) follows from Theorem~\ref{AlterationsExist}, while that for (a) follows from Theorem~\ref{KillCohMixed} as well as the description in Lemma~\ref{RZAIC}. For (c), using Theorem~\ref{LogPrismatic} (2), it is enough to prove that $\colim_{Y \in \mathcal{P}_X^{ss}} \Omega^1_{Y/V}/p = 0$. As the formation of differential forms is compatible with colimits, it is enough to show the $p$-divisibility of $\Omega^1_{X_i^+/\overline{V}}$ for each normalized blowup $X_i^+ \to X^+$.  Now $X_i^+$ is a normal scheme with algebraically closed fraction field, so the claim follows from Lemma~\ref{AICPerfectoid}.
 
(2): As isomorphy of ind-objects of the derived category can be detected after base change along maps with nilpotent kernels, it is enough to prove the result after base change along $A_{\inf}/(p,d^c) \to A_{\inf}/(p,\varphi^{-1}(d))$, i.e., to show that  $\{\Prism^n_Y/(p,\varphi^{-1}(d))\}_{Y \in \mathcal{P}_X^{ss}} \to \{\Prism_Y/(p,\varphi^{-1}(d))\}_{Y \in \mathcal{P}_X^{ss}}$ is an isomorphism of ind-objects in $D(X_{p=0}, A_{\inf}/(p,\varphi^{-1}(d)))$. Using (1) as well as the de Rham comparison isomorphism for usual prismatic cohomology, we are then reduced to checking the following: the natural  map induces an ind-isomorphism $\{(\mathcal{O}_Y/p)^{(1)}\}_{Y \in \mathcal{P}_X} \to \{ \mathrm{dR}^n_{Y/\overline{V}}/p \}_{Y \in \mathcal{P}_X}$, where $(-)^{(1)}$ denotes the Frobenius twist relative to $\overline{V}/p$ and $\mathrm{dR}^n$ denotes the  non-logarithmic derived de Rham complex functor from \cite{IllusieCC2} (see also \cite{BhattpadicddR}). In other words, we must show that for all $Y \in \mathcal{P}_X$, there exists a map $Y' \to Y$ and a map $\mathrm{dR}^n_{Y/\overline{V}}/p \to (\mathcal{O}_{Y'}/p)^{(1)}$ factoring the canonical transition maps $(\mathcal{O}_Y/p)^{(1)} \to (\mathcal{O}_{Y'}/p)^{(1)}$ and $\mathrm{dR}^n_{Y/\overline{V}}/p \to \mathrm{dR}^n_{Y'/\overline{V}}/p$. In fact, we shall show that we can even take $Y' \to Y$ to be finite. Indeed, consider the pro-object of maps $Y' \to Y$ in $\mathcal{P}_X$ which are finite. The inverse limit of this pro-object is an absolute integral closure of $Y$ and is thus perfectoid after $p$-completion by Lemma~\ref{AICPerfectoid}. The claim now follows by applying Lemma~\ref{FactorizeFrobAIC} to this pro-object and using the following observation (proven by left Kan extension from the smooth case): if $A \to B$ is a flat map of $\mathbf{F}_p$-algebras with relative Frobenius $B^{(1)} \to B$, then there is a natural factorization 
\[ \mathrm{dR}^n_{B^{(1)}/A} \xrightarrow{a} B^{(1)} \xrightarrow{b} \mathrm{dR}^n_{B/A} \xrightarrow{c} B,\] 
where $a$ and $c$ are given by $\mathrm{gr}^0$ of the Hodge filtration, $b$ comes from $\mathrm{gr}_0$ of the conjugate filtration, and $cb$ is the relative Frobenius, and $ba$ is the map on de Rham complexes induced by the relative Frobenius.
 \end{proof}

The following lemmas were used above.

\begin{lemma}[Perfectness properties for absolute integral closures]
\label{AICPerfectoid}
Let $R$ be a normal domain with algebraically closed fraction field. 
\begin{enumerate}
\item The multiplicative monoids $R$,  $R[1/p]^*$ and $R[1/p]^* \cap R$ are $p$-divisible.
\item The $p$-adic completion $\widehat{R}$ is a perfectoid ring.
\item Both the the $R$-module $\Omega^{1,n}_{R/\mathbf{Z}}$ of non-logarithmic Kahler differentials as well as the $R$-module $\Omega^1_{R/\mathbf{Z}}$ of logarithmic Kahler differentials (with log structure given by $\mathrm{Spec}(R[1/p]) \subset \mathrm{Spec}(R)$) is $p$-divisible.
\end{enumerate}
\end{lemma}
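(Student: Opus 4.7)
The plan is to prove (1) directly from the hypotheses, then derive (3) via a short computation with the Leibniz rule, and (2) from a standard perfectoid criterion.

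First, I would establish (1) as follows. For any $r \in R$, the polynomial $x^p - r$ has a root $s$ in the algebraically closed fraction field $K$; since $s$ is integral over $R$ via this monic relation, normality forces $s \in R$. The same argument applied to the (still normal) localization $R[1/p]$ gives the variant for $R[1/p]^*$, upon noting that a $p$-th root of a unit is itself a unit. The statement for $R[1/p]^* \cap R$ is then immediate, since the $p$-th root produced by either argument lies in both $R$ and $R[1/p]^*$.

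Next, (3) would follow by direct computation on generators. The module $\Omega^{1,n}_{R/\mathbf{Z}}$ is generated as an $R$-module by $\{dr : r \in R\}$, and writing $r = s^p$ via (1), the Leibniz rule gives $dr = ps^{p-1}\, ds \in p\, \Omega^{1,n}_{R/\mathbf{Z}}$. For the logarithmic version, the additional generators $d\log u$ for $u \in R \cap R[1/p]^*$ satisfy $d\log u = p \cdot d\log v \in p\, \Omega^1_{R/\mathbf{Z}}$ once (1) is applied to $R \cap R[1/p]^*$ to express $u = v^p$ with $v \in R \cap R[1/p]^*$. Hence both modules are $p$-divisible.

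Finally for (2): if $p = 0$ in $R$, then (1) forces Frobenius on $R$ to be surjective (and it is injective since $R$ is a domain), so $R = \widehat{R}$ is perfect and hence perfectoid. If $p \neq 0$ in $R$, iterating (1) produces a compatible system $\pi_0 = p$, $\pi_{n+1}^p = \pi_n$ of $p$-power roots of $p$ in $R$. This system yields an element $\pi^\flat \in \widehat{R}^\flat$ such that $[\pi^\flat]^p - p$ is a distinguished element of $A_{\inf}(\widehat{R})$ generating $\ker(\theta:A_{\inf}(\widehat{R}) \to \widehat{R})$. Combined with the $p$-completeness of $\widehat{R}$ and the surjectivity of Frobenius on $\widehat{R}/p = R/p$ (again from (1)), the standard criterion for perfectoid rings, via the equivalence with perfect prisms \cite[Theorem 3.10]{BhattScholzePrism}, confirms that $\widehat{R}$ is perfectoid.

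The only mildly nontrivial ingredient is the principality of $\ker(\theta)$ in (2). Once the compatible system of $p$-power roots of $p$ coming from (1) is in hand, this reduces to a standard fact in perfectoid theory, so I do not anticipate any serious obstacles; the real content of the lemma lies entirely in (1), and (2)--(3) are formal corollaries.
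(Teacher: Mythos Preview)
Your arguments for (1) and (3) are correct and essentially identical to the paper's. The gap is in (2), in the step you flag as ``mildly nontrivial.''

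Having a compatible system of $p$-power roots of $p$ together with surjectivity of Frobenius on $R/p$ does \emph{not} by itself force $\ker(\theta)$ to be principal. Reducing modulo $p$, principality of $\ker(\theta)$ amounts to the statement that $\ker(R^\flat \twoheadrightarrow R/p)$ is generated by $p^\flat$, which unwinds to exactly the injectivity of Frobenius $R/p^{1/p} \to R/p$, i.e.\ the implication $x^p \in pR \Rightarrow x \in p^{1/p}R$. That implication is the actual content you are deferring to a ``standard fact,'' and it does not follow formally from the inputs you have established.

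The paper supplies this injectivity in two lines using normality a second time: if $x^p \in pR$, then $y := x/p^{1/p} \in R[1/p]$ satisfies $y^p = x^p/p \in R$, whence $y \in R$ by normality, so $x \in p^{1/p}R$. Once you insert this argument, your prism-theoretic packaging and the paper's direct verification of the bijection $R/p^{1/p} \simeq R/p$ become equivalent. So (2) is not a formal corollary of (1) as you suggest; normality is invoked again in a way that the $p$-divisibility statements in (1) alone do not capture.
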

\begin{proof}
(1): The assumption on $R$ implies that eacg monic polynomial over $R$ has a root in $R$, so each element of $R$ has a $p$-th root. As the hypothesis on $R$ passes to localizations, and because $p$-th roots of units are units, we obtain the claim for $R[1/p]^*$. The stability of $R[1/p]^* \cap R \subset R[1/p]^*$ under $p$-th roots (coming from integral closedness of $R$) then implies that $R[1/p]^* \cap R$ is $p$-divisible.

(2): If $p=0$, then it is clear from (1) and reducedness that $R$ is perfect whence perfectoid. Assume now that $p \neq 0$ in $R$, whence $p$ is a nonzerodivisor. By (1), we can choose a $p$-th root $p^{1/p} \in R$ of $p \in R$. It is enough to show that the Frobenius induces a bijection $R/p^{1/p} \simeq R/p$. The surjectivity is  clear from (1).
For injectivity, if $x \in R$ with $x^p \in pR$, then $y = \frac{x}{p^{1/p}} \in R[1/p]$ satisfies $y^p \in R$, whence $y \in R$ by normality, so $x \in p^{1/p} R$ as wanted.

(3): The claim for $\Omega^{1,n}_{R/\mathbf{Z}}$ follows from the $p$-divisibility of $R$, while that for $\Omega^1_{R/\mathbf{Z}}$ follows from that of $\Omega^{1,n}_{R/\mathbf{Z}}$ and $S[1/p]^* \cap S$ for any \'etale $R$-algebra\footnote{Any \'etale $R$-algebra is a finite product of Zariski localizations of $R$ by \cite[Lemma 3]{GabberAffineAnalog}.} by definition of logarithmic differential forms.
\end{proof}

\begin{lemma}
\label{FactorizeFrobAIC}
Let $\{Y_i\}$ be a cofiltered system of finitely presented flat $\overline{V}$-schemes along affine transition maps. Assume that $Y_\infty := \lim_i Y_i$ is perfectoid after $p$-completion. For any $Y_i$ in the pro-system, there exists a sufficiently large map $Y_j \to Y_i$ in the pro-system and an $\overline{V}/p$-map $h:Y_{j,p=0}^{(1)} \to Y_{i,p=0}$ such that the following diagram of $\overline{V}/p$-schemes is commutative:
\[ \xymatrix{ 	Y_{j,p=0} \ar[r] \ar[d] & Y_{i,p=0} \ar[d] \\
(Y_{j,p=0})^{(1)} \ar[r] \ar[ru]^-{h}  & (Y_{i,p=0})^{(1)}, }\]
 where the vertical maps are the relative Frobenii over $\overline{V}/p$ and the horizontal maps are natural ones.
%
%
%For any $Y \in \mathcal{P}_X$, there exists a sufficiently large finite cover $Y' \to Y$ in $\mathcal{P}_X$ and a map $h:(Y'_{p=0})^{(1)} \to Y_{p=0}$ over $\overline{V}/p$ such that the following diagram of $\overline{V}/p$-schemes commutative:
%\[ \xymatrix{ 	Y'_{p=0} \ar[r] \ar[d] & Y_{p=0} \ar[d] \\
%(Y'_{p=0})^{(1)} \ar[r] \ar[ru]^-{h}  & (Y_{p=0})^{(1)}, }\]
% where the vertical maps are the relative Frobenii over $\overline{V}/p$ and the horizontal maps are natural ones.
\end{lemma}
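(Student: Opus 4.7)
The plan is to first construct $h$ at the level of the inverse limit $Y_\infty$, then descend it by standard noetherian approximation, using that $Y_{i,p=0}$ is finitely presented over $\overline{V}/p$.

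Since $\widehat{Y_\infty}$ is perfectoid and $Y_{\infty,p=0}$ coincides with $\widehat{Y_\infty}/p$ as a scheme, the $\overline{V}/p$-scheme $Y_{\infty,p=0}$ is perfect, so the relative Frobenius $F_\infty \colon Y_{\infty,p=0} \to Y_{\infty,p=0}^{(1)}$ is an isomorphism. I would then define the candidate map at infinity as
\[ h_\infty \colon Y_{\infty,p=0}^{(1)} \xrightarrow{F_\infty^{-1}} Y_{\infty,p=0} \to Y_{i,p=0}, \]
where the second arrow is the structure map from the cofiltered limit. A direct check using naturality of the relative Frobenius (i.e.\ $F_i \circ (\text{transition}) = (\text{transition})^{(1)} \circ F_\infty$) shows that both triangles in the target diagram commute when $h$ is replaced by $h_\infty$ and $Y_{j,p=0}$ by $Y_{\infty,p=0}$.

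Next I would descend. Because Frobenius twist is a base change along the affine map $F \colon \mathrm{Spec}(\overline{V}/p) \to \mathrm{Spec}(\overline{V}/p)$, and base change and reduction mod $p$ commute with cofiltered limits along affine transitions, the identification $Y_{\infty,p=0}^{(1)} = \lim_j Y_{j,p=0}^{(1)}$ holds (with affine transitions). Since $Y_{i,p=0}$ is a finitely presented $\overline{V}/p$-scheme, the standard limit formalism (\cite[Tag 01ZC]{StacksProject}) produces some index $j$ in the pro-system and a $\overline{V}/p$-morphism $h \colon Y_{j,p=0}^{(1)} \to Y_{i,p=0}$ whose pullback to $Y_{\infty,p=0}^{(1)}$ is $h_\infty$.

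Finally I need the two triangles to commute already at the finite level. Each triangle is the assertion that two morphisms from a qcqs scheme (namely $Y_{j,p=0}$ or $Y_{j,p=0}^{(1)}$) into the finitely presented $\overline{V}/p$-scheme $Y_{i,p=0}$ or $Y_{i,p=0}^{(1)}$ coincide. These two morphisms agree after passing to $Y_{\infty,p=0}$ (resp.\ $Y_{\infty,p=0}^{(1)}$) by the previous paragraph, so another standard limit argument for agreement of morphisms to finitely presented targets (\cite[Tag 01ZC]{StacksProject}) lets me enlarge $j$ so that both triangles commute on the nose. The main subtlety is purely bookkeeping: ensuring that the Frobenius twist commutes with the cofiltered limit and that both commutativities are descended simultaneously; there is no serious obstacle beyond this.
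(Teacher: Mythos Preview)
Your proof is correct and follows essentially the same approach as the paper's: observe that perfectoidness of $\widehat{Y_\infty}$ makes the relative Frobenius on $Y_{\infty,p=0}$ an isomorphism, so the diagram commutes trivially at the infinite level, and then descend using that $Y_{i,p=0}$ and $(Y_{i,p=0})^{(1)}$ are finitely presented over $\overline{V}/p$. The paper's proof is the same two-sentence argument, just more tersely stated. One terminological quibble: saying $Y_{\infty,p=0}$ is ``perfect'' is not quite accurate (the absolute Frobenius on $R/p$ for $R$ perfectoid need not be injective); what you actually use, and correctly state immediately afterward, is that the \emph{relative} Frobenius over $\overline{V}/p$ is an isomorphism.
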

\begin{proof}
As the $p$-completion of $Y_\infty$ is perfectoid, the relative Frobenius $Y_{\infty,p=0} \to (Y_{\infty,p=0})^{(1)}$ is an isomorphism, so the lemma is trivially true without the finiteness requirement on $Y_j \to Y_i$. As both $Y_{i,p=0}$ and $(Y_{i,p=0})^{(1)}$ are finitely presented over $\overline{V}$, the lemma itself follows by approximation.
\end{proof}

Eventually, we shall annihilate local prismatic cohomology by passing up to alterations. We have essentially already seen how to do this with perfectified prismatic cohomology in the proof of the almost Cohen--Macaulayness of $X^+$. The next proposition will help us deduce a weaker statement for prismatic cohomology itself, using crucially the isogeny theorem for prismatic cohomology of (log) smooth schemes.

\begin{proposition}
\label{CMFactor}
There exists a constant $c = c(n)$ such that for any $Y \in \mathcal{P}_X^{ss}$, there is a map $f:Y' \to Y$ in $\mathcal{P}_X^{ss}$ and $K \in D_{comp,qc}(X,\Prism_X/p)$ such that the following hold true:
\begin{enumerate}
\item  Write $f^*:\Prism_Y/p \to \Prism_{Y'}/p$ for the pullback. Then $d^c f^*$ factors over $K$ in $D_{comp}(X_{p=0}, A_{\inf}/p)$. 
\item $K/d \in D_{qc}(X_{p=0})$ is cohomologically CM (so $R\Gamma_x((K/d)_x) \in D^{\geq n-\dim(\overline{\{x\}})}$ for all  $x \in X_{p=0}$).
\end{enumerate}
\end{proposition}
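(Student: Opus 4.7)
The plan is to build $K$ from the Riemann-Hilbert image of a perverse sheaf, combining the isogeny theorem for log prismatic cohomology (Theorem~\ref{LogPrismatic}(1)) with the ind-perverse structure underlying Theorem~\ref{ACMaic}(1). The key geometric idea is that perfectified log prismatic cohomology of an alteration is (almost) the Riemann-Hilbert image of the derived $\mathbf{F}_p$-pushforward from its generic fibre, and that this derived pushforward, shifted by $n$, becomes perverse after passing to a sufficiently large alteration.

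First I would use the isogeny theorem to fix a constant $c = c(n)$ (depending only on the relative dimension $n$) so that, for every $Z \in \mathcal{P}_X^{ss}$, the natural perfection map $a_Z : \Prism_Z/p \to \Prism_{Z,\perf}/p$ admits a left inverse $s_Z$ up to $d^c$, i.e.\ $s_Z \circ a_Z = d^c$. Using functoriality of $f^*$ to write $a_{Y'} \circ f^* = f^*_{\perf} \circ a_Y$ and composing with $s_{Y'}$ writes $d^c f^*$ as
\[
\Prism_Y/p \xrightarrow{a_Y} \Prism_{Y,\perf}/p \xrightarrow{f^*_{\perf}} \Prism_{Y',\perf}/p \xrightarrow{s_{Y'}} \Prism_{Y'}/p.
\]
By Remark~\ref{LogPrismPerfRH}, the object $\Prism_{Z,\perf}/p$ is almost identified with $\RH_{\Prism}(Rg_{Z,*}\mathbf{F}_p)$, where $g_Z : Z[1/p] \to X[1/p]$ is the generic fibre structure map, and $f^*_{\perf}$ is almost the image under $\RH_{\Prism}$ of the natural pullback $Rg_{Y,*}\mathbf{F}_p \to Rg_{Y',*}\mathbf{F}_p$.

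Next I would invoke the ind-perverse property of the tower $\{Rg_{Y,*}\mathbf{F}_p[n]\}_{Y \in \mathcal{P}_X^{ss}}$, which follows from the proof of Theorem~\ref{ACMaic}(1) combined with the cofinality of $\mathcal{P}_X^{ss}$ in $\mathcal{P}_X$ (Theorem~\ref{AlterationsExist}). This allows me to enlarge $Y'$ so that the transition map $Rg_{Y,*}\mathbf{F}_p \to Rg_{Y',*}\mathbf{F}_p$ factors through $P[-n]$ for some perverse sheaf $P$ on $X[1/p]$. Setting
\[
K := \RH_{\Prism}(P)[-n] \in D_{comp,qc}(X, \Prism_X/p)
\]
and combining with the previous step yields the desired factorization of $d^c f^*$ through $K$. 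Finally, $K/d \simeq \RH_{\overline{\Prism}}(P)[-n]$, and Corollary~\ref{cor:PervACM} applied to the perverse sheaf $P$ gives $R\Gamma_x((K/d)_x) \in D^{\geq n - \dim(\overline{\{x\}})}$ for every $x \in X_{p=0}$, exactly the required cohomological CM condition.

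The main obstacle will be reconciling the almost-level statements with the honest ones demanded by the proposition: both Remark~\ref{LogPrismPerfRH} and Corollary~\ref{cor:PervACM} are formulated in the almost derived category, whereas the factorization of $d^c f^*$ and the CM condition on $K/d$ are required honestly. My plan to bridge this gap is to take $K$ to be a specific honest representative of $\RH_{\Prism}(P)[-n]$ (for instance via the right adjoint $(-)_*$ from the almost to the honest category, or by carefully choosing the perverse sheaf $P$ arising from the tower as an extension by zero along a smooth open of the generic fibre where things are lisse) and to check that both the factorization of $d^c f^*$ and the CM bound descend honestly, possibly at the cost of mildly enlarging $c$ to absorb the $d$-torsion discrepancies introduced when passing between the almost and honest categories.
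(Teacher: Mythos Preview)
Your proposal is correct and follows essentially the same approach as the paper: both factor $d^c f^*$ through the perfection via the isogeny theorem, almost-identify the perfection with $\RH_{\Prism}$ of a constructible pushforward (Remark~\ref{LogPrismPerfRH}), use the ind-perverse property from Theorem~\ref{ACMaic}(1) to interpose a perverse sheaf $P$, set $K = \RH_{\Prism}(P)[-n]$, and then invoke Corollary~\ref{cor:PervACM} for the CM bound, bridging the almost/honest gap by increasing $c$ by one. The paper is slightly more concrete in that it names the perverse sheaf explicitly as $Rj_*\mathbf{F}_{p,U}[n]$ for an affine open $U \subset X[1/p]$ over which $Y[1/p] \to X[1/p]$ is finite \'etale (your parenthetical about ``a smooth open of the generic fibre'' is close, though it is the $*$-extension $Rj_*$ rather than an extension by zero $j_!$ that yields the perverse sheaf here).
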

\begin{proof}
Take $c_0$ to be the constant in Theorem~\ref{LogPrismatic} (1).  We shall construct $K$ in the almost category satisfying (2) and prove the analog of (1) for for the map $d^{c_0} f^*$ also in the almost category; this implies the proposition as one can increase $c_0$ by $1$ to pass back to the real world. For the rest of the proof, we work in the almost category.

As $Y \in \mathcal{P}_X^{ss}$, Remark~\ref{LogPrismPerfRH} gives  $\Prism_{Y,\perf}/p \stackrel{a}{\simeq} \RH_{\Prism}(Rf_* \mathbf{F}_p)$, where $f:Y[1/p] \to X[1/p]$ is the structure map. As $f$ is a dominant quasi-finite map between smooth varieties,  we can find a non-empty affine open $U \subset X[1/p]$ such that $f$ is finite \'etale over $U$. If $j:f^{-1}(U) \to X[1/p]$ denotes the resulting quasi-finite affine map, then $Rj_* \mathbf{F}_p[n]$ is perverse on $X[1/p]$: pushforward along quasi-finite affine maps preserve perversity by \cite[Corollary 4.1.3]{BBDG}. By (the proof of) Theorem~\ref{ACMaic} (1), we can find a finite cover $Y' \to Y$ in $\mathcal{P}_X^{fin}$ such that, if $g:Y'[1/p] \to X[1/p]$ denotes the resulting map, then the pullback $Rf_* \mathbf{F}_{p,Y} \to Rg_* \mathbf{F}_{p,Y'}$ factors over $Rf_* \mathbf{F}_{p,Y} \to Rj_* \mathbf{F}_{p,U}$. By replacing $Y'$ if necessary and picking an embedding of its function field in $\overline{K}$, we may also assume $Y' \in \mathcal{P}_X^{ss}$. Applying Theorem~\ref{thm:RH} (or rather the variant in Remark~\ref{rmk:AinfLift}), we have a commutative diagram
\[ \xymatrix{ \Prism_Y/p \ar[r] \ar[dd] & \Prism_{Y,\perf}/p \simeq \RH_{\Prism}(Rf_* \mathbf{F}_{p,Y}) \ar[d] \\
  & K :=\RH_{\Prism}(Rj_* \mathbf{F}_{p,U}) \ar[d] \\
  \Prism_{Y'}/p \ar[r] & \Prism_{Y',\perf}/p \simeq \RH_{\Prism}(Rg_* \mathbf{F}_{p,Y'}) }\]
in the almost category. By our choice of $j$, the object $Rj_* \mathbf{F}_{p,U}[n]$ is perverse, so $K/d \simeq \RH_{\overline{\Prism}}(Rj_* \mathbf{F}_p) \in D_{qc}(X_{p=0})^a$ has the desired property in (2) by Corollary~\ref{cor:PervACM}. As $Y' \in \mathcal{P}_X^{ss}$, the bottom horizontal map admits an inverse up to multiplication by $d^c$ as an $A_{\inf}/p$-complex on $X_{p=0}$ by Theorem~\ref{LogPrismatic} (1). The diagram above then yields the desired statement.
\end{proof}

Let us explain how to compute local cohomology of prismatic cohomology; 

\begin{construction}[Local cohomology of $\Prism_X$-complexes]
\label{LocalCohFormal}
 Fix $K \in D_{comp,qc}(X,\Prism_X/p)$ as well as a constructible closed subset $Z \subset X_{p=0}$. We shall explain how to define $R\Gamma_Z(K) \in D(X,\Prism_X/p)$.

Let $D_{nilp,qc}(X,\Prism_X/p) \subset D(X,\Prism_X/p)$ be the full subcategory spanned by objects that are quasi-coherent modulo $d$ and have $d^\infty$-torsion homology sheaves. The complete-torsion equivalence of Dwyer-Greenlees in form presented in \cite[0A6X]{StacksProject} shows that the functors local cohomology along $d$ and derived $d$-completion give mutually inverse equivalences $D_{comp,qc}(\Prism_X/p) \simeq D_{nilp,qc}(X,\Prism_X/p)$. 
 
Define $R\Gamma_Z(K) \in D_{qc}(X,\Prism_X/p) \in D_{nilp,qc}(X,\Prism_X/p)$ as the unique object whose $d$-completion in $D_{comp,qc}(X,\Prism_X/p)$ identifies with $R\Gamma_Z(K)^{\wedge} := R\lim_n R\Gamma_Z(K/d^n)$, where $R\Gamma_Z(K/d^n)$ is defined in the usual way as cohomology with supports in a closed set \cite[Tag 0A39]{StacksProject}; Explicitly, this recipe amounts to the following:
 \[ R\Gamma_Z(K) = R\Gamma_Z(K)^{\wedge} \otimes^L_{A_{\inf}} \left(A_{\inf}[1/d]/A_{\inf}\right)[-1] \simeq \colim_m R\Gamma_Z(d^{-m}K/K)[-1].\]
 We shall regard this construction as an exact functor $R\Gamma_Z(-):D_{comp,qc}(X,\Prism_X/p) \to D_{nilp,qc}(X,\Prism_X/p)$. 
 \end{construction}

\begin{remark}
Some further comments on Construction~\ref{LocalCohFormal} to help demystify the construction. 
\begin{enumerate}
\item Say $K \in D_{qc}(X, \Prism_X/(p,d))$; this gives $K' \in D_{qc}(X_{p=0})$ by restriction of scalars along $\mathcal{O}_X/p \to \Prism_X/(p,d)$. Then $R\Gamma_Z(K)$ as constructed in Construction~\ref{LocalCohFormal} agrees with the complex $R\Gamma_Z(K')$ as defined via local cohomology as the closed subset $Z \subset X_{p=0}$ is constructible (see \cite[Tag 0A6T]{StacksProject}).

\item The functor $R\Gamma_Z(-):D_{comp,qc}(X,\Prism_X/p) \to D_{nilp,qc}(X,\Prism_X/p)$  commutes with all colimits: this is well-known for the usual local cohomology functors, and follows in our case as the operation of taking local cohomology along a finitely generated ideal $I$ is insensitive to completing the input along a finitely generated subideal $J \subset I$.

\item The construction of $R\Gamma_Z(-)$ given Construction~\ref{LocalCohFormal} makes sense and is functorial in all $K \in D_{comp}(X_{p=0}, A_{\inf}/p)$, i.e., we do not need the additional linearity over $\Prism_X/p$ in Construction~\ref{LocalCohFormal}. This follows from the formula for $R\Gamma_Z(K)$ given in Construction~\ref{LocalCohFormal}. In particular, $R\Gamma_Z(-)$ is functorial with respect to the implicit inverse maps appearing in Theorem~\ref{LogPrismatic} (1) as well as Proposition~\ref{CMFactor} (1).

\end{enumerate}
\end{remark}

The following lemma provides an important technical ingredient in our proof of $(P_n)$ as it allows us to propagate some finiteness properties from the structure sheaf $\mathcal{O} = \mathrm{gr}^{HT}_0(\Prism/d)$ to $\Prism/d^c$.

\begin{lemma}
\label{IncreaseFinite}
 Fix a maximal ideal $\mathfrak{m} \subset \mathcal{O}_X$.  Assume that for any $Y \in \mathcal{P}_X^{ss}$ there exist some $Y' \to Y$ in $\mathcal{P}_X^{ss}$ such that 
 \[ H^i_{\mathfrak{m}}(\mathcal{O}_Y/p) \to H^i_{\mathfrak{m}}(\mathcal{O}_{Y'}/p)\]
 factors over a finitely presented over $\overline{V}^\flat/d$-module for $i < n$. Then for any $Y \in \mathcal{P}_X^{ss}$ and any $c \geq 1$, there is some $Y' \to Y$ in $\mathcal{P}_X^{ss}$ such that 
\[ H^i_{\mathfrak{m}}(\Prism_{Y}/(p,d^c)) \to H^i_{\mathfrak{m}}(\Prism_{Y'}/(p,d^c))\]
factors over a finitely presented $\overline{V}^\flat$-module for $i < n$.
\end{lemma}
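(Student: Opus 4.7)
The proof will proceed by induction on $c \geq 1$.

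For the base case $c=1$, I will invoke Theorem~\ref{IndObjectPairs}(1), which states that the natural map of ind-objects $\{\mathcal{O}_Y/p\}_{\mathcal{P}_X^{ss}} \to \{\Prism_Y/(p,d)\}_{\mathcal{P}_X^{ss}}$ in $D_{qc}(X_{p=0})$ is an isomorphism. For any $Y \in \mathcal{P}_X^{ss}$, this produces some $Y_0 \to Y$ in $\mathcal{P}_X^{ss}$ together with a factorization $\Prism_Y/(p,d) \to \mathcal{O}_{Y_0}/p \to \Prism_{Y_0}/(p,d)$ of the canonical transition map. Applying the lemma's hypothesis to $Y_0$ then yields $Y_1 \to Y_0$ such that $H^i_\mathfrak{m}(\mathcal{O}_{Y_0}/p) \to H^i_\mathfrak{m}(\mathcal{O}_{Y_1}/p)$ factors through a finitely presented $\overline{V}^\flat/d$-module (equivalently, through a finitely presented $\overline{V}^\flat$-module, since $\overline{V}^\flat/d$ is itself finitely presented over $\overline{V}^\flat$) for $i < n$. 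Composing the resulting factorizations gives the required statement for $\Prism_Y/(p,d) \to \Prism_{Y_1}/(p,d)$.

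For the inductive step, I will use the Bockstein exact triangle
\[ \Prism_Y/(p,d) \xrightarrow{\cdot d^{c-1}} \Prism_Y/(p,d^c) \to \Prism_Y/(p,d^{c-1}) \]
arising by tensoring $\Prism_Y/p$ with the short exact sequence $0 \to A_{\inf}/(p,d) \xrightarrow{d^{c-1}} A_{\inf}/(p,d^c) \to A_{\inf}/(p,d^{c-1}) \to 0$. The associated long exact sequence of local cohomology involves, at either side of $H^i_\mathfrak{m}(\Prism_Y/(p,d^c))$, cohomology at levels $d^{c-1}$ and $d$ in neighboring degrees. By alternately applying the inductive hypothesis at level $d^{c-1}$ and the base case at level $d$, I will produce a finite chain $Y \to Y_1 \to Y_2 \to Y_3$ in $\mathcal{P}_X^{ss}$ such that each relevant transition map at levels $d^{c-1}$ and $d$ factors through a finitely presented $\overline{V}^\flat$-module. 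A diagram chase through the functoriality of the long exact sequence applied to $Y \to Y_3$ then yields the desired factorization at level $d^c$.

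The main obstacle is the diagram chase itself: the image of $H^i_\mathfrak{m}(\Prism_Y/(p,d^c))$ in $H^i_\mathfrak{m}(\Prism_{Y_3}/(p,d^c))$ sits in a short exact sequence whose quotient is controlled by the image at level $d^{c-1}$ (finitely presented, by induction) and whose subobject is controlled by the image at level $d$ (finitely presented, by the base case). Assembling these two pieces into one factorization through a finitely presented module in the middle uses the fact that $\overline{V}^\flat$, being a valuation ring, is coherent, so its category of finitely presented modules is abelian and closed under extensions. Some extra bookkeeping is required near $i = n-1$, where the Bockstein differential brings in the degree-$n$ term at level $d$, which does not fall under the range of the base case; this is handled by enlarging the cover $Y_3$ further so that the image in question already lies in a finitely presented submodule inherited (via the boundary map) from the inductive hypothesis at level $d^{c-1}$.
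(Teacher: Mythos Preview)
Your inductive strategy via the Bockstein triangle, with the $c=1$ case deduced from Theorem~\ref{IndObjectPairs}(1), is reasonable and is morally the same as the paper's $d$-adic filtration approach. The coherence of $\overline{V}^\flat$ does let you splice factorizations through extensions, so the diagram chase can be made to work for $i<n-1$ (with a somewhat longer chain than you indicate; the paper packages this via \cite[Lemma~10.5.6]{ScholzeWeinsteinBerkeley}). However, there is a genuine gap at $i=n-1$, and your proposed fix does not close it.

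The issue is precisely the one you flag. In the long exact sequence, the quotient of $H^{n-1}_{\mathfrak{m}}(\Prism_Y/(p,d^c))$ is the submodule
\[
\ker\!\Big(\partial\colon H^{n-1}_{\mathfrak{m}}(\Prism_Y/(p,d^{c-1})) \longrightarrow H^{n}_{\mathfrak{m}}(\Prism_Y/(p,d))\Big).
\]
Your inductive hypothesis says the transition map on the \emph{ambient} module $H^{n-1}_{\mathfrak{m}}(\Prism/(p,d^{c-1}))$ factors through a finitely presented $\overline{V}^\flat$-module $M$. But the induced map on the submodules $\ker(\partial)$ need not factor through anything finitely presented: the relevant object is the preimage of $\ker(\partial_{Y'})$ inside $M$, and there is no reason this is finitely generated. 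Coherence of $\overline{V}^\flat$ only tells you that \emph{finitely generated} submodules of $M$ are finitely presented; it says nothing about arbitrary submodules cut out by vanishing under a map into a huge target like $H^n_{\mathfrak{m}}(\Prism_{Y'}/(p,d))$. ``Enlarging the cover $Y_3$'' cannot help, because the obstruction is not that some image is too big but that passage to a functorially defined submodule destroys the finiteness over a non-noetherian base.

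The paper confronts exactly this obstacle and resolves it by a genuinely new input: one first replaces $\Prism_Y$ by the non-logarithmic $\Prism^n_Y$ via Theorem~\ref{IndObjectPairs}(2), and then descends the entire situation along a Breuil-Kisin prism $(A,I)\to (A_{\inf},(d))$ with $A/(p,I)$ a \emph{field}. Over a field, ``factoring through a finitely presented module'' is the same as ``having finite-dimensional image'', and this property \emph{does} pass to functorially defined submodules such as $\ker(\partial)$. One checks the finiteness downstairs (using faithful flatness of $A\to A_{\inf}$ to transport the hypothesis down) and then base-changes back up. Without this descent step your argument at $i=n-1$ does not go through.
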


\begin{proof}
Let us first informally explain the (simple) strategy: we filter $\Prism_Y/(p,d^c)$ by $c$ copies of $\Prism_Y/(p,d)$, use Theorem~\ref{IndObjectPairs} (1) to pass from the assumption on $\mathcal{O}_Y/p$ to one on $\Prism_Y/(p,d)$, and iterate the construction finitely many times to obtain the desired conclusion. However, since we need to restrict attention to local cohomology outside the top degree, one must confront the non-noetherianness of the situation, which adds technical complexity to the argument below.

By Theorem~\ref{IndObjectPairs} (2), it is enough to prove the analogous statement for the non-logarithmic derived prismatic complexes $\Prism_Y^n$ instead of the logarithmic ones\footnote{The reason we switch to non-logarithmic derived prismatic complexes is that one step of the proof requires us  to descend from $A_{\inf}$ to an imperfect (Breuil-Kisin) prism to access  noetherianness, but the logarithmic theory in \cite{CesnavicusKoshikawa} only works over (some) perfect prisms. The paper \cite{KoshikawaLogPrisms}, which appeared while the current paper was being prepared, gives a definition of logarithmic prismatic cohomology over a Breuil-Kisin prism with suitable base change properties. It is likely that by using the theory in \cite{KoshikawaLogPrisms} one could run the entire argument using log prismatic cohomology, thus avoiding Theorem~\ref{IndObjectPairs} (2).}. As the map $\{\mathcal{O}_Y/p\}_{Y \in \mathcal{P}_X^{ss}} \to \{\Prism^n_Y/(d,p)\}_{Y \in \mathcal{P}_X^{ss}}$ is an isomorphism of ind-objects (Theorem~\ref{IndObjectPairs} (1) and (2)), we learn that for any $Y \in \mathcal{P}_X^{ss}$ there exist some $Y' \to Y$ in $\mathcal{P}_X^{ss}$ such that 
\[ H^i_{\mathfrak{m}}(\Prism^n_{Y}/(p,d)) \to H^i_{\mathfrak{m}}(\Prism^n_{Y'}/(p,d))\]
factors over a finitely presented $\overline{V}^\flat$-module for $i < n$. This handles the $c=1$ case, and we shall deduce the rest by a filtering argument. Assume from now that $c \geq 2$.

For any $Y \in \mathcal{P}_X^{ss}$, regard $\tau^{\leq n-1} R\Gamma_{\mathfrak{m}}(\Prism^n_Y/(p,d^c))$ as an object of $D(\overline{V}^\flat)$ endowed with the finite decreasing filtration given by powers of $d$, i.e., set
\[ \mathrm{Fil}^i_Y  = \tau^{\leq n-1} R\Gamma_{\mathfrak{m}}(d^i \Prism^n_Y/(p,d^c)) \simeq \tau^{\leq n-1} R\Gamma_{\mathfrak{m}}(\Prism^n_Y/(p,d^{c-i})),\]
so $\mathrm{Fil}^i_Y = 0$ for $i \leq c$ and $\mathrm{Fil}^0_Y = \tau^{\leq n-1} R\Gamma_{\mathfrak{m}}(\Prism^n_Y/(p,d^c))$. The associated graded pieces $\mathrm{gr}^i_Y$ are zero unless $i \in [0,c-1]$. Moreover, using long exact sequence associated to the exact triangle,
\[ R\Gamma_{\mathfrak{m}}( d^{i+1} \Prism^n_Y/(p,d^c) ) \to R\Gamma_{\mathfrak{m}} (d^i \Prism^n_Y/(p,d^c)) \to R\Gamma_{\mathfrak{m}} (\Prism^n_Y/(p,d))\]
we learn that $H^j \mathrm{gr}^i$ is zero unless $j \in [0,...,n-1]$ and is explicitly given by
\begin{equation}
\label{SS1}
H^j \mathrm{gr}^i_Y \simeq H^j_{\mathfrak{m}}(\Prism^n_Y/(p,d)) \quad \text{ if } i \in [0,c-1], j \in [0,n-2]  \text{ or if }   i=c-1,  j \in [0,n-1]
\end{equation}
and
\begin{equation}
\label{SS2}
 H^{n-1}  \mathrm{gr}^i_Y  = \mathrm{ker} \left( H^{n-1}_{\mathfrak{m}}(\Prism^n_Y/(p,d)) \xrightarrow{\delta} H^n_{\mathfrak{m}}(\Prism^n_Y/(p,d^{c-(i+1)})) \right) \quad \text{for } i \in [0,c-2].
 \end{equation}
In particular, the resulting spectral sequence for a filtered complex (normalized as in \cite[Tag 012M]{StacksProject}) has $E_1^{p,q} = 0$ unless $p \in [0,n-1]$ and $q \in [-(c-1),n-1]$. Thus, the spectral sequence is located in a rectangle whose long side has length $N = n-1 + c-1 = c+n-2$. We shall now deduce the lemma by applying \cite[Lemma 10.5.6]{ScholzeWeinsteinBerkeley} using this particular value of $N$.

Given $Y = Y_0$, for $0 \leq k < (N+1)3^{N+1}$, choose maps $Y_{k+1} \to Y_k$ such that 
\[ H^j_{\mathfrak{m}}(\Prism^n_{Y_k}/(p,d)) \to H^j_{\mathfrak{m}}(\Prism^n_{Y_{k+1}}/(p,d))\]
factors over a finitely presented $\overline{V}^\flat$-module for $j < n$; this is possible thanks to the $c=1$ case already proven above. We claim that setting $Y' = Y_{(N+1)3^{N+1}}$ does the job by  \cite[Lemma 10.5.6]{ScholzeWeinsteinBerkeley}. To apply this lemma, we need to show the following: for each map $Y_{k+1} \to Y_k$, the induced map
\begin{equation}
\label{SS3}
H^{j} \mathrm{gr}^i_{Y_k} \to H^j \mathrm{gr}^i_{Y_{k+1}}
\end{equation}
factors over a finitely presented $\overline{V}^\flat$-module for all $i \in [0,c-1]$, $j \in [0,n-1]$. For the indices treated in \eqref{SS1}, this is clear from our hypothesis on the maps $Y_{k+1} \to Y_k$. For the remaining indices (i.e., $j=n-1$ and $i \in [0,c-2]$, described  in \eqref{SS2}), we cannot use the same argument as the property of ``factoring over a finitely presented module'' for a map of $\overline{V}^\flat$-modules is not inherited submodules preserved under the map (such as the right side of \eqref{SS2})  since $\overline{V}^\flat$ is not noetherian. To circumvent this, we descend to a noetherian situation using a Breuil-Kisin prism, and then argue by base change; what follows is the only part of this paper that uses the theory from \cite{BhattScholzePrism} over an imperfect prism (which is not covered by \cite{BMS1}).

After possibly enlarging $V$, we can assume that $Y_{k+1} \to Y_k \to X$ is the base change of a map $Z_{k+1} \to Z_k \to X_V := \mathbf{P}^n_V$. Moreover, as $X_{p=0} \to X_{V,p=0}$ is a universal homeomorphism, we can also assume $\mathfrak{m} \subset \mathcal{O}_X$ is the radical of a maximal ideal $\mathfrak{n} \subset \mathcal{O}_{X_V}$.  Choose a Breuil-Kisin prism $(A,I)$ with $A/I = \widehat{V}$ as well as a map $(A,I) \to (A_{\inf}, (d))$ lifting the map $\widehat{V} \to \widehat{\overline{V}}$ (see \cite[Example 1.3]{BhattScholzePrism}); explicitly, if $W = W( (V/p)_{red})$, then $A = W\llbracket u \rrbracket$ with $\phi$ determined by $\phi(u) = u^p$, the map $A \to \widehat{V}$ given by $u \mapsto \pi$ for a uniformizer $\pi \in V$, and the map $A \to A_{\inf}$ given by $u \mapsto [\underline{\pi}]$ for a compatible system $\underline{\pi} \in \overline{V}^\flat$ of $p$-power roots of $\pi$.  Recall that $A/(p,I)$ is a field, the map $A \to A_{\inf}$ is faithfully flat (by \cite[Lemma 4.30]{BMS1} or Lemma~\ref{DropCompFlat} below), and $IA_{\inf} = (d)$.  The $(p,I)$-completed base change of the non-logarithmic derived prismatic complexes $\Prism^n_{Z_k/A}$ along $A \to A_{\inf}$ gives $\Prism^n_{Y_k}$ for all $k$, and this identification is compatible with all naturally defined maps (such as ones coming from reduction modulo $d$, or passing to local cohomology, or pulling back along $Z_{k+1} \to Z_k$). In particular, we learn that the map 
\[ H^i_{\mathfrak{n}}(\Prism^n_{Z_k/A}/(p,I)) \to H^i_{\mathfrak{n}}(\Prism^n_{Z_{k+1}/A}/(p,I))\]
factors over a finitely presented $A/(p,I)$-module for $i < n$: the ring $A/(p,I)$ is a field, so the property of factoring over a finitely presented $A/(p,I)$-module is equivalent to the image  being finite dimensional, and the latter can be checked after the faithfully flat exension $A/(p,I) \to A_{\inf}/(p,d) \to \overline{k}$, noting that after base change to $A_{\inf}/(p,d)$, the map factors over a finitely presented $A_{\inf}/(p,d)$-module and thus has finitely generated image. Since $A/(p,I)$ is a field, the property of having finite dimensional image passes to functorially defined submodules; in particular, the map
\[ \mathrm{ker} \left( H^{n-1}_{\mathfrak{n}}(\Prism^n_{Z_k/A}/(p,I)) \xrightarrow{\delta} H^n_{\mathfrak{n}}(\Prism^n_{Z_k/A}/(p,I^{c-i+1})) \right) \to  \mathrm{ker} \left( H^{n-1}_{\mathfrak{n}}(\Prism^n_{Z_{k+1}/A}/(p,I)) \xrightarrow{\delta} H^n_{\mathfrak{n}}(\Prism^n_{Z_{k+1}/A}/(p,I^{c-i+1})) \right) \]
also has finite dimensional image. By base change and \eqref{SS2}, this yields that the map in \eqref{SS3} factors over a finitely presented $\overline{V}^\flat$-module for $j=n-1$ and $i \in [0,c-2]$ as well, finishing the proof. 
\end{proof}

\begin{remark}
\label{NoetherianPropertyAlmostZero}
By Theorem~\ref{ACMaic}, the $\overline{V}^\flat$-modules $M := H^i_{\mathfrak{m}}(\mathcal{O}_{X^+}^\flat)$ for $i < n+1$ and $N := H^j_{\mathfrak{m}}(\mathcal{O}_{X^+}^\flat/d^c)$ for $j < n$ and any $c \geq 1$ are almost zero, and can thus be regarded as modules over $\overline{k} = \overline{V}^\flat/d^{1/p^\infty}$, which is a field. In particular, these $\overline{V}^\flat$-modules are locally noetherian, i.e., $\overline{V}^\flat$-submodules of finitely generated $\overline{V}^\flat$-submodules of $M$ or $N$ are automatically finitely generated. 
\end{remark}

We can now prove the main theorem. 

\begin{theorem}
\label{AICCMMain}
The equivalent assertions in Lemma~\ref{CMProjective} hold true for all $n \geq 1$. 
\end{theorem}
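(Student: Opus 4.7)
The plan is to proceed by induction on $n$. The base case $n=1$ is essentially trivial: any finite normal extension $R$ of $\mathcal{O}_{\mathbf{P}^1_V,\mathfrak m}$ is a $2$-dimensional normal local domain, hence satisfies Serre's $(S_2)$, so $p$ being a nonzerodivisor yields $\mathrm{depth}_{\mathfrak m}(R/p)\geq 1$, whence $H^0_{\mathfrak m}(R/p)=0$ already for $S=R$. For the inductive step, assume $(P_k)$ (equivalently, by Lemma~\ref{CMProjective}, $(M_k)$) for all $k<n$. Fix $X=\mathbf{P}^n_{\overline V}$, its absolute integral closure $\pi:X^+\to X$ as in Notation~\ref{not:geomCM}, and a closed point $x\in X_{p=0}$. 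Unwinding $(P_n)$ via Lemma~\ref{CMProjective} and Lemma~\ref{mellowstable}, it suffices to show
\[ H^i_x(\mathcal O_{X^+}/p)=0 \quad\text{for all } i<n; \]
and by Theorem~\ref{IndObjectPairs}(1) this is equivalent to vanishing of the ind-object $\{H^i_x(\Prism_Y/(p,d))\}_{Y\in\mathcal{P}_X^{ss}}$ in the same range.

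The first key input is Proposition~\ref{CMFactor} applied with constant $c=c(n)$: for every $Y\in\mathcal{P}_X^{ss}$ it produces $Y'\to Y$ in $\mathcal{P}_X^{ss}$ and an auxiliary $K\in D_{comp,qc}(X,\Prism_X/p)$ with $K/d$ cohomologically CM such that $d^c f^*:\Prism_Y/p\to\Prism_{Y'}/p$ factors through $K$. Because $\dim\mathcal{O}_{X_{p=0},x}=n$, cohomological CM-ness gives $R\Gamma_x((K/d)_x)\in D^{\geq n}$, and a short Bockstein argument using Construction~\ref{LocalCohFormal} upgrades this to $R\Gamma_x(K_x)\in D^{\geq n}$; hence $\{H^i_x(\Prism_Y/p)\}_{Y\in\mathcal{P}_X^{ss}}$ is ind-$d^c$-torsion for $i<n$. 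The second key input is the inductive hypothesis: since every non-closed $\xi\in X_{p=0}$ has $\dim\mathcal{O}_{X,\xi}<n+1$, the assumption $(M_{<n})$ (combined with henselization and Theorem~\ref{IndObjectPairs}(1)) ensures that the ind-object $\{\mathcal O_Y/p\}_{Y\in\mathcal{P}_X^{ss}}$ is ind-CM after localization at $\xi$. Lemma~\ref{IndCMPuncturedNN} then yields, for every $Y$, some $Y'\to Y$ in $\mathcal{P}_X^{ss}$ such that $H^i_x(\mathcal O_Y/p)\to H^i_x(\mathcal O_{Y'}/p)$ factors through a finitely presented $\overline V/p$-module for all $i<n$. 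Lemma~\ref{IncreaseFinite} transfers this finiteness from $\mathcal O_{(-)}/p$ to $\Prism_{(-)}/(p,d^c)$. Feeding the result through the Bockstein surjection
\[ H^{i-1}_x(\Prism_Y/(p,d^c))\twoheadrightarrow H^i_x(\Prism_Y/p)[d^c] \]
and combining with the ind-$d^c$-torsion control from the first input, we conclude that the image of $H^i_x(\Prism_Y/p)\to H^i_x(\mathcal O_{X^+}^\flat)$ lies in a finitely generated $\overline V^\flat$-submodule for all $i<n+1$.

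The proof finishes by exploiting the Frobenius. By Remark~\ref{rmk:AinfLift} everything in sight is Frobenius-equivariant, so the finitely generated submodule $M\subset H^i_x(\mathcal O_{X^+}^\flat)$ constructed above is $\phi$-stable. Remark~\ref{NoetherianPropertyAlmostZero} shows the ambient module is an $\overline k$-vector space, so $M$ is finite-dimensional over $\overline k$ with bijective $\phi$; Artin--Schreier theory then shows $M$ is spanned by $M^{\phi=1}$, which embeds via the \'etale comparison of Construction~\ref{ConsPrism}(3) (after tilting) into the local \'etale $\mathbf F_p$-cohomology of $X^+$ at $x$, and this vanishes for $i<n+1$ by Proposition~\ref{EtaleCohAIC}. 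Hence $M=0$; taking the colimit over $Y\in\mathcal{P}_X^{ss}$ gives $H^i_x(\mathcal O_{X^+}^\flat)=0$ for $i<n+1$, and the $d$- and $p$-Bocksteins for the perfectoid ring $\widehat{\mathcal O_{X^+}}$ (Lemma~\ref{AICPerfectoid}) then yield $H^i_x(\mathcal O_{X^+}/p)=0$ for $i<n$, closing the induction. The main obstacle is the middle step: reconciling the almost-/$d^c$-vanishing control coming from Proposition~\ref{CMFactor} (and ultimately from the isogeny theorem for log prismatic cohomology) with the honest coherent finiteness provided by the inductive hypothesis; this is exactly what forces one to pass through Lemma~\ref{IncreaseFinite}, whose proof in turn requires a descent to a noetherian Breuil--Kisin prism in order to control kernels of the Bockstein connecting maps across many powers of $d$.
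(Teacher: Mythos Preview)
Your proposal follows essentially the same route as the paper's proof: induction on $n$, Proposition~\ref{CMFactor} for the $d^c$-torsion bound (the paper's Claim~\ref{CMclaim1}), the inductive hypothesis fed through Lemma~\ref{IndCMPuncturedNN} and Lemma~\ref{IncreaseFinite} for the finiteness (the paper's Claim~\ref{CMclaim2}), and the equational lemma (packaged in the paper as Lemma~\ref{EquationCM}) to conclude.

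There is one bookkeeping slip that, as written, breaks the chain. From $R\Gamma_x((K/d)_x)\in D^{\geq n}$ you deduce only $R\Gamma_x(K_x)\in D^{\geq n}$ and hence ind-$d^c$-torsion for $i<n$; but two lines later you use the conclusion for $i<n+1$. The larger range is genuinely needed: if you only knew $H^i_x(\mathcal{O}_{X^+}^\flat)=0$ for $i<n$, the final $d$-Bockstein would only yield $H^i_x(\mathcal{O}_{X^+}/p)=0$ for $i<n-1$, one short of the goal. The fix is that Construction~\ref{LocalCohFormal} actually gains a degree: since $R\Gamma_x(K)$ lands in $D_{nilp}$, the explicit formula there gives $R\Gamma_x(K)\simeq\big(\colim_m R\Gamma_x(K/d^m)\big)[-1]$ with transition maps multiplication by $d$, and each term of the colimit lies in $D^{\geq n}$ by the cohomological CM hypothesis on $K/d$, so $R\Gamma_x(K)\in D^{\geq n+1}$. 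This is exactly the range in the paper's Claim~\ref{CMclaim1}, and with this correction your argument goes through.
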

\begin{proof}
We shall prove by induction on $n$ that $(P_k)$ holds true for all $k \leq n$ (or equivalently that $(M_k)$ holds true for all $k \leq n$). When $n=1$, the statement holds true by Auslander-Buchsbaum. Assume $n \geq 2$ and that $(P_k)$ (and hence $(M_k)$) hold true for $k < n$. We shall verify $(P_n)$. Fix a maximal ideal $\mathfrak{m} \subset \mathcal{O}_{\mathbf{P}^n_V}$ corresponding to a closed point of the special fibre. Let us first make some reductions to translate $(P_n)$ to a statement about prismatic cohomology. 

\begin{enumerate}
\item {\em Reduction to a statement over $\overline{V}$.} We claim it suffices to show the following: for any $Y \in \mathcal{P}_X^{ss}$, there exists a further map $Y' \to Y$ in $\mathcal{P}_X$ such that
\[ H^i_{\mathfrak{m}}(\mathcal{O}_Y/p) \to H^i_{\mathfrak{m}}(\mathcal{O}_{Y'}/p)\]
is the $0$ map for $i < n$. Indeed, this implies a similar statement where $Y,Y' \in \mathcal{P}_X^{fin}$ by Theorem~\ref{IndObjectPairs} (1). Now any map $Y' \to Y$ in $\mathcal{P}_X^{fin}$ is the base change of a map between finite covers of $\mathbf{P}^n$ defined over a finite extension $V'$ of $V$. As local cohomology commutes with flat extension of scalars (such as $V' \to \overline{V}$), we obtain the assertion in $(P_n)$ by descent. 

\item {\em Reduction to prismatic cohomology modulo $(p,d)$.} We claim it is enough to show the following: for any $Y \in \mathcal{P}_X^{ss}$, there exists a further map $Y' \to Y$ in $\mathcal{P}_X^{ss}$ such that
\[ H^i_{\mathfrak{m}}(\Prism_Y/(p,d)) \to H^i_{\mathfrak{m}}(\Prism_{Y'}/(p,d))\]
is $0$ for $i < n$. This follows from Theorem~\ref{IndObjectPairs} (1).

\item {\em Reduction to prismatic cohomology modulo $p$.} We claim it is enough to show the following: for any $Y \in \mathcal{P}_X^{ss}$, there exists a further map $Y' \to Y$ in $\mathcal{P}_X^{ss}$ such that
\[ H^i_{\mathfrak{m}}(\Prism_Y/p) \to H^i_{\mathfrak{m}}(\Prism_{Y'}/p)\]
is $0$ for $i < n+1$. Indeed, iterating such a construction twice yields the statement in (2) thanks to the Bockstein sequence with respect to $d \in H^0(\Prism_Y/p)$.
\end{enumerate}

We now begin with the proof of (3). First, we show that sufficiently high transition maps in $\mathcal{P}_X^{ss}$ induce maps on local prismatic cohomology whose image contained inside a submodule with bounded $d$-torsion:
\begin{claim}
\label{CMclaim1}
For any $Y \in \mathcal{P}_X^{ss}$ and $i < n+1$,  there exists a map $Y' \to Y$ in $\mathcal{P}_X^{ss}$ such that the induced map 
\[ H^i_{\mathfrak{m}}(\Prism_Y/p) \to H^i_{\mathfrak{m}}(\Prism_{Y'}/p)\]
has image annihilated by $d^c$ for some $c \geq 1$ depending only on $X$.
\end{claim}
\begin{proof}
This follows from the factorization in Proposition~\ref{CMFactor}. 
\end{proof}

Next, using our induction assumption and a duality argument, we show that maps on local prismatic cohomology induced from sufficiently high transition maps in $\mathcal{P}_X^{ss}$ carry submodules with bounded $d$-torsion into finitely generated $\overline{V}^\flat$-modules:

\begin{claim}
\label{CMclaim2}
For any $c \geq 1$ and any $Y' \in \mathcal{P}_X^{ss}$, there exists a map $Y'' \to Y'$ in $\mathcal{P}_X^{ss}$ such that the   the image of 
\[ H^i_{\mathfrak{m}}(\Prism_{Y'}/p)[d^{c}] \to H^i_{\mathfrak{m}}( \Prism_{Y''}/p)[d^{c}] \]
 is contained in a finitely generated $V^\flat$-submodule of the target for $i < n+1$.
\end{claim}
\begin{proof}
  By the Bockstein sequence, it is enough to ensure that the image of 
 \[ H^i_{\mathfrak{m}}(\Prism_{Y'}/(p,d^{c})) \to H^i_{\mathfrak{m}}(\Prism_{Y''}/(p,d^{c}))\]
is contained in a finitely generated $\overline{V}^\flat$-submodule of the target for $i < n$. In fact, we shall we prove the stronger statement the this map itself factors over a finitely presented $\overline{V}^\flat$-module.  This follows from  Lemma~\ref{IncreaseFinite} once we show the following, verifying the hypothesis Lemma~\ref{IncreaseFinite}:
\begin{itemize}
\item[$(\ast)$]
For all $Z \in \mathcal{P}_X^{ss}$, there exists a map $Z' \to Z$ in $\mathcal{P}_X^{ss}$ such that 
\[ H^i_{\mathfrak{m}}(\mathcal{O}_{Z}/p) \to H^i_{\mathfrak{m}}(\mathcal{O}_{Z'}/p)\]
factors over a finitely presented $\overline{V}^\flat$-module for $i < n$. 
\end{itemize}
By approximation arguments similar to reduction (1) above, it is enough to prove such a statement for finite covers of $\mathbf{P}^n_V$ instead. The claim then follows from the assumption that $(M_k)$ holds true for $k < n$ as well as Lemma~\ref{IndCMPunctured} applied to $\mathbf{P}^n_{V/p}$. Alternately, we may use Lemma~\ref{IndCMPuncturedNN} directly.
\end{proof}

Combining Claims~\ref{CMclaim1} and \ref{CMclaim2} with Remark~\ref{NoetherianPropertyAlmostZero} shows that for any $Y \in \mathcal{P}_X^{ss}$, the image of 
\[ H^i_{\mathfrak{m}}(\Prism_Y/p) \to H^i_{\mathfrak{m}}(\mathcal{O}_{X^+}^\flat),\]
is a finitely generated $\overline{V}^\flat$-submodule for $i < n+1$. Note that this map is Frobenius equivariant, and hence its image is Frobenius stable. Applying Lemma~\ref{EquationCM} to the $\mathcal{O}_{X^+}^\flat$-linearization of the image shows that the image, and hence the map, is $0$. Taking the colimit over all $Y$'s, it follows that $H^i_{\mathfrak{m}}(\mathcal{O}_{X^+}^\flat)$ is $0$ for $i <n+1$. By Bockstein sequences for $d$ and $p$, this implies that $H^i_{\mathfrak{m}}(\mathcal{O}_{X^+}) = H^{i-1}_{\mathfrak{m}}(\mathcal{O}_{X^+}/p) = 0$ for $i < n+1$. %In particular, $X^+$ is $p$-completely Cohen--Macaulay over $X$. 
Combining this with $(\ast)$, it follows that for any $Y \in \mathcal{P}_X^{fin}$, there is some map $Y' \to Y$ in $\mathcal{P}_X^{fin}$ such that the image of 
\[ H^i_{\mathfrak{m}}(\mathcal{O}_{Y}/p) \to H^i_{\mathfrak{m}}(\mathcal{O}_{Y'}/p)\]
is $0$ for $i < n$: the image is finitely generated for some finite cover and is $0$ when we take the colimit over all finite covers, so it must already be $0$ on a sufficiently large finite cover. By an approximation argument as in reduction (1) above, this proves that $(P_n)$ holds true. 
\end{proof}

\begin{remark}[Reformulation via ind-CM and cohomologically CM objects]
\label{rmk:CCMIndCMuCM}
Consider the ind-object $\{\mathcal{O}_Y/p\}_{\mathcal{P}_X^{fin}}$ in $\mathrm{Coh}(X_{p=0}) \subset D^b_{coh}(X_{p=0})$ with colimit $\mathcal{O}_{X^+}/p \in D_{qc}(X_{p=0})$. By Lemma~\ref{IndCMcCMNN}, the colimit $\mathcal{O}_{X^+}/p$ is cohomologically CM (in the sense of Definition~\ref{CMDerived}) if and only if the ind-object $\{\mathcal{O}_Y/p\}_{\mathcal{P}_X^{fin}}$ is ind-CM (in the sense of Definition~\ref{IndCMDef}). Theorem~\ref{AICCMMain} implies that these equivalent properties  hold true. In conjunction with Lemma~\ref{RegSeqCrit}, this implies that $\mathcal{O}_{X^+}/p$ is a Cohen--Macaulay quasi-coherent sheaf on $\mathbf{P}^n_{V/p}$ in the usual sense. 
\end{remark}

\begin{remark}[Dualizing complexes are pro-discrete in $\mathcal{P}_X$]
\label{ProCM}
Theorem~\ref{AICCMMain} and  Lemma~\ref{IndCMPuncturedNN} yield the following statement: with notations as above, the pro-systems $\{H^i(\omega^\bullet_{Y/\overline{V}}/p)\}_{Y \in \mathcal{P}_X}$ and $\{H^i(\omega^\bullet_{Y/\overline{V}} \otimes_{\overline{V}}^L k)\}_{Y \in \mathcal{P}_X}$ are $0$ for $i > - \dim(X_{p=0}) = \dim(X_{\overline{k}}) = n$. In fact, the first statement implies the second follows by applying the following observation to the map $\overline{V}/p \to \overline{k}$ with the module $M = \omega^\bullet_{Y/\overline{V}}/p[-n]$ and iterating twice: given a map $A \to B$ of commutative rings and $M \in D(A)$, we have $\tau^{> 0}(M \otimes_A^L B) \simeq \tau^{> 0} \left(  (\tau^{> 0} M) \otimes_A^L B \right)$ via the natural map. For the first statement, using Lemma~\ref{IndCMPuncturedNN}, it is enough to show that the ind-object $\{ \mathcal{O}_Y/p\}_{\mathcal{P}_X}$ in $D_{qc}(X_{p=0})$ is ind-CM, which is exactly what we showed in Theorem~\ref{AICCMMain}.
\end{remark}

The following result was used above:

\begin{lemma}[Killing local \'etale cohomology, aka the ``equational lemma'']
\label{EquationCM}
Let $\mathfrak{m} \subset \mathcal{O}_X$ be a maximal ideal. Fix an integer $i$. The $\mathcal{O}_{X^+}^\flat$-module $H^i_{\mathfrak{m}}(\mathcal{O}_{X^+}^\flat)$ contains no nonzero Frobenius stable finitely generated $\mathcal{O}_{X^+}^\flat$-submodule.
\end{lemma}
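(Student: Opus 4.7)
The plan is to translate the assertion, via Artin-Schreier theory, into a vanishing statement for local étale cohomology with $\mathbf{F}_p$-coefficients on $X^+$, which can then be deduced from Proposition~\ref{EtaleCohAIC}. This is a tilted incarnation of the classical equational-lemma argument of Hochster-Huneke~\cite{HHCM}.

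First I would show that $F - 1$ acts bijectively on each $H^j_{\mathfrak{m}}(\mathcal{O}_{X^+}^\flat)$. Let $Z \subset X^+$ denote the preimage of the closed point defined by $\mathfrak{m}$. The scheme $X^+$ is integral and normal with algebraically closed fraction field, as is its dominant open $U := X^+ \setminus Z$. Proposition~\ref{EtaleCohAIC} therefore gives $R\Gamma(X^+, \mathbf{F}_p) \simeq \mathbf{F}_p \simeq R\Gamma(U, \mathbf{F}_p)$ (both concentrated in degree $0$), so the excision triangle yields $R\Gamma_Z(X^+, \mathbf{F}_p) = 0$. Since $\mathcal{O}_{X^+}^\flat$ is a perfect sheaf of $\mathbf{F}_p$-algebras, the Artin-Schreier sequence $0 \to \mathbf{F}_p \to \mathcal{O}_{X^+}^\flat \xrightarrow{F-1} \mathcal{O}_{X^+}^\flat \to 0$ is exact on the étale site of (the special fibre of) $X^+$. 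Applying $R\Gamma_Z$ and combining with the preceding vanishing forces $F - 1$ to be bijective on every $H^j_{\mathfrak{m}}(\mathcal{O}_{X^+}^\flat) = H^j_Z(X^+, \mathcal{O}_{X^+}^\flat)$; in particular, there are no nonzero Frobenius-fixed elements in these groups.

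Next I would argue by contradiction. Suppose $N \subset H^i_{\mathfrak{m}}(\mathcal{O}_{X^+}^\flat)$ is a nonzero finitely generated Frobenius-stable submodule with generators $n_1, \ldots, n_r$, and write $F(n_i) = \sum_j a_{ij} n_j$ with $a_{ij} \in \mathcal{O}_{X^+}^\flat$. A Frobenius-fixed combination $\sum_j y_j n_j \in N$ corresponds to a solution $(y_1, \ldots, y_r)$ of the system $y_k = \sum_j a_{jk} y_j^p$, which cuts out a finite étale cover of $\mathrm{Spec}(\mathcal{O}_{X^+}^\flat)$ since its Jacobian is the identity in characteristic $p$. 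Because every finite étale cover of $\mathrm{Spec}(\mathcal{O}_{X^+}^\flat)$ splits (a property inherited through tilting from $X^+$, which is integral normal with algebraically closed fraction field, so all its finite étale covers are trivial), the system admits $p^r$ solutions in $\mathcal{O}_{X^+}^\flat$ forming an $r$-dimensional $\mathbf{F}_p$-vector space of Frobenius-fixed combinations in $N$. Following Hochster-Huneke's equational-lemma argument~\cite{HHCM}, at least one such combination is nonzero, contradicting the injectivity of $F - 1$ established above.

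The main obstacle is the final step: ensuring that some produced combination $\sum_j y_j n_j$ is actually nonzero in $N$, rather than all of them collapsing to zero (which would yield only relations among the generators $n_i$, not a contradiction). This is handled by exploiting the bijectivity of Frobenius on the ambient $H^i_{\mathfrak{m}}(\mathcal{O}_{X^+}^\flat)$, which forces the matrix $(a_{ij})$ governing $F$ on $N$ to satisfy sufficient non-degeneracy properties for the $\mathbf{F}_p$-vector space of solutions to map non-trivially into $N$; the detailed bookkeeping parallels the delicate analysis in the proof of the classical equational lemma of \cite{HHCM}.
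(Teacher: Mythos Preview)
Your overall strategy matches the paper's: both reduce the claim to the vanishing of local \'etale $\mathbf{F}_p$-cohomology via Artin--Schreier. However, there are two genuine gaps.

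First, in your opening paragraph you conflate \'etale cohomology on $X^+$ with that on its tilt. The Artin--Schreier sequence for $\mathcal{O}_{X^+}^\flat$ is a sequence of \'etale sheaves on $\mathrm{Spec}(R^{+,\flat})$, so applying $R\Gamma_Z$ to it computes $H^*_{\mathfrak m}(\mathrm{Spec}(R^{+,\flat}),\mathbf{F}_p)$, \emph{not} $H^*_Z(X^+,\mathbf{F}_p)$. Bridging the two requires the tilting equivalence for local \'etale cohomology (as in \cite[Theorem~2.2.7]{CesnavicusScholzePurity}) together with a Nisnevich excision step to pass between $R^+$ and its $p$-henselization; the paper makes both steps explicit, and they are not merely cosmetic.

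Second, and more seriously, the ``main obstacle'' you flag is a real gap, and your proposed fix does not close it. The classical Hochster--Huneke equational argument is run over a field: the generators $n_i$ form a basis of a finite-dimensional vector space, so an invertible matrix of solutions $(y_{ij})$ visibly produces nonzero fixed vectors. Over the non-noetherian ring $\mathcal{O}_{X^+}^\flat$, the $n_i$ may satisfy relations, and bijectivity of Frobenius on the ambient module does not by itself force the matrix $(a_{ij})$ to be nondegenerate in any useful sense; invoking ``the detailed bookkeeping'' of \cite{HHCM} is not enough, as that bookkeeping genuinely uses that one is over a field. The paper sidesteps this entirely: it first uses perfectness of the ambient module together with finite generation to show that $M$ is scheme-theoretically supported on $(R^{+,\flat}/(d,\mathfrak m))_{\mathrm{red}} = R^+/\sqrt{\mathfrak m R^+}$, whose spectrum is a profinite set with algebraically closed residue fields. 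On such a scheme every nonzero $\mathbf{F}_p$-sheaf has a nonzero global section, so via the Riemann--Hilbert correspondence for Frobenius modules every nonzero holonomic Frobenius module has a nonzero Frobenius-fixed element. This support reduction is the key idea missing from your argument.
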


\begin{proof}
Let $\mathrm{Spec}(R) \subset X$ be a standard affine open containing the closed point defined by $\mathfrak{m}$. Its preimage in $X^+$ has the form $\mathrm{Spec}(R^+)$ for an absolute integral closure $R^+$ of $R$. The local cohomology $H^i_{\mathfrak{m}}(\mathcal{O}_{X^+}^\flat)$ identifies with $H^i_{\mathfrak{m}}(R^{+,\flat})$ (using, e.g., the formula in Lemma~\ref{LocalCohFormal} to avoid  completion problems). Assume there exists a  nonzero Frobenius stable finitely generated $R^{+,\flat}$-submodule $M \subset H^i_{\mathfrak{m}}(R^{+,\flat})$. By perfectness of the ambient module and finite generation, any such $M$ must be scheme-theoretically supported on $(R^{+,\flat}/(d,\mathfrak{m}))_{red} = R^+/\sqrt{\mathfrak{m}R^+}$ and thus corresponds to a holonomic Frobenius module on $R^+/\sqrt{\mathfrak{m}R^+}$ in the sense of \cite{BhattLurieModpRH}. But $\mathrm{Spec}(R^+/\sqrt{\mathfrak{m}R^+})$ is a profinite set (as a topological space) with each stalk being an algebraically closed residue field, so its \'etale topology identifies with the Zariski topology, and hence every nonzero holonomic Frobenius module over $R^+/\sqrt{\mathfrak{m}R^+}$ has nonzero Frobenius fixed points\footnote{A nonzero sheaf $F$ of $\mathbf{F}_p$-modules on a profinite set $S$ must have $H^0(S,F) \neq 0$. Indeed, by left exactness of $H^0(S,-)$, it is enough to show the same for $F$ constructible. Since constructible sets (and hence quasi-compact opens) in $S$ are clopen, each constructible sheaf $F$ on $S$ is a direct sum of constant sheaves supported on clopen subsets, so the claim is clear.}. The Artin-Schreier sequence then shows that $H^i_{\mathfrak{m}}(\mathrm{Spec}(R^{+,\flat}), \mathbf{F}_p) \neq 0$. Thanks to  tilting for local \'etale cohomology \cite[Theorem 2.2.7]{CesnavicusScholzePurity}, this group is $H^i_{\mathfrak{m}}(\mathrm{Spec}(R^{+,h}), \mathbf{F}_p)$, where $R^{+,h}$ denotes the $p$-henselization of $R^+$. By Nisnevich excision, this group is also simply  $H^i_{\mathfrak{m}}(\mathrm{Spec}(R^+), \mathbf{F}_p)$: the map $R^+ \to R^{+,h}$ is an ind-\'etale neighbourhood of $V(\mathfrak{m})$ since $p \in \mathfrak{m}$. Since $\dim(R^+) \geq 1$,  Proposition~\ref{EtaleCohAIC} shows that this group vanishes for all $i$, which is a contradiction.
\end{proof}

\begin{remark}
The term ``equational lemma'' used to describe Lemma~\ref{EquationCM} goes back to (the method of proof of) the similar statement in \cite[Theorem 2.2]{HHCM}. The strategy of proving this statement by reduction to relatively simple facts in \'etale cohomology was already used in \cite{BhattAnnihilatingGroupSch} in the characteristic $p$ case.
\end{remark}

\newpage \section{Extending to excellent rings}
\label{sec:ExcCM}

In this section, we extend the geometric results  \S \ref{CMgeomcase} to arbitrary excellent noetherian local domains $R$ by relatively standard approximation arguments and record some consequences. More precisely, we first prove the Cohen--Macaulayness of $R^+$ when formulated in terms of local cohomology (Theorem~\ref{AICVanishLocalCoh}) and deduce that excellent splinters are Cohen--Macaulay (Corollary~\ref{splintersCM}). An inductive characterization of Cohen--Macaulayness that we already encountered earlier then allows us to prove that $R^+/p$ is Cohen--Macaulay over $R/p$ in the sense of regular sequences (Corollary~\ref{CMRegSeqAIC}), proving the result promised in Theorem~\ref{RPlusCMIntro}. Finally, these results are reinterpreted as the $p$-complete flatness of $R^+$ over $R$ when $R$ is regular (Theorem~\ref{RPlusFlatReg}).

The local cohomological variant of our main result for arbitrary excellent noetherian local rings is the following theorem, whose proof uses Popescu's approximation theorem to reduce to the geometric case treated in \S \ref{CMgeomcase}.

\begin{theorem}[Vanishing of local cohomology of absolute integral closures]
\label{AICVanishLocalCoh}
Let $(R,\mathfrak{m},k)$ be an excellent noetherian local domain with $p \in \mathfrak{m}$ and let $R^+$ be an absolute integral closure of $R$. 
\begin{enumerate}
\item $H^i_{\mathfrak{m}}(R^+/pR) = 0$ for $i < \dim(R/pR)$ and $H^i_{\mathfrak{m}}(R^+) = 0$ for $i < \dim(R)$.
\item Any system of parameters on $R$ gives a Koszul regular sequence on $R^+$.
\item The derived $\mathfrak{m}$-adic completion $B = \widehat{R^+}$ is concentrated in degree $0$, coincides with the classical $\mathfrak{m}$-adic completion, and is Cohen--Macaulay over $R$. 
\item If $R$ admits a dualizing complex, then there exists a finite extension $R \to S$ with $H^i_{\mathfrak{m}}(R/pR) \to H^i_{\mathfrak{m}}(S/pR)$ being the $0$ map for all $i < \dim(R/pR)$.
\end{enumerate}
\end{theorem}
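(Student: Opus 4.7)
The plan is to reduce everything to the geometric result Theorem~\ref{AICCMMain} by Popescu-style approximation. Statement (4) is essentially a direct translation of Theorem~\ref{AICCMMain} to the setting of an arbitrary excellent local domain with dualizing complex, and (1)--(3) follow from (4) by formal manipulations once one arranges for dualizing complexes to exist.

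I would first establish (4). Given a class $\xi \in H^i_{\mathfrak{m}}(R/p)$ with $i < \dim(R/p)$ to be killed by a finite extension, by flatness of $R \to R^h$ and a Lemma~\ref{DescendFiniteCov}-style descent one may assume $R$ is henselian. The dualizing complex hypothesis, combined with Lemma~\ref{IndCMPunctured}(4), reduces the task to killing a single class at a time. Applying Popescu's smoothing theorem to the regular map $R \to \widehat{R}$, together with Cohen's structure theorem writing $\widehat{R}$ as a quotient of $V[[x_1,\ldots,x_n]]$ (for $V$ the Cohen ring of the residue field, an excellent $p$-henselian $p$-torsionfree DVR), one approximates the pair $(R,\xi)$ by a pair $(R',\xi')$ with $R'$ local and essentially of finite type over $V$. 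Theorem~\ref{AICCMMain}, equivalently $(M_n)$ via Lemma~\ref{CMProjective}, applied at the relevant point of $\mathrm{Spec}(R')$ produces a finite extension $R' \to S'$ killing $\xi'$, and a Lemma~\ref{DescendFiniteCov}-style descent then transports this back to a finite extension $R \to S$ killing $\xi$.

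From (4), I would derive (1) by taking colimits: since $R^+ = \colim_S S$ along the filtered poset of finite normal extensions $S \subset R^+$, one has $H^i_{\mathfrak{m}}(R^+/p) = \colim_S H^i_{\mathfrak{m}}(S/p)$, and iterative application of (4) (to each $S$, which is again excellent) drives this colimit to zero. To reach the generality of (1), where no dualizing complex is assumed on $R$, one first passes to $\widehat{R}$, which admits a dualizing complex by Cohen's structure theorem, and reduces via its irreducible components together with the faithful flatness of $R \to \widehat{R}$. The integral vanishing $H^i_{\mathfrak{m}}(R^+) = 0$ then follows from the mod-$p$ statement via the long exact sequence of local cohomology attached to $0 \to R^+ \xrightarrow{p} R^+ \to R^+/p \to 0$ combined with derived $p$-adic completeness. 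Parts (2) and (3) are formal consequences of (1): (2) follows via Lemma~\ref{CMLocCohCrit} (using that $R/p$ is catenary and equidimensional by excellence, so Lemma~\ref{CatHeight} applies), while (3) follows from (1) applied to all quotients $R^+/p^n$ together with derived Nakayama, which forces the derived $\mathfrak{m}$-adic completion to be discrete, agree with the classical completion, and be Cohen--Macaulay over $R$.

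The main obstacle is the approximation step in (4): one must arrange for the cohomology class, the prospective finite extension, and the finitely many relevant obstructions to descend compatibly to a geometric model over $V$. The finiteness input from Lemma~\ref{IndCMPunctured}(4), enabled by the dualizing complex hypothesis, is precisely what makes the descent feasible; without it one would have to track infinitely many obstructions simultaneously, which is beyond the reach of Artin approximation.
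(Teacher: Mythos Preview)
Your overall strategy—Popescu approximation to reduce to the geometric case of Theorem~\ref{AICCMMain}—matches the paper's, but there are real gaps, and the order of deductions is inverted in a way that causes problems.

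The paper proves (1) first and deduces (4) from it via Lemma~\ref{IndCMcCM}; you reverse this. Your proof of (4) invokes Lemma~\ref{IndCMPunctured}(4) to ``reduce to killing a single class at a time,'' but that lemma \emph{assumes} ind-CMness at all non-closed points, which is essentially what you are trying to establish; without an explicit induction on dimension this step is circular. More importantly, your approximation step (``approximate $(R,\xi)$ by $(R',\xi')$ with $R'$ essentially of finite type over $V$'') skips the key technical move. For complete $R$, the paper does not approximate $R$ itself: it takes a Noether normalization $P = W\llbracket x_2,\dots,x_n\rrbracket \hookrightarrow R$, identifies $R^+ = P^+$, approximates the \emph{regular} ring $P$ via Popescu by regular local rings $Q_j$ essentially of finite type over $W$, and then uses Theorem~\ref{KillCohFinite} together with the flatness criterion Lemma~\ref{FlatCritComp} to conclude that $Q_j \to Q_j^+$ is $p$-completely flat, so the desired local cohomology vanishing for $Q_j^+/p$ follows from that of $Q_j/p$. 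This regularity-plus-flatness reduction is the heart of the argument and is absent from your outline.

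Your derivation of (1) without a dualizing complex by ``passing to $\widehat{R}$'' also does not work as stated: the ring $R^+ \otimes_R \widehat{R}$ need not be a domain, and its connected components are not obviously absolute integral closures of $\widehat{R}$. The paper instead reduces the general case to the henselian one via Lemma~\ref{AICConnComp}, which shows that each connected component of $R^+ \otimes_R R^h$ is an absolute integral closure of $R^h$, together with Lemma~\ref{VanishConnComp}; the henselian case is then reduced to the complete one by the Popescu-plus-section trick you allude to. Finally, for (2) the paper gives a one-line argument using $\mathrm{Kos}(R^+;\underline{x}) \simeq \mathrm{Kos}(R\Gamma_{\mathfrak m}(R^+);\underline{x})$ rather than Lemma~\ref{CMLocCohCrit}; your route is valid but requires applying (1) at all localizations, which you do not mention.
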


\begin{proof}
We only give the arguments assuming $R$ is $p$-torsionfree, though they all extend to the case where $p=0$ in $R$ (where the statements are already known) with purely notational changes.

(1): The assertion for $R^+/p$ implies that for $R^+$ in view of Bockstein sequences as well as the fact that $H^i_{\mathfrak{m}}(R^+)$ is $p^\infty$-torsion, so it suffices to prove the vanishing for $R^+/p$. We prove this in a series of steps using Popescu's approximation theorem. By excellence, we may assume $R$ is normal, say of dimension $n$.

Let us first show the claim when $R$ is $\mathfrak{m}$-complete. By the Cohen structure theorem, we can choose a finite injective map $P = W \llbracket x_2,...,x_n \rrbracket \to R$ with $W$ a Cohen ring for $k$. As $P \to R$ is a finite injective map of complete noetherian local domains, we have $P^+ = R^+$ and $\sqrt{\mathfrak{m}_P R} = \mathfrak{m}$ (where $\mathfrak{m}_P$ is the maximal ideal of $P$), so we may assume $P = R$. Thanks to Popescu's theorem \cite[Tag 07GC]{StacksProject}, we can write $P = \colim_i P_i$ as a filtered colimit of smooth algebras over $P_0 = W[x_2,...,x_n]$. Let $Q_i$ be the localization of $P_i$ at the image of $\mathfrak{m} \in \mathrm{Spec}(R)$, so we also have $P = \colim_i Q_i$ with each $Q_i$ being the local ring of a smooth $W$-scheme at a closed point of residue characteristic $p$. In particular, each $Q_i$ is a regular local ring. Let $\mathfrak{n} = (p,x_2,..,x_n) \subset Q_0$, so $\mathfrak{n}P = \mathfrak{m}$.  By the compatibility of local cohomology with filtered colimits and a finite presentation argument to descend a finite extension of $P$ to that of $Q_j$ for $j \gg 0$, it is enough to show that $H^i_{\mathfrak{n}}(Q_j^+/p) = 0$ for $i < n-1 = \dim(P/p)$. Since $Q_j$ is a flat over $Q_0$ (as $P_j$ was flat over $P_0$), we have $H^i_{\mathfrak{n}}(Q_j/p) = 0$ for $i < n-1$ and all $j$ as the same holds true for $j=0$ by direct calculation. Since $Q_j$ is regular and essentially finitely presented over $W$, Theorem~\ref{KillCohFinite} and Lemma~\ref{FlatCritComp} imply that $Q_j \to Q_j^+$ is $p$-completely flat, and thus $H^i_{\mathfrak{n}}(Q_j^+/p) = 0$ for $i < n-1$, as wanted.

Next, we prove (1) when $R$ is $\mathfrak{m}$-henselian. By Popescu's theorem again,  the $\mathfrak{m}$-completion $\widehat{R}$ of $R$ is a normal domain and the map $R \to \widehat{R}$ to the $\mathfrak{m}$-completion of $R$ can be written as a filtered colimit of smooth maps $R \to R_i$. Given a class $\alpha \in H^i_{\mathfrak{m}}(R/p)$ for $i < \dim(R/p) = \dim(\widehat{R}/p)$,  the complete case treated in the previous paragraph gives a finite cover $\widehat{R} \to T$ such that $\alpha$ maps to $0$ in $H^i_{\mathfrak{m}}(T/p)$. We can descend $T$ to a finite cover $R_j \to T_j$ for some $j \geq 0$ by \cite[Tags 01ZL, 01ZO, 07RR]{StacksProject}. By enlarging $j$, we can also assume that $\alpha$ maps to $0$ in $H^i_{\mathfrak{m}}(T_j/p)$. But the map $R \to R_j$ admits a section: this is a smooth map over a henselian base with a distinguished section over the residue field (or in fact any artinian quotient of $R$). Base changing $R_j \to T_j$ along a section $R_j \to R$ then gives a finite cover $R \to T$ such that $\alpha$ maps to $0$ in $H^i_{\mathfrak{m}}(T/p)$, as wanted.

Finally, we prove $(1)$ for an arbitrary excellent noetherian normal local domain $(R,\mathfrak{m})$. Let $R \to R^h$ be the henselization of $R$.  By Lemma~\ref{AICConnComp} (1) - (3), the tensor product $R^{+,h} := R^+ \otimes_R R^h$ is the henselization of $R^+$ at $\mathfrak{m}$ and each connected component of $R^{+,h}$ is an absolute integral closure of $R^h$. Using Lemma~\ref{VanishConnComp} and Lemma~\ref{AICConnComp} (4), it is now enough to observe that $H^i_{\mathfrak{m}}( (R^h)^+/p) = 0$ for $i < \dim(R/p)$, which follows from the henselian case treated above.

(2): This is a formal consequence of (1). Indeed, given a system of parameters, say $\underline{x} := x_1,...,x_d$, on $R$,  we must show that $\mathrm{Kos}(R^+;\underline{x})$ is coconnective. But this complex is quasi-isomorphic to $\mathrm{Kos}(R\Gamma_{\mathfrak{m}}(R^+); \underline{x})$, which is coconnective as $R\Gamma_{\mathfrak{m}}(R^+)$ is concentrated in degree $d = \dim(R)$ by (1).

(3): This follows from (2) and Lemma~\ref{KoszulRegularCrit} applied to $M = R^+$. 

(4): Combine (1) applied to all localizations of $R$ at points of characteristic $p$ with Corollary~\ref{IndCMcCM} applied to the ind-object $\{S/p\}$ of $D(R/p)$ indexed by all finite $R$-subalgebras $S \subset R^+$. 
\end{proof}

\begin{example}[Excellence cannot be dropped entirely]
\label{ExcNecc}
Let $k$ be a field. Let $(R,\mathfrak{m})$ be a noetherian local UFD of dimension $3$ with $\mathfrak{m}$-adic completion $\widehat{R} = k\llbracket x,y,w,z \rrbracket / (wx,wy)$; such a ring exists by \cite{Heitmann-completion-UFD}. We claim that the conclusion of Theorem~\ref{AICVanishLocalCoh} (1) fails for $R$. Indeed, assume towards contradiction that  each class in $H^2_{\mathfrak{m}}(R)$ can be annihilated by passage to a finite cover of $R$. Then the same holds true for $H^2_{\mathfrak{m}}(\widehat{R})$. Now $\widehat{R}$ has two minimal primes $\mathfrak{p} = (w)$ and $\mathfrak{q} = (x,y)$, with $A = R/\mathfrak{p} = k \llbracket x,y,z \rrbracket$ and $B = R/\mathfrak{q} = k \llbracket w,z \rrbracket$. Moreover, there is a Mayer-Vietoris exact sequence
\[ 0 \to \widehat{R} \to A \times B \to C := \widehat{R}/(x,y,w) = k\llbracket z \rrbracket \to 0\]
of finitely generated $\widehat{R}$-modules. Taking the long exact sequence of local cohomology shows that $H^2_{\mathfrak{m}}(\widehat{R}) \to H^2_{\mathfrak{m}}(B)$ is surjective. Thus, our assumption implies that each class in $H^2_{\mathfrak{m}}(B)$ can be annihilated by a finite cover of $B$. But $B$ is a splinter (it is complete and regular), so $H^2_{\mathfrak{m}}(B) \to H^2_{\mathfrak{m}}(B')$ is split injective for any finite cover $B \to B'$. Since $H^2_{\mathfrak{m}}(B) \neq 0$ as $\dim(B) = 2$, we get a contradiction.
\end{example}

\begin{remark}[Relaxing excellence]
\label{MellowDualizingDrop}
The analog of Theorem~\ref{AICVanishLocalCoh} (4) in characteristic $p$, under slightly different assumptions, was proven in \cite{HunekeLyubeznik}, and gives a refinement of \cite{HHCM}. The main result of \cite{QuyRPlus}  extends this to all noetherian local domains over $\mathbf{F}_p$ that are quotients of CM rings. Thus, one might wonder if the entirety of Theorem~\ref{AICVanishLocalCoh} holds true for any noetherian local domain that is the quotient of a CM ring.
\end{remark}

\begin{remark}[Weakly functorial CM algebras]
Theorem~\ref{AICVanishLocalCoh} part (3) gives a new and explicit construction of ``big Cohen--Macaulay algebras'' that are weakly functorial in local maps of local domains of mixed characteristic. Such algebras were previously constructed by \cite{AndreWeakFun}  (via Hochster's partial algebra modification process as refined by \cite{DietzSeeds} and extended to mixed characteristic in \cite{ShimomotoIntegerPerfAndre}; see also \cite[Appendix A]{MSTWWAdjoint}) and Gabber \cite{GabberMSRINotesBCM} (via an ultraproduct construction); see also Remark~\ref{rmk:WeakFunNonLoc} below.
\end{remark}

\begin{remark}[Top degree local cohomology of $R^+$]
For an excellent noetherian local domain $R$, Theorem~\ref{AICVanishLocalCoh} yields a  mixed characteristic analog of \cite{HHCM}, which concerns the local cohomology of $R^+$ outside the top degree. It would be quite interesting to extend these ideas to the top degree and prove a mixed characteristic analog of \cite{SmithTightParam}.
\end{remark}

\begin{remark}
Theorem~\ref{AICVanishLocalCoh} implies that several results in commutative algebra that were contingent on \cite{HeitmannDSC3} and thus had a dimension $\leq 3$ constraint now hold in arbitrary dimension; one such example is \cite[Theorem 4.13]{BIMKunz}. We thank Wenliang Zhang for suggesting this remark.
\end{remark}

The following regularity criterion (which is borrowed from \cite[Theorem 8.5.1]{BrunsHerzog}) was used in Theorem~\ref{AICVanishLocalCoh}.

\begin{lemma}
\label{KoszulRegularCrit}
Let $R$ be a commutative ring equipped with a sequence $\underline{x} := x_1,...,x_d$ of elements. Let $M \in D^{\leq 0}(R)$ such that $\mathrm{Kos}(M; \underline{x})$ is concentrated in degree $0$. Then the derived $\underline{x}$-completion $N$ of $M$ is concentrated in degree $0$ and $\underline{x}$ gives a regular sequence on $N$. Moreover, $N$ coincides with the classical $\underline{x}$-completion of $H^0(M)$. 
\end{lemma}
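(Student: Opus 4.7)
The plan is to reduce the statement to a classical commutative-algebra theorem about Koszul-regular sequences on $\underline{x}$-adically complete modules, using derived Nakayama to pass from $M$ to a discrete object.

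First, since $M \to N$ is the derived $\underline{x}$-completion, we have $N \otimes_R^L R/\underline{x} \simeq M \otimes_R^L R/\underline{x} = \mathrm{Kos}(M;\underline{x})$, which is concentrated in degree $0$ by hypothesis. As $N$ is derived $\underline{x}$-complete, derived Nakayama for $\underline{x}$-complete complexes forces $N$ itself to be concentrated in degree $0$, so $N$ is a discrete $R$-module.

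Next, I would identify $N$ with the classical $\underline{x}$-adic completion of $H^0(M)$. The key subclaim is that $\mathrm{Kos}(M; x_1^n, \ldots, x_d^n) \in D^{=0}$ for every $n \geq 1$: this follows inductively from the single-element observation that $\mathrm{Kos}(M;x) \in D^{=0}$ forces $x$ to act bijectively on $H^{<0}(M)$, hence so does $x^n$, iterated across the coordinates using the symmetry of Koszul complexes under permutations. The truncation triangle $\tau^{<0}M \to M \to H^0(M)$ then identifies $H^0(\mathrm{Kos}(M;\underline{x}^n))$ with $H^0(M)/(\underline{x}^n)H^0(M)$, since $\mathrm{Kos}(\tau^{<0}M;\underline{x}^n) \in D^{\leq -1}$. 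On the other hand, $\mathrm{Kos}(M;\underline{x}^n) \simeq N \otimes_R^L R/\underline{x}^n$ is discrete and hence equals $N/(\underline{x}^n)N$. Combining these and taking inverse limits (which agrees with $R\lim$ since the transition maps are surjections) exhibits $N$ as $\lim_n H^0(M)/(\underline{x}^n)H^0(M)$, the classical $\underline{x}$-adic completion of $H^0(M)$.

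Finally, $N$ is now a discrete, classically $\underline{x}$-adically complete $R$-module with $\mathrm{Kos}(N;\underline{x}) \in D^{=0}$, and in particular $H_1(\underline{x};N)=0$. The classical result that $H_1$-regularity of $\underline{x}$ on an $\underline{x}$-adically complete module implies that $\underline{x}$ is a regular sequence on it (essentially \cite[Theorem~8.5.1]{BrunsHerzog}) then finishes the proof. I expect the main technical hurdle to be the Koszul-acyclicity propagation step from $\underline{x}$ to each $\underline{x}^n$, which is essential for concluding that $N/(\underline{x}^n)N$ is an honest quotient rather than merely a derived one; the remaining ingredients are either formal or standard.
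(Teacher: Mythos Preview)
Your proof is correct and follows essentially the same path as the paper's: both identify $N$ with the classical $\underline{x}$-adic completion of $H^0(M)$ by propagating the Koszul-acyclicity from $\underline{x}$ to each $\underline{x}^n$ (you spell this out carefully, whereas the paper writes ``by assumption'' a bit loosely), and then deduce that $\underline{x}$ is regular on the complete module $N$. The only structural difference is that the paper unpacks the last step directly---showing $x_1$ is a nonzerodivisor via the associated graded and then inducting on $d$---while you cite \cite[Theorem~8.5.1]{BrunsHerzog}, which the paper itself names as the source; your opening derived-Nakayama paragraph is a harmless extra, since the discreteness of $N$ already falls out of your $R\lim$ computation in the second paragraph.
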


\begin{proof}
We shall prove that $N$ is concentrated in degree $0$ and that $x_1$ is a nonzerodivisor on $N$; one can then deduce the first assertion by induction on $d$ applied to $M' = M/x_1 \in D^{\leq 0}(R)$ with the sequence $\underline{x}' = x_2,...,x_d$, noting that $N' = N/x_1N$ is the derived $\underline{x}'$-completion of $M'$. 

By assumption, we know that $\mathrm{Kos}(M; \underline{x}^n) \simeq H^0(M)/\underline{x}^nH^0(M)$. This implies that $N \simeq R\lim_n \mathrm{Kos}(M;\underline{x}^n) \simeq \lim_n H^0(M)/\underline{x}^nH^0(M)$, whence  $N$ is discrete and is the classical $\underline{x}$-adic completion of $H^0(M)$. In particular, $N$ is $\underline{x}$-adically separated. As Koszul regular sequences are quasi-regular, we also know that the associated graded $\mathrm{gr}_{\underline{x}}(N)$ of the $\underline{x}$-adic filtration on $N$ is a polynomial algebra $N/\underline{x}N[X_1,...,X_d]$. This implies that if $f \in N$ satisfies $x_1 f \in \underline{x}^n N$, then $f \in \underline{x}^{n-1} N$. In particular, since $N$ is $\underline{x}$-adically separated, we learn that $x_1$ is a nonzerodivisor on $N$, as wanted.

The fact that $N$ coincides with the classical $\underline{x}$-completion of $H^0(M)$ was already proven above. 
\end{proof}

The following lemmas on the structure on the absolute integral closure and its henselization were also used Theorem~\ref{AICVanishLocalCoh}.

\begin{lemma}
\label{AICConnComp}

Let $R$ be a normal domain with a fixed absolute integral closure $R \to R^+$. Let $I \subset R$ be an ideal, and write $R^h$ for the henselization of $R$ along $I$. Consider $R^{+,h} := R^+ \otimes_R R^h$.

\begin{enumerate}
\item The map $R^+ \to R^{+,h}$ is the henselization of $R^+$ along $IR^+$.
\item The map $\mathrm{Spec}(R^+/I) \to \mathrm{Spec}(R^{+,h})$ induces a bijection on connected components. 
\item Each connected component $Z \subset \mathrm{Spec}(R^{+,h})$ is the spectrum of an absolute integral closure of $R^h$ via the natural map $R^h \to R^{+,h} \to \mathcal{O}(Z)$. 
\item For any $M \in D(R)$ and any integer $i$, the map $H^i_I(M) \to H^i_I(M \otimes_R R^h)$ is bijective.
\end{enumerate}
\end{lemma}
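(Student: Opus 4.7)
For part (1), the plan is to invoke the filtered-colimit description of henselization: by \cite[Tag 0A02]{StacksProject}, $R^h$ is the filtered colimit over étale $R$-algebras $R_\alpha$ satisfying $R_\alpha/I R_\alpha \simeq R/I$. Base-changing along $R \to R^+$ preserves both properties, so $R^{+,h} = \colim (R_\alpha \otimes_R R^+)$ admits the analogous colimit description for the pair $(R^+, IR^+)$. This identifies $R^{+,h}$ as the henselization of that pair.

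Parts (2) and (4) then reduce to general facts once (1) is in hand. For (2), the pair $(R^{+,h}, IR^{+,h})$ is henselian by (1), and for any henselian pair $(A,J)$ the natural map $\pi_0(\mathrm{Spec}(A/J)) \to \pi_0(\mathrm{Spec}(A))$ is a bijection (\cite[Tag 09XI]{StacksProject}); combined with $R^{+,h}/IR^{+,h} \simeq R^+/IR^+$ this gives the claim. For (4), observe that $R \to R^h$ is flat and $R/I^n \simeq R^h/I^n R^h$ for all $n \geq 1$, since étale $R$-algebras agreeing with $R$ modulo $I$ automatically agree modulo every $I^n$. Using $R\Gamma_I(M) \simeq \colim_n R\mathrm{Hom}_R(R/I^n, M)$ together with flat base change for $R\mathrm{Hom}$ of finitely presented modules gives $R\Gamma_I(M \otimes_R^L R^h) \simeq R\Gamma_I(M) \otimes_R^L R^h$; and because each cohomology of $R\Gamma_I(M)$ is an $R/I^n$-module for some $n$, the outer base change along $R \to R^h$ is the identity, yielding (4).

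The substantive part is (3), which I will argue via a colimit decomposition in the local case $I = \mathfrak{m}$ (the only case used). Write $R^+ = \colim_L R_L$ where $R_L$ is the normalization of $R$ in a finite subextension $L \subset \overline{K}$. Each $R_L$ is a module-finite semi-local normal $R$-algebra, and $R_L^h := R_L \otimes_R R^h$ decomposes as a finite product $\prod_{\mathfrak{M}} (R_L)_{\mathfrak{M}}^h$ over maximal ideals $\mathfrak{M} \subset R_L$ above $\mathfrak{m}$, each factor a local henselian normal domain finite over $R^h$. Taking colimits, $\pi_0(\mathrm{Spec}(R^{+,h})) = \lim_L \{\mathfrak{M} \subset R_L : \mathfrak{M} \supset \mathfrak{m}\}$ naturally identifies with the set of maximal ideals of $R^+$ above $\mathfrak{m}$. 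For $\mathfrak{M} \subset R^+$ corresponding to a component $Z$, one gets $\mathcal{O}(Z) \simeq (R^+)_{\mathfrak{M}} \otimes_R R^h$, which is a filtered colimit of local henselian normal $R^h$-algebras finite over $R^h$, hence itself a local henselian normal $R^h$-algebra integral over $R^h$. To finish, note that $\mathrm{Frac}((R^+)_{\mathfrak{M}}) = \overline{K}$ is algebraically closed, so any ind-étale extension of $(R^+)_{\mathfrak{M}}$ has fraction field still equal to $\overline{K}$ on the local piece (since étale algebras over an algebraically closed field are products of copies of that field). Hence the fraction field of $\mathcal{O}(Z)$ is $\overline{K}$, which is algebraic over $\mathrm{Frac}(R^h)$ because $R \to R^h$ is ind-étale, and therefore an algebraic closure of $\mathrm{Frac}(R^h)$; this realizes $\mathcal{O}(Z)$ as an absolute integral closure of $R^h$.

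The main obstacle I anticipate is the bookkeeping in (3): identifying connected components in terms of maximal ideals of $R^+$ through the filtered colimit, and then verifying that the fraction field of the localized henselization remains $\overline{K}$. The key conceptual input is that ind-étale extensions of an algebraically closed field remain essentially trivial on local pieces, which rigidifies the behavior of the tower under henselization.
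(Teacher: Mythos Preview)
Your arguments for (1), (2), and (4) match the paper's in spirit; the paper just cites \cite[Tag 0DYE]{StacksProject} for (1) and phrases (4) as ``$R\Gamma_I(M) \otimes_R^L R^h \simeq R\Gamma_I(M \otimes_R R^h)$ by flat base change, then $N \simeq N \otimes_R R^h$ for any $I$-power torsion complex $N$''. One small correction: in your (4), it is not true that each $H^i_I(M)$ is an $R/I^n$-module for a single $n$; rather, each element is killed by some power of $I$, and that suffices since $N \otimes_R R^h = N$ for any $I^\infty$-torsion module $N$ (write $N = \colim_n N[I^n]$).

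For (3) the paper takes a genuinely shorter route. Instead of decomposing $R^+ = \colim_L R_L$ and tracking maximal ideals through the tower, it works directly with $R^+$: since $R^+$ is normal with algebraically closed fraction field, every \'etale $R^+$-algebra is a finite product of absolutely integrally closed domains (this is exactly the Gabber lemma already used in the paper). Hence $R^{+,h}$, being a filtered colimit of such products, has each connected component an absolutely integrally closed domain; as $R^h \to R^{+,h}$ is integral and $R^{+,h} \to \mathcal{O}(Z)$ is surjective, $\mathcal{O}(Z)$ is integral over $R^h$, so it is an absolute integral closure of $R^h$. This argument is three lines, works for general $I$, and avoids your identification $\mathcal{O}(Z) \simeq (R^+)_{\mathfrak{M}} \otimes_R R^h$ and the colimit bookkeeping. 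Your approach is correct in the local case you treat, and the key fact you invoke at the end (\'etale algebras over a ring with algebraically closed fraction field are locally trivial) is precisely what the paper leverages from the start --- it just applies it to $R^+$ rather than to $(R^+)_{\mathfrak{M}}$, which is why the finite-level decomposition becomes unnecessary.
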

\begin{proof}
(1): The formation of henselizations commutes with integral base change, see \cite[Tag 0DYE]{StacksProject}.

(2): This follows from (1) by henselianness since $R^{+,h}/I = R^+/I$.

(3): The map $R^+ \to R^{+,h}$ is a filtered colimit of \'etale $R^+$-algebras. As each \'etale $R^+$-algebra is a finite product of absolutely integrally closed domains, it follows that each connected component $\mathcal{O}(Z)$ of $R^{+,h}$ is also an absolutely integrally closed domain. Each of these connected components is also integral over $R^h$ (as $R^h \to R^{+,h}$ is integral and $R^{+,h} \to \mathcal{O}(Z)$ is surjective), so each component must be an absolute integral closure of $R^h$.

(4): By general nonsense, we have $R\Gamma_I(M) \otimes_R^L R^h \simeq R\Gamma_I(M \otimes_R R^h)$. As $R^h$ is $R$-flat, this gives $H^i_I(M) \otimes_R R^h \simeq H^i_I(M \otimes_R R^h)$. It remains to observe that $N \simeq N \otimes_R R^h$ for any $I$-power torsion complex as $R \to R^h$ is an isomorphism after derived $I$-completion.
\end{proof}

\begin{lemma}
\label{VanishConnComp}
Let $R$ be a commutative ring. Let $F$ be a set-valued functor on $R$ algebras that commutes with filtered colimits and  finite products. If $F(S) = 0$ for each connected component $\mathrm{Spec}(S) \subset \mathrm{Spec}(R)$, then $F(R) = 0$.
\end{lemma}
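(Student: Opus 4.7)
The plan is to combine a quasi-compactness argument on $\mathrm{Spec}(R)$ with an orthogonalization of idempotents to reduce to the finite product compatibility of $F$. First, for each $y \in \mathrm{Spec}(R)$ with corresponding prime $\mathfrak{p}_y$, the connected component containing $y$ has coordinate ring $S_y = \colim_{e \notin \mathfrak{p}_y} R/(1-e)$, a filtered colimit over those idempotents $e \in R$ with $e \notin \mathfrak{p}_y$ (ordered by $e \leq e'$ iff $D(e) \subset D(e')$, i.e.\ $ee = ee'$). Given an arbitrary element $x \in F(R)$, the hypothesis $F(S_y) = 0$ combined with the filtered colimit compatibility of $F$ will produce an idempotent $e_y \in R$ with $y \in D(e_y)$ such that the image of $x$ in $F(R/(1-e_y))$ vanishes.

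Next, I will use quasi-compactness of $\mathrm{Spec}(R)$ to extract a finite subcover $D(e_1),\ldots,D(e_n) \subseteq \{D(e_y)\}_y$ of $\mathrm{Spec}(R)$. Setting $e'_i := (1-e_1)(1-e_2)\cdots(1-e_{i-1})\,e_i$ yields pairwise orthogonal idempotents, and a short computation identifies $1 - \sum_i e'_i$ with $\prod_i(1-e_i)$; the latter is an idempotent whose vanishing locus contains $\bigcup_i D(e_i) = \mathrm{Spec}(R)$, so $\prod_i (1-e_i) = 0$ and hence $\sum_i e'_i = 1$. Moreover, the evident inclusion $D(e'_i) \subset D(e_i)$ translates to an inclusion of ideals $(1-e_i) \subset (1-e'_i)$, hence to a quotient map $R/(1-e_i) \twoheadrightarrow R/(1-e'_i)$; pushing the vanishing of the image of $x$ through this quotient shows that the image of $x$ in $F(R/(1-e'_i))$ is also zero for each $i$.

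Finally, the orthogonality of the $e'_i$ together with $\sum_i e'_i = 1$ gives a ring isomorphism $R \cong \prod_{i=1}^n R/(1-e'_i)$, and applying $F$ and invoking its compatibility with finite products yields $F(R) \cong \prod_i F(R/(1-e'_i))$, under which $x$ is sent to $(0,\ldots,0) = 0$; hence $x = 0$, as desired. There is no serious obstacle in this argument; the only point requiring care is the orthogonalization step, where one must verify both that the refined idempotents $e'_i$ sum to $1$ (which uses that the original $D(e_i)$ cover) and that the witness of vanishing at $e_i$ transfers to $e'_i$ (which uses the induced quotient map $R/(1-e_i) \twoheadrightarrow R/(1-e'_i)$).
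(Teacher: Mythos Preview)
Your argument is correct and follows essentially the same approach as the paper: both proofs pick $x \in F(R)$, use the filtered-colimit compatibility to find a clopen cover by idempotent loci on which $x$ dies, pass to a finite subcover by quasi-compactness, refine to a pairwise disjoint clopen cover, and conclude via the finite-product compatibility. The only difference is that you spell out the orthogonalization step explicitly via $e'_i = (1-e_1)\cdots(1-e_{i-1})e_i$, whereas the paper simply invokes that a finite clopen cover can be refined by a disjoint one.
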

\begin{proof}
Pick an element $x \in F(R)$. As each connected component has the form $\colim_i R_i$ where $R_i$ is a direct factor of $R$, we learn from the compatibility of $F$ with filtered colimits there exists a clopen cover $\{U_i\}$ of $\mathrm{Spec}(R)$ such that $x = 0$ in $F(\mathcal{O}(U_i))$ for all $i$. By quasi-compactness, we may assume this cover is finite. As a finite clopen cover may be refined by a finite clopen cover by sets that are pairwise disjoint,   we may assume that the $U_i$'s are pairwise disjoint. But then $R = \prod_i \mathcal{O}(U_i)$, whence $x = 0$ in $F(R)$ by the finite product compatibility. 
\end{proof}

As a first application, we deduce that excellent splinters are Cohen--Macaulay.

\begin{corollary}[Splinters are Cohen--Macaulay]
\label{splintersCM}
Let $R$ be an excellent domain with $p \in \mathrm{Rad}(R)$. If $R$ is a splinter, then $R$ is Cohen--Macaulay.
\end{corollary}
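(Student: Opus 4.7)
The plan is to combine Theorem~\ref{AICVanishLocalCoh}(1) with the observation that the splinter hypothesis makes $R \hookrightarrow R^+$ a filtered colimit of $R$-linearly split inclusions, so that local cohomology of $R$ injects into that of $R^+$.

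First I would reduce to verifying that $R_{\mathfrak{m}}$ is Cohen-Macaulay for each maximal ideal $\mathfrak{m} \subset R$; the assumption $p \in \mathrm{Rad}(R)$ ensures $p \in \mathfrak{m}$. Since $H^i_{\mathfrak{m}}(-)$ is automatically $\mathfrak{m}$-local, the desired vanishing $H^i_{\mathfrak{m}R_{\mathfrak{m}}}(R_{\mathfrak{m}}) = 0$ for $i < \mathrm{ht}(\mathfrak{m})$ is equivalent to $H^i_{\mathfrak{m}}(R) = 0$ in the same range, and I would work with the latter formulation to avoid having to localize the splinter property itself. Writing $R^+ = \colim_S S$ along the filtered poset of finite $R$-subalgebras $S \subset R^+$, each inclusion $R \hookrightarrow S$ is finite and injective, hence splits as $R$-modules by the splinter hypothesis. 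Applying $H^i_{\mathfrak{m}}(-)$ produces a split injection $H^i_{\mathfrak{m}}(R) \hookrightarrow H^i_{\mathfrak{m}}(S)$, and exactness of filtered colimits promotes this to an injection $H^i_{\mathfrak{m}}(R) \hookrightarrow \colim_S H^i_{\mathfrak{m}}(S) = H^i_{\mathfrak{m}}(R^+)$: any class dying in the colimit vanishes at some finite stage $S$, and the retraction of $H^i_{\mathfrak{m}}(R) \to H^i_{\mathfrak{m}}(S)$ then forces it to be zero.

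To finish, I would identify the localization $(R^+)_{\mathfrak{m}} = R^+ \otimes_R R_{\mathfrak{m}}$, which is a domain contained in $\overline{K(R)} = \overline{K(R_{\mathfrak{m}})}$ and integrally closed there, as an absolute integral closure of the excellent local domain $R_{\mathfrak{m}}$. Applying Theorem~\ref{AICVanishLocalCoh}(1) to $R_{\mathfrak{m}}$ gives $H^i_{\mathfrak{m}R_{\mathfrak{m}}}((R^+)_{\mathfrak{m}}) = 0$ for $i < \dim(R_{\mathfrak{m}})$, and $\mathfrak{m}$-locality identifies this with $H^i_{\mathfrak{m}}(R^+)$; combined with the injection from the previous paragraph this forces $H^i_{\mathfrak{m}}(R) = 0$ for $i < \dim(R_{\mathfrak{m}})$, as needed.

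The substantive input is Theorem~\ref{AICVanishLocalCoh}, so I do not expect a genuine obstacle in the corollary itself: the splinter hypothesis contributes only the elementary splitting argument that turns the vanishing on $R^+$ into a vanishing on $R$, and the only minor bookkeeping is the identification of $(R^+)_{\mathfrak{m}}$ with an absolute integral closure of $R_{\mathfrak{m}}$, which is immediate from the construction of $R^+$ inside $\overline{K(R)}$.
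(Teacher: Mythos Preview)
Your argument is correct and follows essentially the same route as the paper: both use the splinter hypothesis to exhibit $R \to R^+$ as a filtered colimit of split maps, deduce injectivity on local cohomology, and conclude by invoking Theorem~\ref{AICVanishLocalCoh}. The only cosmetic differences are that the paper localizes the splinter property to each prime containing $p$ and works with $H^i_{\mathfrak{m}}(R/pR) \hookrightarrow H^i_{\mathfrak{m}}(R^+/pR^+)$, whereas you work globally with $H^i_{\mathfrak{m}}(R) \hookrightarrow H^i_{\mathfrak{m}}(R^+)$ at maximal ideals only (which suffices since Cohen--Macaulayness is checked at maximal ideals and $p \in \mathrm{Rad}(R)$ forces every maximal ideal to contain $p$); your variant neatly sidesteps the need to verify that localizations of splinters are splinters.
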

\begin{proof}
As $p \in \mathrm{Rad}(R)$, it is enough to show that $R$ is Cohen--Macaulay at points of characteristic $p$, i.e., that $H^i_{\mathfrak{p}}(R/pR) = 0$ for $i < \dim(R_{\mathfrak{p}}/pR_{\mathfrak{p}})$ for every prime $\mathfrak{p}$ of $R$ containing $p$; here we implicitly use that the Cohen--Macaulay locus is open by excellence, see \cite[\S 2.8]{CesnavicusCM}. If $R$ is a splinter, the same holds true for any localization of $R$ (as finite covers spread out along open immersions). Thus, we may replace $R$ with its localization at a prime to assume $(R,\mathfrak{m})$ is an excellent noetherian local domain with $p \in \mathfrak{m}$. Choose an absolute integral closure $R \to R^+$.  The map $R \to R^+$ is a direct limit of split maps by the splinter property, so the same holds true for the map $H^i_{\mathfrak{m}}(R/pR) \to H^i_{\mathfrak{m}}(R^+/pR^+)$ as well. In particular, this map is injective for all $i$. The target vanishes for $i < \dim(R/pR)$ by Theorem~\ref{AICVanishLocalCoh} and hence so does the source. In particular, $R$ is Cohen--Macaulay. 
\end{proof}

Next, we strengthen Theorem~\ref{AICVanishLocalCoh} to a statement that involves the entirety of $\mathrm{Spec}(R^+/p)$ instead of its formal completion at the preimage of a closed point.

\begin{corollary}[Cohen--Macaulayness of absolute integral closures]
\label{CMRegSeqAIC}
Let $R$ be an excellent noetherian domain with an absolute integral closure $R^+$. Then the $R/p^nR$-module $R^+/p^nR^+$ is a Cohen--Macaulay for all $n \geq 1$.
\end{corollary}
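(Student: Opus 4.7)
The plan is to localize at each prime, verify cohomological Cohen-Macaulayness (Definition~\ref{CMDerived}) of $R^+/p^n R^+$ using Theorem~\ref{AICVanishLocalCoh}, and then invoke Corollary~\ref{CatCohCM} to convert this into the regular sequence formulation required by the definition of Cohen-Macaulayness for modules over a general noetherian ring. Since primes of $R/p^n R$ correspond to primes $\mathfrak{q}$ of $R$ containing $p$, and since the formation of the absolute integral closure commutes with localization, we have $(R^+/p^n R^+)_\mathfrak{q} = R_\mathfrak{q}^+/p^n R_\mathfrak{q}^+$. Replacing $R$ by $R_\mathfrak{q}$, we reduce to proving the following: for every excellent noetherian local domain $(R, \mathfrak{m})$ with $p \in \mathfrak{m}$, the $R/p^nR$-module $R^+/p^n R^+$ is Cohen-Macaulay in the sense of regular sequences.

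Next, note that $R/p^n R$ has the same underlying topological space as $R/pR$, which is a catenary equidimensional noetherian local ring (as recalled just before Lemma~\ref{CatHeight}, using that $R$ is excellent and $p$ is a nonzerodivisor in the domain $R$). Consequently $R/p^n R$ itself is catenary and equidimensional, so by Corollary~\ref{CatCohCM} it suffices to show that $R^+/p^n R^+$ is cohomologically Cohen-Macaulay as an object of $D(R/p^n R)$; the support condition $R^+/\mathfrak{m} R^+ \neq 0$ is automatic from integrality of $R \hookrightarrow R^+$. Unwinding definitions and localizing once more, we must establish
$$H^i_\mathfrak{q}\bigl(R_\mathfrak{q}^+/p^n R_\mathfrak{q}^+\bigr) = 0 \qquad \text{for every prime } \mathfrak{q} \text{ of } R \text{ containing } p \text{ and every } i < \dim(R_\mathfrak{q}/pR_\mathfrak{q}).$$

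The case $n=1$ is exactly Theorem~\ref{AICVanishLocalCoh} (1) applied to the excellent noetherian local domain $R_\mathfrak{q}$. For $n \geq 2$, I would induct on $n$ via the short exact sequence
$$0 \to R_\mathfrak{q}^+/p^{n-1} R_\mathfrak{q}^+ \xrightarrow{\cdot p} R_\mathfrak{q}^+/p^n R_\mathfrak{q}^+ \to R_\mathfrak{q}^+/p R_\mathfrak{q}^+ \to 0,$$
whose exactness uses only that $R^+$ is a $p$-torsionfree domain. The resulting long exact sequence of local cohomology, combined with the inductive hypothesis applied to the outer terms, yields the vanishing for the middle term. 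Conceptually, Corollary~\ref{CMRegSeqAIC} is essentially a packaging of Theorem~\ref{AICVanishLocalCoh}: the substantive input is the local cohomological vanishing already proved there, and the work here is the routine translation between the cohomological and regular sequence formulations of the Cohen-Macaulay property, propagated through powers of $p$ by dévissage. Accordingly I do not expect any genuine obstacle at this stage.
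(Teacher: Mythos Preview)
Your proposal is correct and follows essentially the same route as the paper: localize at primes containing $p$, invoke Theorem~\ref{AICVanishLocalCoh} (1) for the local cohomology vanishing, and then use Corollary~\ref{CatCohCM} (noting excellence gives catenarity) to pass to the regular-sequence formulation. The only cosmetic difference is ordering: the paper first reduces to $n=1$ (leaving the d\'evissage implicit) and then runs the localization/CatCohCM argument, whereas you run the localization argument for general $n$ and postpone the induction on $n$ to the end; both are fine.
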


\begin{proof}
We may assume $n=1$. As $\mathrm{Spec}(R^+/pR) \to \mathrm{Spec}(R/pR)$ is surjective, it is clear that $R^+_{\mathfrak{p}}/\mathfrak{p}R^+_{\mathfrak{p}} \neq 0$ for any prime $\mathfrak{p} \subset R$ containing $p$. For the regular sequence property, note that the formation of absolute integral closures commutes with localization. Theorem~\ref{AICVanishLocalCoh} thus implies that for each $\mathfrak{p} \in \mathrm{Spec}(R/pR) \subset \mathrm{Spec}(R)$, the complex $R\Gamma_{\mathfrak{p}}(R^+_{\mathfrak{p}}/pR_{\mathfrak{p}}^+)$ is concentrated in degree $\dim(R_{\mathfrak{p}}/pR_{\mathfrak{p}})$.  As excellent rings are catenary, the claim now follows from  Lemma~\ref{RegSeqCrit} with Theorem~\ref{AICVanishLocalCoh}.
 \end{proof}

The following consequence was conjectured in \cite{LyubeznikRPlus}.

\begin{corollary}[Cohen--Macaulayness of $\overline{R^+}$]
\label{LyubeznikConj}
Let $(R,\mathfrak{m})$ be a excellent noetherian local domain with an absolute integral closure $R^+$. Then the $R/pR$-module $(R^+/pR^+)_{red}$ is Cohen--Macaulay.
\end{corollary}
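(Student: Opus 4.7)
The plan is to sandwich $A := (R^+/pR^+)_{\mathrm{red}} = R^+/\sqrt{pR^+}$ between two Cohen--Macaulay $R/pR$-modules via short exact sequences. The equal-characteristic-$p$ case is classical, so assume $R$ has mixed characteristic; then $p$ is a nonzerodivisor on the domain $R^+$. Setting $B := R^+/pR^+$, $C := \sqrt{pR^+}/p\sqrt{pR^+}$, and $N := \sqrt{pR^+}/pR^+$, we have the tautological short exact sequence
\[0 \to N \to B \to A \to 0, \qquad (\ast)\]
and, since multiplication by $p$ on $R^+$ is injective and carries $\sqrt{pR^+}$ isomorphically onto $p\sqrt{pR^+}$, it descends to an $R$-linear isomorphism $A \xrightarrow{\sim} pR^+/p\sqrt{pR^+} \hookrightarrow C$, which fits into
\[0 \to A \to C \to N \to 0 \qquad (\ast\ast)\]
with quotient $N$ via the inclusion $pR^+ \subset \sqrt{pR^+}$.

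By Corollary~\ref{CMRegSeqAIC} the module $B$ is Cohen--Macaulay over $R/pR$, and by Theorem~\ref{ACMIntro} (taken with $n=1$) the module $C$ is Cohen--Macaulay over $R/pR$. Writing $d := \dim R$ and $\mathfrak m$ for the maximal ideal, $R\Gamma_\mathfrak{m}(B)$ and $R\Gamma_\mathfrak{m}(C)$ are thus concentrated in degree $d-1 = \dim(R/pR)$. I claim $H^i_\mathfrak{m}(A) = H^i_\mathfrak{m}(N) = 0$ for all $i < d-1$, proven by induction on $i$. The base case $i=0$ is immediate from either long exact sequence. For the inductive step, the long exact sequence of $(\ast)$ places $H^i_\mathfrak{m}(N)$ between $H^{i-1}_\mathfrak{m}(A) = 0$ (by induction) and $H^i_\mathfrak{m}(B) = 0$ (by Cohen--Macaulayness of $B$ and $i < d-1$), forcing $H^i_\mathfrak{m}(N) = 0$; symmetrically, the long exact sequence of $(\ast\ast)$ places $H^i_\mathfrak{m}(A)$ between $H^{i-1}_\mathfrak{m}(N) = 0$ and $H^i_\mathfrak{m}(C) = 0$, forcing $H^i_\mathfrak{m}(A) = 0$.

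With $R\Gamma_\mathfrak{m}(A)$ concentrated in degree $d-1$, $A$ is cohomologically Cohen--Macaulay in the sense of Definition~\ref{CMDerived}. Since $R$ is an excellent local domain, $R/pR$ is catenary and equidimensional (every minimal prime over $p$ in $R$ has height one by Krull's theorem, hence quotient of dimension $d-1$), so Corollary~\ref{RegSeqCrit} implies that every system of parameters on $R/pR$ is a regular sequence on $A$; the nondegeneracy $A/\mathfrak{m}A \neq 0$ is automatic from $\mathfrak{m}R^+ \neq R^+$ (integrality of $R \to R^+$). The same argument runs verbatim after localization of $R$ at any prime containing $p$, since absolute integral closures commute with localization and both cited inputs are stable under localization; this gives Cohen--Macaulayness at every prime, as required by the paper's conventions. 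No real obstacle arises beyond this bookkeeping: the argument is effectively a diagram chase, and it illustrates that the upgrade from almost Cohen--Macaulayness of $R^+/pR^+$ (Theorem~\ref{ACMIntro}) to honest Cohen--Macaulayness of $(R^+/pR^+)_{\mathrm{red}}$ is \emph{formal} once honest Cohen--Macaulayness of $R^+/pR^+$ is in hand.
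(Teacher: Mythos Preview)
Your argument is correct, but it takes a genuinely different route from the paper. The paper's proof is a one-liner: since $R^+$ contains all $p$-power roots of $p$, one has $\sqrt{pR^+}=p^{1/p^\infty}R^+$ and hence $(R^+/p)_{\mathrm{red}}=\colim_n R^+/p^{1/p^n}R^+$; each term in this colimit has the required local-cohomology vanishing by Theorem~\ref{AICVanishLocalCoh} (as $p^{1/p^n}$ is a nonzerodivisor on $R^+$), and one passes to the colimit. So the paper uses only the honest Cohen--Macaulayness result (Corollary~\ref{CMRegSeqAIC}/Theorem~\ref{AICVanishLocalCoh}) together with the explicit $p$-divisibility of $R^+$.

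Your approach instead invokes \emph{two} inputs, Corollary~\ref{CMRegSeqAIC} for $B$ and Theorem~\ref{ACMIntro} for $C$, and runs a sandwich/bootstrap on the pair of exact sequences $(\ast)$ and $(\ast\ast)$; no explicit description of $\sqrt{pR^+}$ is needed. This is longer and uses more, but it has the conceptual merit you highlight: it makes transparent that the passage from almost Cohen--Macaulayness of $R^+/p$ (i.e.\ $C$ is CM) to honest Cohen--Macaulayness of $(R^+/p)_{\mathrm{red}}$ is purely formal once one also knows $R^+/p$ itself is CM. One small presentational point: you assert ``$A$ is cohomologically CM'' immediately after the computation at the closed point, but Definition~\ref{CMDerived} requires the condition at all primes; you do supply this in the last sentence via localization, so there is no gap, but it would read more cleanly to establish the local-cohomology vanishing at every prime first and only then invoke Corollary~\ref{RegSeqCrit}.
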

\begin{proof}
It is easy to see that $\sqrt{pR^+} = p^{1/p^\infty} R^+$. Consequently, $(R^+/p)_{red} = \colim_n R^+/p^{1/p^n} R^+$. The claim now follows readily from Corollary~\ref{CMRegSeqAIC}.
\end{proof}

\begin{remark}
One can in fact deduce Corollary~\ref{LyubeznikConj} directly from the almost Cohen--Macaulayness result in Theorem~\ref{ACMaic} using an argument with \'etale cohomology and induction on dimension. This argument will appear in \cite{BhattLuriepadicRH1}; this implication was also independently noticed by Heitmann-Ma (unpublished) who used Frobenius modules in lieu of \'etale cohomology. In turn, Corollary~\ref{LyubeznikConj} and Theorem~\ref{ACMaic} already imply that $R^+/p^n$ has depth $\geq d-2$ where $d = \dim(R)$; however, improving this estimate to $d-1$ (as follows from Theorem~\ref{AICVanishLocalCoh}) seems quite difficult to do directly. The proof of Theorem~\ref{AICVanishLocalCoh} essentially jumps this ``off-by-1'' hurdle by introducing an extra arithmetic parameter by deforming from $\mathcal{O}/p$ to $\Prism/p$. A similar ``off-by-1'' problem is encountered in \cite{CesnavicusScholzePurity}, where it is was solved by deforming from $\mathcal{O}$ to $\Prism_{\perf}$.
\end{remark}

We briefly indicate how one might use $R^+$ instead of arbitrary big Cohen--Macaulay algebras to prove some results on singularities as in \cite{MaSchwedeSingMixed}. 

\begin{remark}[$F$-rationality through $R^+$]
Let $(R,\mathfrak{m})$ be a noetherian local domain of residue characteristic $p$ with dimension $d$ with a fixed absolute integral closure $R^+$. Let us define $R$ to be {\em $F$-rational} if the maps $H^i_{\mathfrak{m}}(R) \to H^i_{\mathfrak{m}}(R^+)$ are injective for all $i$; equivalently (by Theorem~\ref{AICVanishLocalCoh}), we require $R$ is Cohen--Macaulay and demand the preceding injectivity for $i=d$. Then one has the following properties:

\begin{enumerate}
\item $R$ has pseudo-rational singularities. In fact, one has the following stronger assertion: if $f:X \to \mathrm{Spec}(R)$ is any proper surjective map, then $R \to R\Gamma(X, \mathcal{O}_X)$ induces an injective map on $H^d_{\mathfrak{m}}(-)$. To see this, we may assume $X$ is integral. In fact, it suffices to show the same injectivity after replacing $X$ with an absolute integral closure $X^+$ compatible with the given one for $R$. The map $R \to R\Gamma(X, \mathcal{O}_X) \to R\Gamma(X^+, \mathcal{O}_{X^+})$ then factors as $R \to R^+ \to R\Gamma(X^+,\mathcal{O}_X^+)$, so the claim follows from the fact that $R^+ \to R\Gamma(X^+,\mathcal{O}_X^+)$ gives an isomorphism on $p$-completion (and hence also on applying $R\Gamma_{\mathfrak{m}}(-)$) by Theorem~\ref{KillCohMixed}.

\item When $R$ has characteristic $p$, this notion coincides with $F$-rationality as classically defined (see \cite[Proposition 3.5]{MaSchwedeSingMixed}).

\item Given $f \in R$, if $R/f$ is an $F$-rational domain, then so is $R$.  To see this, we may assume $f \neq 0$, so $\dim(R/f) = d-1$. As $R/f$ is Cohen--Macaulay, the same holds true for $R$. We must show that $H^d_{\mathfrak{m}}(R) \to H^d_{\mathfrak{m}}(R^+)$ is injective. It suffices to show injectivity on the $f$-torsion: any non-trivial submodule of $H^d_{\mathfrak{m}}(R)$ intersects non-trivially with the $f$-torsion. But the map on $f$-torsion identifies with the map $H^{d-1}_{\mathfrak{m}}(R/f) \to H^{d-1}_{\mathfrak{m}}(R^+/f)$ by the Bockstein sequence, so the claim follows as we can factor $R/f \to (R/f)^+$ over $R/f \to R^+/f$.
\end{enumerate}
\end{remark}

Next, we want to reformulate the main theorem as a flatness assertion over regular rings. 
The following lemma, essentially borrowed from \cite[Lemma 4.31]{BMS1}, allows us to upgraded $I$-complete flatness to genuine flatness in noetherian situations without finiteness hypotheses.

\begin{lemma}[$I$-complete flatness = flatness over a noetherian ring]
\label{DropCompFlat}
Let $R$ be a noetherian ring and let $I \subset R$ be an ideal. If $M \in D(R)$ is $I$-completely flat, then the derived $I$-completion $\widehat{M}$ is $R$-flat (i.e., a flat $R$-module in the usual sense).
\end{lemma}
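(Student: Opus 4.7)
The plan is to reduce to classical (non-derived) commutative algebra and apply the Noetherian local criterion of flatness for $I$-adically complete modules. The core task is to show, by induction on $n \geq 1$, that $M_n := M \otimes_R^L R/I^n$ is concentrated in degree $0$ and flat over $R/I^n$; once this is established, $\widehat{M}$ will appear as the classical $I$-adic completion of a flat system and the conclusion will follow from a standard criterion.

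The base case $n=1$ is the hypothesis. For the inductive step, tensoring the short exact sequence of $R$-modules $0 \to I^n/I^{n+1} \to R/I^{n+1} \to R/I^n \to 0$ with $M$ produces the exact triangle
\[ M \otimes_R^L I^n/I^{n+1} \to M_{n+1} \to M_n. \]
Noetherianness of $R$ ensures $I^n/I^{n+1}$ is a finitely generated $R/I$-module, so associativity simplifies the left term to $M_1 \otimes_{R/I}^L I^n/I^{n+1}$, which is discrete by flatness of $M_1$ over $R/I$. Combined with the inductive hypothesis (discreteness of $M_n$), this forces $M_{n+1}$ to be discrete. To promote discreteness to $R/I^{n+1}$-flatness, I would invoke the local criterion of flatness for the nilpotent ideal $I/I^{n+1} \subset R/I^{n+1}$ (\cite[Tag 00MP]{StacksProject}): the required conditions are that $M_{n+1}/IM_{n+1} \simeq M_1$ is flat over $R/I$ (given) and that $\mathrm{Tor}_1^{R/I^{n+1}}(M_{n+1}, R/I) = 0$; the latter follows from the base change identity $M_{n+1} \otimes_{R/I^{n+1}}^L R/I \simeq M \otimes_R^L R/I = M_1$, which is already discrete.

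Once the claim is in hand, the rest is formal. Surjectivity of the transition maps $M_{n+1} \twoheadrightarrow M_n$ (visible from the exact triangle, as its leftmost term is discrete) ensures that $R\lim$ agrees with the ordinary limit, so $\widehat{M} = \lim_n M_n$ is concentrated in degree $0$, classically $I$-adically complete and separated, with $\widehat{M}/I^n\widehat{M} = M_n$ flat over $R/I^n$ for each $n$. The Noetherian local criterion of flatness for $I$-adically complete modules (Bourbaki, Commutative Algebra Ch.\ III \S 5, Thm.\ 1, or \cite[Tag 00MP]{StacksProject} after base change to the faithfully flat $\widehat{R}$, where $I\widehat{R}$ lies in the Jacobson radical) then concludes that $\widehat{M}$ is flat over $R$. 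The main obstacle is the inductive upgrade from discreteness of $M_{n+1}$ to its $R/I^{n+1}$-flatness; once this hurdle is cleared, standard Noetherian arguments and classical flatness criteria package everything cleanly.
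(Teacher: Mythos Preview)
Your inductive argument that each $M_n = M \otimes_R^L R/I^n$ is discrete and $R/I^n$-flat is correct and is in fact more explicit than the paper's treatment of discreteness. The gap is in the final step: neither reference you invoke actually applies to $\widehat{M}$. Tag 00MP in the Stacks Project requires the module to be finite over some ring $S$ with $IS \subset \mathrm{Jac}(S)$; passing to $\widehat{R}$ puts $I\widehat{R}$ in the Jacobson radical, but $\widehat{M}$ is typically not finitely generated over $\widehat{R}$, so the finiteness hypothesis still fails. Bourbaki's Theorem III.5.2.1 requires the module to be \emph{idealwise $I$-adically separated}: for every ideal $\mathfrak{a}$, the module $\mathfrak{a} \otimes_R \widehat{M}$ must be $I$-adically separated. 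But one can check (using Artin-Rees and the flatness of the $M_n$) that $\bigcap_n I^n(\mathfrak{a} \otimes_R \widehat{M}) = \mathrm{Tor}_1^R(R/\mathfrak{a}, \widehat{M})$, so idealwise separatedness is \emph{equivalent} to the flatness you are trying to prove, and the citation becomes circular.

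The paper closes exactly this gap, and its argument is essentially what is missing from yours. Rather than appealing to a flatness criterion, it checks directly that $N \otimes_R^L \widehat{M} \in D^{\geq 0}$ for every finitely generated $R$-module $N$: since $N$ is pseudocoherent, $N \otimes_R^L \widehat{M}$ is derived $I$-complete, hence equals $R\lim_n (N \otimes_R^L \widehat{M} \otimes_R^L R/I^n)$. The Artin-Rees lemma then shows that the pro-systems $\{N \otimes_R^L R/I^n\}$ and $\{N/I^n N\}$ are pro-isomorphic, and tensoring the latter with $\widehat{M}$ lands in $D^{\geq 0}$ by $I$-complete flatness. This Artin-Rees step is the genuine content your black-box citation would need to contain; once you add it, your argument and the paper's coincide.
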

\begin{proof}
By replacing $M$ with $\widehat{M}$, we may assume $M$ is derived $I$-complete. By flatness modulo $I$ and the formula $M \simeq R\lim_n(M \otimes_R^L R/I^n)$ coming from derived $I$-completeness, we already know that $\widehat{M}$ is discrete. It remains to show that $N \otimes_R^L M \in D^{\geq 0}$ for all discrete $R$-modules $N$. By compatibility with filtered colimits, we may assume $N$ is finitely presented. But then $N$ is a pseudocoherent $R$-complex as $R$ is noetherian. It is then easy to see (e.g., by tensoring with a resolution of $N$ by finite free $R$-modules) that $N \otimes_R^L M$ is derived $I$-complete, so $N \otimes_R^L M \simeq N \widehat{\otimes}_R^L M \simeq R\lim_n(N \otimes_R^L M \otimes_R^L R/I^n)$. By the Artin-Rees lemma, the pro-systems $\{N/I^nN\}$ and $\{N \otimes_R^L R/I^n\}$ are pro-isomorphic, so $N \widehat{\otimes}_R^L M \simeq \lim_n (N/I^nN\otimes_R^L M)$. As $M$ is $I$-completely flat, this last object is in $D^{\geq 0}$, which proves the lemma.
\end{proof}

Applying the previous lemma, we obtain:

\begin{theorem}[Flatness of absolute integral closures of regular rings]
\label{RPlusFlatReg}
Let $R$ be an excellent regular ring with $p \in \mathrm{Rad}(R)$. If $R^+$ is an absolute integral closure, then the $p$-completion $\widehat{R^+}$ is faithfully flat over $R$.
\end{theorem}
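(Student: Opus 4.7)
The plan is to combine the vanishing of local cohomology established in Theorem~\ref{AICVanishLocalCoh} with the flatness criterion over regular rings recorded in Lemma~\ref{FlatCritComp}, and then upgrade $p$-complete flatness to honest flatness of the $p$-adic completion via Lemma~\ref{DropCompFlat}. Since $R^+$ is defined, $R$ is tacitly a domain; if $p = 0$ in $R$ the statement reduces to the classical theorem of Hochster--Huneke, so I would assume $p \neq 0$, whence both $R$ and $R^+$ are $p$-torsionfree.

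Next I would verify hypothesis $(2)_p$ of Lemma~\ref{FlatCritComp} for $M = R^+$. Fix a prime $\mathfrak{p} \subset R$ containing $p$. Since the formation of absolute integral closures commutes with localization, the ring $R^+_{\mathfrak{p}}$ is an absolute integral closure of the excellent noetherian local domain $R_{\mathfrak{p}}$. Theorem~\ref{AICVanishLocalCoh}(1) then gives $H^i_{\mathfrak{p} R_{\mathfrak{p}}}(R^+_{\mathfrak{p}}) = 0$ for $i < \dim(R_{\mathfrak{p}})$; the vanishing above $\dim(R_{\mathfrak{p}})$ is automatic. Hence $R\Gamma_{\mathfrak{p} R_{\mathfrak{p}}}(R^+_{\mathfrak{p}})$ is concentrated in degree $\dim(R_{\mathfrak{p}})$ as required. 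Applying Lemma~\ref{FlatCritComp} $(2)_p \Rightarrow (1)_p$ (valid because $R$ is regular) shows $R^+$ is $p$-completely flat over $R$. Since $R$ is noetherian, Lemma~\ref{DropCompFlat} with $I = pR$ then upgrades this to honest flatness of the derived $p$-completion $\widehat{R^+}$. Moreover, because $R^+$ is $p$-torsionfree, the derived completion coincides with the classical $p$-adic completion $\lim_n R^+/p^n R^+$ and is concentrated in degree $0$.

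Finally, for faithful flatness I would check fiber-by-fiber at maximal ideals. Let $\mathfrak{m} \subset R$ be maximal; by hypothesis $p \in \mathfrak{m}$. Reducing modulo $p$ first gives
\[ \widehat{R^+}/\mathfrak{m}\widehat{R^+} \;=\; (\widehat{R^+}/p)/(\mathfrak{m}/p)(\widehat{R^+}/p) \;=\; R^+/\mathfrak{m}R^+, \]
using $\widehat{R^+}/p = R^+/pR^+$. Going up for the integral injective extension $R \hookrightarrow R^+$ produces a prime of $R^+$ lying over $\mathfrak{m}$, so $R^+/\mathfrak{m}R^+ \neq 0$. Combined with the flatness above, this yields faithful flatness of $\widehat{R^+}$ over $R$.

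The main obstacle here has already been surmounted earlier in the paper: it is exactly the CM vanishing of $R^+/p^n$ provided by Theorem~\ref{AICVanishLocalCoh}, and once that is in hand the present theorem is a formal consequence of the flatness criteria in \S\ref{sec:ExcCM}.
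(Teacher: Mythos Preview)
Your proof is correct and follows essentially the same approach as the paper: invoke Theorem~\ref{AICVanishLocalCoh} to verify condition $(2)_p$ of Lemma~\ref{FlatCritComp}, conclude $p$-complete flatness, upgrade via Lemma~\ref{DropCompFlat}, and then check faithful flatness. The only cosmetic difference is in the last step: the paper argues that the image of $\mathrm{Spec}(\widehat{R^+}) \to \mathrm{Spec}(R)$ is stable under generalization and contains $\mathrm{Spec}(R/p)$, while you check directly that fibers at maximal ideals are nonzero via going-up; both are standard and equivalent.
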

\begin{proof}
Theorem~\ref{AICVanishLocalCoh} and Lemma~\ref{FlatCritComp} imply that $\widehat{R^+}$ is $p$-completely $R$-flat, and Lemma~\ref{DropCompFlat} improves that to genuine flatness. For faithful flatness, it suffices to show that $\mathrm{Spec}(\widehat{R^+}) \to \mathrm{Spec}(R^+)$ is surjective. But the image is stable under generalization by flatness and contains $\mathrm{Spec}(R/pR)$ (as $\mathrm{Spec}(R^+/pR) \to \mathrm{Spec}(R/pR)$ is surjective), so it must be everything as $p \in \mathrm{Rad}(R)$.
\end{proof}

Unlike Corollary~\ref{CMRegSeqAIC}, Theorem~\ref{RPlusFlatReg} includes an assertion about characteristic $0$ points. We can use this to improve Corollary~\ref{CMRegSeqAIC} itself in a similar vein:

\begin{corollary}[Cohen--Macaulayness of $p$-complete absolute integral closures]
\label{CMPlusComp}
Let $S$ be a biequidimensional excellent noetherian domain with $p \in \mathrm{Rad}(S)$. Assume that $S$ admits a Noether normalization, i.e., there exists a finite injective map $R \to S$ of excellent biequidimensional domains with $R$ regular. If $S^+$ denotes an absolute integral closure of $S$, then the $p$-completion $\widehat{S^+}$ is a Cohen--Macaulay $S$-module.
\end{corollary}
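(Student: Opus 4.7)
The plan is to reduce Corollary~\ref{CMPlusComp} to Theorem~\ref{RPlusFlatReg} and Corollary~\ref{RegSeqCrit} by exploiting the finite Noether normalization $R \to S$. The first observation is that $S^+$ doubles as an absolute integral closure of $R$: since $R \hookrightarrow S$ is finite, $\mathrm{Frac}(S)/\mathrm{Frac}(R)$ is algebraic, so any algebraic closure of $\mathrm{Frac}(S)$ is simultaneously one of $\mathrm{Frac}(R)$, and $S^+$ is the integral closure of both $R$ and $S$ in this field. The hypothesis $p \in \mathrm{Rad}(S)$ forces $p \in \mathrm{Rad}(R)$: any maximal ideal $\mathfrak{n}$ of $R$ is the contraction of a (necessarily maximal) ideal $\mathfrak{m}$ of $S$ by going-up for the integral extension $R \hookrightarrow S$, and $p \in \mathfrak{m}$ gives $p \in \mathfrak{n}$. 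Thus Theorem~\ref{RPlusFlatReg} applies to $R$ and gives that $\widehat{S^+}$ is faithfully flat over $R$.

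To verify the Cohen-Macaulay condition at a prime $\mathfrak{p} \in \mathrm{Spec}(S)$, set $\mathfrak{q} = \mathfrak{p} \cap R$. The finiteness of $R \to S$ makes the fibre ring $S \otimes_R \kappa(\mathfrak{q})$ an Artinian $\kappa(\mathfrak{q})$-algebra; localizing at $\mathfrak{p}$ yields the Artinian local ring $S_\mathfrak{p}/\mathfrak{q}S_\mathfrak{p}$, whose maximal ideal $\mathfrak{p}S_\mathfrak{p}/\mathfrak{q}S_\mathfrak{p}$ is nilpotent. In particular, $\mathfrak{p}S_\mathfrak{p}$ and $\mathfrak{q}S_\mathfrak{p}$ share the same radical in $S_\mathfrak{p}$, so local cohomology with support in either coincides. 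Using flatness of $(\widehat{S^+})_\mathfrak{p}$ over $R_\mathfrak{q}$ (inherited as a localization of the $R$-flat module $\widehat{S^+}$) together with the Cohen-Macaulayness of the regular local ring $R_\mathfrak{q}$, we obtain
\[ R\Gamma_{\mathfrak{p}S_\mathfrak{p}}((\widehat{S^+})_\mathfrak{p}) \;\simeq\; (\widehat{S^+})_\mathfrak{p} \otimes_{R_\mathfrak{q}}^{L} R\Gamma_{\mathfrak{q}R_\mathfrak{q}}(R_\mathfrak{q}), \]
which is concentrated in degree $\dim(R_\mathfrak{q})$. Biequidimensionality of both $R$ and $S$ combined with the finite extension $R \to S$ yields $\dim(R_\mathfrak{q}) = \dim(S_\mathfrak{p})$ via the formulas $\dim(R_\mathfrak{q}) = \dim(R) - \dim(R/\mathfrak{q})$ and $\dim(S_\mathfrak{p}) = \dim(S) - \dim(S/\mathfrak{p})$ from Remark~\ref{BiequiCM}, together with $\dim(R) = \dim(S)$ and $\dim(R/\mathfrak{q}) = \dim(S/\mathfrak{p})$ (since $S/\mathfrak{p}$ is finite over $R/\mathfrak{q}$). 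Hence the concentration sits in degree $\dim(S_\mathfrak{p})$.

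With this local-cohomological concentration in hand, Corollary~\ref{RegSeqCrit} applies: the local ring $S_\mathfrak{p}$ is catenary (by excellence) and equidimensional (being a domain, so having a unique minimal prime), so every system of parameters of $S_\mathfrak{p}$ acts as a regular sequence on $(\widehat{S^+})_\mathfrak{p}$. The non-triviality $(\widehat{S^+})_\mathfrak{p}/\mathfrak{p}(\widehat{S^+})_\mathfrak{p} \neq 0$ follows from faithful flatness of $\widehat{S^+}/\mathfrak{q}\widehat{S^+}$ over $R/\mathfrak{q}$ (descent of faithful flatness from $\widehat{S^+}$ over $R$) together with a Nakayama argument in the Artinian local ring $S_\mathfrak{p}/\mathfrak{q}S_\mathfrak{p}$, whose nilpotent maximal ideal allows Nakayama to run on non-finitely-generated modules. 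I expect the most delicate step to be the dimension equality $\dim(R_\mathfrak{q}) = \dim(S_\mathfrak{p})$; this crucially uses biequidimensionality of $S$, without which distinct primes $\mathfrak{p}$ of $S$ lying over a common $\mathfrak{q}$ of $R$ could have mismatched local dimensions, and the translation from $R$-flatness of $\widehat{S^+}$ to Cohen-Macaulayness over $S$ would break down.
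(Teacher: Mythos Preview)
Your proposal is correct and follows essentially the same route as the paper: both reduce to Theorem~\ref{RPlusFlatReg} via $S^+ = R^+$ and $p \in \mathrm{Rad}(R)$, establish the local cohomology concentration $R\Gamma_{\mathfrak{p}}((\widehat{S^+})_\mathfrak{p}) \in D^{\geq \dim(S_\mathfrak{p})}$ using $R$-flatness and the dimension equality $\dim(R_\mathfrak{q}) = \dim(S_\mathfrak{p})$ from biequidimensionality, and conclude via Corollary~\ref{RegSeqCrit}. The only cosmetic difference is that the paper obtains the concentration by observing that $R\Gamma_{\mathfrak{p}}((\widehat{S^+})_\mathfrak{p})$ is a direct summand of $R\Gamma_{\mathfrak{q}}((\widehat{S^+})_\mathfrak{q})$ (since the fibre of $\mathrm{Spec}(S) \to \mathrm{Spec}(R)$ over $\mathfrak{q}$ is finite discrete), whereas you compute it directly via $\sqrt{\mathfrak{q}S_\mathfrak{p}} = \mathfrak{p}S_\mathfrak{p}$ and flatness of $(\widehat{S^+})_\mathfrak{p}$ over $R_\mathfrak{q}$; these are equivalent.

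One small point on the support condition: your justification that $(\widehat{S^+})_\mathfrak{p}/\mathfrak{p} \neq 0$ via faithful flatness of $\widehat{S^+}/\mathfrak{q}$ over $R/\mathfrak{q}$ is not quite complete as written, since this only gives that the \emph{total} fibre $\widehat{S^+} \otimes_R \kappa(\mathfrak{q}) \neq 0$, not that its component at the specific prime $\mathfrak{p}$ among all primes of $S$ over $\mathfrak{q}$ is nonzero. The paper instead asserts surjectivity of $\mathrm{Spec}(\widehat{S^+}) \to \mathrm{Spec}(S)$ directly from $p \in \mathrm{Rad}(S)$ (the image contains $\mathrm{Spec}(S/p)$ via $\widehat{S^+}/p \simeq S^+/p$, hence all closed points). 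Both treatments are terse here; a clean way to fill the gap is to note that $S \hookrightarrow \widehat{S^+}$ is injective (any nonzero $s \in S$ has a monic minimal polynomial over $R$ with nonzero constant term by normality of $R$, and $R \hookrightarrow \widehat{S^+}$ by faithful flatness), so $(\widehat{S^+})_\mathfrak{p} \neq 0$, and then for primes with $p \in \mathfrak{p}$ one has $(\widehat{S^+})_\mathfrak{p}/\mathfrak{p} = (S^+)_\mathfrak{p}/\mathfrak{p} \neq 0$ directly.
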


\begin{proof}
Note that the map $\mathrm{Spec}(\widehat{S^+}) \to \mathrm{Spec}(S)$ is surjective as $p \in \mathrm{Rad}(S)$. Using Corollary~\ref{CatCohCM}, it is enough to prove that $R\Gamma_x( (\widehat{S^+})_x) \in D^{\geq \dim(S_x)}$ for all $x \in \mathrm{Spec}(S)$. For any $x \in \mathrm{Spec}(S)$ with image $y \in \mathrm{Spec}(R)$ and any $M \in D(S)$, we have $\dim(R_y) = \dim(S_x)$ (as $R \to S$ is a finite injective map of biequidimensional domains) and the object $R\Gamma_x(M_x)$ is a summand of $R\Gamma_y(M_y)$ as an object in $D(R_y)$ (as the fibres of $\mathrm{Spec}(S) \to \mathrm{Spec}(R)$ are finite discrete sets). Consequently, it is enough to prove $R\Gamma_y((\widehat{S^+})_y) \in D^{\geq \dim(R_y)}$ for all $y \in \mathrm{Spec}(R)$. But $S^+ = R^+$ as $R \to S$ is a finite injective map of domains, and thus $\widehat{S^+} = \widehat{R^+}$. The claim now follows from  Theorem~\ref{RPlusFlatReg} (and the observation that $p \in \mathrm{Rad}(R)$ since $p \in \mathrm{Rad}(S)$ and $\mathrm{Spec}(S) \to \mathrm{Spec}(R)$ is finite surjective).
\end{proof}

\begin{remark}[Weakly functorial CM algebras, redux]
\label{rmk:WeakFunNonLoc}
Corollary~\ref{CMPlusComp} applies to any complete noetherian local domain $S$ with $p \in \mathrm{Rad}(S)$, thus giving an explicit construction of weakly functorial CM algebras on the category of such local domains (with possibly non-local maps). Moreover, one might wonder if the conclusion of Corollary~\ref{CMPlusComp} holds true without assuming the existence of a Noether normalization\footnote{This question was answered after this paper first appeared: \cite[Theorem 2.4]{BMPSTWW} extended Corollary~\ref{CMPlusComp} to all excellent local domains by essentially elementary arguments. Thus, one now has a very simple construction of weakly functorial Cohen--Macaulay algebras over excellent local domains $R$ of residue characteristic $p$: we may simply use the $p$-adic completion $\widehat{R^+}$.}.
\end{remark}

\newpage \section{The global theorem}

In this section, we extend the results from \S \ref{CMgeomcase} to the graded case. Using the equivalent geometric language, we first formulate our main theorem in \S \ref{ss:MainThmGraded} as a variant of Kodaira vanishing ``up to finite covers'' for proper scheme $X$ over a $p$-adic DVR equipped with an ample line bundle $L$ (Theorem~\ref{KodairaFiniteCover}). In \S \ref{ss:GAIC}, we introduce the graded absolute integral closure $R^{+,GR}$ as a suitable section ring, and reformulate the main theorem in terms of local cohomology (Theorem~\ref{GAICCM}). In \S \ref{ss:GAICACM}, using the Riemann-Hilbert functor from Theorem~\ref{thm:RH}, we prove the target theorem in the almost category (Proposition~\ref{WeakCMGAIC}); this is perhaps the most novel material in the section, relying crucially on the following small miracles of geometry in an infinitely ramified context:
\begin{itemize}
\item An interpretation of $R^{+,GR}$ (which is an inverse limit of affine cones over projective schemes) as the derived global functions on $T_\infty$ (which is an inverse limit of the $\mathbf{G}_a$-bundles over projective schemes that are obtained by blowing up the vertex in the affine cones that approximate  $R^{+,GR}$) that is only available  at infinite level (see Proposition~\ref{AICAmple}).
\item A variant of ``semiperversity of nearby cycles'' from \cite[\S 4.4]{BBDG} that also becomes available only at infinite level (allowing us to replace $\mathbf{G}_a$ with $\mathbf{G}_m$ while studying the constant sheaf on the scheme $T_\infty$ mentioned above, see first half of proof of Lemma~\ref{IndPervConstGAIC}).
\end{itemize}
 We pass from almost mathematics to honest mathematics in \S \ref{ss:GACMtoCM} by broadly following the contours of the argument in  \S \ref{CMgeomcase}. In \S \ref{rmk:RelativeGradedBCM}, we extend our results to a relative context where $X$ is merely assumed to be proper over an affine scheme that is finitely presented over a $p$-adic DVR. Finally, in \S \ref{GlobalSABig}, the assumption that $L$ is ample is relaxed to merely requiring $L$ to be semiample and big.

\subsection{The main geometric theorem}
\label{ss:MainThmGraded}

\begin{notation}
Let $V$ be an excellent henselian $p$-torsionfree DVR with residue field $k$ of characteristic $p$. Let $\overline{V}$ be an absolute integral closure of $V$, so $\overline{V}$ is a rank $1$ valuation ring over $V$ with residue field $\overline{k}$ being an algebraic closure of $k$; let $C = \widehat{V}[1/p]$ be the completed fraction field of $\widehat{V}$. Let $X/\overline{V}$ be a flat integral proper scheme of relative dimension $d$ equipped with an ample line bundle $L$.

\end{notation}

Our goal is to prove the following theorem:

\begin{theorem}[Kodaira vanishing up to finite covers]
\label{KodairaFiniteCover}
There is a finite surjective map $\pi:Y \to X$ such that the following pullback maps are $0$:
\begin{enumerate}
\item $H^{> 0}(X_{p=0}, L^a) \to H^{> 0}(Y_{p=0}, \pi^* L^a)$ for $a \geq 0$.
\item $H^{< d}(X_{p=0}, L^b) \to H^{< d}(Y_{p=0}, \pi^* L^b)$ for $b < 0$.
\end{enumerate}
In particular, there exist such a $\pi$ with $\pi^*:H^*(X, L^{-b})_{tors} \to H^*(Y, \pi^* L^{-b})_{tors}$ being $0$ for any fixed $b < 0$.
\end{theorem}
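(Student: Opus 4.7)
The plan is to reformulate the theorem as a Cohen-Macaulayness statement for the graded absolute integral closure of the section ring of $(X,L)$, and then prove that statement by adapting the argument of \S\ref{CMgeomcase}. Let $R = \bigoplus_{n \geq 0} H^0(X, L^n)$, with vertex $v$ on the affine cone $T = \mathrm{Spec}(R)$. The punctured cone $T \setminus \{v\}$ identifies with the complement of the zero section in the total space of $L^{-1}$ over $X$, so it is a $\mathbf{G}_m$-torsor over $X$. Taking $\mathbf{G}_m$-weight decompositions in the standard exact triangle $R\Gamma_v(R/p) \to R\Gamma(T, \mathcal{O}/p) \to R\Gamma(T \setminus v, \mathcal{O}/p)$ produces identifications $H^i_v(R/p)_a \simeq H^{i-1}(X_{p=0}, L^a)$ for $a \geq 0, i \geq 2$, and analogously in negative weight. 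Consequently, parts (1) and (2) of the theorem are equivalent to the statement that every class in $H^i_v(R/p)$ with $i \leq d$ can be annihilated by a sufficiently large finite graded extension $R \hookrightarrow S$. This is packaged as the vanishing $H^i_v(R^{+, GR}/p) = 0$ for $i < \dim(R/p)$, where $R^{+, GR}$ is the graded absolute integral closure (cf.\ Theorem~\ref{GAICCM}); it is the graded analog of Theorem~\ref{RPlusCMIntro}.

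To prove the graded CM vanishing, I would follow the two-step strategy of \S\ref{CMgeomcase}: establish the almost version first, then upgrade to a genuine one. The new geometric input in the graded case is a perfectoid model for $R^{+,GR}$: on $X^+$, choose a compatible system $\{L^{-1/n}\}_{n \geq 1}$ of $n$-th roots of $L^{-1}$, and form the inverse limit $T'_\infty = \varprojlim_n T'_n$ of the corresponding total spaces $T'_n = \underline{\mathrm{Spec}}_{X^+}(\mathrm{Sym}\, L^{1/n})$. The $p$-completion of $T'_\infty$ is a perfectoid formal scheme over $X^+$, and the direct image of its structure sheaf along $T'_\infty \to T$ recovers the $p$-adic completion of $R^{+, GR}$ away from the vertex (this is the ``miracle'' of infinite-level roots). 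The almost CM statement then follows from Theorem~\ref{thm:RH} and Corollary~\ref{cor:PervACM} applied to an appropriate ind-perverse sheaf arising from the constant sheaf $\mathbf{F}_p$ on $T'_{\infty, \eta}$, by the same mechanism as the proof of Theorem~\ref{ACMaic}.

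The honest CM statement is obtained by importing the outline of \S\ref{ss:OutlineProof}: replace the tower of finite covers of projective space by the tower of semistable (graded) alterations of the cone, invoke the isogeny theorem for log prismatic cohomology to control the difference between $\Prism/p$ and $\Prism_\perf/p$ by a fixed power $d^c$, propagate finiteness through that power via the graded analog of Lemma~\ref{IncreaseFinite}, and conclude using the ``equational lemma'' (Lemma~\ref{EquationCM}) that no nonzero Frobenius-stable finitely generated submodule of $H^i_v(\mathcal{O}_{T^{+, GR}}^\flat)$ exists. The torsion statement at the end of Theorem~\ref{KodairaFiniteCover} follows by a Bockstein argument: any $p$-torsion class in $H^*(X, L^b)$ comes from a class in $H^{*-1}(X_{p^n = 0}, L^b)$ for some $n$ and is hence killed by (2) applied to this mod-$p^n$ reduction; noetherian approximation (descending $X/\overline{V}$ to a finitely presented model over a $p$-adic DVR) ensures that the torsion group is finite, so a single cover suffices.

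The principal obstacle is the ind-perversity input feeding into the almost CM statement. At any finite level $T'_n$, the zero-section contribution prevents Artin vanishing from giving a perverse sheaf directly; it is only at the infinite level, where $p$-power roots of the fibre coordinate are available and the $\mathbf{G}_a$-structure near the zero section can be replaced by a $\mathbf{G}_m$-structure modulo almost-zero error, that the needed perversity bounds appear and Corollary~\ref{cor:PervACM} applies. After this, all subsequent steps --- cofinality of semistable alterations of the cone, the Hodge-Tate/isogeny dance, and the final descent from $\mathcal{O}_{X^+}^\flat$ to a finite extension --- go through essentially verbatim from \S\ref{ss:ProvePn}, provided one keeps track of the extra $\mathbf{G}_m$-grading throughout.
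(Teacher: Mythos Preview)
Your proposal is correct and follows essentially the same route as the paper: reformulate via the graded absolute integral closure $R^{+,GR}$, establish almost Cohen--Macaulayness using the perfectoid model $T_\infty$ and the Riemann--Hilbert functor (with the key ind-perversity input coming from the $\mathbf{G}_a \leadsto \mathbf{G}_m$ miracle at infinite level, exactly as you identify), and then upgrade to honest Cohen--Macaulayness via the log prismatic isogeny theorem and a Frobenius argument. Two small refinements the paper makes explicit: first, the equational lemma you invoke needs a genuinely new graded version (Proposition~\ref{LocalEtaleCohGAIC}), whose proof exploits the $\mathbf{Q}$-grading to rule out $\phi$-invariants rather than appealing directly to Lemma~\ref{EquationCM}; second, the dimension induction from \S\ref{ss:ProvePn} is avoided entirely in the graded case, since Cohen--Macaulayness away from the vertex already follows from the local result applied to the pro-smooth $U_\infty \to X^+$.
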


\begin{remark}
Remark~\ref{rmk:ProperDVR} extends Theorem~\ref{KodairaFiniteCover} by relaxing the positivity assumption on $L$. 
\end{remark}

\subsection{Reformulation via graded absolute integral closures}
\label{ss:GAIC}

We shall prove Theorem~\ref{KodairaFiniteCover} by reinterpreting it as a result about the local cohomology of the affine cone over $X$, to which we can apply techniques analogous to those in \S \ref{CMgeomcase}.

\begin{notation}[Affine cones]
For any scheme $Y/\overline{V}$ and line bundle $L$, write 
\[ U(Y,L) := \mathrm{Spec}_Y(\bigoplus_{n \in \mathbf{Z}} L^n) \to Y \quad \text{and} \quad T(Y,L) := \mathrm{Spec}_Y(\bigoplus_{n \in \mathbf{Z}_{\geq 0}} L^n) \to Y\]
for the $\mathbf{G}_m$-torsor and the total space of the line bundle $L^{-1}$ Thus, we have an open immersion $U(Y,L) \subset T(Y,L)$ with complement given by the $0$ section of $T(Y,L) \to Y$. Write $R(Y,L) = \bigoplus_{n \in \mathbf{Z}_{\geq 0}} H^0(Y,L^n)$ for the homogeneous co-ordinate ring of $L$ and write $C(Y,L) = \mathrm{Spec}(R(Y,L))$. We have a canonical affinization map $T(Y,L) \to C(Y,L)$. Moreover, if $Y/\overline{V}$ is a flat projective scheme and $L$ is ample, then $C(Y,L)$ is a finitely presented flat affine $\overline{V}$-scheme. Thus, we obtain a  diagram
\[ U(Y,L) \subset T(Y,L) \to C(Y,L)\]
where the first map and the composite are open immersions, and the second map is a proper birational map that is an isomorphism over $U(Y,L) \subset C(Y,L)$ (which is also the complement of the origin of $C(Y,L)$, corresponding to the ideal $R(Y,L)_{\geq 1} \subset R(Y,L)$ of elements of degree $\geq 1$). 
\end{notation}

\begin{lemma}
\label{FlatZarAIC}
If $R$ is a normal domain with algebraically closed fraction field, then any fppf map $R \to S$ admits a section Zariski locally on $R$. 
\end{lemma}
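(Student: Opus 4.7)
The plan is to first reduce to the local case, then exploit the strong consequences of the algebraic closedness of the fraction field for localizations of $R$, and finally produce a section via a finite flat quasi-section argument.

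The claim is Zariski-local on $\mathrm{Spec}(R)$, and $S$ is finitely presented over $R$, so it suffices to produce, for each prime $\mathfrak{p} \subset R$, an $R_{\mathfrak{p}}$-algebra section of $S \otimes_R R_{\mathfrak{p}}$; such a section spreads out to a Zariski neighborhood of $\mathfrak{p}$. Since $R_{\mathfrak{p}}$ is again a normal domain with the same algebraically closed fraction field $K$, I may replace $R$ by $R_{\mathfrak{p}}$ and assume $R$ is local with maximal ideal $\mathfrak{m}$ and residue field $\kappa$. By Gabber's lemma (\cite[Lemma 3]{GabberAffineAnalog}, already invoked in the proof of Proposition~\ref{EtaleCohAIC}), every \'etale $R$-algebra is a finite product of Zariski localizations of $R$, which for local $R$ forces $R$ to be strictly Henselian and $\kappa$ to be separably closed. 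Moreover $\kappa$ is perfect: for any residue characteristic $p$ and $a \in R$, the $p$-th root of $a$ lies in $K$ (by algebraic closedness) and hence in $R$ (by normality), descending to a $p$-th root in $\kappa$. So $\kappa$ is algebraically closed.

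The main step is then to produce an $R$-algebra section of $S$. I would do this in two stages: first, find a finite flat ``quasi-section'', i.e., a closed subscheme $\mathrm{Spec}(A) \hookrightarrow \mathrm{Spec}(S)$ with $R \to A$ finite, flat, and faithfully flat; then split $R \to A$. For the splitting, observe that $A$ is a finite flat module over local $R$, hence free of some rank $n$, so $A \otimes_R K$ is a finite $K$-algebra of dimension $n$. Over the algebraically closed $K$, this decomposes as a product $\prod_i A_i$ of local Artinian $K$-algebras each with residue field $K$. For each $i$, the composition $A \to A \otimes_R K \twoheadrightarrow A_i \twoheadrightarrow K$ is an $R$-algebra map whose image in $K$ is integral over $R$ (since $A$ is finite over $R$), and hence contained in $R$ by the normality of $R$ in $K$. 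This gives an $R$-algebra section $A \to R$, which composes with $\mathrm{Spec}(A) \hookrightarrow \mathrm{Spec}(S)$ to yield a section of $R \to S$.

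The main obstacle is the construction of the finite flat quasi-section. Starting from a $\kappa$-rational closed point of the non-empty closed fiber $\mathrm{Spec}(S \otimes \kappa)$ (guaranteed by the algebraic closedness of $\kappa$), one needs to lift it to a finite flat closed subscheme of $\mathrm{Spec}(S)$ over the strictly Henselian $R$; in the non-Noetherian setting of the paper (where $R$ may be a rank~$1$ valuation ring such as $\overline{V}$), the precise formulation of this Hensel-type lifting, together with the dimension-cutting needed to ensure finite (rather than merely quasi-finite) flatness, is the technical heart of the argument.
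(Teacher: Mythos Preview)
Your overall strategy matches the paper's: reduce to the local case, observe that $R$ is henselian, produce a finite flat $R$-algebra $A$ mapping to $S$, and then split $R \to A$ using that $R$ is normal with algebraically closed fraction field. Your splitting argument is essentially identical to the paper's.

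The gap is precisely where you flag it: the construction of the finite flat quasi-section. You describe this as the ``technical heart'' requiring ``dimension-cutting'' and ``Hensel-type lifting'', but the paper resolves it in one line with no such machinery. The point you are missing is the standard fact that any fppf map $R \to S$ can be \emph{refined} by a quasi-finite fppf map $R \to S'$ (i.e., there is an $R$-algebra map $S \to S'$ with $R \to S'$ fppf and quasi-finite); see for instance \cite[Tag 07NS]{StacksProject}. Once you have this, henselianness of $R$ gives a decomposition $S' \simeq S_1 \times S_2$ with $R \to S_1$ finite (flat) and $\mathrm{Spec}(S_2)$ missing the closed point. Now $S_1$ plays the role of your $A$, and a section of $R \to S_1$ (which you know how to produce) yields a section of $R \to S'$ and hence of $R \to S$. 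No lifting from the closed fiber or slicing arguments are needed, and in particular there is no issue with $R$ being non-Noetherian.
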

\begin{proof}
This is a variant of \cite[Lemma 3]{GabberAffineAnalog}. By finite presentation considerations, we may assume $R$ is local, whence $R$ is henselian by absolute integral closedness. Any fppf map map $R \to S$ admits a refinement $R \to S \to S'$ with $R \to S'$ being fppf and quasi-finite. As $R$ is henselian, we have $S' \simeq S_1 \times S_2$, where $R \to S_1$ is finite flat and $\mathrm{Spec}(S_2) \to \mathrm{Spec}(R)$ misses the closed point. It is now enough to observe that $R \to S_1$ admits a section: indeed, any finite cover $R \to S$ admits a section, as $R$ maps isomorphically to $S/\mathfrak{p}$ for any prime $\mathfrak{p}$ of $S$ above the generic point of $\mathrm{Spec}(R)$.
\end{proof}

\begin{lemma}
\label{AICFlatCoh}
Let $Y$ be a integral normal scheme with algebraically closed fraction field. Then $H^i(Y,\widehat{\mathbf{Z}}(1)) = 0$ for $i > 0$. In particular, $\mathrm{Pic}(Y)$ is a $\mathbf{Q}$-vector space. (Here $\widehat{\mathbf{Z}}(1) = \lim_n \mu_n$ is an affine group scheme, and we define $R\Gamma(X, \widehat{\mathbf{Z}}(1))$ as the fpqc cohomology or equivalently as $R\lim R\Gamma_{fppf}(X, \mu_n)$.)
\end{lemma}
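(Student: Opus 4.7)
The plan is to deduce everything from the vanishing $H^i_{fppf}(Y,\mu_n)=0$ for $i>0$ and all $n\geq 1$, then pass to $\widehat{\mathbf{Z}}(1)$ via a Milnor sequence, and finally to $\mathrm{Pic}(Y)$ via the Kummer sequence.

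For the core vanishing, I would first observe that Lemma~\ref{FlatZarAIC} applies to every affine open of $Y$: each such open is again integral normal with the same algebraically closed fraction field as $Y$. Consequently, every fppf cover of $Y$ admits a Zariski refinement, so the restriction functor from fppf sheaves to Zariski sheaves is cohomologically conservative, yielding $H^i_{fppf}(Y,F) \simeq H^i_{Zar}(Y,F|_{Zar})$ for every fppf sheaf $F$. Next I would identify $\mu_n|_{Zar}$ as a constant Zariski sheaf: for any Zariski open $U \subset Y$, the ring $\mathcal{O}(U)$ is normal with fraction field equal to the algebraically closed function field $K$ of $Y$, and any $n$-th root of unity in $K$ is an algebraic integer, hence integral over $\mathcal{O}(U)$, and thus in $\mathcal{O}(U)$ by normality. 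So $\mu_n(\mathcal{O}(U)) = \mu_n(K)$ independently of $U$. Since $Y$ is irreducible as a topological space, Grothendieck's vanishing theorem for constant sheaves on irreducible spaces (\cite[Tag 02UW]{StacksProject}) gives $H^i_{Zar}(Y,\mu_n) = 0$ for $i>0$, whence the same for $H^i_{fppf}(Y,\mu_n)$.

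To pass to $\widehat{\mathbf{Z}}(1) = R\lim_n \mu_n$, I would invoke the Milnor exact sequence, which for $i>0$ takes the form
\[ 0 \to R^1\lim\nolimits_n H^{i-1}(Y,\mu_n) \to H^i(Y,\widehat{\mathbf{Z}}(1)) \to \lim\nolimits_n H^i(Y,\mu_n) \to 0. \]
The right term vanishes by the previous step. The left term vanishes trivially for $i \geq 2$, and for $i=1$ it reduces to $R^1\lim_n \mu_n(K)$, which vanishes because the transition maps $\mu_{nm}(K) \to \mu_n(K)$, $x \mapsto x^m$, are surjective in characteristic zero; in characteristic $p$ one passes to the cofinal subsystem indexed by integers prime to $p$, where surjectivity again holds. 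This yields $H^i(Y,\widehat{\mathbf{Z}}(1)) = 0$ for $i>0$.

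For the Picard claim, the fppf-exact Kummer sequence $1 \to \mu_n \to \mathbf{G}_m \xrightarrow{n} \mathbf{G}_m \to 1$ yields an exact fragment
\[ H^1(Y,\mu_n) \to \mathrm{Pic}(Y) \xrightarrow{n} \mathrm{Pic}(Y) \to H^2(Y,\mu_n), \]
and the vanishing of the outer terms forces $n:\mathrm{Pic}(Y) \to \mathrm{Pic}(Y)$ to be an isomorphism for every $n \geq 1$, so $\mathrm{Pic}(Y)$ is uniquely divisible, i.e., a $\mathbf{Q}$-vector space. The main delicate step is the identification $H^i_{fppf} = H^i_{Zar}$ via Lemma~\ref{FlatZarAIC} (somewhat subtle in positive residue characteristic, where $\mu_{p^k}$ is not étale-locally constant); once this reduction is in hand, everything else is formal manipulation of the Milnor and Kummer sequences.
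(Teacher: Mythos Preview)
Your proposal is correct and follows essentially the same route as the paper: reduce fppf cohomology of $\mu_n$ to Zariski cohomology via Lemma~\ref{FlatZarAIC}, observe that $\mu_n$ is Zariski-constant, and invoke Grothendieck's vanishing for constant sheaves on irreducible spaces. Your treatment is simply more explicit in a few places (the Milnor sequence, the reason $\mu_n$ is Zariski-constant, the Kummer sequence for $\mathrm{Pic}$), and the $R^1\lim$ step could be shortened by noting that an inverse system of finite abelian groups automatically satisfies Mittag-Leffler.
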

\begin{proof}
It is enough to prove that $H^0(Y,\mu_n) \to R\Gamma(Y,\mu_n)$ is an isomorphism for all $n$, where the cohomology is in the fppf topology. By Lemma~\ref{FlatZarAIC}, we may compute these objects in the Zariski topology instead of the fppf topology. But $\mu_n$ is constant in the Zariski topology of $Y$, so the claim follows from Grothendieck's theorem  \cite[Tag 02UW]{StacksProject} that constant sheaves have trivial cohomology on irreducible spaces. 

Alternately, one could avoid Lemma~\ref{FlatZarAIC} by observing that the restriction of the fppf Kummer sequence to the Zariski topology of $Y$ is still a short exact sequence of sheaves as $\mathbf{G}_m(U) = \mathcal{O}(U)^*$ is a divisible abelian group for all affine opens $U \subset Y$ by the assumption on $Y$. The associated long exact sequence then again reduces ust to Grothendieck's theorem mentioned above.
\end{proof}

For the rest of the section, we fix the following notation:

\begin{notation}[The graded absolute integral closure]
Fix an absolute integral closure $X^+ \to X$ and notation as in Definition~\ref{SSAlt}. Let $\widetilde{\mathbf{G}_m} := \lim_n \mathbf{G}_m \to \mathbf{G}_m$ be the ``profinite universal cover'' and write $\widehat{\mathbf{Z}}(1) = \lim_n \mu_n$ for its kernel, regarded as a profinite group scheme. Fix a compatible system $\{ L^{1/n} \}_{n \in \mathbf{Z}_{\geq 0}}$ of $n$-th roots of $L$ over $X^+$, regarded as a lift the $\mathbf{G}_m$-torsor determined by $L$ along the surjection $\widetilde{\mathbf{G}_m} \to \mathbf{G}_m$; such a choice exists and is unique up to isomorphism as $H^i(X^+, \widehat{\mathbf{Z}}(1))  = 0$ for $i=2,1$ by Lemma~\ref{AICFlatCoh}. Thanks to this choice, we obtain a compatible system of diagrams 
\[ U(X^+,L^{1/n}) \subset T(X^+,L^{1/n}) \to C(X^+, L^{1/n})\] 
indexed by $n \in \mathbf{Z}_{\geq 1}$ under divisibility. The transition maps are integral (and thus affine), so we can take a limit to obtain a diagram
\[ U_\infty \subset T_\infty \to C_\infty\] 
where the first map and the composite map are both open immersions and the second map is a pro-proper map that is an an affinization and isomorphism over $U_\infty \subset C_\infty$. We shall regard all these schemes as living over $C(X,L)$ and all sheaves are implicitly pushed forward to $C(X,L)$ unless otherwise specified. 

We give the following names to the following rings/ideals resulting from this construction:
\begin{enumerate}
\item $R := R(X,L) = \bigoplus_{n \in \mathbf{Z}_{\geq 0}} H^0(X,L^n)$ viewed as a graded $\overline{V}$-algebra.
\item The homogeneous ideals $\widetilde{\mathfrak{m}} = R_{\geq 1}$ and $\mathfrak{m} := (p, R_{\geq 1}) \subset R$, so $R/\widetilde{\mathfrak{m}} \to R/\mathfrak{m}$ identifies with $\overline{V} \to \overline{V}/p$. 
\item $R^{+,gr} := \Gamma(C(X^+,L), \mathcal{O}) = R(X^+,L) =  \colim_{Y \in \mathcal{P}_X^{fin}} R(Y, L) = \bigoplus_{n \in \mathbf{Z}_{\geq 0}} H^0(X^+,L^n)$.
\item $R^{+,GR} := \Gamma(C_\infty, \mathcal{O}) = \colim_n R(X^+,L^{1/n}) \simeq \bigoplus_{n \in \mathbf{Q}_{\geq 0}} H^0(X^+, L^n)$.
\end{enumerate}
Thus, we have integral maps $R \to R^{+,gr} \to R^{+,GR}$ of rings with the second map being a direct summand as graded $R^{+,gr}$-modules. Note that all these rings are flat over $\overline{V}$ with relative dimension $d+1$, and hence have Krull dimension $d+2$. 
\end{notation}

In the above notation, the graded analog of Theorem~\ref{AICVanishLocalCoh} is  the following result; this implies Theorem~\ref{KodairaFiniteCover} and its proof shall occupy most  of this section.

\begin{theorem}
\label{GAICCM}
We have $H^i_{\mathfrak{m}}(R^{+,gr}) = H^i_{\mathfrak{m}}(R^{+,GR}) = 0$ for $i < d+2$.
\end{theorem}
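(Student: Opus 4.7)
I would mimic the structure of the proof of Theorem~\ref{AICCMMain}, adapted to the graded setting via the cone-type schemes $C_\infty$ and $T_\infty$.

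First I would reduce to a mod-$p$ statement: since $\mathfrak{m}\supseteq (p)$, each $H^i_\mathfrak{m}(R^{+,GR})$ is $p^\infty$-torsion, so the Bockstein sequence for multiplication by $p$ reduces the theorem to showing $H^i_\mathfrak{m}(R^{+,GR}/p)=0$ for $i<d+1=\dim(R^{+,GR}/p)$. The analogous statement for $R^{+,gr}/p$ then follows from the observation that $R^{+,gr}$ is a graded $R$-module direct summand of $R^{+,GR}$ (with complement supported in rational-but-non-integer degrees), so its local cohomology is a summand of that of $R^{+,GR}$.

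Next I would establish the almost version, i.e.\ that $H^i_\mathfrak{m}(R^{+,GR}/p)$ is almost zero for $i<d+1$ (this is what will be recorded as Proposition~\ref{WeakCMGAIC}). The key new geometric ingredient should be an identification $R^{+,GR}/p\simeq R\Gamma(T_\infty,\mathcal{O}_{T_\infty}/p)$ in the almost category, available only at infinite level: taking rational roots of $L$ asymptotically annihilates the obstruction classes $H^{>0}(X^+,L^{n/m}/p)$ that would otherwise prevent the affinization $T_\infty\to C_\infty$ from preserving cohomology (this is Proposition~\ref{AICAmple}). Granting this, $R\Gamma_\mathfrak{m}(R^{+,GR}/p)$ becomes cohomology with supports along the zero section of $T_\infty$ over the closed fibre. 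To control it, I would apply the Riemann-Hilbert machinery: the $p$-adic completion of $T_\infty$ is a perfectoid formal scheme (being built from the perfectoid $\widehat{X^+}$ with compatible $p$-power roots available both along $X^+$ and along the fibre direction), so Theorem~\ref{thm:RH} gives $\mathcal{O}_{T_\infty}/p\simeq\RH_{\overline{\Prism}}(\mathbf{F}_{p,T_\infty})$. Corollary~\ref{cor:PervACM} then reduces the almost-vanishing to showing that, up to shift by $d+1$, the constant sheaf on $T_\infty[1/p]$ is (ind-)perverse after pushforward to $X^+[1/p]$. This is where the infinite-level miracle comes in: the tautological $\widetilde{\mathbf{G}_m}$-torsor structure on $U_\infty\to X^+$ and a semiperversity-of-nearby-cycles variant (roughly, ``replace $\mathbf{G}_a$ with $\mathbf{G}_m$ at infinite level'') force the desired perversity even though at any finite level the corresponding line bundle gives the wrong perversity.

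With the almost version in hand, the upgrade to honest vanishing should follow the blueprint of \S\ref{ss:ProvePn} by induction on $d$. One would write $R^{+,GR}/p$ as a filtered colimit of graded section rings $R(Y,\pi_Y^*L^{1/m})/p$ over pairs $(Y\in\mathcal{P}_X^{ss},\,m\in\mathbf{Z}_{\geq 1})$; use graded analogs of Theorem~\ref{IndObjectPairs} and Theorem~\ref{LogPrismatic} to obtain a Frobenius-equivariant approximation of $R^{+,GR,\flat}$ by log prismatic cohomology of semistable alterations; apply Proposition~\ref{CMFactor} together with the almost vanishing to bound the relevant local prismatic cohomology by $d^c$-torsion after passing to finite covers; invoke Lemma~\ref{IncreaseFinite} and a graded version of Lemma~\ref{IndCMPuncturedNN} to propagate inductive finiteness from $\mathcal{O}/p$ to $\Prism/(p,d^c)$; and finally use a graded analog of the equational Lemma~\ref{EquationCM} (with the role of ``closed point'' played by the irrelevant ideal $\widetilde{\mathfrak{m}}$, and the tilt $R^{+,GR,\flat}/\widetilde{\mathfrak{m}}^\flat$ playing the role of the absolutely integrally closed quotient) to rule out Frobenius-stable, finitely generated submodules in the limit. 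The decisive new difficulty is the almost-vanishing step, specifically combining the identification $R^{+,GR}/p\simeq R\Gamma(T_\infty,\mathcal{O}/p)$ with the perversity of the constant sheaf on $T_\infty$: both phenomena are genuinely infinite-level statements with no finite-level analog, and verifying them cleanly will require a careful analysis of the tower $\{L^{1/n}\}_n$ of line bundles on $X^+$.
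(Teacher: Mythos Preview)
Your outline follows the paper's approach closely, but two points deserve correction. First, you propose to upgrade from almost to honest vanishing ``by induction on $d$,'' mirroring \S\ref{ss:ProvePn}. The paper explicitly avoids this: once Theorem~\ref{AICCMMain} is proved, the punctured cone $U(X,L)_{p=0} = C(X,L)_{p=0} \setminus V(\mathfrak{m})$ is pro-smooth over $X_{p=0}$, so ind-CMness there is inherited from the already-established CMness of $\mathcal{O}_{X^+}/p$. Thus the only new work is at the cone point itself, and no induction on $d$ is needed (see the first paragraph of the proof of Theorem~\ref{GrAICCM}). Your induction would presumably work but is an unnecessary complication. Second, and more substantively, your sketch of the graded equational lemma (``$R^{+,GR,\flat}/\widetilde{\mathfrak{m}}^\flat$ playing the role of the absolutely integrally closed quotient'') is not how the paper proceeds. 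Proposition~\ref{LocalEtaleCohGAIC} instead uses the grading directly: after a formal glueing argument and the special-fibre computation of Lemma~\ref{SpecialFibreGAIC}, the Frobenius-fixed locus one must kill lives in $\bigoplus_{n \in \mathbf{Q}_{<0}} H^d(X^+_{\overline{k}}, L^n)$, and since $\phi$ multiplies degrees by $p$, any $\phi$-invariant element would be supported on a finite subset of $\mathbf{Q}_{<0}$ closed under multiplication by $p$---which is impossible. This grading trick is the genuinely new idea in the graded equational step, and your proposal does not capture it.

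Two smaller imprecisions: the identification $R^{+,GR}/p \simeq R\Gamma(T_\infty, \mathcal{O}/p)$ of Proposition~\ref{AICAmple} is an honest isomorphism, not merely almost (it follows from $\mathbf{F}_p \simeq Rf_* \mathbf{F}_p$ on $C_\infty$ via Proposition~\ref{EtaleCohAIC} and proper base change, then Theorem~\ref{thm:RH}~(2)); and the perversity statement (Lemma~\ref{IndPervConstGAIC}) concerns pushforward to $C(X,L)[1/p]$, not $X^+[1/p]$, since the local cohomology you are computing is at a point of $\mathrm{Spec}(R)$.
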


\begin{remark}[$R^{+,gr}$ vs $R^{+,GR}$]
In applications of Theorem~\ref{GAICCM}, the case of $R^{+,gr}$ is the most relevant one (e.g., see Theorem~\ref{VanishingFiniteCov}). However, for our method of proof (which relies on perfectoid methods), it is crucial to pass to the larger ring $R^{+,GR}$. Roughly speaking, the reason is that $R^{+,gr}$ is not ramified enough: its punctured spectrum is a $\mathbf{G}_m$-torsor over $X^+$ (which is perfectoid after $p$-completion), so the $p$-completion cannot be perfectoid as the fibres of $\mathrm{Spec}(R^{+,gr}/p) - V(\mathfrak{m}) \to X^+_{p=0}$ are not semiperfect.  The passage to $R^{+,GR}$ has the effect of adding infinite ramification in the fibres by turning this $\mathbf{G}_m$-torsor into a $\widetilde{\mathbf{G}_m}$-torsor, resulting in a scheme which is perfectoid on $p$-completion. We refer the reader to Proposition~\ref{AICAmple} for a concrete payoff of passing to this larger ring. For completeness, we also remark that one could also work with slightly smaller (but still highly ramified) subrings of $R^{+,GR}$ -- such as the subring corresponding to degrees in $\mathbf{Z}[1/p]_{\geq 0} \subset \mathbf{Q}_{\geq 0}$ -- without changing the argument.
\end{remark}

\subsection{Almost Cohen--Macaulayness of $R^{+,GR}$}
\label{ss:GAICACM}

In this subsection, we prove $R^{+,GR}$ is almost Cohen--Macaulay by interpreting it through perfectoid geometry (twice) and using the Riemann-Hilbert functor. First, we begin with the analog of Lemma~\ref{AICPerfectoid} (2). 

\begin{lemma}[Perfectoidness of $R^{+,GR}$]
\label{PerfdGAIC}
The ring $R^{+,GR}$ is a normal domain and each homogenous element of $R^{+,GR}$ admits a $p$-th root. In particular, the $p$-completion $(R^{+,GR})^{\wedge}$ is perfectoid.
\end{lemma}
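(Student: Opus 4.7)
My plan is to prove three statements in sequence: (i) every homogeneous element of $R^{+,GR}$ admits an $n$-th root for every $n\geq 1$ (in particular a $p$-th root), (ii) $R^{+,GR}$ is a normal domain, and (iii) $(R^{+,GR})^{\wedge}$ is perfectoid. For (i), given a homogeneous $s \in H^0(X^+, L^m)$ and an integer $n \geq 1$, I would consider the closed subscheme $W \subset T(X^+, L^{m/n})$ cut out by the equation $\sigma^n = \pi^* s$, where $\pi : T(X^+, L^{m/n}) \to X^+$ is the structure map and $\sigma \in H^0(T(X^+, L^{m/n}), \pi^*L^{m/n})$ is the tautological section. Then $W \to X^+$ is finite of degree $n$. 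Picking any irreducible component $W_0 \subset W$ with its reduced structure, the normalization $\widetilde{W}_0 \to X^+$ is a finite dominant map from a normal integral scheme; since the function field of $X^+$ is algebraically closed, this map must be an isomorphism. Its inverse composed with $\widetilde{W}_0 \to W \hookrightarrow T(X^+, L^{m/n})$ provides a section of $T(X^+, L^{m/n}) \to X^+$, i.e., an element $t \in H^0(X^+, L^{m/n}) \subset R^{+,GR}$ with $t^n = s$.

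For (ii), I would use the identification $T_\infty = \mathrm{Spec}_{X^+}\!\bigl(\bigoplus_{q \in \mathbf{Q}_{\geq 0}} L^q\bigr)$ and the fact that $R^{+,GR} = \Gamma(T_\infty, \mathcal{O}) = \Gamma(C_\infty, \mathcal{O})$ because $T_\infty \to C_\infty$ is the affinization. Since global sections of a normal integral scheme form a normal domain (integral closedness being checkable stalkwise), it suffices to check that $T_\infty$ is a normal integral scheme, which can be done Zariski-locally on $X^+$. Ampleness of $L$ ensures that $X^+$ is covered by the non-vanishing loci $U_s$ of sections $s \in H^0(X^+, L^k)$ for $k \gg 0$. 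By (i), for each such $s$ one can choose compatible roots $s^{1/n} \in H^0(X^+, L^{k/n})$ satisfying $(s^{1/nm})^m = s^{1/n}$: for each $n$, the set of $n$-th roots of $s$ is a torsor under the finite group $\mu_n(\overline{V})$, and the transition maps between these torsors are surjective (again by (i) applied to $n$-th roots), so Mittag-Leffler guarantees a compatible system. Such a system identifies $T_\infty|_{U_s}$ with $\mathrm{Spec}(\mathcal{O}(U_s)[\mathbf{Q}_{\geq 0}])$ as $\mathbf{Q}_{\geq 0}$-graded $U_s$-schemes; and since $\mathcal{O}(U_s)[\mathbf{Q}_{\geq 0}] = \colim_n \mathcal{O}(U_s)[t^{1/n}]$ is a filtered colimit of polynomial rings over the normal domain $\mathcal{O}(U_s)$ along injective transition maps, it is itself a normal domain.

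For (iii), applying the $p$-th root property from (i) to $s = p \in R^{+,GR}$ yields $p^{1/p} \in R^{+,GR}$, and $R^{+,GR}$ is $\overline{V}$-flat, hence $p$-torsionfree. The argument of Lemma~\ref{AICPerfectoid}(2) then applies verbatim: the Frobenius on $R^{+,GR}/p$ is surjective because it is additive modulo $p$, every homogeneous element is a $p$-th power by (i), and every element is a finite sum of homogeneous elements; injectivity of the induced map $R^{+,GR}/p^{1/p} \to R^{+,GR}/p$ reduces to the normality established in (ii), since if $x \in R^{+,GR}$ satisfies $x^p \in p R^{+,GR}$ then $y := x/p^{1/p}$ lies in $\mathrm{Frac}(R^{+,GR})$, is integral over $R^{+,GR}$, and hence lies in $R^{+,GR}$. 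The main geometric point---and the reason the construction has to involve the full $\mathbf{Q}_{\geq 0}$-grading rather than just $R^{+,gr}$---is the identification $T_\infty|_{U_s} \simeq \mathrm{Spec}(\mathcal{O}(U_s)[\mathbf{Q}_{\geq 0}])$: at any finite level, $T(X^+, L^{1/n})$ is only an $\mathbf{A}^1$-bundle and the corresponding affinization $C(X^+, L^{1/n})$ can fail to be normal due to projective-normality obstructions for $L^{1/n}$, whereas passage to the $\mathbf{Q}_{\geq 0}$-graded inverse limit dissolves these obstructions and lands on a genuinely normal scheme.
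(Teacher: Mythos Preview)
Your proof is correct. The argument for $p$-th roots in (i) is essentially the same cyclic-cover idea as the paper's, except you run it directly on $X^+$ (using that its function field is algebraically closed) while the paper first descends the section to a finite cover $Y \to X$ and applies the cyclic covering trick there. One minor simplification: in your (i), since $W$ is finite flat over $X^+$ and reduced (as $t^n - s$ is separable over the generic point and $W$ is $X^+$-torsionfree), each irreducible component $W_0$ already dominates $X^+$ and is therefore finite birational over the normal scheme $X^+$; hence $W_0 \to X^+$ is already an isomorphism and the normalization step is unnecessary.

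For normality, your route differs from the paper's. The paper observes that each $T(X^+, L^{1/n})$ is smooth over the normal integral scheme $X^+$, hence normal integral, so its ring of global functions $R(X^+, L^{1/n})$ is a normal domain; then $R^{+,GR} = \colim_n R(X^+, L^{1/n})$ is a filtered colimit of normal domains along injections, hence normal. This is quicker than your Zariski-local trivialization of $T_\infty$ via compatible roots. In particular, your closing remark that ``$C(X^+, L^{1/n})$ can fail to be normal due to projective-normality obstructions'' is not correct over $X^+$: the paper's argument shows $R(X^+, L^{1/n})$ is always normal. The projective-normality obstruction is a phenomenon over $X$ or a finite cover $Y$, and it disappears already after passing to $X^+$ (not only at $\mathbf{Q}_{\geq 0}$-level). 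Your local trivialization of $T_\infty$ is still valuable, however, since exactly this computation is used later in the paper (Lemma~\ref{TotLBPerfd}) to show $T_\infty$ is perfectoid on $p$-completion.
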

\begin{proof}
For each integer $n \geq 1$, the map $T(X^+,L^{1/n}) \to X^+$ is smooth, so $T(X^+,L^{1/n})$ is a normal integral scheme. The affinization $\overline{U(X^+,L^{1/n})}$ is then a normal integral affine scheme, so $R(X^+,L^{1/n})$ is a normal domain. Taking a limit as $n \to \infty$ then shows that $R^{+,GR}$ is a normal domain. 

To find $p$-th roots, fix a homogeneous element $f \in H^0(X^+, L^n)$ for some $n \in \mathbf{Q}$; we may assume $n \geq 0$ since $f=0$ otherwise. By approximation, there is a finite cover $Y \to X$ factoring $X^+ \to X$ such that $L^n$ descends to a line bundle $M$ on $Y$ and such that $f$ comes from a unique $f' \in H^0(Y, M)$. By enlarging $Y$, we may assume $M = N^p$ is a $p$-th power. The cyclic covering trick then gives a $p$-th root of $f$ in a finite cover of $Y$, and thus also after pullback to $X^+$. 
\end{proof}

In our applications, it will be more convenient to work with $T_\infty$ due to its direction connection to geometry via the map $T_\infty \to X^+$.

\begin{lemma}[Perfectoidness of $T_\infty$]
\label{TotLBPerfd}
The $p$-adic completion of $T_\infty$ is perfectoid.
\end{lemma}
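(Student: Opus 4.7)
The plan is to verify perfectoidness of $\widehat{T_\infty}$ Zariski-locally on $X^+$, using Lemma~\ref{AICPerfectoid} as input. The map $T_\infty \to X^+$ is affine, being the inverse limit of the affine maps $T(X^+,L^{1/nm}) \to T(X^+,L^{1/n})$; concretely, $T_\infty = \mathrm{Spec}_{X^+}(\mathcal{B})$ where $\mathcal{B} = \bigoplus_{r \in \mathbf{Q}_{\geq 0}} L^r$ with multiplication coming from the canonical isomorphisms $L^r \otimes L^s \simeq L^{r+s}$. It therefore suffices to cover $\widehat{T_\infty}$ by preimages of small affine opens $U = \mathrm{Spec}(A) \subset X^+$ on which enough $L^{1/n}$'s trivialize, and to check that each such preimage's coordinate ring is perfectoid after $p$-adic completion.

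On such a $U$, Lemma~\ref{AICPerfectoid} gives that $\widehat{A}$ is perfectoid; I would take $\varpi := p^{1/p} \in \overline{V}$ as a pseudouniformizer (so $\varpi^p = p$), giving semiperfectness of $\widehat{A}/\varpi$. After choosing generators $s_n \in L^{1/n}(U)$ for the relevant $n$'s, the preimage of $U$ in $T_\infty$ identifies with $\mathrm{Spec}(B)$ where $B$ is the $A$-algebra generated by formal symbols $t_r$ (corresponding to $s_n$'s and their products) with multiplication $t_r \cdot t_s = w_{r,s}\, t_{r+s}$ for units $w_{r,s} \in A^*$ encoding the tensor structure. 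The key relation is $(t_{r/p})^p = \alpha_r\, t_r$ for a unit $\alpha_r \in A^*$ (a product of $p-1$ cocycle values), yielding $t_r = \alpha_r^{-1} \phi(t_{r/p})$ modulo $\varpi$. Given any generator $a\, t_r \in \widehat{B}/\varpi$ with $a \in \widehat{A}/\varpi$, semiperfectness of $\widehat{A}/\varpi$ produces $a', \beta_r \in \widehat{A}/\varpi$ with $\phi(a') = a$ and $\phi(\beta_r) = \alpha_r^{-1}$, so $a\, t_r = \phi(a' \beta_r\, t_{r/p})$. Since Frobenius is additive in characteristic $p$ and $\widehat{B}/\varpi$ is spanned as an $\widehat{A}/\varpi$-module by the $t_r$'s, Frobenius is surjective on $\widehat{B}/\varpi$; combined with $p$-adic completeness this yields perfectoidness of $\widehat{B}$, hence of $\widehat{T_\infty}$.

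The main obstacle I foresee is the Zariski-local trivialization of infinitely many line bundles $L^{1/n}$ on a single open $U \subset X^+$: since $\mathrm{Pic}(U)$ may carry nontrivial torsion, one cannot in general trivialize all $L^{1/n}$ simultaneously on a common Zariski open. I would circumvent this by exploiting that perfectoidness is Zariski-local on $\widehat{T_\infty}$, so I may refine to a basis of opens each of which trivializes only the finitely many $L^{1/p^k n_0}$ needed to run the Frobenius argument on a given finite subset of generators — which is always possible since each stalk $\mathcal{O}_{X^+, x}$ is a local ring with trivial Picard group. Once the local model is set up, the verification of the perfectoid criterion reduces to the semiperfectness of $\widehat{A}/\varpi$ provided by Lemma~\ref{AICPerfectoid}.
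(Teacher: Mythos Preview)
Your overall plan—localize on $X^+$, use Lemma~\ref{AICPerfectoid} for the base, and check semiperfectness of the fiber algebra—is exactly the paper's approach. The difference is that the ``main obstacle'' you flag is not an obstacle at all, and your proposed workaround for it is both unnecessary and not clearly correct.

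Specifically: any affine open $U = \mathrm{Spec}(S) \subset X^+$ is itself integral normal with algebraically closed fraction field, so Lemma~\ref{AICFlatCoh} applies to $U$. In particular $\mathrm{Pic}(U)$ is a $\mathbf{Q}$-vector space and hence torsion-free, so once $L|_U$ is trivial, every $L^{1/n}|_U$ (being $n$-torsion) is automatically trivial. Better still, $H^1(U,\widehat{\mathbf{Z}}(1)) = 0$ trivializes the entire compatible system $\{L^{1/n}\}$ simultaneously as a $\widetilde{\mathbf{G}_m}$-torsor. The paper uses this to identify $T_\infty|_U$ with $\mathrm{Spec}(S[t^{\mathbf{Q}_{\geq 0}}])$ on the nose, after which perfectoidness is immediate: $S \to S[t^{\mathbf{Q}_{\geq 0}}]$ is flat and relatively perfect mod $p$. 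This bypasses your cocycle bookkeeping with the $w_{r,s}$ and $\alpha_r$ entirely.

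Your fallback—refining to opens on $\widehat{T_\infty}$ that ``trivialize only finitely many $L^{1/p^k n_0}$''—does not work as stated: on any affine open of $T_\infty$ lying over $U \subset X^+$, the coordinate ring still contains all graded pieces $L^r|_U$ for $r \in \mathbf{Q}_{\geq 0}$, so checking Frobenius surjectivity on that open still requires handling every degree at once. Fortunately, once you invoke Lemma~\ref{AICFlatCoh}, the issue evaporates and your argument (or the paper's cleaner version of it) goes through directly.
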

\begin{proof}
This can be checked Zariski locally on $X^+$. Pick $V := \mathrm{Spec}(S) \subset X^+$ a Zariski open such that $L|_V$ is trivial. The $\widetilde{\mathbf{G}_m}$-torsor $\{L^{1/n}\}_{n \in \mathbf{Z}_{\geq 0}}|_V$ is also trivial as $H^1(V, \widehat{\mathbf{Z}}(1)) = 0$ by Lemma~\ref{AICFlatCoh}. Consequently, the restriction $T_\infty|_V \to V$ can be identified as the affine $V$-scheme attached to the $S$-algebra $S[t^{\mathbf{Q}_{\geq 0}}]$. We must show this algebra is perfectoid after $p$-completion, but this is immediate: $S$ is perfectoid after $p$-completion by Lemma~\ref{AICPerfectoid}, and $S \to S[t^{\mathbf{Q}_{\geq 0}}]$ is flat and relatively perfect modulo $p$. 
\end{proof}

\begin{proposition}[Killing cohomology for ample line bundles by finite covers]
\label{AICAmple}
If $f:T_\infty \to C_\infty$ denotes the affinization map, then $\mathbf{F}_{p,C_\infty} \simeq Rf_* \mathbf{F}_{p,T_\infty}$ in $D(C_\infty, \mathbf{F}_p)$ via the natural map.  Consequently, we have $\mathcal{O}_{C_\infty}/p \simeq Rf_* \mathcal{O}_{T_\infty}/p$ and thus the map
\[ R^{+,GR}/p := \bigoplus_{n \in \mathbf{Q}_{\geq 0}} H^0(X^+,L^n)/p \to \bigoplus_{n \in \mathbf{Q}_{\geq 0}} R\Gamma(X^+_{p=0}, L^n)\] 
is an isomorphism.
\end{proposition}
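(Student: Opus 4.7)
The plan is to first establish the constant-sheaf identification $\mathbf{F}_{p,C_\infty} \simeq Rf_* \mathbf{F}_{p,T_\infty}$, then deduce the coherent $\mathcal{O}/p$ version via the Riemann-Hilbert functor of Theorem~\ref{thm:RH}, and finally pass to global sections using the affine projection $\pi_T : T_\infty \to X^+$. Since $f$ restricts to an isomorphism over the open $U_\infty$ (the complement in $C_\infty$ of the vertex $Z = \mathrm{Spec}(\overline{V})$, where we use that $H^0(X^+,\mathcal{O})=\overline{V}$), comparing the open-closed distinguished triangles on $C_\infty$ and $T_\infty$ under $Rf_*$ reduces the first claim to $R(f\circ\tilde\imath)_*\mathbf{F}_{p,X^+}\simeq\mathbf{F}_{p,Z}$, where $\tilde\imath:X^+\hookrightarrow T_\infty$ is the zero section and $f\circ\tilde\imath$ is simply the structure map $\pi:X^+\to\mathrm{Spec}(\overline{V})$ (the zero sections of the $T_n$ are compatibly identified with $X^+$ at every finite level). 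Proper base change, applied to the pro-proper $\pi$ via the finite-level approximations in $\mathcal{P}_X^{fin}$, computes stalks: at the generic point of $\overline{V}$ one has $R\Gamma(X^+_\eta,\mathbf{F}_p)=\mathbf{F}_p$ by Proposition~\ref{EtaleCohAIC}, since $X^+_\eta$ is an absolute integral closure of $X_\eta$; the task is then to show $R\Gamma(X^+_{\overline{k}},\mathbf{F}_p)=\mathbf{F}_p$ at the closed point.

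The closed-point identification proceeds via Artin-Schreier. First, Theorem~\ref{KillCohMixed}, applied in the colimit over $\mathcal{P}_X^{fin}$ to the proper structure maps $Y\to\mathrm{Spec}(V_Y)$, kills all higher $\mathcal{O}/p$-cohomology in the ind-system, while the $H^0$ colimit $\colim_Y H^0(Y,\mathcal{O}_Y)=\overline{V}$ (via Stein factorization in the tower) gives $R\pi_*\mathcal{O}_{X^+}/p\simeq\mathcal{O}_{\overline{V}}/p$. Since $X^+$ is $\overline{V}$-flat, the sheaf $\mathcal{O}_{X^+}/p$ is $\overline{V}/p$-flat, and derived base change along $\overline{V}/p\to\overline{V}/\sqrt{p}=\overline{k}$ (justified at each finite level by the flat projection formula and then passed to the colimit) yields $R\Gamma(X^+_{\overline{k}},\mathcal{O}_{X^+_{\overline{k}}})\simeq\overline{k}$. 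On the characteristic-$p$ scheme $X^+_{\overline{k}}$, the Artin-Schreier short exact sequence $0\to\mathbf{F}_p\to\mathcal{O}\xrightarrow{\phi-1}\mathcal{O}\to0$ and the surjectivity of $\phi-1$ on the algebraically closed field $\overline{k}$ (with kernel $\mathbf{F}_p$) then give $R\Gamma(X^+_{\overline{k}},\mathbf{F}_p)=\mathbf{F}_p$ in degree $0$, completing the $\mathbf{F}_p$-sheaf identification.

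For the consequences, Theorem~\ref{thm:RH}(1) identifies $\RH_{\overline{\Prism}}(\mathbf{F}_p)\simeq\mathcal{O}/p$ on both $T_\infty$ and $C_\infty$, since each is perfectoid on $p$-adic completion by Lemmas~\ref{TotLBPerfd} and~\ref{PerfdGAIC}. Applying $\RH_{\overline{\Prism}}$ to the established $\mathbf{F}_p$-isomorphism --- using compatibility with proper pushforward (Theorem~\ref{thm:RH}(2)) at each finite level $f_n:T_n\to C_n$ and then taking the colimit to handle the pro-proper $f$ --- yields $\mathcal{O}_{C_\infty}/p\simeq Rf_*\mathcal{O}_{T_\infty}/p$. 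Taking $R\Gamma(C_\infty,-)$ of this isomorphism gives the left side as $R^{+,GR}/p$ (using affineness of $C_\infty$ and $\overline{V}$-flatness of $R^{+,GR}$), while the right side, computed through the affine projection $\pi_T$ with $\pi_{T,*}\mathcal{O}_{T_\infty}=\bigoplus_{n\in\mathbf{Q}_{\geq 0}}L^n$ and the $\overline{V}$-flatness of each $L^n$, equals $\bigoplus_{n\in\mathbf{Q}_{\geq 0}}R\Gamma(X^+_{p=0},L^n)$, finishing the proof. The main obstacle is the closed-point vanishing $R\Gamma(X^+_{\overline{k}},\mathbf{F}_p)=\mathbf{F}_p$, which demands the delicate coordination of Theorem~\ref{KillCohMixed} (to control $\mathcal{O}/p$-cohomology in the ind-limit), flat base change from $\overline{V}/p$ to $\overline{k}$, and Artin-Schreier to translate coherent vanishing into constant-sheaf vanishing; secondarily, careful bookkeeping is needed to extend proper base change and the Riemann-Hilbert pushforward compatibility to the pro-proper limit morphism $f$.
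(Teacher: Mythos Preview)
Your proof is correct, but it takes a substantially longer route than the paper's for the \'etale-sheaf identification over the zero section. You reduce to computing $R\pi_*\mathbf{F}_p$ for $\pi:X^+\to\mathrm{Spec}(\overline{V})$ by checking stalks at the two points of $\mathrm{Spec}(\overline{V})$; at the generic point you invoke Proposition~\ref{EtaleCohAIC} (first part) for $X^+_\eta$, but at the closed point you cannot apply it directly (since $X^+_{\overline{k}}$ need not be integral normal), so you detour through Theorem~\ref{KillCohMixed} to get $R\pi_*\mathcal{O}_{X^+}/p\simeq\overline{V}/p$, base change to $\overline{k}$, and then Artin--Schreier. The paper instead applies the \emph{second} statement of Proposition~\ref{EtaleCohAIC} directly to the dominant map $g:X^+\to\mathrm{Spec}(\overline{V})$ between integral normal schemes with algebraically closed function fields, immediately yielding $\mathbf{F}_p\simeq Rg_*\mathbf{F}_p$ on all of $\mathrm{Spec}(\overline{V})$ without any stalk-by-stalk analysis. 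Your approach works but is self-defeating in that Theorem~\ref{KillCohMixed} is itself proven in the paper via Proposition~\ref{EtaleCohAIC}, so you are implicitly using the same input through a longer chain. For the coherent consequence, your argument (citing both Lemma~\ref{TotLBPerfd} and Lemma~\ref{PerfdGAIC} to identify $\RH_{\overline{\Prism}}(\mathbf{F}_p)$ on source and target) is essentially the same as the paper's, and in fact slightly more explicit about needing perfectoidness on both sides.
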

\begin{proof}
Note that $f$ is an isomorphism outside the $0$ section and gives $g:X^+ \to \mathrm{Spec}(\overline{V})$ when pulled back along the $0$ section $\mathrm{Spec}(\overline{V}) \subset C_\infty$. Lemma~\ref{EtaleCohAIC} implies that $\mathbf{F}_p \simeq Rg_* \mathbf{F}_p$ as sheaves on $\mathrm{Spec}(\overline{V})$. By proper base change, it follows that $\mathbf{F}_p \simeq Rf_* \mathbf{F}_p$ on $C_\infty$ as well. The second part then follows from Theorem~\ref{thm:RH} (2) thanks to Lemma~\ref{TotLBPerfd}.
\end{proof}

\begin{remark}
Proposition~\ref{AICAmple} proves the half of Theorem~\ref{KodairaFiniteCover} that corresponds to non-negative powers of $L$. The proof still relies on a non-trivial input from $p$-adic Hodge theory: the proper pushforward compatibility of $\RH_{\overline{\Prism}}$ that ultimately comes from the primitive comparison theorem. It would be interesting to find a proof that avoids this input. 
\end{remark}

The next assertion is the analog of Theorem~\ref{ACMaic} and crucially relies on the fact that we work with $C_\infty = \lim_n C(X^+,L^{1/n})$ rather than $C(X^+,L)$.

\begin{lemma}[Perversity of the constant sheaf on $C_\infty$]
\label{IndPervConstGAIC}
The sheaf $\mathbf{F}_{p,C_\infty[1/p]}[d+1]$ is ind-perverse when regarded as a sheaf on $C(X,L)[1/p]$ via pushforward.
\end{lemma}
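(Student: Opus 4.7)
The plan is to follow the template of the proof of Theorem~\ref{ACMaic}(1): express $R\pi_*\mathbf{F}_{p,C_\infty[1/p]}$ on $C(X,L)[1/p]$ as a filtered colimit of pushforwards along finite covers, and for each member of the colimit find a refinement whose pushforward factors through an $Rj_*$-extension of a local system from an affine open (perverse after shifting by $d+1$, via Artin vanishing for affine quasi-finite morphisms \cite[Corollary 4.1.3]{BBDG}). By Proposition~\ref{AICAmple}, this pushforward coincides with the pushforward of $\mathbf{F}_{p,T_\infty[1/p]}$ along $T_\infty[1/p] \to C_\infty[1/p] \to C(X,L)[1/p]$. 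Writing $T_\infty = \lim_{\mathcal{Q}} T(Y,M)$ over the poset $\mathcal{Q}$ of triples $(Y,M,n)$ with $Y \in \mathcal{P}_X^{fin}$ and $M^n \simeq L|_Y$, this pushforward is the filtered colimit of the finite pushforwards along $T(Y,M)[1/p] \to C(X,L)[1/p]$. For a fixed $(Y,M,n)$, pick a principal affine open $U = D(f) \subset C(X,L)[1/p]$ defined by a positive-degree homogeneous $f \in R$ vanishing on the cone over the branch locus of $Y \to X$; the complement $\widetilde{D}$ is a divisor that automatically contains the vertex, and the finite map $T(Y,M)[1/p] \to C(X,L)[1/p]$ is étale over $U$ (both possible ramification loci---the cone over the branch of $Y \to X$, and the zero section of $T(Y,M) \to C(Y,M)$, which contracts to the vertex---lie inside $\widetilde{D}$). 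By compactness of constructible complexes, the claim then reduces to showing that $\mathbf{F}_{p,T_\infty[1/p]}$ is $*$-extended in the ind-sense from the preimage of $U$ in $T_\infty[1/p]$.

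The \emph{first half} of the argument removes the zero section $i:X^+[1/p] \hookrightarrow T_\infty[1/p]$ from consideration. For $j_n: U(X^+,L^{1/n})[1/p] \hookrightarrow T(X^+,L^{1/n})[1/p]$ at finite level, the stalk $i^*Rj_{n,*}\mathbf{F}_p$ at a geometric point of the zero section equals $\mathbf{F}_p \oplus \mathbf{F}_p[-1]$: the $H^1$ summand computes the $\mathbf{F}_p$-cohomology of a geometric punctured disk in characteristic zero, reflecting the nontrivial Kummer $\mu_p$-cover given by $p$-th roots of the fiber coordinate. The transition map in $\mathcal{Q}$ from level $n$ to level $nm$ is pullback along the $m$-th power in the fiber direction, and it acts on this $H^1$ summand as multiplication by $m$; taking $m = p$ (cofinally in $\mathcal{Q}$) kills the summand in the colimit. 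Thus $\mathbf{F}_{p,T_\infty[1/p]}$ is $*$-extended in the ind-limit from $U_\infty[1/p]$. This is the ``$\mathbf{G}_a$ to $\mathbf{G}_m$'' replacement, an analog at infinite level of the semiperversity of nearby cycles from \cite[\S 4.4]{BBDG}. It reduces the task to showing that $\mathbf{F}_{p,U_\infty[1/p]}$ is $*$-extended from $U_\infty[1/p] \setminus \pi_X^{-1}(D)$, where $\pi_X: U_\infty \to X^+$ is the $\widetilde{\mathbf{G}_m}$-torsor and $D \subset X^+[1/p]$ is the pullback of the branch locus of $Y \to X$.

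The \emph{second half} exploits the pro-smoothness of $\pi_X$: by smooth base change, the $*$-extension of $\mathbf{F}_{p,U_\infty[1/p]}$ from $U_\infty[1/p] \setminus \pi_X^{-1}(D)$ is equivalent to the $*$-extension of $\mathbf{F}_{p,X^+[1/p]}$ from $X^+[1/p] \setminus D$, which is immediate from Proposition~\ref{EtaleCohAIC} since $X^+[1/p]$ is integral normal with algebraically closed function field. The main obstacle lies in making the first-half computation rigorous in the non-noetherian limit $T_\infty$: the verification that the transition map really acts as multiplication by $m$ on the $\mathbf{F}_p[-1]$ summand requires descending the stalk calculation to a finitely presented approximation before applying the standard computation of $R\Gamma(\text{punctured disk},\mathbf{F}_p)$ in characteristic zero, and then passing to the colimit over $\mathcal{Q}$ compatibly.
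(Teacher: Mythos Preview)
Your approach mirrors the paper's: pass to $T_\infty$ via Proposition~\ref{AICAmple}, remove the zero section via the $\lim_n\mathbf{G}_a$ vs.\ $\lim_n\mathbf{G}_m$ computation (your punctured-disk transition-map argument is exactly this), then restrict over a dense open of $X^+[1/p]$ using Proposition~\ref{EtaleCohAIC} together with pro-smooth base change along $U_\infty\to X^+$. The paper performs the two steps in the opposite order and restricts all the way to the generic point $\eta$, whereas you stop at an explicit principal open $D(f)$; this is organizational rather than substantive.

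There is, however, one genuine gap. For a local system $\mathcal{L}$ on $U$, the shift $\mathcal{L}[d+1]$ is perverse only when $U$ is \emph{smooth} of dimension $d+1$; the paper does not assume $X[1/p]$ is smooth, so your open $U=D(f)\subset C(X,L)[1/p]$ (a $\mathbf{G}_m$-bundle over $\{f\neq 0\}\subset X[1/p]$) need not be smooth, and then your candidate $Rj_*(\mathcal{L}[d+1])$ need not be perverse. The repair is to enlarge $V(f)$ to also contain $\mathrm{Sing}(X[1/p])$; since both this and the branch locus are proper closed subsets and $L$ is ample, such a homogeneous $f$ exists. Having done so, note that the $*$-extension you then need in the second half is from $\pi_X^{-1}(\{f\neq 0\})$, not from $\pi_X^{-1}(X^+[1/p]\setminus D)$ as you wrote---but Proposition~\ref{EtaleCohAIC} gives $*$-extension of $\mathbf{F}_{p,X^+[1/p]}$ from \emph{any} nonempty open, so your argument via smooth base change applies verbatim with $D$ replaced by $V(f)$. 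The paper's passage to the generic fibre $U_{\infty,\eta}$ handles both issues at once: that fibre is automatically a smooth affine curve over $\kappa(\eta)$, and $*$-extension all the way to $\eta$ is the strongest form of what Proposition~\ref{EtaleCohAIC} provides. (Aside: $T(Y,M)[1/p]\to C(X,L)[1/p]$ is proper but not finite---its fibre over the vertex is $Y[1/p]$---but your argument only uses that it is finite \'etale over $U$, which is correct.)
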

\begin{proof}
Write $\eta \in X$ for the generic point; this point has a unique lift $X^+$ that we also call $\eta$. 

We first show that the constant sheaf $\mathbf{F}_{p,T_\infty[1/p]}$ on $T_\infty[1/p]$ is $*$-extended from the pro-open subset given by the generic fibre $U_{\infty,\eta}$ of $U_\infty[1/p] \to X^+[1/p]$. In fact, since the map $T_\infty \to X^+$ is pro-smooth, it follows immediately Theorem~\ref{EtaleCohAIC} and smooth base change that $\mathbf{F}_{p,T_\infty[1/p]}$ on $T_\infty[1/p]$ is $*$-extended from the pro-open subset given by the generic fibre $T_{\infty,\eta}$ of $T_\infty[1/p] \to X^+[1/p]$. It remains to show that the constant sheaf on $T_{\infty,\eta}$ is $*$-extended from $U_{\infty,\eta}$. Unwinding definitions, this amounts to the following observation (that we leave to the reader): given an algebraically closed field $E$ of characteristic $0$, if $Z = \lim_n \mathbf{G}_a = \mathrm{Spec}(E[t^{\mathbf{Q}_{\geq 0}}])$ denotes the inverse limit of copies of the multiplicative monoid $\mathbf{G}_a$ under the maps $x \mapsto x^n$ indexed by $n \in \mathbf{Z}_{\geq 1}$ under divisibility, then the constant sheaf $\mathbf{F}_{p,Z}$ is $*$-extended from the open subset $Z^\circ = \lim_n \mathbf{G}_m = \mathrm{Spec}(E[t^{\mathbf{Q}}]) \subset Z$ given by the same construction for $\mathbf{G}_m$. 

Via Proposition~\ref{AICAmple}, the previous paragraph implies that $\mathbf{F}_{p,C_\infty[1/p]}$ (viewed as a sheaf on $C(X,L)$ via pushforward) is the $*$-extension of the constant sheaf along $U_{\infty,\eta} \subset C_\infty[1/p] \to C(X,L)[1/p]$. Now $U_{\infty,\eta}$ is projective limit of smooth affine curves over $\eta$, and hence a projective limit of smooth affine schemes of dimension $d+1$ over $\overline{V}[1/p]$. Moreover, the map $U_{\infty,\eta} \to C(X,L)[1/p]$ is a projective limit of quasi-finite maps between affine schemes. As pushforward along quasi-finite affine maps preserves perversity by \cite[Corollary 4.1.3]{BBDG}, we conclude by noting that the shifted constant sheaf $\mathbf{F}_{p,Z}[\dim(Z)]$ is perverse for a smooth $\overline{V}[1/p]$-variety $Z$.
\end{proof}

The next lemma is a consequence of the previous one, and shall be used later in the proof of a vanishing result for the local \'etale cohomology of $R^{+,GR}/p$.

\begin{lemma}[Killing cohomology of (anti-)ample line bundles on the special fibre by finite covers]
\label{SpecialFibreGAIC}
Fix $a \in \mathbf{Q}$. If $a \geq 0$, then $H^{>0}(X^+_k, L^a) = 0$; if $a < 0$, then $H^{<d}(X^+_k, L^a) = 0$. 
\end{lemma}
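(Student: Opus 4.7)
The plan is to treat the two cases $a \ge 0$ and $a < 0$ separately: the first follows from Proposition~\ref{AICAmple} via a Bockstein/flatness argument, while the second reduces to the characteristic-$p$ Kodaira-vanishing-up-to-finite-covers theorem (the graded main theorem of \cite{ddscposchar}, refining \cite{HHCM}). The key geometric fact in both cases is that $X^+_k$ is the cofiltered limit of the special fibres of finite normal covers of $X$.

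For $a \ge 0$, I first observe, via a Cech computation along an affine cover of $X^+$, that $R\Gamma(X^+_k, L^a) \simeq R\Gamma(X^+, L^a) \otimes^L_{\overline{V}} \overline{k}$: the Cech terms are $\overline{V}$-flat (since $X^+/\overline{V}$ is flat and $L^a$ is a line bundle), so the termwise underived base change already gives the derived one. Proposition~\ref{AICAmple} says $R\Gamma(X^+, L^a)/p \simeq H^0(X^+,L^a)/p$ is concentrated in degree $0$. Unwinding the Bockstein long exact sequence for $p$ on $R\Gamma(X^+, L^a)$ then shows that $H^i(X^+, L^a)$ is both $p$-torsionfree and $p$-divisible for every $i \ge 1$ (and hence an $\overline{V}[1/p]$-module), while $H^0(X^+, L^a)$ is $p$-torsionfree and thus $\overline{V}$-flat (any torsion-free module over a valuation ring is flat). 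Tensoring over $\overline{V}$ with $\overline{k}$ now annihilates the $\overline{V}[1/p]$-parts in every degree (since $\overline{V}[1/p] \otimes^L_{\overline{V}} \overline{k} = 0$) and leaves only the flat $H^0$-part in degree $0$ (no $\mathrm{Tor}$). Hence $R\Gamma(X^+_k, L^a)$ is concentrated in degree $0$, giving $H^{>0}(X^+_k, L^a) = 0$.

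For $a < 0$, I pass to equal characteristic. Explicitly, $X^+ = \lim_{Y \in \mathcal{P}_X^{fin}} Y$ along affine transition maps, so the same holds after taking the special fibre, and sheaf cohomology along such cofiltered limits commutes with colimits of global sections, giving $H^i(X^+_k, L^a) = \colim_Y H^i(Y_k, L^a)$. Restricting to the cofinal subsystem of $Y \in \mathcal{P}_X^{fin}$ on which $L^{1/n}$ is defined (cofinal by the cyclic covering trick recalled in Lemma~\ref{PerfdGAIC}), each $Y_k$ is projective of pure dimension $d$ over $\overline{k}$ in characteristic $p$, and the pullback of $L$ is ample (ampleness pulls back along finite morphisms). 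The graded main theorem of \cite{ddscposchar} -- a Kodaira-vanishing-up-to-finite-covers statement for ample line bundles on projective schemes in equal characteristic $p$, refining \cite{HHCM} -- then supplies, for each cohomology class in $H^i(Y_k, L^a)$ with $i < d$, a further finite surjective cover $Y' \to Y$ that kills the class; since $Y' \to X$ is itself finite, $Y' \in \mathcal{P}_X^{fin}$. Passing to the colimit yields $H^{<d}(X^+_k, L^a) = 0$.

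The main technical subtlety is that a more uniform attempt -- deriving both cases from the almost Cohen-Macaulayness of $R^{+,GR}/p$ via Lemma~\ref{IndPervConstGAIC}, the RH functor, and Corollary~\ref{cor:PervACM} -- produces only \emph{almost} vanishing, which does not cleanly upgrade to honest vanishing upon base change from $\overline{V}/p$ to $\overline{k}$ (since $\overline{k}$ has infinite flat dimension over $\overline{V}/p$, as a short $\mathrm{Tor}$ computation exhibits). The hybrid approach above -- direct mixed-characteristic argument for $a \ge 0$, plus reduction to equal-characteristic \cite{ddscposchar} for $a < 0$ -- sidesteps this obstruction.
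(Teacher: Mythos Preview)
Your argument for $a \geq 0$ is correct and coincides with what the paper calls the ``base change'' deduction from Proposition~\ref{AICAmple}; you have simply spelled out the Bockstein/flatness details. (The paper also gives a separate Frobenius-based proof for $a = 1$, because that technique is what drives the case $a < 0$.)

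For $a < 0$ there is a genuine gap. The characteristic $p$ theorem you cite supplies, for a class in $H^i(Y_{\overline{k}}, L^a)$, a finite cover of the \emph{characteristic $p$ scheme} $Y_{\overline{k}}$ killing it. But your colimit is indexed by $\mathcal{P}_X^{fin}$, so you need this cover to be of the form $Y'_{\overline{k}}$ for some mixed-characteristic $Y' \in \mathcal{P}_X^{fin}$. Your sentence ``since $Y' \to X$ is itself finite, $Y' \in \mathcal{P}_X^{fin}$'' treats the output of \cite{ddscposchar} as a cover of $Y$ rather than of $Y_{\overline{k}}$; it is not, and there is no general mechanism for lifting an arbitrary finite cover of the special fibre to a cover in $\mathcal{P}_X^{fin}$, nor for showing that the tower $\{Y_{\overline{k}}\}_{Y \in \mathcal{P}_X^{fin}}$ is cofinal among all finite covers of $X_{\overline{k}}$.

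The paper's approach for $a < 0$ avoids this by staying inside the tower throughout. It dualizes on each $Y_{\overline{k}}$ to reduce to showing the pro-system $\{H^i(Y_{\overline{k}}, \omega^\bullet_{Y_{\overline{k}}} \otimes L)\}$ vanishes for $i > -d$; invokes the already-proved ind-CM result (Remark~\ref{ProCM}) to replace $\omega^\bullet$ by $\omega[d]$ pro-isomorphically; and then uses only Frobenius twists: since $X^+_{\overline{k}}$ is perfect, for any $Y$ and any $N$ there is a map $Y' \to Y$ in $\mathcal{P}_X^{fin}$ with $Y'_{\overline{k}} \to Y_{\overline{k}}$ factoring through the $N$-fold relative Frobenius, so the relevant trace map factors through $H^i(Y^{(-N)}_{\overline{k}}, \omega \otimes L) \cong H^i(Y_{\overline{k}}, \omega \otimes L^{p^N})$, which vanishes for $N \gg 0$ by Serre vanishing. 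The point is that Frobenius twists \emph{are} available in the mixed-characteristic tower (by perfectness of the limit), whereas arbitrary characteristic $p$ covers may not be. Your argument could be salvaged by unpacking \cite{HHCM} along these lines, but the black-box citation does not suffice.
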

\begin{proof}
Relabelling $X$ if necessary, there are $3$ cases to consider: $a \in \{0,1,-1\}$. 

The cases $a \in \{0,1\}$ follow from Proposition~\ref{AICAmple} by base change, but we give a direct proof below when $a = 1$ for use in the case $a=-1$.

For $a =1$, let us prove the stronger statement that the ind-object $\{H^i(Y_{\overline{k}}, L)\}_{Y \in \mathcal{P}_X^{fin}}$ is $0$ for $i > 0$. Indeed, for any finite $Y \in \mathcal{P}_X^{fin}$, the map $X^+_{\overline{k}} \to Y_{\overline{k}}$ factors over $Y_{\overline{k}}^{\perf} \to Y_{\overline{k}}$ since $X^+_{\overline{k}}$ is perfect. In particular, the map $H^i(Y_{\overline{k}},L) \to H^i(X^+_{\overline{k}},L)$ is $0$ for $i > 0$ since it factors over $H^i(Y_{\overline{k}}^{\perf}, L)$ which vanishes by Serre vanishing; as $H^i(Y_{\overline{k}},L)$ is a finite dimensional $k$-vector space, the claim follows. 

For $a=-1$, via duality, it suffices to show that the pro-system $\{H^i(Y_{\overline{k}}, \omega_{Y_{\overline{k}}}^\bullet \otimes L)\}_{Y \in \mathcal{P}_X^{fin}}$ is pro-zero for $i > -d$ (where $ \omega_{Y_{\overline{k}}}^\bullet$ is the dualizing complex normalized to start in homological degree $d$). Our proof of the Cohen--Macaulayness of $X^+_k$ shows that  $\{\omega_{Y_{\overline{k}}}^\bullet\}_{Y \in \mathcal{P}_X^{fin}} \simeq \{ \omega_Y[d] \}_{Y \in \mathcal{P}_X^{fin}}$ as pro-objects (Remark~\ref{ProCM}). It is thus enough to show that $\{H^i(Y_{\overline{k}}, \omega_{Y_{\overline{k}}} \otimes L)\}_{Y \in \mathcal{P}_X^{fin}}$ is pro-zero for $i > 0$. As in the previous paragraph, by the perfectness of $X^+_{\overline{k}}$, for any $Y \in \mathcal{P}_X$ and any $N \geq 0$, there exists a map $Y' \to Y$ in $\mathcal{P}_X^{fin}$ such that $Y'_{\overline{k}} \to Y_{\overline{k}}$ factors over the $N$-fold relative\footnote{As $\overline{k}$ is perfect, the Frobenius twist functor $Z \mapsto Z^{(1)}$ on $\overline{k}$-schemes is invertible. We write $Z \mapsto Z^{(-1)}$ for the inverse, and $Z \mapsto Z^{(-N)}$ for its $N$-fold composition. The $N$-fold relative Frobenius $Z \to Z^{(N)}$ then yields a map $Z^{(-N)} \to Z$ that we also call the $N$-fold relative Frobenius.} Frobenius $Y^{(-N)}_{\overline{k}} \to Y_{\overline{k}}$. But then the trace map $H^i(Y'_{\overline{k}}, \omega_{Y'_{\overline{k}}} \otimes L) \to  H^i(Y_{\overline{k}}, \omega_{Y_{\overline{k}}} \otimes L)$ factors over $H^i(Y^{(-N)}_{\overline{k}}, \omega_{Y^{(-N)}_{\overline{k}}} \otimes L) \to  H^i(Y_{\overline{k}}, \omega_{Y_{\overline{k}}} \otimes L)$.  Now $H^i(Y^{(-N)}_{\overline{k}}, \omega_{Y^{(-N)}_{\overline{k}}} \otimes L)$ is abstractly (i.e., not $\overline{k}$-linearly) identified with $H^i(Y_k, \omega_{Y_k} \otimes L^{p^N})$ and thus vanishes (by Serre vanishing) for $i > 0$ provided $N \gg 0$. But then the map $H^i(Y'_{\overline{k}}, \omega_{Y'_{\overline{k}}} \otimes L) \to  H^i(Y_{\overline{k}}, \omega_{Y_{\overline{k}}} \otimes L)$ is $0$ for $i > 0$, as wanted. 
 \end{proof}

\begin{proposition}[Almost Cohen--Macaulayness of $R^{+,GR}$]
\label{WeakCMGAIC}
Recall that we have $\widetilde{\mathfrak{m}} = R_{\geq 1} \subset R$. 
\begin{enumerate}
\item $R\Gamma_{\widetilde{\mathfrak{m}}}(R^{+,GR}/p)$ is almost concentrated in degrees $d+1$.
\item $R\Gamma_{\widetilde{\mathfrak{m}}}(R^{+,GR})^{\wedge}$ is concentrated in degree $d+1$ and almost $p$-torsionfree in degree $d+1$.
\item $R\Gamma_{\widetilde{\mathfrak{m}}}(R^{+,GR} \otimes_{\overline{V}}^L \overline{k})$ is concentrated in degree $d+1$. 
\end{enumerate}
\end{proposition}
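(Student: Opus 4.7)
The plan is to prove (1) by explicit computation combined with the Riemann-Hilbert perversity bound, and then deduce (2) and (3) as formal corollaries.

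For (1), the vanishing locus $V(\widetilde{\mathfrak{m}}) \subset C_\infty$ is (topologically) the vertex, with complement $U_\infty$. Combining the local cohomology fiber sequence with (i) the $\widetilde{\mathbf{G}_m}$-torsor structure on $U_\infty \to X^+$, which gives $R\Gamma(U_\infty, \mathcal{O}/p) \simeq \bigoplus_{n \in \mathbf{Q}} R\Gamma(X^+_{p=0}, L^n)$, and (ii) Proposition~\ref{AICAmple}, which gives $R^{+,GR}/p \simeq \bigoplus_{n \in \mathbf{Q}_{\geq 0}} R\Gamma(X^+_{p=0}, L^n)$ with restriction map the canonical inclusion of non-negative-degree summands, yields
\[ R\Gamma_{\widetilde{\mathfrak{m}}}(R^{+,GR}/p) \simeq \bigoplus_{n \in \mathbf{Q}_{< 0}} R\Gamma(X^+_{p=0}, L^n)[-1]. \]
It then suffices to show each summand is almost concentrated in degree $d$. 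For this, I would apply $\RH_{\overline{\Prism}}$ to the ind-perverse sheaf $F := \mathbf{F}_{p,C_\infty[1/p]}[d+1]$ on $C(X,L)[1/p]$ from Lemma~\ref{IndPervConstGAIC}. Theorem~\ref{thm:RH} (5)--(6) (perverse $t$-exactness plus Grothendieck/Verdier duality compatibility) forces $\RH_{\overline{\Prism}}(F)$ to be an almost Cohen-Macaulay complex on the $(d+1)$-dimensional scheme $C(X,L)_{p=0}$. Identifying $\RH_{\overline{\Prism}}(F)$ (up to shift and almost isomorphism) with $R^{+,GR}/p$ via perfectoidness of $C_\infty^\wedge$ (Lemma~\ref{PerfdGAIC}) together with a colimit of proper pushforwards along the finite covers $C(Y, L^{1/n}) \to C(X,L)$ for $Y \in \mathcal{P}_X^{fin}$, and then specializing the almost CM bound at the closed vertex $\mathfrak{m} \in C(X,L)_{p=0}$ (where $\dim \overline{\{\mathfrak{m}\}} = 0$), yields the desired almost concentration in degree $d+1$.

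For (3), the same computation after $\overline{V}$-derived base change to $\overline{k}$ yields $R\Gamma_{\widetilde{\mathfrak{m}}}(R^{+,GR} \otimes_{\overline{V}}^L \overline{k}) \simeq \bigoplus_{n < 0} R\Gamma(X^+_{\overline{k}}, L^n)[-1]$, using that local cohomology commutes with derived tensor products; Lemma~\ref{SpecialFibreGAIC} (vanishing below $d$) and the cohomological dimension bound $\leq d$ on $X^+_{\overline{k}}$ (as a cofiltered affine limit of $d$-dimensional projective $\overline{k}$-varieties) give honest concentration in degree $d+1$. For (2), set $K := R\Gamma_{\widetilde{\mathfrak{m}}}(R^{+,GR})^{\wedge}_p$; induction via $0 \to R^{+,GR}/p \to R^{+,GR}/p^n \to R^{+,GR}/p^{n-1} \to 0$ combined with (1) shows $K/p^n \simeq R\Gamma_{\widetilde{\mathfrak{m}}}(R^{+,GR}/p^n)$ is almost concentrated in degree $d+1$ for every $n \geq 1$, and the Bockstein short exact sequence $0 \to H^i(K)/p \to H^i(K/p) \to H^{i+1}(K)[p] \to 0$ combined with derived $p$-completeness of $K$ yields the claimed concentration of $H^i(K)$ and the almost $p$-torsionfreeness of $H^{d+1}(K)$.

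The main obstacle is the Riemann-Hilbert identification in (1): since $C_\infty \to C(X,L)$ is only integral, Theorem~\ref{thm:RH} (2) does not directly apply. One must approximate through the finite (hence proper) covers $C(Y, L^{1/n}) \to C(X,L)$ where pushforward compatibility holds, and then pass to the colimit, using perfectoidness of $C_\infty^\wedge$ to match the answer with $R^{+,GR}/p$ from Proposition~\ref{AICAmple}.
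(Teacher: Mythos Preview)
Your arguments for (1) and (3) are correct and match the paper's approach: apply Corollary~\ref{cor:PervACM} to the ind-perverse sheaf of Lemma~\ref{IndPervConstGAIC} for (1), and use Lemma~\ref{SpecialFibreGAIC} for (3). The workaround you describe for the ``main obstacle'' (approximating the integral map $C_\infty \to C(X,L)$ by finite covers) is exactly how one makes the Riemann--Hilbert identification rigorous.

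There is, however, a genuine gap in your argument for (2). The statement claims \emph{honest} concentration of $K = R\Gamma_{\widetilde{\mathfrak{m}}}(R^{+,GR})^{\wedge}$ in degree $d+1$, not merely almost concentration. Your Bockstein/Nakayama argument only shows: $H^i(K/p)$ almost zero for $i<d+1$ implies $H^i(K)/p$ almost zero, and hence (by derived $p$-completeness, applied in the almost category) $H^i(K)^a = 0$. But this does not exclude $H^i(K)$ being a nonzero almost-zero module, for instance a $\overline{k}$-vector space. Nothing in your argument rules this out. (Your deduction of the almost $p$-torsionfreeness of $H^{d+1}(K)$ via $H^{d+1}(K)[p] \hookrightarrow H^d(K/p)$ is fine.)

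The paper closes this gap by using (3), which you have already established and which is an \emph{honest} vanishing statement. Writing $M=K$, the paper invokes the fibre sequence
\[
M \to M_{\overline{k}} \times M_* \to M_* \otimes^L_{\overline{V}} \overline{k},
\]
where $M_{\overline{k}} = M \otimes^L_{\overline{V}} \overline{k}$ and $M_* = (M^a)_* = \mathrm{RHom}_{\overline{V}}(\sqrt{p\overline{V}},M)$. Since $(-)_*$ is left exact and $M^a \in D^{\geq d+1}$ by (1), one has $M_* \in D^{\geq d+1}$; and $M_{\overline{k}} \in D^{\geq d+1}$ by (3). The fibre sequence then forces $M \in D^{\geq d+1}$. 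The upper bound $M \in D^{\leq d+1}$ is automatic since $\widetilde{\mathfrak{m}}$ is generated up to radicals by $d+1$ elements. Equivalently, one may use the simpler triangle $M_! \to M \to M_{\overline{k}}$: since $\sqrt{p\overline{V}} \otimes_{\overline{V}} \overline{k} = 0$, the module $M_! = \sqrt{p\overline{V}} \otimes^L_{\overline{V}} M$ has $H^i(M_!) = \sqrt{p\overline{V}} \otimes H^i(M) = 0$ for $i<d+1$ (as $H^i(M)$ is almost zero there), and combined with (3) this again gives $M \in D^{\geq d+1}$.
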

\begin{proof}
Lemma~\ref{IndPervConstGAIC} gives almost Cohen--Macaulayness of $R^{+,GR}/p$, which gives (1) as well as (2) in the almost category. Lemma~\ref{SpecialFibreGAIC} implies that $R\Gamma_{\widetilde{\mathfrak{m}}}(R^{+,GR} \otimes_V^L \overline{k})$ is concentrated in degree $d+1$, giving (3). It remains to prove that the $p$-completion $M := R\Gamma_{\widetilde{\mathfrak{m}}}(R^{+,GR})^{\wedge}$ is concentrated in degrees $d+1$. But both $M_{\overline{k}}$ and $M_*$ lie in $D^{\geq d+1}$, so the usual fibre sequence 
\[ M \to M_{\overline{k}} \times M_* \to M_* \otimes^L_V {\overline{k}}\]
then implies that $M \in D^{\geq d+1}$. As $I$ is generated up to radicals by $d+1$ elements, we know that $R\Gamma_{\widetilde{\mathfrak{m}}}(R^{+,GR})$ lives in $D^{\leq d+1}$, and hence so does its $p$-completion $M$.
\end{proof}

\begin{remark}
Since $\mathfrak{m} R/p = \widetilde{\mathfrak{m}} R/p$ as ideals of $R/p$, Proposition~\ref{WeakCMGAIC} (2)  implies (via reduction mod $p$) that $H^i_{\mathfrak{m}}(R^{+,GR}/p) = 0$ unless $i=d,d+1$ and that $H^d_{\mathfrak{m}}(R^{+,GR}/p)$ is almost zero. This proves most of Theorem~\ref{GAICCM} except the crucial assertion that $H^d_{\mathfrak{m}}(R^{+,GR}/p)$ is actually $0$. We shall prove this in the next section using the Frobenius in a more serious way.
\end{remark}

\subsection{From almost to honest Cohen--Macaulayness}
\label{ss:GACMtoCM}

We shall prove Theorem~\ref{GAICCM} by following the outline of Theorem~\ref{AICCMMain} with the following two changes. First, instead of approximating $\mathcal{O}_{C_\infty/p}$ via mod $(p,d)$ reductions of prismatic complexes of schemes approximating $C_\infty$, we shall  approximate $\mathcal{O}_{T_\infty}/p$ via mod $(p,d)$ reductions of prismatic complexes of schemes approximating ${T_\infty}$, and then push this description down along $T_\infty \to C_\infty$; in view of Proposition~\ref{AICAmple}, this still gives an approximation of $\mathcal{O}_{C_\infty}/p$. Secondly, as we already know Cohen--Macaulayness of $U_\infty$ (as it is pro-smooth over $X^+$), we can avoid the inductive argument from the proof of Theorem~\ref{AICCMMain}.

 To carry out this argument, we first set up some notation that helps describe $T_\infty$ as a projective limit of varieties obtained as total spaces of line bundles on suitable covers of $X$. 

\begin{construction}[Approximating $T_\infty$ via line bundles on finite covers of $X$]
Define $\mathcal{P}_X$, $\mathcal{P}_X^{fin}$, and $\mathcal{P}_X^{ss}$ exactly as in Definition~\ref{SSAlt}. Regard $\{L^{1/n}\}$ as a compatible system of $n$-th roots of $L$ on the Riemann-Zariski space $\widetilde{X^+} = \lim_{Y \in \mathcal{P}_X} Y$ of $X^+$. For $Y \in \mathcal{P}_X$, write $\pi_Y:\widetilde{X^+} \to Y$ and $f_Y:Y \to X$ for the natural maps. 

For a fixed positive integer $n$, let $\mathcal{Q}_n$ denote the category of those $Y \in \mathcal{P}_X$ equipped with an $n$-th root $L_n$ of $f_Y^* L$ that descends the chosen $n$-th root $L^{1/n}$ over $\widetilde{X^+}$. More precisely, the objects $\mathcal{Q}_n$ are tuples $(Y \in \mathcal{P}_X, L_n \in \mathrm{Pic}(Y),\alpha,\beta)$ where $\alpha$ and $\beta$ are isomorphisms $\alpha:L_n^{\otimes n} \simeq f_Y^* L $ on $Y$ and $\beta:\pi_Y^* L_n \simeq L^{1/n}$ on $\widetilde{X^+}$ such that $\pi_Y^*(\alpha)$  agrees with the chosen isomorphism $(L^{1/n})^{\otimes n} \simeq \pi_X^* L$ under $\beta$; morphisms in this category are given in the obvious way. Thanks to the choice of $\beta$, the situation is very rigid: the map $\mathcal{Q}_n \to \mathcal{P}_X$ is faithful, so $\mathcal{Q}_n$ is a poset. Moreover, for any $(Y,L_n,\alpha,\beta) \in \mathcal{Q}_n$, the slice category $\mathcal{Q}_{n,/(Y,L_n,\alpha,\beta)}$ identifies with the slice category $\mathcal{P}_{X,/Y}$ under the obvious map. In particular, $\mathcal{Q}_{n}$ is cofiltered. In the future, unless there is potential for confusion, we shall simply write $(Y,L_n)$ for an object of $\mathcal{Q}_n$.

 %Taking pushforward of the structure sheaf, we obtain ind-objects like $\{\mathcal{O}_{T(Y,L_n)}\}_{\mathcal{Q}_n}$ or $\{\Prism_{T_Y(L_n)}/p\}_{\mathcal{Q}_n}$ over $\overline{U_L}$. 

For $m \mid n$, there is a natural map $\mathcal{Q}_n \to \mathcal{Q}_m$ given by $(Y,L_n) \mapsto (Y,L_n^{\otimes \frac{n}{m}})$. Thus, we can regard the collection $\{\mathcal{Q}_n\}$ as a family of categories parametrized by the poset $\mathbf{Z}_{\geq 1}$ of positive integers divisibility (i.e., there is a map $n \to m$ exactly when $m \mid n$). Let $\mathcal{Q}$ be the Grothendieck construction of the resulting functor $\mathbf{Z}_{\geq 1} \to \mathrm{Cat}$. Explicitly, an object of $\mathcal{Q}$ is given by a pair $(n \in \mathbf{Z}_{\geq 1}, (Y,L_n) \in \mathcal{Q}_n)$; a map $(n,(Y,L_n)) \to (m,(Z,L_m))$ exists only if $m \mid n$ in which case it is given by a map $(Y,L_n^{\otimes n/m}) \to (Z,L_m)$ in $\mathcal{Q}_m$. One checks that  $\mathcal{Q}$ is cofiltered.

There is a natural functor $\mathcal{Q}_n \to \mathrm{Sch}_{/T(X,L)}$ given by $(Y,L_n) \mapsto T(Y,L_n)$. In fact, using the natural maps $T(Y,L_n) \to T(Y, L_n^{\otimes n/m})$ for $m \mid n$, the preceding assignment naturally extends to a functor $\mathcal{Q} \to \mathrm{Sch}_{/T(X,L)}$. Regard this construction as yielding an pro-object $\{T(Y,L_n)\}_{\mathcal{Q}}$.

Finally, there are evident subcategories $\mathcal{Q}_n^{ss}, \mathcal{Q}_n^{fin} \subset \mathcal{Q}_n$ and $\mathcal{Q}^{ss}, \mathcal{Q}^{fin} \subset \mathcal{Q}$ spanned by those $(Y,L_n)$ with $Y$ lies in $\mathcal{P}_X^{ss}$ or $\mathcal{P}_X^{fin}$ respectively. 
\end{construction}

\begin{remark}[The inverse limit of the spaces parametrized by $\mathcal{Q}$]
\label{QfinQssBC}
The primary reason we introduce the category $\mathcal{Q}$ (and its variants) is that it indexes a collection of understandable finite type $\overline{V}$-schemes approximating $T_\infty$. More precisely, the locally ringed space $\lim_{(Y,L_n) \in \mathcal{Q}^{fin}} T(Y,L_n)$ identifies with $T_\infty$, while  $\lim_{(Y,L_n) \in \mathcal{Q}} T(Y,L_n)$ identifies with the base change of $T_\infty \to X$ along $\widetilde{X^+} \to X^+$. In particular, the map $\lim_{(Y,L_n) \in \mathcal{Q}} T(Y,L_n) \to \lim_{(Y,L_n) \in \mathcal{Q}^{fin}} T(Y,L_n)$ is the base change of $\widetilde{X^+} \to X^+$ along the pro-smooth map $T_\infty \to X^+$. We shall use this description to understand the difference in the cohomologies (both prismatic and coherent) of $\lim_{(Y,L_n) \in \mathcal{Q}} T(Y,L_n)$ and  $\lim_{(Y,L_n) \in \mathcal{Q}^{fin}} T(Y,L_n)$
\end{remark}

\begin{remark}[Log prismatic complex for the spaces parametrized by $\mathcal{Q}$]
Given $(Y,L_n) \in \mathcal{Q}_n$, the map $T(Y,L_n) \to Y$ is a smooth map. In particular, if $Y \in \mathcal{P}_X^{ss}$ is semistable, then $T(Y,L_n)$ is also semistable. Write $\Prism_{T(Y,L_n)}$ for the log prismatic complex of its $p$-completion. This complex satisfies the isogeny theorem as well as the Hodge-Tate comparison as in Theorem~\ref{LogPrismatic}. We shall write $\Prism^n_{T(Y,L_n)}$ for the non-logarithmic derived prismatic complex of $T(Y,L_n)$.
\end{remark}

The next lemma is analogous to Theorem~\ref{IndObjectPairs}.

\begin{lemma}[A collection of ind-isomorphisms]
\label{GAICIndObj}
Let $\mathcal{Q}^{fin} \subset \mathcal{Q} \supset \mathcal{Q}^{ss}$ be the categories above. 
\begin{enumerate}
\item All arrows in the following diagram of ind-objects are isomorphisms
\[ \{ \mathcal{O}_{T(Y,L_n)}/p  \}_{\mathcal{Q}^{fin}} \xrightarrow{a}    \{ \mathcal{O}_{T(Y,L_n)}/p \}_{\mathcal{Q}}  \xleftarrow{b}  \{ H^0(\mathcal{O}_{T(Y,L_n)}/p) \}_{\mathcal{Q}} \xrightarrow{c}  \{S/p\}_{S \subset R^{+,GR}},   \]
where the last object is indexed by finite $R$-subalgebras $S \subset R^{+,GR}$
\item All arrows in the following diagram of ind-objects are isomorphisms
\[ 
\{ \mathcal{O}_{T(Y,L_n)}/p \}_{\mathcal{Q}^{ss}} \xrightarrow{d}  \{ \mathcal{O}_{T(Y,L_n)}/p \}_{\mathcal{Q}} \xrightarrow{e}  \{\Prism^n_{T(Y,L_n)}/(p,d) \}_{\mathcal{Q}} \xrightarrow{f} \{\Prism_{T(Y,L_n)}/(p,d) \}_{\mathcal{Q}} \xleftarrow{g}  \{\Prism_{T(Y,L_n)}/(p,d)\}_{\mathcal{Q}^{ss}} \]
\end{enumerate}
In both cases, the colimit of the corresponding ind-object is $R^{+,GR}/p$.
\end{lemma}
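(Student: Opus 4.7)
The proof closely mirrors that of Theorem~\ref{IndObjectPairs}. First I note that every term of every ind-system in (1) and (2) lies in $D^{[0,d+1]}_{coh}(C(X,L)_{p=0})$ with uniform amplitude bound $d+1 = \dim T(Y, L_n)$: the bound for the higher direct images of $\mathcal{O}_{T(Y,L_n)}/p$ along the proper map $T(Y,L_n) \to C(X, L)$ is standard, while the analogous bounds for the prismatic complexes follow from Theorem~\ref{LogPrismatic} (2) and the Hodge-Tate comparison. Since bounded coherent complexes are compact in $D^{[0,d+1]}_{qc}(C(X,L)_{p=0})$, verifying that the displayed maps are isomorphisms of ind-objects reduces to verifying that they are isomorphisms after passage to colimits; non-noetherianness of $\overline{V}$ is handled as in Lemma~\ref{IndCMPuncturedNN} by descending to a finitely presented $V'$-model.

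For part (1), observe that $\lim_{\mathcal{Q}^{fin}} T(Y,L_n) = T_\infty$ and $\lim_{\mathcal{Q}} T(Y,L_n) \simeq T_\infty \times_{X^+} \widetilde{X^+}$; in both cases the transition maps are affine, so pushforward of $\mathcal{O}/p$ commutes with the limit. The colimit of (a)'s source thus identifies with the pushforward of $\mathcal{O}_{T_\infty}/p$ to $C(X,L)$, which is $R^{+,GR}/p$ by Proposition~\ref{AICAmple}. The colimit of (a)'s target equals the pushforward of $\mathcal{O}/p$ from $T_\infty \times_{X^+} \widetilde{X^+}$; by flat base change along the pro-smooth map $T_\infty \to X^+$, this reduces to the cohomological triviality of $\widetilde{X^+} \to X^+$ modulo $p$, which is Theorem~\ref{IndObjectPairs} (1) applied to $X$. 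Map (b) then follows by combining this with Proposition~\ref{AICAmple} (which gives that the pushforward of $\mathcal{O}_{T_\infty}/p$ to $C(X,L)$ is concentrated in degree $0$). Finally, map (c) is the tautological identification $\colim_{\mathcal{Q}} R(Y,L_n)/p = R^{+,GR}/p$ rewritten in terms of finite $R$-subalgebras of $R^{+,GR}$.

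For part (2), the isomorphisms (d) and (g) follow from cofinality of $\mathcal{Q}^{ss}$ in $\mathcal{Q}$, which in turn follows from Theorem~\ref{AlterationsExist} together with the observation that $T(Y, L_n)$ is semistable whenever $Y$ is (since $T(Y, L_n) \to Y$ is smooth). Map (f) is Theorem~\ref{IndObjectPairs} (2) applied to the schemes $T(Y, L_n)$. For map (e), as in the proof of Theorem~\ref{IndObjectPairs} (2), I base change along $A_{\inf}/(p,d) \to A_{\inf}/(p, \varphi^{-1}(d))$: under the de Rham comparison for non-logarithmic derived prismatic cohomology (Construction~\ref{ConsPrism} (2)), the target becomes the Frobenius twist of the derived de Rham complex modulo $p$, so isomorphy reduces to the vanishing of derived Kahler differentials in the colimit. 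This vanishing is established by combining Lemma~\ref{AICPerfectoid} with the perfectoidness of $T_\infty$ (Lemma~\ref{TotLBPerfd}) via the Frobenius factorization argument of Lemma~\ref{FactorizeFrobAIC}. Together with part (1), this also shows that every colimit appearing in (2) equals $R^{+,GR}/p$.

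The principal technical subtlety is the identification of the colimit indexed by $\mathcal{Q}$ (rather than $\mathcal{Q}^{fin}$): the limit schemes differ by the extra Riemann-Zariski factor $\widetilde{X^+} \to X^+$, and the argument hinges on transporting the cohomological triviality of this map (which is Theorem~\ref{IndObjectPairs} (1) applied to $X$) through the flat pro-smooth projection $T_\infty \to X^+$. Granting this, everything reduces to combining Proposition~\ref{AICAmple}, the cofinality of semistable alterations (Theorem~\ref{AlterationsExist}), and the prismatic-to-coherent comparison machinery of Theorem~\ref{IndObjectPairs} (2).
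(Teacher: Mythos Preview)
Your proof follows essentially the same strategy as the paper's, and the treatment of maps $a$, $b$, $c$, $d$, $g$, and $f$ is correct (with only cosmetic differences, e.g.\ you invoke Theorem~\ref{IndObjectPairs}(1) for the cohomological triviality of $\widetilde{X^+}\to X^+$ where the paper cites Theorem~\ref{KillCohMixed} directly).

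There is, however, a genuine confusion in your argument for map $e$. You propose to base change along $A_{\inf}/(p,d)\to A_{\inf}/(p,\varphi^{-1}(d))$ and invoke the de~Rham comparison, claiming the target becomes derived de~Rham modulo $p$. But this is the technique used in the proof of Theorem~\ref{IndObjectPairs}(2) for the comparison $\Prism^n\to\Prism$ (your map $f$), not for $\mathcal{O}/p\to\Prism^n/(p,d)$. Concretely, $\Prism^n_{T(Y,L_n)}/(p,d)\otimes^L_{A_{\inf}/(p,d)}A_{\inf}/(p,\varphi^{-1}(d))$ is not the same as $\Prism^n_{T(Y,L_n)}/(p,\varphi^{-1}(d))$ unless you already know $d$-torsionfreeness, so the de~Rham identification does not apply to the base-changed target as stated. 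The clean route for $e$ is the derived Hodge--Tate comparison (Construction~\ref{ConsPrism}(1)): the cofiber of $\mathcal{O}/p\to\Prism^n/(p,d)$ is filtered by shifted wedge powers of the cotangent complex, which vanish in the colimit since $\lim_{\mathcal{Q}}T(Y,L_n)=T_\infty\times_{X^+}\widetilde{X^+}$ is perfectoid on $p$-completion. The paper itself sidesteps $e$ altogether: it shows directly that the composite $\{\mathcal{O}_{T(Y,L_n)}/p\}_{\mathcal{Q}^{ss}}\to\{\Prism_{T(Y,L_n)}/(p,d)\}_{\mathcal{Q}^{ss}}$ is an isomorphism via the log Hodge--Tate comparison and the colimit computation, then proves $f$ separately, and deduces $e$ by cancellation.
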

\begin{proof}

(1): All the ind-objects appearing here have coherent terms over $R^{+,GR}/p$ and uniformly bounded cohomological amplitude (as the map $T(Y,L_n) \to C(X,L)$ is proper with fibres of dimension $\leq d$). It is therefore enough to show that all the maps give isomorphisms after taking colimits. Using the base change description in Remark~\ref{QfinQssBC}, the claim for $a$ follows from the fact that $R\Gamma(X^+, \mathcal{O}_{X^+}/p) \simeq R\Gamma(\widetilde{X^+}, \mathcal{O}_{\widetilde{X^+}}/p)$, which follows from Theorem~\ref{KillCohMixed} as $\widetilde{X^+} \to X^+$ is a cofiltered limit of pro-(proper surjective) maps between integral schemes with algebraically closed function fields.  In fact, this reasoning also shows that this complex is concentrated in degree $0$, which gives the isomorphy of $b$ after taking the colimit. Finally, the isomorphy of $c$ after taking the colimit is the content of Proposition~\ref{AICAmple}.

(2): The isomorphy of $d$ and $g$ follows from the cofinality of $\mathcal{Q}^{ss} \subset \mathcal{Q}$, which is proven using de Jong's theorem as in Theorem~\ref{AlterationsExist}. Moreover, there is a natural map $\{ \mathcal{O}_{T(Y,L_n)}/p \}_{\mathcal{Q}^{ss}} \to \{\Prism_{T(Y,L_n)}/(p,d)\}_{\mathcal{Q}^{ss}}$ making the diagram commute by the Hodge-Tate comparison, and this map is also an isomorphism: both sides have coherent terms with uniformly bounded cohomological amplitude (by the Hodge-Tate comparison), and one has an isomorphism after taking colimits by Theorem~\ref{IndObjectPairs} (1) as well as the base change description given in the previous paragraph. It remains to check the claim for $f$, which follows from the argument for the analogous statement Theorem~\ref{IndObjectPairs} (2) which uses Lemma~\ref{FactorizeFrobAIC} (that applies thanks to Lemma~\ref{TotLBPerfd}).
\end{proof}

The following is an analog of Lemma~\ref{CMFactor}.

\begin{lemma}[$T_\infty^\flat$ is ind-CM up to bounded $d$-torsion]
\label{CMFactorGAIC}
There exists a constant $c = c(d)$ such that for any $(n,(Y,L_n)) \in \mathcal{Q}^{ss}$, there is a map there is a map $f:(m,(Z,L_m)) \to (n,(Y,L_n))$ in $\mathcal{Q}^{ss}$ and $K \in D_{comp,qc}(\Prism_R/p)$ with the following properties:
\begin{enumerate}
\item Writing $f^*:\Prism_{T(Y,L_n)/p} \to \Prism_{T(Z,L_m)}/p$ for the pullback, the map $d^c f^*$ factors over $K$ in $D(\Prism_R/p)$.
\item $K/d \in D_{qc}(R/p)$ is cohomologically CM (so $R\Gamma_x( (K/d)_x) \in D^{\geq d+1 - \dim(\overline{\{x\}})}$ for all $x \in \mathrm{Spec}(R/p)$). 
\end{enumerate}
\end{lemma}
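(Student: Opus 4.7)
The plan is to imitate the proof of Proposition~\ref{CMFactor} almost verbatim, with the base $\mathbf{P}^n_{\overline{V}}$ replaced by $C(X,L)$ (viewed as $\mathrm{Spec}(R)$) and each $Y \in \mathcal{P}_X^{ss}$ replaced by $T(Y,L_n) \in \mathcal{Q}^{ss}$. The latter is a semistable $\overline{V}$-scheme of relative dimension $d+1$ (being the total space of a line bundle on the semistable $Y$), so Theorem~\ref{LogPrismatic}(1) produces a constant $c_0 = c(d+1)$. As in Proposition~\ref{CMFactor}, it will suffice to construct an almost factorization of $d^{c_0} f^*$ through $K^a$ for an object $K$ of the desired sort; taking $c := c_0 + 1$ then converts this to an honest factorization for the same reason as in {\em loc.\ cit.}

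First I would set up the geometric picture. Let $g_Y : T(Y,L_n)[1/p] \to C(X,L)[1/p]$ be the natural map, factoring through the contraction $T(Y,f_Y^* L)[1/p] \to C(Y, f_Y^* L)[1/p]$ and the finite map $C(Y, f_Y^* L) \to C(X,L)$. By Remark~\ref{LogPrismPerfRH} applied to the semistable scheme $T(Y,L_n)$, there is an almost identification $\Prism_{T(Y,L_n),\perf}/p \simeq \RH_{\Prism}(Rg_{Y,*}\mathbf{F}_p)$ as $\Prism_R$-modules. To construct a perverse target, pick an affine open $V \subset X[1/p]$ over which $Y[1/p] \to X[1/p]$ is finite \'etale, and let $V' \subset U(Y,L_n)[1/p]$ be the preimage of $U(V, L|_V)[1/p]$. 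Then $V' \to C(X,L)[1/p]$ is the composition of a finite \'etale map with the affine open immersion $U(V,L|_V)[1/p] \hookrightarrow C(X,L)[1/p]$, hence quasi-finite affine. By \cite[Corollary 4.1.3]{BBDG}, the sheaf $R(g_Y \circ j)_* \mathbf{F}_{p,V'}[d+1]$ is perverse on $C(X,L)[1/p]$, where $j : V' \hookrightarrow T(Y,L_n)[1/p]$ is the open immersion.

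The crucial factorization is supplied by (the proof of) Lemma~\ref{IndPervConstGAIC}, which established that $\mathbf{F}_{p,T_\infty[1/p]}$ is $*$-extended from the pro-open $U_{\infty,\eta} \subset T_\infty[1/p]$. By compactness of $\mathbf{F}_{p,T(Y,L_n)[1/p]}$ as a constructible sheaf and the compatibility of pushforward with the filtered system $T_\infty = \lim_{\mathcal{Q}^{fin}} T(Y,L_n)$, one obtains a map $(m,(Z,L_m)) \to (n,(Y,L_n))$ in $\mathcal{Q}^{fin}$ such that the pullback $Rg_{Y,*} \mathbf{F}_p \to Rg_{Z,*}\mathbf{F}_p$ factors through $R(g_Y \circ j)_* \mathbf{F}_{p,V'}$ on $C(X,L)[1/p]$. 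By refining via de Jong's theorem (Theorem~\ref{AlterationsExist}), we may further assume $(Z,L_m) \in \mathcal{Q}^{ss}$.

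Applying $\RH_{\Prism}$ and setting $K := \RH_{\Prism}(R(g_Y \circ j)_* \mathbf{F}_{p,V'}) \in D_{comp,qc}(\Prism_R/p)$ then yields a commutative diagram
\[ \xymatrix{ \Prism_{T(Y,L_n)}/p \ar[r] \ar[dd] & \Prism_{T(Y,L_n),\perf}/p \ar[d] \\
 & K \ar[d] \\
\Prism_{T(Z,L_m)}/p \ar[r] & \Prism_{T(Z,L_m),\perf}/p } \]
in the almost category. By the isogeny theorem (Theorem~\ref{LogPrismatic}(1)) applied to the semistable scheme $T(Z,L_m)$, the bottom horizontal arrow admits an inverse up to $d^{c_0}$, so $d^{c_0} f^*$ factors through $K$ in the almost category. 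Since $R(g_Y \circ j)_* \mathbf{F}_{p,V'}[d+1]$ is perverse, Corollary~\ref{cor:PervACM} gives $R\Gamma_x((K/d)_x) \in D^{\geq d+1 - \dim(\overline{\{x\}})}$ (almost) for all $x \in \mathrm{Spec}(R/p)$, so $K/d$ is almost cohomologically CM. The remaining obstacle is the passage from almost to honest factorization, but this is handled exactly as in Proposition~\ref{CMFactor} by absorbing one extra power of $d$, giving the claim with $c = c_0 + 1$.
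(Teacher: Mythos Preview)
Your proof is correct and follows essentially the same route as the paper's own argument. The paper's proof is a two-sentence sketch (``This is proven as in Lemma~\ref{CMFactor}. Specifically, using Lemma~\ref{IndPervConstGAIC}, we learn the existence of an $f$ as above such that $\Prism_{T(Y,L_n),\perf}/p \to \Prism_{T(Z,L_m),\perf}/p$ factors in the almost category over $K := \RH_{\Prism}(F[-d-1])$ for a perverse $\mathbf{F}_p$-sheaf $F$ on $C(X,L)[1/p]$''), and you have correctly unpacked it: you build the explicit perverse target $R(g_Y \circ j)_* \mathbf{F}_{p,V'}[d+1]$ via a quasi-finite affine map to $C(X,L)[1/p]$, invoke the $*$-extension property from the proof of Lemma~\ref{IndPervConstGAIC} together with compactness to produce the factorization at finite level, refine to $\mathcal{Q}^{ss}$ via de Jong, and close with the isogeny theorem exactly as in Proposition~\ref{CMFactor}.
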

\begin{proof}
This is proven as in Lemma~\ref{CMFactor}. Specifically, using Lemma~\ref{IndPervConstGAIC}, we learn the existence of an $f$ as above such that $\Prism_{T(Y,L_n),\perf}/p \to \Prism_{T(Z,L_m),\perf}/p$ factors in the almost category over $K := \RH_{\Prism}(F[-d-1])$ for a perverse $\mathbf{F}_p$-sheaf $F$ on $C(X,L)[1/p]$. One then argues as in Lemma~\ref{CMFactor}, using the isogeny theorem to pass from the perfect prismatic complexes to their imperfect variants.
\end{proof}

Finally, we obtain the analog of Lemma~\ref{EquationCM}.

\begin{proposition}[Killing local \'etale cohomology, aka the ``graded equational lemma'']
\label{LocalEtaleCohGAIC}
 $R\Gamma_{\mathfrak{m}}(R^{+,GR,\flat}, \mathbf{F}_p)$ is concentrated in degree $d+2$. 
\end{proposition}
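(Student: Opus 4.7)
The plan is to imitate the proof of Lemma~\ref{EquationCM}, exploiting our geometric model of $\mathrm{Spec}(R^{+,GR})$ via the affinization $f: T_\infty \to C_\infty$ from Proposition~\ref{AICAmple}. The tilting equivalence for local \'etale cohomology (\cite[Theorem 2.2.7]{CesnavicusScholzePurity}), applied to the perfectoid ring $(R^{+,GR})^{\wedge}$, together with Nisnevich excision along the $p$-henselization $R^{+,GR} \to R^{+,GR,h}$ (which is an ind-\'etale $V(\mathfrak{m})$-neighbourhood since $p \in \mathfrak{m}$), identifies $R\Gamma_{\mathfrak{m}}(R^{+,GR,\flat}, \mathbf{F}_p)$ with $R\Gamma_{\mathfrak{m}}(\mathrm{Spec}(R^{+,GR}), \mathbf{F}_p)$. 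It therefore suffices to show this latter object is concentrated in degree $d+2$.

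Writing $C := \mathrm{Spec}(R^{+,GR})$ and $U := C \setminus V(\mathfrak{m})$, consider the localization triangle
\[ R\Gamma_{\mathfrak{m}}(C, \mathbf{F}_p) \to R\Gamma(C, \mathbf{F}_p) \to R\Gamma(U, \mathbf{F}_p). \]
The global term is easy: Proposition~\ref{AICAmple} gives $R\Gamma(C, \mathbf{F}_p) \simeq R\Gamma(T_\infty, \mathbf{F}_p)$, and since $T_\infty \to X^+$ is a pro-affine-line bundle (its finite levels being $\mathbf{A}^1$-bundles, all acyclic for \'etale $\mathbf{F}_p$-cohomology in both characteristics -- the characteristic-$p$ case via Artin-Schreier), we deduce $R\Gamma(T_\infty, \mathbf{F}_p) \simeq R\Gamma(X^+, \mathbf{F}_p) \simeq \mathbf{F}_p$ by Proposition~\ref{EtaleCohAIC}. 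The problem thereby reduces to showing $R\Gamma(U, \mathbf{F}_p)$ is concentrated in degrees $\{0, d+1\}$, with $H^0 \simeq \mathbf{F}_p$ via the natural pullback from $C$.

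For the punctured term, the map $f$ restricts to a proper surjection $f^{\circ}: T^{\circ} := T_\infty \setminus X^+_{\overline{k}} \to U$: over $U_\infty = U \cap D(\widetilde{\mathfrak{m}})$ the map $f^{\circ}$ is an isomorphism, while the exceptional fibre over the residual locus $V(\widetilde{\mathfrak{m}}) \cap U = \mathrm{Spec}(\overline{V}[1/p])$ is the AIC characteristic-$0$ variety $X^+[1/p]$, which has $R\Gamma(-, \mathbf{F}_p) = \mathbf{F}_p$ by Proposition~\ref{EtaleCohAIC}. Proper base change therefore yields $Rf^{\circ}_{*} \mathbf{F}_p \simeq \mathbf{F}_p$, so $R\Gamma(U, \mathbf{F}_p) \simeq R\Gamma(T^{\circ}, \mathbf{F}_p)$. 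The desired bounds on $R\Gamma(T^{\circ}, \mathbf{F}_p)$ then follow from a perversity argument analogous to Lemma~\ref{IndPervConstGAIC}: the constant sheaf on $T^{\circ}$ is $*$-extended from the pro-open $U_{\infty,\eta} \subset T^{\circ}$ (a projective limit of smooth affine varieties of relative dimension $d+1$ over $\overline{V}[1/p]$), and pushforward along the resulting pro-(quasi-finite affine) map to $C$ makes the appropriately shifted constant sheaf ind-perverse. Artin vanishing on the affine scheme $C$ then confines $R\Gamma(T^{\circ}, \mathbf{F}_p)$ to degrees $[0, d+1]$, and the $*$-extension description together with a second application of Proposition~\ref{EtaleCohAIC} pins $H^0 \simeq \mathbf{F}_p$.

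The hardest part will be correctly executing the mixed-characteristic version of the perverse analysis from Lemma~\ref{IndPervConstGAIC}: there it sufficed to work over the generic fibre $C_\infty[1/p]$, whereas here one must also show that removing the $d$-dimensional closed subvariety $X^+_{\overline{k}} \subset T_\infty$ (rather than merely inverting $p$) is compatible with the perverse $t$-structure enough to secure both the middle-degree vanishing and the correct identification of $H^0$. Granting this perversity upgrade, the localization triangle immediately forces $R\Gamma_{\mathfrak{m}}(R^{+,GR,\flat}, \mathbf{F}_p)$ to be concentrated in degree $d+2$, as desired.
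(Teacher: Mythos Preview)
Your approach differs from the paper's: the paper stays on the tilt side, using a formal-glueing triangle to reduce to the special-fibre computation (Lemma~\ref{SpecialFibreGAIC}) and the almost-CM result (Proposition~\ref{WeakCMGAIC}). Your geometric route via untilting and the localization triangle on $C_\infty$ is viable, but as written it has one error and one genuine gap.

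The error is the claim that $\mathbf{A}^1$-bundles are $\mathbf{F}_p$-acyclic in characteristic $p$ ``via Artin--Schreier'': in fact $H^1(\mathbf{A}^1_{\overline{k}}, \mathbf{F}_p) \simeq \overline{k}[t]/(\phi-1)\overline{k}[t]$ is infinite-dimensional, so you cannot deduce $R\Gamma(T_\infty, \mathbf{F}_p) \simeq R\Gamma(X^+,\mathbf{F}_p)$ fibrewise. The gap is the ``mixed-characteristic perversity upgrade'' you flag as the hardest part: Lemma~\ref{IndPervConstGAIC} and Artin vanishing are tools for varieties over a field, available only on $C(X,L)[1/p]$, and there is no ready substitute over $\mathrm{Spec}(\overline{V})$.

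Both issues dissolve via one observation that renders the perversity argument unnecessary. Proposition~\ref{EtaleCohAIC}, applied to the dominant open immersion $X^+[1/p] \hookrightarrow X^+$, shows that $\mathbf{F}_p$ on $X^+$ is $*$-extended from the generic fibre; by (pro-)smooth base change along $T_\infty \to X^+$, the same holds on $T_\infty$, so $R\Gamma_{T_{\infty,\overline{k}}}(T_\infty, \mathbf{F}_p) = 0$. Since $f^{-1}(V(\mathfrak{m})) = X^+_{\overline{k}} \subset T_{\infty,\overline{k}}$ and $\mathbf{F}_{p,C_\infty} = Rf_* \mathbf{F}_{p,T_\infty}$ by Proposition~\ref{AICAmple}, one obtains
\[ R\Gamma_{\mathfrak{m}}(C_\infty, \mathbf{F}_p) = R\Gamma_{X^+_{\overline{k}}}(T_\infty, \mathbf{F}_p) = 0. \]
Combined with your (correct) tilting reduction, this yields the proposition --- indeed the stronger statement $R\Gamma_{\mathfrak{m}}(R^{+,GR,\flat}, \mathbf{F}_p) = 0$ --- while bypassing Lemma~\ref{SpecialFibreGAIC} and Proposition~\ref{WeakCMGAIC} entirely.
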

\begin{proof}
Consider the complex $K := R\Gamma_{\mathfrak{m}}(R^{+,GR,\flat}) \in D(\overline{V}^\flat)$. Formal glueing along the $d$-completion map gives an exact triangle
\[ K \to \widehat{K} \times K[1/d] \to \widehat{K}[1/d].\] 
Now $K[1/d] = 0$ as $K$ is $d$-nilpotent. Applying $(-)^{\phi=1}$ and using Artin-Schreier sequences then gives
\[ R\Gamma_{\mathfrak{m}}(R^{+,GR,\flat}, \mathbf{F}_p) \to R\Gamma_{\mathfrak{m}}(R^{+,GR}_k, \mathbf{F}_p) \to (\widehat{K}[1/d])^{\phi=1},\]
where we use that $\widehat{K}^{\phi=1} \simeq (K/d)^{\phi=1} \simeq (K/d^{1/p^\infty})^{\phi=1}$; here the first isomorphism comes from the topological nilpotence of $\phi$ on $d\widehat{K}$ (see also \cite[Lemma 9.2]{BhattScholzePrism}), while the second isomorphism comes from the insensitivity of $(-)^{\phi=1}$ to nilpotents. Thanks to this triangle and the fact that inverting $d$ kills almost zero objects, it is enough to show that $R\Gamma_{\mathfrak{m}}(R^{+,GR}_k, \mathbf{F}_p)$ is concentrated in degree $d+2$ and that $R\Gamma_{\mathfrak{m}}(R^{+,GR,\flat})^{\wedge}$ is almost concentrated in degree $d+1$. 

For the special fibre, we have a $\phi$-equivariant identification
\[ R\Gamma_{\mathfrak{m}}(R^{+,GR}_k) = \mathrm{fib}( \bigoplus_{n \in \mathbf{Q}_{\geq 0}} H^0(X^+_k, L^n) \to  \bigoplus_{n \in \mathbf{Q}} R\Gamma(X^+_k, L^n) ).\]
Using Lemma~\ref{SpecialFibreGAIC}, this simplifies to give a $\phi$-equivariant isomorphism
\[ R\Gamma_{\mathfrak{m}}(R^{+,GR}_k) = (\bigoplus_{n \in \mathbf{Q}_{< 0}} H^d(X^+_k, L^n))[-(d+1)]  \]
Now $\phi$ is bijective on $X^+_k$ and hence also on the objects above. But then, for grading reasons, there are no $\phi$-invariant elements in $\oplus_{n \in \mathbf{Q}_{< 0}} H^d(X^+_k, L^n)$: the set of degrees that occur in such an element is a finite subset of $\mathbf{Q}_{<0}$ that is stable under multiplying by $p$, but there are no such non-empty subsets. This implies that $R\Gamma_{\mathfrak{m}}(R^{+,GR}_k)^{\phi=1}$ is concentrated in degree $d+2$.

To show $R\Gamma_{\mathfrak{m}}(R^{+,GR,\flat})^{\wedge}$ is almost concentrated in degree $d+1$, it is enough (by completeness) to show  the same for $R\Gamma_{\mathfrak{m}}(R^{+,GR,\flat})^{\wedge}/d \simeq R\Gamma_{\mathfrak{m}}(R^{+,GR}/p)$, but this follows from Proposition~\ref{WeakCMGAIC}. 
\end{proof}

The following is our main theorem in this section, proving Theorem~\ref{KodairaFiniteCover} in particular.

\begin{theorem}[$R^{+,GR}/p$ is CM]
\label{GrAICCM}
Any of the  (isomorphic) ind-objects in $D_{qc}(C(X,L)_{p=0})$ appearing in Lemma~\ref{GAICIndObj} are ind-CM in the sense of Definition~\ref{IndCMDef}. In particular, the colimit $R^{+,GR}/p$ is a cohomologically CM $R/p$-module in the sense of Definition~\ref{CMDerived}.
\end{theorem}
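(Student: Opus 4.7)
The target statement asserts the ind--CM property for the ind--system $\{\mathcal{O}_{T(Y,L_n)}/p\}_{\mathcal{Q}^{fin}}$ in $D_{qc}(C(X,L)_{p=0})$; by Lemma~\ref{GAICIndObj} all the ind--systems listed there are isomorphic (in particular $\{\Prism_{T(Y,L_n)}/(p,d)\}_{\mathcal{Q}^{ss}}$ represents the same ind--object), and by Lemma~\ref{IndCMcCMNN} the ind--CM property is equivalent to $R^{+,GR}/p$ being cohomologically CM. My plan is to execute the graded analog of the proof of Theorem~\ref{AICCMMain}, with two structural simplifications: (i) the inductive handling of "lower-dimensional" strata collapses, since the points of $\mathrm{Spec}(R/p)$ away from $V(\mathfrak{m})$ lie on $U(X,L)_{p=0}$, a $\mathbf{G}_m$-torsor over $X_{p=0}$, where cohomological CMness for the pulled-back absolute integral closure follows directly from Theorem~\ref{AICVanishLocalCoh}; (ii) the induction on dimension, needed in Theorem~\ref{AICCMMain} to produce the finiteness input to Lemma~\ref{IncreaseFinite}, is replaced here by the almost Cohen--Macaulayness of $R^{+,GR}$ proved in Proposition~\ref{WeakCMGAIC}.

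It therefore remains to verify the ind-vanishing of $\{H^i_\mathfrak{m}(\mathcal{O}_{T(Y,L_n)}/p)\}_{\mathcal{Q}^{ss}}$ for $i \leq d$; by a double Bockstein applied to the ind--system $\{\Prism_{T(Y,L_n)}/p\}_{\mathcal{Q}^{ss}}$ (exactly as in reduction step (3) of the proof of Theorem~\ref{AICCMMain}), this reduces to showing that for every $(Y,L_n) \in \mathcal{Q}^{ss}$ and every $i < d+2$, there exists a map $(Y,L_n) \to (Z,L_m)$ in $\mathcal{Q}^{ss}$ killing $H^i_\mathfrak{m}(\Prism_{T(Y,L_n)}/p) \to H^i_\mathfrak{m}(\Prism_{T(Z,L_m)}/p)$. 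I will establish this in two steps, mirroring Claims~\ref{CMclaim1} and \ref{CMclaim2}. Step A (bounded $d$-torsion): for some $c = c(d)$, any sufficiently large transition in $\mathcal{Q}^{ss}$ sends $H^i_\mathfrak{m}(\Prism/p)$ into a submodule annihilated by $d^c$; this is immediate from the factorization in Lemma~\ref{CMFactorGAIC}, whose cohomologically CM complex $K$ ensures $R\Gamma_\mathfrak{m}(K/d) \in D^{\geq d+1-\dim(\overline{\{\mathfrak{m}\}})} = D^{\geq d+1}$ at the vertex. Step B (local finite-generation on the $d^c$-torsion): after a further transition, the resulting $d^c$-torsion image lands inside a finitely generated $\overline{V}^\flat$-submodule; this follows from Lemma~\ref{IncreaseFinite} applied to $T(Y,L_n)$ (the proof transposes verbatim using Lemma~\ref{GAICIndObj}(2) and Breuil--Kisin descent for the non-logarithmic prismatic complexes), which in turn reduces the claim to the corresponding factorization for $\{\mathcal{O}_{T(Y,L_n)}/p\}_{\mathcal{Q}^{ss}}$, and this last factorization is supplied by Proposition~\ref{WeakCMGAIC}: the ind-object $\{H^i_\mathfrak{m}(\mathcal{O}_{T(Y,L_n)}/p)\}$ has almost zero colimit for $i < d+1$, and almost zero $\overline{V}^\flat$-modules are locally noetherian as they are modules over the field $\overline{k} = \overline{V}^\flat/d^{1/p^\infty}$ (compare Remark~\ref{NoetherianPropertyAlmostZero}).

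Combining Steps A and B, the Frobenius-equivariant map $H^i_\mathfrak{m}(\Prism_{T(Y,L_n)}/p) \to H^i_\mathfrak{m}(R^{+,GR,\flat})$ factors through a finitely generated, Frobenius-stable $\overline{V}^\flat$-submodule for $i < d+2$. The graded equational lemma (Proposition~\ref{LocalEtaleCohGAIC}), combined with the Artin--Schreier argument from the proof of Lemma~\ref{EquationCM}, implies that no such nonzero submodule exists in $H^i_\mathfrak{m}(R^{+,GR,\flat})$ for $i < d+2$: any such submodule would yield a nonzero holonomic Frobenius module whose $\phi=1$ part would contribute a nonzero class to the local \'etale cohomology $H^i_\mathfrak{m}(R^{+,GR,\flat},\mathbf{F}_p)$, contradicting Proposition~\ref{LocalEtaleCohGAIC}. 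The image therefore vanishes, and a compactness argument in the ind-system then produces the desired transition in $\mathcal{Q}^{ss}$, completing the proof.

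\textbf{Main obstacle.} The principal hurdle is Step B: converting the almost-vanishing from Proposition~\ref{WeakCMGAIC} into a genuine factorization of prismatic local cohomology through a finitely generated module. This requires both the $d$-thickening mechanism of Lemma~\ref{IncreaseFinite} (which involves descent to a Breuil--Kisin prism to enforce a noetherian spectral sequence argument) and the observation that almost zero $\overline{V}^\flat$-modules sit over a field and are thus locally noetherian. The Frobenius-equivariance of all the maps in play is critical for the final appeal to the equational lemma, and is the reason the passage from $R^{+,gr}$ to the more heavily ramified $R^{+,GR}$ -- which is what makes Proposition~\ref{LocalEtaleCohGAIC} accessible via tilting and perfectoid techniques -- was necessary in the first place.
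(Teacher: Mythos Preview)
Your overall architecture is right, and matches the paper's: establish ind-CM on the punctured cone $U(X,L)_{p=0}$ via the ungraded theorem, then run the two-step prismatic argument (bounded $d^c$-torsion from Lemma~\ref{CMFactorGAIC}, finite generation on the $d^c$-torsion, then the equational lemma). But there is a genuine gap in Step~B.

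You try to supply the hypothesis of Lemma~\ref{IncreaseFinite} --- namely, that each transition map $H^i_{\mathfrak{m}}(\mathcal{O}_{T(Y,L_n)}/p) \to H^i_{\mathfrak{m}}(\mathcal{O}_{T(Z,L_m)}/p)$ factors through a finitely presented $\overline{V}^\flat/d$-module for $i < d+1$ --- from Proposition~\ref{WeakCMGAIC}. That proposition only tells you the \emph{colimit} $H^i_{\mathfrak{m}}(R^{+,GR}/p)$ is almost zero. An almost zero colimit does not force individual transition maps to factor through finitely presented modules: the individual terms $H^i_{\mathfrak{m}}(\mathcal{O}_{T(Y,L_n)}/p)$ are not themselves almost zero, and an ind-system of large $\overline{V}/p$-modules can perfectly well have almost zero colimit without any transition map having small image. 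The locally noetherian property you cite from Remark~\ref{NoetherianPropertyAlmostZero} applies to submodules of the \emph{colimit}, not to images of transition maps at finite level.

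The fix is already in your hands: you established in (i) that the ind-object is ind-CM over $U(X,L)_{p=0} = C(X,L)_{p=0} - V(\mathfrak{m})$. That is exactly the hypothesis of Lemma~\ref{IndCMPuncturedNN}, which (via the duality argument) then yields the required factorization through finitely presented modules at the cone point for $i < d+1$. This is precisely what the paper does, and it is the correct replacement for the inductive step in Theorem~\ref{AICCMMain} --- not Proposition~\ref{WeakCMGAIC}. The almost Cohen--Macaulayness of Proposition~\ref{WeakCMGAIC} enters only at the very end, to ensure that the finitely generated image inside $H^i_{\mathfrak{m}}(R^{+,GR,\flat})$ is in fact a finite-dimensional $\overline{k}$-vector space (so that the Frobenius-fixed-point argument applies). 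Your point (ii) in the plan thus misidentifies the role of Proposition~\ref{WeakCMGAIC}.
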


\begin{proof}
The structure of the argument is analogous to the proof of Theorem~\ref{AICCMMain}: using the almost Cohen--Macaulayness result, one shows a finiteness property of the image of a large enough transition map for the ind-object obtained by applying local cohomology functors to $\{\Prism_{T(Y,L_n)}/(p,d)\}_{\mathcal{Q}}$, and then concludes using the graded equational lemma. There is one simplification in the current setting: we do not need to carry out an induction on dimension as the statement outside the cone point already follows from Theorem~\ref{AICCMMain}.

First, we claim that $\{\Prism_{T(Y,L_n)}/(p,d)\}_{\mathcal{Q}}$ is ind-CM over $C(X,L)_{p=0} - V(\mathfrak{m}) = U(X,L)_{p=0}$. This statement only depends on the restriction of $\{\Prism_{T(Y,L_n)}/(p,d)\}_{\mathcal{Q}}$ to $U(X,L)_{p=0} \subset C(X,L)_{p=0}$. Moreover, the inclusion $U(X,L)_{p=0} \subset C(X,L)_{p=0}$ factors as $U(X,L)_{p=0} \subset T(X,L){p=0} \to C(X,L)_{p=0}$ with the second map being an isomorphism of $U(X,L)_{p=0}$. Thus, it is enough to prove the stronger statement that the ind-object $\{f_{(Y,L_n),*} \Prism_{T(Y,L_n)}/(p,d)\}_{\mathcal{Q}}$ in $D_{qc}(T(X,L)_{p=0})$ is ind-CM; here $f_{(Y,L_n)}:T(Y,L_n) \to T(X,L)$ is the natural map. This follows by using Lemma~\ref{GAICIndObj} to switch to the structure sheaf, then using Theorem~\ref{AICCMMain} to conclude that that $\{f_{Y,*} \mathcal{O}_{T(Y,L)}/p\}_{\mathcal{P}_X}\}$ is ind-CM over $T(X,L)_{p=0}$, and finally observing that the natural maps $T(Y,L_n) \to T(Y,L_n^{\otimes n}) = T(Y,L)$ are finite flat for $(Y,L_n) \in \mathcal{Q}$ and $m \mid n$.

%
%---
%
%First, we claim that for any prime $\mathfrak{p} \in \mathrm{Spec}(R/p) = C(X,L)_{p=0}$ with $\mathfrak{p} \neq \mathfrak{m}$, the ind-object $\{H^i_{\mathfrak{p}}( (\Prism_{T(Y,L_n)})_{\mathfrak{p}})/(p,d)  \}_{\mathcal{Q}}$. But this statement depends only on the restriction of $\{H^i_{\mathfrak{m}}(\Prism_{T(Y,L_n)}/(p,d))\}_{\mathcal{Q}}$ to $U(X,L) \subset C(X,L)$. Unwinding definitions, it is enough to show that $\{\Prism_{U(Y,L_n)/(p,d)}\}_{\mathcal{Q}}$, regarded as an ind-object on $U(X,L)$ is ind-CM. 
%
%
%is ind-CM after localization at $\mathfrak{p}$, i.e., for any $S$, there is some $S \to S'$ such that the map $H^i_{\mathfrak{p}}( (S/p)_{\mathfrak{p}}) \to H^i_{\mathfrak{p}}( (S'/p)_{\mathfrak{p}})$ is $0$ for $i < \dim( (R/p)_{\mathfrak{p}})$. Noting that $C(X,L)_{p=0} - \{\mathfrak{m}\} = U(X,L)$, it is enough to prove the analog for the tower $\{T(Y,L_n)_{p=0}\}_{\mathcal{Q}}$ of finite covers of $T(X,L)_{p=0}$ by Lemma~\ref{GAICIndObj}. This follows from the fact that we know the analogous statement for the tower $\{Y_{p=0}\}_{\mathcal{P}}$ of finite covers of $X_{p=0}$ by Corollary~\ref{CMRegSeqAIC} as well as the fact that $T_\infty = \lim_n T(X^+,L^{1/n}) \to T(X^+,L) = T(X,L) \times_X X^+$ is pro-(finite flat) since each $T(X^+,L^{1/n}) \to T(X^+,L)$ is finite flat.

Thanks to the previous paragraph, it remains to prove that the ind-($\overline{V}^\flat/d$-module) $\{H^i_{\mathfrak{m}}(\Prism_{T(Y,L_n)}/(p,d))\}_{\mathcal{Q}}$ is $0$ for $i < d+1$. By Lemma~\ref{IndCMPuncturedNN} as well as ind-CMness over $U(X,L)_{p=0}$ shown in the previous paragraph, this ind-object is an ind-(finitely presented $\overline{V}^\flat/d$-module) for $i < d+1$, i.e., for each $(n, (Y,L_n)) \in \mathcal{Q}^{ss}$, there is a map $(m,(Z,L_m)) \to (n,(Y,L_n))$ in $\mathcal{Q}^{ss}$ such that the induced map $H^i_{\mathfrak{m}}(\Prism_{T(Y,L_n)}/(p,d)) \to H^i_{\mathfrak{m}}(\Prism_{T(Z,L_m)}/(p,d))$ factors over a finitely presented image $\overline{V}^\flat/d$-module for $i < d+1$. Arguing as in Lemma~\ref{IncreaseFinite}, the same holds true with $d$ replaced by $d^c$ for any fixed $c \geq 1$. 

On the other hand, using Lemma~\ref{CMFactorGAIC}, for any $(n,(Y,L_n)) \in \mathcal{Q}^{ss}$,  there is a map $(m,(Z,L_m)) \to (n,(Y,L_n))$ in $\mathcal{Q}^{ss}$ such that the induced map $H^i_{\mathfrak{m}}(\Prism_{T(Y,L_n)}/p) \to H^i_{\mathfrak{m}}(\Prism_{T(Z,L_m)}/p)$ has image annihilated by $d^c$ for some fixed $c = c(X)$ and $i < d+2$. 

Combining the previous two paragraphs and taking the colimit shows that for all $(n,(Y,L_n)) \in \mathcal{Q}^{ss}$, the image of  $H^i_{\mathfrak{m}}(\Prism_{T(Y,L_n)}/p) \to H^i_{\mathfrak{m}}(R^{+,GR,\flat})$ is contained in a finitely generated $\overline{V}^\flat$-submodule for $i < d+2$. As $H^i_{\mathfrak{m}}(R^{+,GR,\flat})$ is almost zero for $i < d+2$ (Proposition~\ref{WeakCMGAIC}), it is naturally an $\overline{k}$-vector space, and hence  locally noetherian as a $\overline{V}^\flat$-module (as in Remark~\ref{NoetherianPropertyAlmostZero}). In particular, the image of $H^i_{\mathfrak{m}}(\Prism_{T(Y,L_n)}/p) \to H^i_{\mathfrak{m}}(R^{+,GR,\flat})$ is a finite dimensional $\overline{k}$-vector space for $i < d+2$. As this image is also Frobenius stable, one argues as in Lemma~\ref{EquationCM}: this image must be generated by its Frobenius fixed points by finiteness and perfectness, and hence vanishes by Proposition~\ref{LocalEtaleCohGAIC} and Artin-Schreier theory as $i < d+2$. Consequently, the preceding map is $0$. Taking the colimit then shows that  $H^i_{\mathfrak{m}}(R^{+,GR,\flat}) = 0$ for $i < d+2$ as well. By Bockstein sequences, this implies $H^i_{\mathfrak{m}}(R^{+,GR}/p) = 0$ for $i < d+1$ and $H^i_{\mathfrak{m}}(R^{+,GR}) = 0$ for $i < d+2$. Thus, for $i < d+1$, the ind-object $\{H^i_{\mathfrak{m}}(\Prism_{T(Y,L_n)}/(p,d))\}_{\mathcal{Q}}$ has $0$ colimit. As we already saw that this ind-object is an ind-(finitely presented $\overline{V}^\flat/d$-module), a formal argument now shows that this ind-object must be $0$: any ind-(finitely generated module) over a commutative ring with $0$ colimit must be ind-isomorphic to $0$.
\end{proof}

\begin{corollary}
Assume $(X,L)/\overline{V}$ comes via base change from some $(Y,M)/V$. Let $S = \oplus_{n \geq 0} H^0(Y, M^n)$ be the homogenous co-ordinate ring. Then the $S/p$-module $R^{+,GR}/p$ is Cohen--Macaulay.
\end{corollary}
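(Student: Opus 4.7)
The plan is to descend the cohomological Cohen-Macaulayness over $R/p$ established in Theorem~\ref{GrAICCM} to the classical Cohen-Macaulayness over $S/p$ via the faithfully flat base change $S/p \to R/p$. The geometric crux is that this base change has zero-dimensional, semi-local fibers, which enables a Mayer-Vietoris decomposition of the relevant local cohomology.

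First I would record the identification $R = S \otimes_V \overline{V}$ (flat base change of coherent cohomology, using that $M$ is ample on $Y/V$), giving $R/p = S/p \otimes_{V/p} \overline{V}/p$ with $S/p \to R/p$ faithfully flat. For $\mathfrak{p} \in \mathrm{Spec}(S/p)$ with residue field $\kappa(\mathfrak{p})$, the map $V/p \to \kappa(\mathfrak{p})$ kills the nilpotent maximal ideal of $V/p$ and factors through the residue field $k$, so the fiber ring identifies with $\kappa(\mathfrak{p}) \otimes_k \overline{V}/\pi\overline{V}$ (with $\pi$ a uniformizer of $V$). The local ring $\overline{V}/\pi\overline{V}$ is zero-dimensional with reduction $\overline{k}$, so the reduced fiber ring is $\kappa(\mathfrak{p}) \otimes_k \overline{k}$; since $\kappa(\mathfrak{p})$ is a finitely generated field over $k$ (being a residue field of the finitely generated $k$-algebra $S/\mathfrak{m}_V S$), this is a finite product of field extensions of $\overline{k}$. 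Hence only finitely many primes $\mathfrak{q}_1,\ldots,\mathfrak{q}_r$ of $R/p$ lie over $\mathfrak{p}$, and $\dim((R/p)_{\mathfrak{q}_i}) = d := \dim((S/p)_\mathfrak{p})$ by flatness with zero-dimensional fiber.

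The heart of the argument is the Mayer-Vietoris descent. Set $B' := (R/p) \otimes_{S/p} (S/p)_\mathfrak{p}$, a semi-local ring whose maximal ideals $\mathfrak{q}_i B'$ are pairwise coprime; the ideal $\mathfrak{p} B'$ shares its radical with $\bigcap_i \mathfrak{q}_i B'$ (again by the zero-dimensionality of the fiber). Viewing $(R^{+,GR}/p)_\mathfrak{p}$ as a $B'$-module and decomposing local cohomology over the disjoint closed subsets $V(\mathfrak{q}_i B') \subset \mathrm{Spec}(B')$, one obtains
\[
R\Gamma_\mathfrak{p}\bigl((R^{+,GR}/p)_\mathfrak{p}\bigr) \;\simeq\; \bigoplus_{i=1}^{r} R\Gamma_{\mathfrak{q}_i}\bigl((R^{+,GR}/p)_{\mathfrak{q}_i}\bigr),
\]
where we use the identification $(R^{+,GR}/p)_\mathfrak{p} \otimes_{B'} B'_{\mathfrak{q}_i B'} = (R^{+,GR}/p)_{\mathfrak{q}_i}$ (localization as $R/p$-module), valid because $S/p \setminus \mathfrak{p} \subset R/p \setminus \mathfrak{q}_i$. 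Each summand lies in $D^{\geq d}$ by Theorem~\ref{GrAICCM}, so $R^{+,GR}/p$ is cohomologically Cohen-Macaulay as an $S/p$-module.

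To conclude, the localization $(S/p)_\mathfrak{p}$ is excellent (hence catenary) since $S$ is a finitely generated flat $V$-algebra; moreover, $S$ is a catenary equidimensional domain of dimension $d+2$, so the minimal primes of $(S/p)_\mathfrak{p}$ are localizations of height-one primes of $S$ containing $p$ and all have common coheight, giving equidimensionality. Corollary~\ref{RegSeqCrit} (via Lemma~\ref{CatHeight} and Lemma~\ref{CMLocCohCrit}) then promotes the cohomological Cohen-Macaulayness above to the regular-sequence property for every system of parameters on $(S/p)_\mathfrak{p}$; the required non-vanishing $(R^{+,GR}/p) \otimes_{S/p} \kappa(\mathfrak{m}) \neq 0$ at each maximal ideal $\mathfrak{m}$ follows from going-up applied to the integral extension $S \to R^{+,GR}$. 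The principal obstacle is the Mayer-Vietoris step, which depends crucially on the finiteness of the fiber spectrum from the first step.
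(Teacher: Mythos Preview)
Your proof is correct and follows the same approach as the paper's: deduce cohomological CM-ness of $R^{+,GR}/p$ over $S/p$ from Theorem~\ref{GrAICCM} (cohomological CM over $R/p$), then apply Corollary~\ref{RegSeqCrit}. The paper's proof is a two-sentence invocation of these results; you have simply made explicit the bridge step (the finite-fiber/Mayer--Vietoris decomposition identifying $R\Gamma_{\mathfrak{p}}$ over $S/p$ with a sum of $R\Gamma_{\mathfrak{q}_i}$'s over $R/p$, as in the proof of Corollary~\ref{CMPlusComp}) and the catenary/equidimensionality verification that the paper leaves to the reader.
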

\begin{proof}
The map $\mathrm{Spec}(R^{+,GR}/p) \to \mathrm{Spec}(S/p)$ is surjective, so it remains to check that systems of parameters in the local rings of $S/p$ give regular sequences in the corresponding localization of $R^{+,GR}/p$. This follows from Theorem~\ref{GrAICCM} and Lemma~\ref{RegSeqCrit}.
\end{proof}

\subsection{The relative variant}
\label{rmk:RelativeGradedBCM}

Say $T/\overline{V}$ is a flat finitely presented domain and $X/T$ is proper integral $T$-scheme equipped with a relatively ample line bundle $L$. Assume $X$ is $V$-flat. By choosing absolute integral closures of $T$ and $X$ as well as a compatible system of roots of $L$, one can define the $T$-algebras $R := R(X,L) \subset R^{+,gr} \subset R^{+,GR}$ as in \S \ref{ss:GAIC}.  Let $I = R_{\geq 1} \subset R$ be the irrelevant ideal, so $V(I) \subset \mathrm{Spec}(R) =: C(X,L)$ identifies with $\mathrm{Spec}(T)$ since $R(X,L)/I = H^0(X,\mathcal{O}_X) = T$; write $Z = \mathrm{Spec}(R/I) \subset \mathrm{Spec}(R)$ for the corresponding closed subscheme.  Following the notation from \S \ref{ss:GAIC}, we obtain the following diagram of $T$-schemes:
\[ \xymatrix{ & T_\infty \ar[r] \ar[d] \ar[ld] & C_\infty = \mathrm{Spec}(R^{+,GR}) \ar[d] & \\
X^+ \ar[d] & T(X,L) \ar[r] \ar[ld] & C(X,L) = \mathrm{Spec}(R) \ar[d] & \mathrm{Spec}(R/I) =: Z \ar[l] \ar[ld]^-{\simeq} \\
X \ar[rr] & & \mathrm{Spec}(T). & }\]
Our main result is the following analog of Theorem~\ref{GrAICCM}:

\begin{theorem}
\label{RGrAICCM}
The $R/p$-module $R^{+,GR}/p$ is cohomologically CM.
\end{theorem}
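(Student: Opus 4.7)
The plan is to verify cohomological Cohen–Macaulayness pointwise on $\mathrm{Spec}(R/p)$, splitting into two cases depending on whether the point $x$ lies outside or inside $V(I) \cong \mathrm{Spec}(T/p)$.

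When $x \notin V(I)$, the localization is supported on the open $\mathrm{Spec}(R) \setminus V(I) \cong U(X,L)$, the $\mathbf{G}_m$-torsor associated to $L$ over $X$, which is a finitely presented flat integral $\overline{V}$-scheme. On this open, $R^{+,GR}$ agrees with $\Gamma(U_\infty,\mathcal{O})$, where $U_\infty$ is the cofiltered limit of the $\mathbf{G}_m$-torsors $U(X^+,L^{1/n}) \to X^+$. Applying Theorem~\ref{AICVanishLocalCoh} to $U(X,L)$ (equivalently, Theorem~\ref{AICCMMain} via an approximation argument) provides the cohomological CMness of $\mathcal{O}_{U(X,L)^+}/p$; combined with the description of $R^{+,GR}|_{U(X,L)}/p$ as a filtered colimit of finite $R|_{U(X,L)}$-subalgebras (each becoming smooth or finite flat after pullback to $X^+$), Lemma~\ref{IndCMcCMNN} transfers the CM property to $(R^{+,GR}/p)_x$.

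When $x \in V(I)$, the plan is to adapt the proof of Theorem~\ref{GrAICCM} to the relative setting. All of the structural inputs should extend directly: $T_\infty$ remains perfectoid on $p$-completion (Lemma~\ref{TotLBPerfd}) and $R^{+,GR}/p \simeq R\Gamma(T_\infty,\mathcal{O}/p)$ holds (Proposition~\ref{AICAmple}); the constant sheaf on $C_\infty[1/p]$, shifted by $[d + \dim(T[1/p])]$, is ind-perverse, being $*$-extended from the generic stratum $U_{\infty,\eta}$ which maps to $C(X,L)[1/p]$ as a cofiltered limit of quasi-finite affine maps between smooth varieties (adapting Lemma~\ref{IndPervConstGAIC}); the Riemann–Hilbert functor together with the log-prismatic isogeny theorem then give the relative analog of Lemma~\ref{CMFactorGAIC}, factoring $d^c$ times the transition maps on $\{\Prism_{T(Y,L_n)}/p\}_{\mathcal{Q}^{ss}}$ through cohomologically CM complexes. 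Combining these with Lemma~\ref{IncreaseFinite} and Lemma~\ref{IndCMPuncturedNN}, the problem reduces to a relative analog of the graded equational lemma: for a maximal ideal $\mathfrak{m}$ of $R$ lying over $x$, the local étale cohomology $R\Gamma_\mathfrak{m}(R^{+,GR,\flat},\mathbf{F}_p)$ must be concentrated in top degree.

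The hard part will be this last step. In the absolute case (Proposition~\ref{LocalEtaleCohGAIC}) the degree-zero piece of $R^{+,GR}_k$ is just $\overline{k}$, with no nontrivial higher local cohomology. In the relative case this piece is $T^+/p$, whose cohomological CMness is however supplied by Theorem~\ref{AICVanishLocalCoh} applied to the excellent noetherian domain $T$. Together with the standard grading/Frobenius vanishing for positive-degree pieces — where $\phi$ rescales the rational degree by $p$, so no nonzero Frobenius fixed class can appear in a finitely-supported local cohomology slice concentrated in positive degrees — this should yield the required vanishing and complete the proof.
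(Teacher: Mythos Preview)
Your overall strategy — handle $x \notin V(I)$ via the relation of $U_\infty$ to $X^+$, then for $x \in V(I)$ adapt Theorem~\ref{GrAICCM} — matches the paper's. However, two substantive ingredients are missing in the $x \in V(I)$ case.

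First, you omit the induction on $\dim(T/p)$. To invoke Lemma~\ref{IndCMPuncturedNN} at a closed point $x \in V(I)_{p=0}$, one needs ind-CMness at \emph{all} non-closed points of $\mathrm{Spec}(R/p)$ specializing to $x$ — in particular at non-closed points of $V(I)_{p=0} = \mathrm{Spec}(T/p)$ itself. Your first step only covers points of $U(X,L)_{p=0}$; the non-closed points of $V(I)_{p=0}$ require an inductive hypothesis, with base case $\dim(T/p)=0$ being exactly Theorem~\ref{GrAICCM}.

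Second, and more seriously, your analysis of the relative equational lemma is misdirected. Writing $R\Gamma_x = R\Gamma_{\mathfrak{m}_{T,x}} \circ R\Gamma_I$ with $I = R_{\geq 1}$, the special-fibre variant of Proposition~\ref{AICAmple} makes the degree-$n$ part of $R\Gamma_I(R^{+,GR}_{\overline{k}})$ vanish for all $n \geq 0$; only \emph{negative} degrees survive, namely $R\Gamma(X^+_{\overline{k}}, L^n)[-1]$ for $n < 0$. So the Cohen--Macaulayness of $T^+$ (your degree-$0$ input) plays no role, and there are no positive-degree pieces to worry about. On the surviving negative-degree pieces, the Frobenius/grading argument kills only $\ker(1-\phi)$ on each cohomology group; it does not control $\mathrm{coker}(1-\phi)$, so by itself it cannot place $\left(R\Gamma_x(R^{+,GR}_{\overline{k}})\right)^{\phi=1}$ in high enough degree. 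One needs an a priori bound placing $R\Gamma_x(R^{+,GR}_{\overline{k}})$ in $D^{\geq \dim(X_{p=0})+1}$ first, and the paper supplies this as Lemma~\ref{RelativeGlobalDepth}: each $R\Gamma(X^+_{\overline{k}}, L^n)$ has depth $\geq \dim(X_{p=0})$ at closed points of $T_{\overline{k}}$ for $n<0$. That lemma is proven by Grothendieck duality, the pro-discreteness of $\{\omega^\bullet_{Y_{\overline{k}}}\}_{\mathcal{P}_X}$ from Remark~\ref{ProCM}, and relative Serre vanishing via factoring through high Frobenius twists — a genuinely different input from anything in your sketch.
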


The proof largely follows the outline of Theorem~\ref{GrAICCM}, so we only indicate the necessary modifications.

\begin{proof}
As the statement of the lemma only depends on the pair $(X,L)$, we may replace $T$ with the Stein factorization of $X \to \mathrm{Spec}(T)$ to assume $T = H^0(X,\mathcal{O}_X)$.  Our goal is to prove the following statement for each $x \in \mathrm{Spec}(R/p) \subset \mathrm{Spec}(R) = C(X,L)$:

\begin{itemize}
\item[$(\ast)_x$] We have $H^i_x(R^{+,GR}_x/p) = 0$ for $i < \dim(R_x/p)$
\end{itemize}

We begin by proving $(\ast)_x$ if $x \in U(X,L)_{p=0} = C(X,L)_{p=0} - V(I)_{p=0}$. Indeed, the restriction of $T_\infty \to C(X,L)$ away from $V(I)$ identifies with the map $U_\infty \to U(X,L)$. This map factors as $U_\infty \to U(X,L) \times_X X^+ \to U(X,L)$, where the first map is pro-(finite flat) while the second map is a smooth base change of $\pi:X^+ \to X$. As we have already seen that $\mathcal{O}_{X^+}/p$ is cohomologically CM over $X_{p=0}$, it follows that the same holds true for $\mathcal{O}_{U_\infty}/p$ over $U(X,L)_{p=0}$, which implies $(\ast)_x$ for $x \in U(X,L)_{p=0}$.

It remains to prove $(\ast)_x$ for all $x \in Z_{p=0} \subset \mathrm{Spec}(R/p) = C(X,L)_{p=0}$. We shall prove this claim by induction on $d = \dim(T/p)$. If $d=0$ (whence $T = \overline{V}$), the claim follows from Theorem~\ref{GrAICCM}.  By induction on dimension  (i.e., by a variant of Lemma~\ref{CMProjective}), we may assume that the statement is known to hold true after localization over all non-closed points of $Z_{p=0} = \mathrm{Spec}(T/p)$.

Pick a closed point $x \in Z_{p=0} \subset C(X,L)_{p=0}$, so $\dim(T_x/p) = d$ while $\dim(R_x/p) = \dim(X_{p=0}) + 1$ (as taking the affine cone increases dimension by $1$). We now show $(\ast)_x$ by mimicing the argument in Theorem~\ref{GrAICCM}. First, one checks that Lemma~\ref{IndPervConstGAIC} holds true in our current setting with the same proof, and thus $(\ast)_x$ holds true in the almost category by Corollary~\ref{cor:PervACM}. To prove the statement on the nose, following the argument in Theorem~\ref{GrAICCM}, we reduce to checking that 
\[ \left(R\Gamma_x(R^{+,GR}_{\overline{k}})\right)^{\phi=1} \in D^{\geq \dim(X_{p=0})+2}.\] 
Since $x \in Z = V(I)$, we have $R\Gamma_x(R^{+,GR}_{\overline{k}}) = R\Gamma_x R\Gamma_I(R^{+,GR}_{\overline{k}})$. Moreover, since  $\mathrm{Spec}(R/p) \to \mathrm{Spec}(T/p)$ carries $Z_{p=0}$ homeomorphically onto $\mathrm{Spec}(T/p)$, we can (and do) regard $R\Gamma_x R\Gamma_I(R^{+,GR}_{\overline{k}})$ as the local cohomology on $\mathrm{Spec}(T/p)$ at the point $x$ of the pushforward of $R\Gamma_I(R^{+,GR}_{\overline{k}})$ along $\mathrm{Spec}(R/p) \to \mathrm{Spec}(T/p)$. Now observe that we have a $\phi$-equivariant identification
\[ R\Gamma_I(R^{+,GR}_{\overline{k}}) \simeq \bigoplus_{n < 0} R\Gamma(X^+_{\overline{k}}, L^n)[-1]\]
in $D(T_{\overline{k}})$ by a variant of Proposition~\ref{AICAmple}. Applying $R\Gamma_x(-)^{\phi=1}$ gives
\[ \left(R\Gamma_x(R^{+,GR}_{\overline{k}})\right)^{\phi=1} = \left( R\Gamma_x R\Gamma_I(R^{+,GR}_{\overline{k}}) \right)^{\phi=1} = \left( \bigoplus_{n < 0} R\Gamma_x R\Gamma(X^+, L^n)\right)^{\phi=1}  [-1].\]
Lemma~\ref{RelativeGlobalDepth} implies that $R\Gamma_x R\Gamma(X^+,L^n) \in D^{\geq \dim(X_{p=0})}$ for all $n < 0$. Moreover, for grading reasons, there are $\phi$-invariant elements in the lowest cohomology degree as in Lemma~\ref{LocalEtaleCohGAIC}, so we learn that 
\[ \left( \bigoplus_{n < 0} R\Gamma_x R\Gamma(X^+, L^n)\right)^{\phi=1} \in D^{\geq \dim(X_{p=0})+1}\]
whence
\[R\Gamma_x(R^{+,GR}_{\overline{k}}) \in D^{\geq \dim(X_{p=0}) + 2},\] 
as wanted.
\end{proof}

The following statement was used above:

\begin{lemma}
\label{RelativeGlobalDepth}
With notation as above, the $T_{\overline{k}}$-complex $R\Gamma(X^+_{\overline{k}},L^{b})$ has depth $\geq \dim(X_{p=0})$ at all closed points for $b < 0$. 
\end{lemma}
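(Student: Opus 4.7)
The plan is to reduce the depth bound to a pro-vanishing statement via Grothendieck duality, then use the Cohen-Macaulayness of $\mathcal{O}_{X^+}/p$ to collapse the dualizing complex in the pro-system to its top cohomology sheaf, and finally close with the Frobenius-twisting trick from the proof of Lemma~\ref{SpecialFibreGAIC} combined with relative Serre vanishing.

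Write $d = \dim(X_{p=0})$ and fix a closed point $x \in T_{\overline{k}}$. Since $R\Gamma(X^+_{\overline{k}}, L^b) = \colim_{Y \in \mathcal{P}_X^{fin}} Rg_{Y,*}(L^b|_{Y_{\overline{k}}})$ (where $g_Y : Y_{\overline{k}} \to T_{\overline{k}}$ is the structure map) and local cohomology commutes with filtered colimits, the desired depth statement is equivalent to showing that the ind-system $\{H^i_x(Rg_{Y,*}(L^b|_{Y_{\overline{k}}}))_x\}_{Y \in \mathcal{P}_X^{fin}}$ has vanishing colimit for $i < d$. First I would pass to the henselization $T_{\overline{k},(x)}$, which is an excellent noetherian local ring admitting a dualizing complex, and combine Grothendieck local duality with Grothendieck duality for the proper map $g_Y$. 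Letting $\omega^\bullet_{Y_{\overline{k}}}$ denote the dualizing complex of $Y_{\overline{k}}$ normalized so that its top non-vanishing cohomology sheaf sits in degree $-d$, Matlis duality converts the ind-vanishing above to the pro-vanishing of
\[ \{H^j R\Gamma(Y_{\overline{k},(x)}, L^{-b} \otimes \omega^\bullet_{Y_{\overline{k}}})\}_{Y \in \mathcal{P}_X^{fin}} \]
for $j > -d$, with transition maps given by the trace maps for finite covers $Y' \to Y$.

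Next, I would exploit the Cohen-Macaulayness of $\mathcal{O}_{X^+}/p$ from Theorem~\ref{AICVanishLocalCoh}, which is equivalent to the ind-CMness of $\{\mathcal{O}_Y/p\}_{\mathcal{P}_X^{fin}}$. As in Remark~\ref{ProCM}, adapted to $\mathcal{P}_X^{fin}$ either by re-running the Lemma~\ref{IndCMPuncturedNN} argument directly or by using the ind-isomorphism $\{\mathcal{O}_Y/p\}_{\mathcal{P}_X^{fin}} \simeq \{\mathcal{O}_Y/p\}_{\mathcal{P}_X}$ from Theorem~\ref{IndObjectPairs}(1), this yields the pro-vanishing of $\{H^j \omega^\bullet_{Y_{\overline{k}}}\}_{Y \in \mathcal{P}_X^{fin}}$ for $j > -d$. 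Setting $\omega_{Y_{\overline{k}}} := H^{-d}(\omega^\bullet_{Y_{\overline{k}}})$, the truncation map $\omega_{Y_{\overline{k}}}[d] \to \omega^\bullet_{Y_{\overline{k}}}$ is thus a pro-isomorphism; tensoring with the line bundle $L^{-b}$ (which is flat) and applying $R\Gamma(Y_{\overline{k},(x)}, -)$, the problem reduces to showing that the pro-system $\{H^\ell R\Gamma(Y_{\overline{k},(x)}, L^{-b} \otimes \omega_{Y_{\overline{k}}})\}_{Y \in \mathcal{P}_X^{fin}}$ is pro-zero for $\ell > 0$.

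Finally I would conclude with the Frobenius-twisting argument used in Lemma~\ref{SpecialFibreGAIC}. Since $X^+_{\overline{k}}$ is perfect, for each $Y \in \mathcal{P}_X^{fin}$ and each $N \geq 1$ the map $X^+_{\overline{k}} \to Y_{\overline{k}}$ factors through the $N$-fold relative Frobenius $Y^{(-N)}_{\overline{k}} \to Y_{\overline{k}}$; by approximation, some $Y' \to Y$ in $\mathcal{P}_X^{fin}$ has $Y'_{\overline{k}} \to Y_{\overline{k}}$ factoring through $Y^{(-N)}_{\overline{k}}$. The trace $Rg_{Y',*}(L^{-b} \otimes \omega_{Y'_{\overline{k}}}) \to Rg_{Y,*}(L^{-b} \otimes \omega_{Y_{\overline{k}}})$ thus factors through $Rg_{Y^{(-N)},*}(L^{-b} \otimes \omega_{Y^{(-N)}_{\overline{k}}})$, which under the abstract identification $Y^{(-N)}_{\overline{k}} \cong Y_{\overline{k}}$ (valid over the perfect field $\overline{k}$) becomes $Rg_{Y,*}(L^{-bp^N} \otimes \omega_{Y_{\overline{k}}})$. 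Since $L$ is relatively ample over $T$ and $-b > 0$, relative Serre vanishing for the proper map $g_Y$ yields $R^\ell g_{Y,*}(L^{-bp^N} \otimes \omega_{Y_{\overline{k}}}) = 0$ for all $\ell > 0$ and all sufficiently large $N$; after localization at $x$ this kills the intermediate term and hence the trace map in degrees $\ell > 0$, giving the required pro-vanishing. The main obstacle will be the bookkeeping around normalizations of dualizing complexes and cleanly transferring Remark~\ref{ProCM} from $\mathcal{P}_X$ to $\mathcal{P}_X^{fin}$, which should ultimately be straightforward but requires care.
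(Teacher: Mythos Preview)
Your proposal is correct and follows essentially the same route as the paper: convert the depth bound to a pro-vanishing statement for $\{R\Gamma(Y_{\overline{k}}, L^{-b} \otimes \omega_{Y_{\overline{k}}}^\bullet)\}$ via duality, use the ind-CMness of $X^+_{\overline{k}}$ (Remark~\ref{ProCM}) to replace $\omega^\bullet$ by $\omega[d]$ in the pro-system, and finish with the Frobenius-twisting plus (relative) Serre vanishing argument from Lemma~\ref{SpecialFibreGAIC}. The only cosmetic difference is that the paper invokes Example~\ref{CMDualConn} globally on $T_{\overline{k}}$ rather than localizing at $x$ and passing through Matlis duality, but these are equivalent formulations of the same step.
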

\begin{proof}
We may assume $b=-1$ by renaming $L$. Let $\mathbf{D}_{T_{\overline{k}}}(-)$ be the autoduality of $D^b_{coh}(T_{\overline{k}})$ induced by the relative dualizing complex for $T_{\overline{k}}/\overline{k}$.  By Example~\ref{CMDualConn},  it is enough to show that the pro-object $\{\mathbf{D}_{T_{\overline{k}}}(R\Gamma(Y_{\overline{k}}, L^{-1}))\}$ lies in $D^{\leq -\dim(X_{p=0})}$. But the pushforward compatibility of Grothendieck duality gives an identification
\[ \mathbf{D}_{T_{\overline{k}}}(R\Gamma(Y_{\overline{k}}, L^{-1})) \simeq R\Gamma(Y_{\overline{k}}, L \otimes \omega_{Y_{\overline{k}}}^\bullet).\]
One can now proceed as in the proof of Lemma~\ref{SpecialFibreGAIC}: the pro-system $\{R\Gamma(Y_{\overline{k}}, L \otimes \omega_{Y_{\overline{k}}}^\bullet)\}$ identifies with $\{R\Gamma(Y_{\overline{k}}, L \otimes \omega_{Y_{\overline{k}}}[\dim(X_{\overline{k}})])\}$ by the ind-Cohen--Macaulayness of $X^+_{\overline{k}}$ (see Remark~\ref{EquationCM}), so one then concludes by Serre vanishing and the perfectness of $X^+_{\overline{k}}$ as in Lemma~\ref{SpecialFibreGAIC}.
\end{proof}

\subsection{Consequences for semiample and big line bundles}
\label{GlobalSABig}

Using Theorem~\ref{KillCohMixed}, we can relax the ampleness assumption in Theorem~\ref{RGrAICCM}.

\begin{theorem}[Killing cohomology of semiample (and big) line bundles]
\label{VanishingFiniteCov}
Let $V$ be a $p$-henselian excellent DVR. Let $X \to \mathrm{Spec}(T)$ be a proper surjective map of integral flat finitely presented $V$-schemes. Fix a closed point $x \in \mathrm{Spec}(T)_{p=0}$ and a semiample line bundle $L \in \mathrm{Pic}(X)$.
\begin{enumerate}
\item There exists a finite surjective map $Y \to X$ such that 
\[ \tau^{> 0} R\Gamma(X_{p=0},L^a) \to \tau^{>0} R\Gamma(Y_{p=0}, L^a)\] 
is $0$ all $a \geq 0$.
\item If $L$ is also big, then there exists a finite surjective map $Y \to X$  such that 
\[ R\Gamma_x( R\Gamma(X_{p=0}, L^b)  ) \to R\Gamma_x(R\Gamma(Y_{p=0}, L^b))\] 
is $0$ on $H^i$ for $i < \dim(X_{p=0})$ and $b < 0$. 
\end{enumerate}
\end{theorem}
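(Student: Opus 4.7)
The plan is to reduce the semiample case to the ample case (Theorem~\ref{RGrAICCM} from \S\ref{rmk:RelativeGradedBCM}) via the semiample fibration, using Theorem~\ref{KillCohMixed} to kill higher direct images. Choose $N \geq 1$ so that $L^N$ is globally generated and let $f : X \to X' := \mathrm{Proj}(R(X, L^N))$ be the associated Stein-factored semiample fibration, so that $L^N = f^*M$ with $M$ ample on $X'$, $f_*\mathcal{O}_X = \mathcal{O}_{X'}$, and $X'$ is a proper flat integral $T$-scheme (after replacing $T$ by $H^0(X, \mathcal{O}_X)$ if necessary). The bigness hypothesis in (2) ensures $f$ is generically finite, so that $\dim X'_{p=0} = \dim X_{p=0}$.

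The central technical input is a comparison of coherent cohomology on $X^+$ and $(X')^+$. Choose an absolute integral closure $X^+ \to X$, let $(X')^+$ be the Stein factorization of $X^+ \to X'$ (which is an absolute integral closure of $X'$, as in the proof of Theorem~\ref{KillCohMixed}), and fix compatible systems of rational-power roots $\{L^{1/n}\}$ on $X^+$ and $\{M^{1/n}\}$ on $(X')^+$ with $L^{1/n} = (f^+)^*M^{1/(Nn)}$ in the torsionfree group $\mathrm{Pic}(X^+)$ (using Lemma~\ref{AICFlatCoh}). Theorem~\ref{KillCohMixed} applied to $f^+$ gives $R(f^+)_*\mathcal{O}_{X^+}/p \simeq \mathcal{O}_{(X')^+}/p$, so the projection formula yields a natural isomorphism
\[ R\Gamma(X^+_{p=0}, L^a) \;\simeq\; R\Gamma((X')^+_{p=0}, M^{a/N}) \]
in $D(T/p)$ for every $a \in \mathbf{Q}$. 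Since $M$ is ample on $X'$, applying Theorem~\ref{RGrAICCM} and Theorem~\ref{KodairaFiniteCover} to $(X', M)$ and passing to the colimit over all finite covers of $X'$ (along which $(X')^+$ is exhausted, and along which every rational-power root of $M$ is eventually defined) gives $H^{>0}((X')^+_{p=0}, M^c) = 0$ for $c \in \mathbf{Q}_{\geq 0}$ and, under the bigness assumption, $H^i_x R\Gamma((X')^+_{p=0}, M^c) = 0$ for $c \in \mathbf{Q}_{< 0}$ and $i < \dim X'_{p=0}$. The comparison above then transports both vanishings to $X^+$.

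To produce a single finite cover from these vanishings, I would use finite generation. For each $i > 0$ the graded module $\bigoplus_{a \geq 0} H^i(X, L^a)$ is finitely generated over $R = R(X, L)$: this follows from the analogous finite generation for $(X', M)$ (where $M$ is ample) by combining the Leray spectral sequence with the module-finiteness of $R$ over its $N$-th Veronese $R^{(N)} = R(X', M)$; the analogous statement for $\bigoplus_{b < 0} H^i_x R\Gamma(X_{p=0}, L^b)$ with $i < \dim X_{p=0}$ is obtained by Grothendieck--Serre duality applied to the dualizing complex of $X/T$ (turning local cohomology in negative powers into coherent cohomology in positive powers, where finite generation already holds). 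Each generator is a cohomology class on a fixed power $L^a$, and hence, by the $X^+$-vanishings of the previous paragraph and compactness of coherent complexes under filtered colimits, dies on some finite cover of $X$. A single finite cover $\pi:Y \to X$ handling all (finitely many) generators in both (1) and (2) kills the entire graded module by $R/p$-linearity, which delivers the theorem; the statement on $H^*(X, L^b)_{tors}$ in the introduction of Theorem~\ref{VanishingFiniteCov} (via Theorem~\ref{KodairaFiniteCover}) then follows from (1) applied to a suitable power.

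The main obstacle I expect is verifying the comparison $R\Gamma(X^+_{p=0}, L^a) \simeq R\Gamma((X')^+_{p=0}, M^{a/N})$ with full compatibility for rational $a/N$ (requiring some care with the compatible choice of roots and derived projection formula), together with the finite generation input for the local cohomology side of (2), where the duality reduction must be made honest in the non-noetherian and relative setting. Once these two ingredients are in place, the reduction to the ample case via Theorem~\ref{RGrAICCM} is essentially formal.
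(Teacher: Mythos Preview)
Your strategy is the same as the paper's: reduce the semiample case to the ample one via the semiample fibration together with Theorem~\ref{KillCohMixed}, and invoke Theorem~\ref{RGrAICCM} for the ample case. The implementation differs in two places.

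For the semiample-to-ample step, the paper does not pass all the way to $X^+$ and then descend. Instead, after a finite-cover adjustment arranging that $L$ itself (not just a power) is pulled back from an ample $N$ on a base $Z$, it applies Theorem~\ref{KillCohMixed} to $X \to Z$ at the \emph{finite} level: this yields a commutative square of structure-sheaf complexes with a diagonal factorization $Rf_*\mathcal{O}_X/p \to \mathcal{O}_{Z'}/p$, and twisting by $E = \bigoplus N^a$ via the projection formula shows directly that the pullback $R\Gamma(X_{p=0},L^a) \to R\Gamma(X'_{p=0},L^a)$ factors through $R\Gamma(Z'_{p=0},N^a)$. One then applies the ample case to $(Z',N)$ and pulls the resulting cover back over $X'$. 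This sidesteps your separate descent step (and hence your first flagged obstacle) entirely.

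For the descent in (2), your ``finite generation via Grothendieck--Serre duality'' is the right intuition, but as written it does not deliver uniformity in $b$: the dual of $H^i_x R\Gamma(X_{p=0},L^b)$ is coherent over $T/p$ for each fixed $b$, but the $R/p$-linearity you invoke to treat all $b$ at once is not available on the negative-degree piece alone (that piece is only a $T$-module, not an $R$-module). The paper handles this by working on the cone: it applies Lemma~\ref{IndCMcCM} to the ind-object $\{R(Y_{p=0},L)\}_{\mathcal{P}}$ of finite $R(X,L)/p$-modules, whose colimit $R^{+,gr}/p$ is cohomologically CM by Theorem~\ref{RGrAICCM}, and then extracts the $b<0$ summands from the resulting ind-CM statement. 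This packages the duality and the uniformity simultaneously. Your two flagged obstacles are exactly where these tools give the cleaner route.
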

\begin{proof}
Fix an absolute integral closure $\overline{V}/V$. By passing to suitable irreducible components of base changes to a sufficiently large finite extension of $V$, we may assume that $X_{\overline{V}}$ and $T_{\overline{V}}$ are integral. 

Assume first that $L$ is ample. In this case, we're in the setup of \S \ref{rmk:RelativeGradedBCM}.  Let $\mathcal{P}$ be the category of all finite surjective maps $Y \to X$ of integral schemes equipped with a lift of a fixed geometric generic point of $X$, so $X^+ = \lim_{\mathcal{P}} Y$ is an absolute integral closure of $X$. The proof of Theorem~\ref{RGrAICCM} (more precisely, the analog of Proposition~\ref{AICAmple}) immediately yields (1); here we observe that the statement in (1) is non-trivial only for finitely many values of $a$ by Serre vanishing, so we may use the coherence of each $R\Gamma(X_{p=0}, L^a)$ to reduce checking (1) to the analogous statement with $Y$ with replaced by $X^+$.  For (2), note that Theorem~\ref{RGrAICCM} and Lemma~\ref{IndCMcCM} (as well as a standard filtration argument to pass from a statement on cohomology groups to a statement at the level of complexes, see \cite[Lemma 3.2]{ddscposchar}) show that the ind-object 
\[ \{R\Gamma_x (R(Y_{p=0},L))\}_{\mathcal{P}} = \{R\Gamma_x\left(\left(\bigoplus_{a \geq 0} \tau^{>0} R\Gamma(Y_{p=0}, L^a)\right) \oplus \left(\bigoplus_{b < 0} R\Gamma(Y_{p=0}, L^b) \right)  \right) [-1] \ \}_{\mathcal{P}} \]
lies in $D^{\geq \dim(X_{p=0})+1}$; here the left side denotes the local cohomology at $x \in \mathrm{Spec}(T)_{p=0} \subset \mathrm{Spec}(R(X,L))_{p=0}$ of the finite $R(X,L)/p$-module $R(Y_{p=0},L)$, while the right side denotes the local cohomology on $\mathrm{Spec}(T)_{p=0}$ of the displayed complex (with equality arising as in the proof of Theorem~\ref{RGrAICCM}).  Shifting to the left by $1$ and extracting the summands corresponding to $b< 0$ then gives the desired claim in (2).

We now deduce the general case of the theorem from the ample case using Theorem~\ref{KillCohMixed}; this argument is analogous to one in \cite[Proposition 7.3]{ddscposchar}. As $L$ is semiample, there exists an integer $n \geq 1$ such that $L^{\otimes n}$ is globally generated. We can then find a proper surjective map $f:X \to Z$ such that $Z$ is an integral proper integral $T$-scheme which is $V$-flat and such that $L^{\otimes n} = f^* M$ for an ample line bundle $M$ on $Z$. Choose  absolute integral closures $X^+ \to X$ and $Z^+ \to Z$ as well as a map $f^+:X^+ \to Z^+$ lifting $f$. Then $L^{\otimes n}|_{X^+} = f^{+,*}(M|_{Z^+})$. As $\mathrm{Pic}(X^+)$ and $\mathrm{Pic}(Z^+)$ are both uniquely divisible (Lemma~\ref{AICFlatCoh}), there is a unique $n$-th root $N$ of $M|_{Z^+}$ such that $L = f^{+,*} N$. Note that $N$ is the pullback of an ample line bundle on a finite cover of $Z$ factoring $Z^+ \to Z$. Replacing $X$ and $Z$ with suitable finite covers if necessary, we may then assume $L = f^* N$. Applying Theorem~\ref{KillCohMixed} to $X \to Z$ yields a diagram 
\[ \xymatrix{ X' \ar[r] \ar[d] & X \ar[d]^-f \\
			Z' \ar[r]^h & Z }\]
where the horizontal maps are finite covers by $V$-flat integral schemes, and such that the resulting square on pushforwards
\[ \xymatrix{ \mathcal{O}_Z/p \ar[r] \ar[d] & \mathcal{O}_{Z'}/p \ar[d] \\
		Rf_* \mathcal{O}_X/p \ar[r] \ar@{..>}[ru]^s & Rg_* \mathcal{O}_{X'}/p }\]
admits a dotted arrow labelled $s$ making the diagram commute (here $g:X' \to Z$ is the structure map for the first square above). By the projection formula, for any $E \in D_{qc}(Z)$, this gives a commutative diagram
\[ \xymatrix{ R\Gamma(Z_{p=0},E) \ar[r] \ar[d] & R\Gamma(Z'_{p=0},E) \ar[d] \\
		R\Gamma(X_{p=0},E) \ar[r] \ar@{..>}[ru]^s & R\Gamma(X'_{p=0},E). }\]
It is now easy to see that (1) for the pair $(X,L)$ follows from (1) for the pair $(Z',N)$ by taking $E = \oplus_{a \geq 0} N^a$. Similarly, (2) also follows by taking $E = \oplus_{b < 0} N^b$ and observing that $\dim(X_{p=0}) = \dim(Z_{p=0})$ if $L$ is big.
\end{proof}

\begin{remark}[The case of proper $V$-schemes]
\label{rmk:ProperDVR}
An important special case of Theorem~\ref{VanishingFiniteCov} concerns the case $T=V$. In this case, $\mathrm{Spec}(T/p)_{p=0}$ has a single point. Consequently, Theorem~\ref{VanishingFiniteCov} yields to the following assertion: for any proper integral $V$-scheme $X$ and any semiample and big line bundle $L$ on $X$, there exists a finite surjective map $Y \to X$ such that the maps 
\begin{enumerate}
\item $\tau^{> 0} R\Gamma(X_{p=0}, L^a) \to \tau^{>0} R\Gamma(Y_{p=0}, L^a)$ for all $a \geq 0$, and
\item $\tau^{< d} R\Gamma(X_{p=0}, L^b) \to \tau^{>d} R\Gamma(Y_{p=0}, L^b)$ for all $b < 0$, where $d = \dim(X_{p=0})$
\end{enumerate}
are $0$. As each $R\Gamma(X,L^n)$ has bounded $p$-power torsion, Bockstein sequences then show that we can also arrange that
\begin{enumerate}
\item[$(2')$] $H^*(X, L^b)_{tors} \to H^*(Y, L^b)_{tors}$ for fixed $b < 0$
\end{enumerate}
is the $0$ map; here we  use that $X$ and $X_{p=0}$ have the same coherent cohomological amplitude to relate the $p$-torsion in $H^d(X,L^b)$ to $H^{d-1}(X_{p=0}, L^b)$.  In particular,  this extends Theorem~\ref{KodairaFiniteCover} to the case of semiample and big line bundles, and thus proves Theorem~\ref{XPlusCMIntro}.
\end{remark}

\newpage
\bibliographystyle{amsalpha}
\bibliography{mybib}

\end{document}